\DeclareMathAlphabet{\mathpzc}{OT1}{pzc}{m}{it}
\theoremstyle{plain}
\numberwithin{equation}{section}
\newcommand\bbW{{\mathbb W}}
\newcommand\bbX{{\mathbb X}}
\newcommand\bbA{{\mathbb A}}
\newcommand\Ch{{\mathrm{Ch}}}
\newcommand{\CS}{{\rm CS}}
\newcommand{\rI}{{\rm I}}
\newcommand{\bi}{{\bf i}}
\newcommand{\bm}{{\bf m}}
\newcommand{\bv}{{\bf v}}
\newcommand{\bx}{{\bf x}}
\newcommand{\bA}{{\bf A}}
\newcommand{\bC}{{\bf C}}
\newcommand{\bH}{{\bf H}}
\newcommand{\bQ}{{\bf Q}}
\newcommand{\bR}{{\bf R}}
\newcommand{\bZ}{{\bf Z}}
\newcommand{\fc}{{\mathfrak c}}
\newcommand{\fd}{{\mathfrak d}}
\newcommand{\fl}{{\mathfrak l}}
\newcommand{\fr}{{\mathfrak r}}
\newcommand{\ft}{{\mathfrak t}}
\newcommand{\fB}{{\mathfrak B}}
\newcommand{\fC}{{\mathfrak C}}
\newcommand{\fD}{{\mathfrak D}}
\newcommand{\fF}{{\mathfrak F}}
\newcommand{\fW}{{\mathfrak W}}
\newcommand{\fX}{{\mathfrak X}}
\newcommand{\Z}{\bZ}
\newcommand{\Q}{\bQ}
\newcommand{\R}{\bR}
\newcommand{\C}{\bC}
\renewcommand{\H}{\bH}
\newcommand{\su}{\mathfrak{su}}
\newcommand{\gl}{\mathfrak{gl}}
\newcommand{\SU}{{\rm SU}}
\newcommand{\U}{{\rm U}}
\DeclareMathOperator{\Ad}{Ad}
\DeclareMathOperator{\Hol}{Hol}
\DeclareMathOperator{\PD}{PD}
\DeclareMathOperator{\ad}{ad}
\DeclareMathOperator{\rk}{rk}
\DeclareMathOperator{\coker}{coker}
\DeclareMathOperator{\ind}{index}
\DeclareMathOperator{\sign}{sign}
\DeclareMathOperator{\tr}{tr}
\renewcommand{\Im}{\operatorname{Im}}
\renewcommand{\Re}{\operatorname{Re}}
\renewcommand{\det}{\operatorname{det}}
\newcommand{\hol}{\mathrm{hol}}
\newcommand{\id}{{\rm id}}
\renewcommand{\epsilon}{\varepsilon}
\def\({\mathopen{}\left(}
\def\){\right)\mathclose{}}
\def\<{\mathopen{}\left<}
\def\>{\right>\mathclose{}}
\definecolor{gold}{rgb}{0.85,.66,0}
\definecolor{cherry}{rgb}{0.9,.1,.2}
\definecolor{burgundy}{rgb}{0.8,.2,.2}
\definecolor{orangered}{rgb}{0.85,.3,0}
\definecolor{orange}{rgb}{0.85,.4,0}
\definecolor{olive}{rgb}{.45,.4,0}
\definecolor{lime}{rgb}{.6,.9,0}
\definecolor{green}{rgb}{.2,.7,0}
\definecolor{grey}{rgb}{.4,.4,.2}
\definecolor{brown}{rgb}{.4,.3,.1}
\def\makeautorefname#1#2{\AtBeginDocument{\expandafter\def\csname#1autorefname\endcsname{#2}}}
\newcommand{\mynewtheorem}[2]{
  \newaliascnt{#1}{equation}
  \newtheorem{#1}[#1]{#2}
  \aliascntresetthe{#1}
  \makeautorefname{#1}{#2}
}
\numberwithin{substep}{step}
\numberwithin{subcase}{case}
\theoremstyle{remark}
\theoremstyle{definition}
\newtheorem*{convention*}{Convention}
\newtheorem*{conventions*}{Conventions}
\DeclareMathOperator{\I}{I}
\tikzset{middlearrow/.style={
        decoration={markings,
            mark= at position 0.5 with {\arrow{#1}} ,
        },
        postaction={decorate}
    }
}
\title{\large \bf Surgery, Polygons and $\SU(N)$-Floer Homology}
\author{\bf \sc \large Lucas Culler, Aliakbar Daemi, Yi Xie}
\date{}
\begin{document}
\maketitle
\begin{abstract}
	Surgery exact triangles in various 3-manifold Floer homology theories provide
	 an important tool in studying and computing the relevant Floer homology groups. 
	 These exact triangles relate the invariants of 3-manifolds, obtained by 
	three different Dehn surgeries on a fixed knot. In this paper, the behavior of $\SU(N)$-instanton Floer homology with
	respect to Dehn surgery is studied. In particular, it is shown that 
	there are surgery exact tetragons and pentagons, respectively, for $\SU(3)$- and $\SU(4)$-instanton 
	Floer homologies. It is also conjectured that $\SU(N)$-instanton Floer homology in general admits a surgery 
	exact $(N+1)$-gon. An essential step in 
	the proof is the construction of a family of asymptotically cylindrical metrics on ALE spaces of type $A_n$. 
	This family is parametrized by the $(n-2)$-dimensional associahedron and consists of
	anti-self-dual metrics with positive scalar curvature. The metrics in the family also admit a torus symmetry.
\end{abstract}
\newpage
\tableofcontents
\newpage
\section{Introduction}

In 1985, Casson introduced an integer-valued invariant of integral homology spheres \cite{AM:Cass}. Casson's invariant for an integral homology sphere $Y$, often denoted by $\lambda(Y)$, was later reinterpreted by Taubes as an algebraic count of non-trivial flat $\SU(2)$-connections on $Y$ \cite{Cliff:Cas-invt}. Remarkably, Casson developed a series of formulas that describes how $\lambda$ behaves with respect to Dehn surgery on knots. These surgery formulas allowed him to identify the mod $2$ reduction of $\lambda$ in terms of {\it Rochlin invariant}. Consequently, he showed that  the Poincar\'e conjecture holds for an integral homology sphere whose Rochlin invariant is non-trivial.

Suppose $K$ is a knot in an integral homology sphere $Y$. For a pair of co-prime integer numbers $p$ and $q$, let $Y_{p/q}$ denote the result of performing $(p/q)$-surgery on $K$. The 3-manifolds $Y_{1/q}(K)$ are integral homology spheres and their Casson's invariants satisfy the following surgery formulas \cite{AM:Cass}:
\begin{equation} \label{SU(2)-surgery-formula}
	\lambda(Y_{1/(q+1)}(K))=\lambda(Y_{1/q}(K))+\frac{1}{2}\frac{d^2}{dt^2}\Delta_K(1)
\end{equation}
where $\Delta_K(t)$ is the symmetrized Alexander polynomial of $K$. Therefore, we have:
\begin{equation}\label{SU(2)-surgery-formula-2}
	\lambda(Y_{1/(q+2)}(K))-2\lambda(Y_{1/(q+1)}(K))+\lambda(Y_{1/q}(K))=0
\end{equation}
This identity implies that $\lambda(Y_{1/q}(K))=aq+b$ for constants $a$ and $b$ that depend only on $Y$ and $K$.

Casson invariant and surgery formulas \eqref{SU(2)-surgery-formula} and \eqref{SU(2)-surgery-formula-2} can be categorified in terms of Floer's instanton homology. For an integral homology sphere $Y$, Floer used {\it anti-self-dual} $\SU(2)$-connections on $\R \times Y$ to define a $\Z/8$-graded abelian group, which we denote by $\rI^2_*(Y)$ \cite{Fl:I}\footnote{Here the superscript $2$ stands for the choice of $\SU(2)$ as the structure group of the connections involved in the definition of instanton Floer homology.}. Floer also used a similar construction to define an instanton homology group $\rI^2_*(Y,\gamma)$ for a pair of a 3-manifold $Y$ and $\gamma\in H^2(Y,\Z)$ such that the pairing of $\gamma$ and an oriented embedded surface in $Y$ is an odd number \cite{F:sur-rel,DB:sur-rel}. A pair $(Y,\gamma)$ is called {\it admissible} if either it satisfies this condition or $Y$ is an integral homology sphere. For an integral homology sphere $Y$, the integer number $\lambda(Y)$ is half of the Euler characteristic of $\rI^2_*(Y)$ \cite{Cliff:Cas-invt,Fl:I}. Moreover, for a knot $K$ in $Y$, the instanton Floer homology groups $\rI^2_*(Y_{1/(q+1)}(K))$ and $\rI^2_*(Y_{1/q}(K))$ fit into an exact triangle as follows \cite{F:sur-rel,DB:sur-rel,Ch:I-Cube}:
\begin{equation}
	\label{surg-triangle}
	\xymatrix{
	\rI^2_*(Y_0(K),\gamma_0) \ar[rr]& &
	 \rI^2_*(Y_{1/(q+1)}(K)) \ar[dl]\\
	& \rI^2_*(Y_{1/q}(K)) \ar[ul] &
	}
\end{equation}
Here $Y_0(K)$, the $0$-surgery on $K$, has the same homology as $S^1 \times S^2$, and $\gamma_0$ is a generator of $H^2(Y_0(K),\Z)$. The Euler characteristic of the instanton Floer homology group of the admissible pair $(Y_0(K),\gamma_0)$ is equal to $-\frac{d^2}{dt^2}\Delta_K(1)$. In the above diagram, the horizontal map has degree $1$ and the remaining two maps have degree $0$. Therefore, \eqref{SU(2)-surgery-formula} is an immediate consequence of Floer's {\it surgery exact triangle}. There is also an exact triangle in correspondence to the identity in \eqref{SU(2)-surgery-formula-2} which has the form in \eqref{surg-triangle-2}. (See the discussion following Theorem \ref{surgery-polygon-thm}.)
\begin{equation} \label{surg-triangle-2}
	\xymatrix{
	\rI^2_*(Y_{1/q}(K)) \ar[rr]& &
	 \rI^2_*(Y_{1/(q+2)}(K)) \ar[dl]\\
	& \rI^2_*(Y_{1/(q+1)}(K))\oplus \rI^2_*(Y_{1/(q+1)}(K)) \ar[ul] &
	}
\end{equation}

Both Casson invariant and instanton Floer homology have various extensions obtained by replacing $\SU(2)$ with special unitary groups of higher rank. Flat $\SU(3)$ connections on an integral homology sphere were used in several ways to define analogues of Casson invariant \cite{BH:3-Casson, BHK:3-Casson,CLM:3-Casson}. In \cite{KM:YAFT}, Kronheimer and Mrowka used $\SU(N)$-Yang-Mills gauge theory to define instanton Floer homology $\rI^N_*(Y,\gamma)$ for an {\it $N$-admissible pair} of a 3-manifold $Y$ and a cohomology class $\gamma \in H^2(Y,\Z)$. For $N\geq 3$, $(Y,\gamma)$ is $N$-admissible, if there is an oriented embedded surface in $Y$ whose pairing with $\gamma$ is coprime to $N$. A pair is $2$-admissible, if it is admissible in the above sense. If $N\geq 3$ and $Y$ is an integral homology sphere, then $\gamma$ is necessarily trivial and $(Y,\gamma)$ is not $N$-admissible. Therefore, $\SU(N)$-instanton Floer homology is not presently defined for integral homology spheres and the relationship between $\SU(3)$ Casson invariants and Floer homology is not clear.

Surgery formulas and surgery exact triangles as above are ubiquitous in low dimensional topology. Motivated by Floer's exact triangle, analogous exact triangles were constructed for other 3-manifold Floer homologies \cite{OzSz:HF-app,KMOS:mon-lens}. A closely related exact triangle is also built into the definition of Khovanov homology \cite{Kh:Jones-cat}.
The Jones polynomial \cite{Jones:Jones} or more generally the HOMFLY polynomial \cite{HOMFLY:HOMFLY} are also defined by analogues of surgery formula \eqref{SU(2)-surgery-formula} in the context of knot invariants. Our main concern in this article is to investigate how $\SU(N)$-instanton Floer homologies behave with respect to Dehn surgery.

The values of $\SU(3)$-Casson invariant constructed in \cite{BHK:3-Casson} have been computed for Dehn surgeries on torus knots in $S^3$ \cite{BHK:3-Casson-comp}. In particular, it is shown that this invariant, denoted by $\tau$, has the following form:
\[
  \tau(S^3_{1/q}(K))=aq^2+bq
\]
which is in contrast to the linear behavior of $\lambda$. However, $\tau$ for these special choices of $K$ satisfy the following modified version of \eqref{SU(2)-surgery-formula-2}:
\[
  \tau(Y_{1/q}(K))-3\tau(Y_{1/(q+1)}(K))+3\tau(Y_{1/(q+2)}(K))-\tau(Y_{1/(q+3)}(K))=0
\]
Therefore, one might hope that there is a Casson type invariant $\lambda_N$ of integral homology spheres, defined by counting $\SU(N)$ flat connections, which satisfies the following surgery formula:
\begin{equation} \label{order-N-surgery}
	\sum_{i=0}^{N} \binom{N}{i}(-1)^i \lambda_N(Y_{1/(q+i)}(K))=0
\end{equation}
In this article, we construct a {\it surgery exact polygon} for $\SU(N)$-Floer homology in the case that $N=2$, $3$ or $4$. This surgery exact polygon can be regarded as a categorification of \eqref{order-N-surgery}.

\subsection{Statement of Results}

Fix an integer $N\geq 2$, a 3-manifold $\mathcal Y$ with torus boundary and an oriented 1-manifold $w$  embedded in the interior of $\mathcal Y$. Let $\lambda$ and $\mu$ be two curves on the boundary of $\mathcal Y$ such that $\lambda\cdot \mu$, the algebraic intersection of $\lambda$ and $\mu$, is equal to $1$. Let $Y_i$ be the closed 3-manifold obtained by gluing the solid torus $S^1\times D^2$ such that $\{{\rm point}\}\times \partial D^2$ is glued along a loop in $\partial Y$ homologous to $i\mu+\lambda$. We also denote $K_i$ for the core of the solid torus $S^1\times D^2$ in $Y_i$. For each subset $S$ of $\{0,1,\dots,N-1\}$ with $i$ elements, consider the following cohomology class in $Y_i$:
\[
  \gamma_S={\rm P.D.}(w+\sigma_S\cdot K_i) \in H^2(Y_i,\Z)
\]
where $\sigma_S$ denotes the sum of the elements of $S$. The main result of the present pair can be summarized as in Theorem \ref{surgery-polygon-thm}. A more detailed version of this theorem is stated as Theorem \ref{main} in Section \ref{Floer-main}.
\begin{theorem}\label{surgery-polygon-thm}
	Suppose $\mathcal Y$ and $w$ are chosen such that $(Y_i,\gamma_S)$ is $N$-admissible for any $i$ and any set
	$S\subseteq \{0,1,\dots,N-1\}$ with $i$ elements. If $N\leq 4$, then $\SU(N)$-instanton Floer homology
	associates an exact $(N+1)$-gon to $(\mathcal Y,w)$. The chain complex associated to the $i^{ th}$ vertex of this
	exact $(N+1)$-gon is equal to:
	\[
	  \bigoplus_{|S|=i} \fC^N_*(Y_i,\gamma_S)
	\]
	where $ \fC^N_*(Y_i,\gamma_S)$ denotes the instanton Floer complex
	associated to the admissible pair $(Y_i,\gamma_S)$.
\end{theorem}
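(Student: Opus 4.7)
The plan is to construct an $A_\infty$-type system of maps between the Floer chain complexes $\fC^N_*(Y_i,\gamma_S)$ whose structure relations force the total complex of the $(N+1)$-gon to be acyclic. The chain maps themselves will come from moduli spaces of $\SU(N)$ anti-self-dual connections on the standard $2$-handle cobordisms between the $Y_i$, and the higher coherence homotopies will come from moduli spaces on the stacked cobordisms whose interiors contain ALE regions of type $A_k$, equipped with the associahedron-parametrized family of ASD metrics advertised in the abstract.

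\textbf{Step 1: Edge maps.} The manifolds $Y_i$ and $Y_{i+1}$ differ by a single Rolfsen-type Dehn surgery modification, so there is a canonical $2$-handle cobordism $W_i:Y_i\to Y_{i+1}$ whose core disk is capped off by a $(-1)$-framed $2$-sphere $\Sigma_i$. After decorating $W_i$ with a properly embedded surface extending $w$ and the appropriate multiple of $\Sigma_i$ to match the classes $\gamma_S$ and $\gamma_{S\cup\{i\}}$, counting rigid ASD connections on $W_i$ produces, for each $S\subseteq\{0,\dots,N-1\}$ with $|S|=i$ and $i\notin S$, a chain map $\fC^N_*(Y_i,\gamma_S)\to\fC^N_*(Y_{i+1},\gamma_{S\cup\{i\}})$. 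Summing over all $S$ gives the edge map of the $(N+1)$-gon.

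\textbf{Step 2: Higher homotopies from ALE moduli.} Composing $k$ consecutive cobordisms $W_{i+k-1}\circ\cdots\circ W_i$ yields a $4$-manifold whose cores form a linear plumbing of $k$ two-spheres of self-intersection $-2$, whose regular neighborhood is the $A_{k-1}$ ALE space $X_{k-1}$. The family of asymptotically cylindrical ASD metrics on $X_{k-1}$ parametrized by the associahedron $K_k$ (of dimension $k-2$) pulls back to a family of metrics on the composed cobordism (with long necks), and counting rigid ASD connections in this parametrized family produces a map
\[
h_k:\bigoplus_{|S|=i}\fC^N_*(Y_i,\gamma_S)\longrightarrow \bigoplus_{|S'|=i+k}\fC^N_*(Y_{i+k},\gamma_{S'})
\]
of degree $k-2$. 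The codimension-one boundary strata of $K_k$ correspond to degenerations in which the ALE space breaks along a long neck into two smaller ALE pieces $X_{a-1}\sqcup X_{b-1}$ with $a+b=k$. By a standard neck-stretching/gluing argument, the algebraic count of the boundary of the zero-dimensional moduli over $K_k$ yields the $A_\infty$-relation
\[
\sum_{a+b=k}(\pm)\,h_a\circ h_b \;=\; 0.
\]
With $h_1$ equal to the Floer differential and $h_2$ to the edge map of Step 1, these relations are precisely the data of an exact $(N+1)$-gon, and the total complex $\bigoplus_{i}\bigoplus_{|S|=i}\fC^N_*(Y_i,\gamma_S)$ with differential $\sum h_k$ is then acyclic by the standard spectral sequence (cancellation) argument for $A_\infty$-structures whose top-length operation loops back to the differential.

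\textbf{Main obstacle.} The difficult part is analytic: one must show that the parametrized ASD moduli over $K_k$ admits a compactification whose boundary is exactly the glued product moduli associated to the strata of $K_k$, with no unexpected bubbling or reducible contributions. This is where the positive scalar curvature of the constructed ALE metrics is essential, since it rules out the dominant reducible loci on each $X_{k-1}$ via a Weitzenb\"ock argument, and it is also where the restriction $N\le 4$ appears: for $N\ge 5$, higher-rank reducible $\SU(N)$-connections on $X_{k-1}$ (corresponding to nontrivial flat bundles of rank $\ge 5$ not excluded by the curvature bound) can survive, and the combinatorics of the associahedron no longer suffices to cancel their contributions. Modulo this analytic input, the surgery polygon is then assembled by applying the construction uniformly to all consecutive tuples of vertices and checking that the direct-sum decomposition over the subsets $S$ is respected by the cobordism decorations, which is a bookkeeping exercise with the cohomology classes $\gamma_S={\rm P.D.}(w+\sigma_S\cdot K_i)$.
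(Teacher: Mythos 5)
Your proposal reproduces the general architecture of the paper---edge maps from the elementary surgery cobordisms, higher homotopies from associahedron-parametrized families of metrics, and anti-self-dual positive-scalar-curvature (Gibbons--Hawking) metrics used to kill unwanted boundary strata---but it has a genuine gap at the heart of the argument. An exact $(N+1)$-gon requires that the composite of maps all the way around the polygon be chain homotopic to the \emph{identity}: the defining relation \eqref{exact-n-gon} carries a $+1$ term precisely when the path has length $n$. Your relations $\sum_{a+b=k}h_a\circ h_b=0$ never produce this term, and without it the total complex has no reason to be acyclic (take all maps zero: every relation of your form holds, and nothing cancels). The mechanism that produces the identity is the central new content of the paper: on the longest cobordism $W^l_{l+N+1}$ the family $\mathcal K_{N+2}$ has a codimension-one face whose cut is $S^1\times S^2$, and the contribution of that face is a fiber product of an irreducible moduli space on the knot-complement piece $W_0$ with the moduli space of \emph{completely reducible} ASD connections on $X_N(l)$ (the non-Gibbons--Hawking component). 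One must compute the holonomy-restriction map from $\mathcal K_{N+1}$ to the alcove $\Delta^{\ft}_{N-1}$, show it factors through the forgetful map to a simplex (Proposition \ref{hol-fac-forg}), assemble the maps over all bi-permutations via the bi-barycentric subdivision, and prove the resulting map has degree one (Propositions \ref{red-hol-ver} and \ref{deg-one}); only then does the sum of these boundary terms equal the identity cobordism map up to homotopy (Proposition \ref{homotopy-to-identity}). None of this appears in your proposal, and it cannot be waved away as bookkeeping.

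Your account of where $N\le 4$ enters is also misplaced. The positive-scalar-curvature ASD metrics live on the Gibbons--Hawking components and do their job for every $N$; the restriction bites on $X_N(l)$, which contains the $S^1\times S^2$ end and carries no such metric. There one must show that every (possibly perturbed) ASD connection whose index is at most $-h^0(\beta_0)-h^0(\chi_0)$ is completely reducible, and the paper proves this by an explicit energy comparison with completely reducible connections having the same Chern class and limiting flat connection on $L(N,1)$ (Lemmas \ref{crvspr} and \ref{crvspr2}), an enumeration carried out case by case in tables that only covers $N\le 4$. It is not a matter of ``flat bundles of rank $\ge 5$ surviving the curvature bound''; the authors state that extending this step to general $N$ would require new gluing techniques, not a stronger vanishing theorem on the ALE pieces.
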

\noindent
An exact $n$-gon consists of $n$ chain complexes and homomorphisms between any pair of chain complexes that are supposed to satisfy a series of identities. The definition of exact $n$-gons, as generalization of exact triangles, is discussed in Section \ref{exact-polygon}. In general, we also believe the following conjecture holds:
\begin{conjecture}\label{surgery-polygon-conj}
	Theorem \ref{surgery-polygon-thm} holds without the assumption $N\leq 4$.
\end{conjecture}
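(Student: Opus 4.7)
The plan is to extend the proof strategy of Theorem \ref{surgery-polygon-thm} for $N\leq 4$, exploiting the fact that the key geometric input, namely the family of anti-self-dual, positive scalar curvature ALE metrics on the $A_n$-spaces parametrized by the associahedron, is constructed in this paper for all $n$. The framework should be an $A_\infty$-like $(N+1)$-gon of Floer complexes, where the chain-level maps between the vertices $\bigoplus_{|S|=i}\fC^N_*(Y_i,\gamma_S)$ are defined as signed counts of anti-self-dual $\SU(N)$-connections on cobordisms built from Dehn-filling traces, equipped with families of metrics coming from the associahedron.

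Concretely, for each interval $I \subset \{0,1,\dots,N\}$ of length $k\geq 1$, I would define a structure map $f_I$ by counting instantons on the cobordism $W_I$ whose ends are the surgeries indexed by the elements of $I$ (with appropriate cohomological data determined by subsets $S$). The metric on $W_I$ is obtained by gluing a standard cylindrical-end metric to the ALE model, with the ALE piece running over the face of the associahedron $K_{N+1}$ corresponding to $I$. The $(N+1)$-gon relations then should follow by analyzing the codimension-one boundary of the parametrized moduli spaces: the associahedron is precisely the polytope whose facet structure records how a one-parameter family of anti-self-dual metrics can degenerate into concatenations of lower-dimensional ones, so $\partial \overline{\mathcal{M}(W_I)}$ should split as a sum over the facets of $K_I$, yielding exactly the quadratic identities $\sum f_{I_1}\circ f_{I_2}=0$ required of an exact polygon. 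The positive scalar curvature of the ALE metrics is crucial here: it forces rigidity of anti-self-dual connections in the ALE ends, making gluing theorems available and excluding certain reducible contributions.

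The main obstacle will be the analytic package in rank $N\geq 5$. Three points require genuinely new input. First, transversality for $\SU(N)$ moduli spaces on cobordisms with ALE ends must be achieved by holonomy perturbations chosen in a way that is compatible across the entire associahedron parameter family, without destroying either the metric symmetry or the $N$-admissibility conditions. Second, bubbling of $\SU(N)$-instantons on the ALE ends admits new strata for $N\geq 5$ associated to intermediate non-abelian reductions (Levi subgroups strictly between a maximal torus and $\SU(N)$) that do not appear for $N\leq 4$; one must show that these strata either make no codimension-one contribution or cancel among themselves. Third, one must upgrade the parametrized compactness and gluing statements to handle the higher-dimensional associahedra $K_{N+1}$ uniformly, matching the Kuranishi-style boundary description stratum by stratum with the combinatorics of the polytope. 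I would expect step two, ruling out spurious contributions from non-abelian reductions of $\SU(N)$-instantons on $A_{N-1}$ ALE spaces, to be the decisive difficulty separating the general conjecture from the cases already settled.
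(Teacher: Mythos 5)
This statement is a conjecture that the paper explicitly leaves open: the authors say that every step of the proof of Theorem \ref{surgery-polygon-thm} can be carried out for arbitrary $N$ except for Subsection \ref{XNl-reg}, and that reproducing that subsection for general $N$ ``seems to require new gluing techniques.'' There is therefore no proof in the paper to compare yours against, and your proposal, which is a program rather than an argument, does not close the gap either. Your overall architecture (structure maps defined by counting instantons over families of metrics parametrized by associahedra, with the exact-polygon relations read off from the codimension-one faces) is exactly the paper's, and your first and third listed obstacles (transversality compatible across the family, and parametrized compactness and gluing over the higher associahedra) are in fact already handled in the paper for all $N$ (Section \ref{perturbations}, Proposition \ref{S1S2glue}, and the construction of Section \ref{metrics}).

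The genuine missing step is both more specific than your second obstacle and located in a different place. The Gibbons-Hawking/ALE components are precisely where nothing goes wrong for large $N$: the anti-self-dual, positive scalar curvature metrics make every ASD connection on those pieces automatically regular (Proposition \ref{regularGH}, Corollary \ref{regular-GH}). The obstruction lives on the component $X_N(l)$ of NGH type, with boundary $S^1\times S^2\sqcup L(N,1)\sqcup(\text{spheres})$, where one must prove Proposition \ref{avoid-noncomp-red-pre}: after a small perturbation, every element of the relevant low-index parametrized moduli spaces is a completely reducible connection. For $N\le 4$ this rests on Lemmas \ref{crvspr} and \ref{crvspr2}, which assert that for each admissible first Chern class and limiting flat connection on $L(N,1)$ there exists a completely reducible connection with the same data and no greater topological energy; these are verified by finite enumeration (Tables \ref{N2r2}--\ref{N4k3-2}) for summands of rank at most $3$. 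For $N\ge 5$ one needs the analogous energy comparison for reducible summands of rank up to $N-1$, fed into the index bookkeeping of Propositions \ref{partial-reg} and \ref{S1S2-partial-reg-rel}, and neither your proposal nor the paper supplies it. Asserting that the strata of intermediate non-abelian reductions ``either make no codimension-one contribution or cancel among themselves'' is a restatement of what must be proved, not a proof; until that comparison (or the genuinely new gluing argument the authors anticipate) is established, the conjecture remains open.
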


Suppose $\mathcal Y$ is given by the complement of a knot $K$ in an integral homology sphere $Y$. Let $m$ be the meridian of $K$ and $l$ be the longitude of $K$ fixed by a Seifert surface and oriented such that $l\cdot m=1$. Let $\lambda$ and $\mu$ be closed curves on the boundary of $\mathcal Y$ whose homology classes are given as below:
\[
  [\lambda]=[m+(q+2)l]  \hspace{1cm} [\mu]=[-l]
\]
Then $Y_i$ is diffeomorphic to the integral homology sphere $Y_{1/(q+2-i)}(K)$, the manifold obtained by $1/(q+2-i)$-surgery on $K$. If we assume $N=2$, then all pairs $(Y_i,\gamma_S)$ are admissible. Therefore, \ref{surgery-polygon-thm} gives rise to an exact triangle of the Floer chain complexes of the 3-manifolds $Y_{1/(q+2)}(K)$, $Y_{1/(q+1)}(K)$ and $Y_{1/q}(K)$. As it is expected, any exact triangle, in the sense defined in Section \ref{exact-polygon}, induces an exact sequence of homology groups. In this case, the resulting exact sequence of Floer homology groups is given in \eqref{surg-triangle}.

For $n\geq 4$, an exact $n$-gon does not induce an $n$-periodic exact sequence at the level of homology groups. Nevertheless, there are spectral sequences associated to an exact $n$-gon generalizing the exact sequence of homology groups for $n=3$. These spectral sequences are discussed in Proposition \ref{spectral-sequence}. An immediate consequence of Proposition \ref{spectral-sequence} is the following corollary of Theorem \ref{surgery-polygon-thm}:

\begin{cor}
	Suppose $N$, $\mathcal Y$ and $w$ are given as in Theorem \ref{surgery-polygon-thm}. Then:
	\vspace{-5pt}
	\begin{itemize}
		\item[(i)] there is a spectral sequence that converges to the trivial group and its second page is equal to:
				\[
	  			  \bigoplus_{0\leq i\leq N}\bigoplus_{|S|=i} \rI^N_*(Y_i,\gamma_S);
				\]
		\item[(ii)] there is a spectral sequence that converges to $\rI^N_*(Y_0,\gamma_{\emptyset})$ and
		its second page is equal to:
				\[
	  			  \bigoplus_{1\leq i\leq N}\bigoplus_{|S|=i} \rI^N_*(Y_i,\gamma_S).
				\]		
	\end{itemize}
\end{cor}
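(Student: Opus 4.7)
The plan is to combine the structural result of \autoref{surgery-polygon-thm} with the general machinery of \autoref{spectral-sequence}, which produces spectral sequences out of any exact $n$-gon. Once the exact $(N+1)$-gon is in hand, the two parts of the corollary are read off by choosing two different filtrations on an associated total complex: one that sees all $N+1$ vertices (giving convergence to the zero group by exactness) and one that omits the vertex labeled by the empty subset (giving convergence to the homology of that vertex).

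\textbf{Step 1.} Apply \autoref{surgery-polygon-thm} to $(\mathcal Y, w)$ under the standing admissibility hypothesis. This yields, for $N\leq 4$, an exact $(N+1)$-gon whose $i^{\text{th}}$ vertex is the chain complex
\[
  \bigoplus_{|S|=i} \fC^N_*(Y_i,\gamma_S), \qquad 0\leq i\leq N,
\]
together with structure maps between any two vertices satisfying the $A_\infty$-type relations that define an exact $(N+1)$-gon in \autoref{exact-polygon}.

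\textbf{Step 2.} Feed this exact $(N+1)$-gon into \autoref{spectral-sequence}. The underlying construction is the usual one: form a total complex by taking the direct sum of the $N+1$ chain complexes at the vertices and, as differential, add up all the higher maps of the polygon (graded by vertex index, with an appropriate grading shift). The exactness identities guarantee that this total object is in fact a chain complex; they also guarantee, for part (i), that its homology vanishes. Filtering the total complex by the vertex index gives a spectral sequence whose $E_1$-page is the direct sum of the vertex chain complexes, whose $E_2$-page is the corresponding direct sum of Floer homology groups
\[
  \bigoplus_{0\leq i\leq N}\bigoplus_{|S|=i} \rI^N_*(Y_i,\gamma_S),
\]
and which converges to the homology of the total complex, namely the trivial group. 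This establishes (i).

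\textbf{Step 3.} For part (ii), consider instead the subcomplex of the total complex obtained by discarding the $i=0$ summand (equivalently, truncating the filtration). By exactness of the $(N+1)$-gon, this subcomplex is quasi-isomorphic to $\fC^N_*(Y_0,\gamma_\emptyset)$ up to a degree shift; in fact it is precisely the iterated cone that, by the $(N+1)$-gon relations, resolves $\fC^N_*(Y_0,\gamma_\emptyset)$. The induced filtration produces a spectral sequence whose $E_2$-page is
\[
  \bigoplus_{1\leq i\leq N}\bigoplus_{|S|=i} \rI^N_*(Y_i,\gamma_S),
\]
converging to $\rI^N_*(Y_0,\gamma_\emptyset)$.

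The only nontrivial input here is \autoref{spectral-sequence}, which is a formal statement about exact $n$-gons and is independent of gauge theory; everything else is just the translation of \autoref{surgery-polygon-thm} into a format that can be fed into it. So there is no real obstacle at the level of the corollary itself; the substantive work lies in \autoref{surgery-polygon-thm} and in setting up \autoref{exact-polygon} and \autoref{spectral-sequence} so that the filtration arguments above are literally correct, in particular so that the $E_2$-page is identified with Floer homology rather than with the cohomology of some auxiliary complex.
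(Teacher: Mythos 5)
Your proposal is correct and follows exactly the route the paper takes: the corollary is obtained by applying Proposition \ref{spectral-sequence} (itself proved via the vertex-index filtration on the acyclic total complex $(C,D)$ of Proposition \ref{Ngon}, and on the truncated complex $(C_0',D_0')$ homotopy equivalent to $(C_0,d_0)$) to the exact $(N+1)$-gon furnished by Theorem \ref{surgery-polygon-thm}. No gaps.
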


Theorem \ref{surgery-polygon-thm} also has a consequence at the level of Euler characteristics. The instanton Floer homology group $\rI_*^N(Y,\gamma)$ has a canonical $\Z/2\Z$-grading. (See Subsection \ref{inst-floer}.) In particular, we can use this grading to define Euler characteristic $\lambda_N(Y,\gamma)$ for an $N$-admissible pair $(Y,\gamma)$.
\begin{cor}\label{Euler-char}
	Suppose $N$, $\mathcal Y$ and $w$ are given as in Theorem \ref{surgery-polygon-thm}.
	Let $\nu$ denote a generator of the kernel of the map $H_1(\partial \mathcal Y) \to H_1(\mathcal Y)$. Then we have:
	\begin{equation} \label{Euler-char-equ}
	  \sum_{i=0}^N(-1)^{i+\epsilon_i} \sum_{|S|=i} \lambda_N(Y_i,\gamma_S)=0.
	\end{equation}
	If $N$ is odd $\epsilon_i=0$, and if $N$ is even, $\epsilon_i=\sum_{j=1}^i\epsilon_j'$ where:
	\begin{equation}\label{epsilon-i}
		\epsilon_j'=\left \{
		\begin{array}{ll}
			0&\text{if }\sign((\lambda+j\mu)\cdot \nu)=\sign((\lambda+(j-1)\mu)\cdot \nu);\\
			1&\text{if }\sign((\lambda+j\mu)\cdot \nu)=-\sign((\lambda+(j-1)\mu)\cdot \nu);\\
			0&(\lambda+(j-1)\mu)\cdot \nu=0;\\
			1&(\lambda+j\mu)\cdot \nu=0.\\
		\end{array}
		\right.
	\end{equation}
	In particular, if $\nu=\lambda$ or $\mu$, then all constants $\epsilon_i$ are zero.
\end{cor}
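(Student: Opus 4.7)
The plan is to deduce \eqref{Euler-char-equ} from the exact $(N+1)$-gon furnished by Theorem \ref{surgery-polygon-thm}. An exact polygon, viewed as a twisted complex, assembles into a single total chain complex $\fC^{\rm tot}_*$; by part (i) of the preceding corollary the associated spectral sequence converges to zero, so $\fC^{\rm tot}_*$ is acyclic. Writing $\sigma_i \in \Z/2$ for the grading at which the $i$-th vertex of the polygon enters $\fC^{\rm tot}_*$, the vanishing of the $\Z/2$-graded Euler characteristic of $\fC^{\rm tot}_*$ yields
\[
\sum_{i=0}^N (-1)^{\sigma_i}\sum_{|S|=i}\lambda_N(Y_i,\gamma_S) = 0,
\]
and the corollary will follow once we show $\sigma_i \equiv i + \epsilon_i \pmod 2$.

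The shift $\sigma_i$ is obtained by accumulating the $\Z/2$-degrees of the structure maps of the polygon along a path from vertex $0$ to vertex $i$. Each such structure map is a cobordism map associated with the elementary surgery cobordism $W_j$ that replaces the framing $\lambda+(j-1)\mu$ by $\lambda+j\mu$, equipped with an appropriate extension of the relevant $\U(N)/Z(N)$-bundle. Its mod-$2$ degree is computed, via Atiyah-Singer, from $\chi(W_j)$, $\sigma(W_j)$ and a characteristic-number contribution of the extension. The topological part gives the universal $(-1)$ per edge, producing the $(-1)^i$ factor. For $N$ odd the characteristic-number contribution is identically even, so $\epsilon_i=0$ as claimed.

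The main obstacle is the even-$N$ case. Here the parity of the second Chern number of the extended bundle across the solid-torus handle in $W_j$ is governed by the algebraic intersection $(\lambda+j\mu)\cdot\nu$, since $\nu$ is the slope bounding in $\mathcal Y$ and therefore controls how the class $w+\sigma_S K_i$ extends across the new cap. A direct obstruction calculation, organized by the sign of $(\lambda+j\mu)\cdot\nu$ compared with $(\lambda+(j-1)\mu)\cdot\nu$ — including the degenerate configurations in which one of these intersections vanishes — shows that the parity flips exactly in the four situations enumerated in \eqref{epsilon-i}. Summing from $j=1$ to $j=i$ then gives $\epsilon_i = \sum_{j=1}^i \epsilon_j'$ as required.

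The final claim is an immediate check: when $\nu=\mu$ one has $(\lambda+j\mu)\cdot\nu=1$ for every $j$, and when $\nu=\lambda$ one has $(\lambda+j\mu)\cdot\nu=-j$, which is nonzero and of constant sign for $j\ge 1$, the value at $j=0$ being absorbed by the third clause of \eqref{epsilon-i}. In either case every $\epsilon_j'$ is zero and hence so is $\epsilon_i$.
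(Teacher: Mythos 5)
Your overall skeleton is the paper's: Theorem \ref{surgery-polygon-thm} gives an exact $(N+1)$-gon, Proposition \ref{Ngon} makes the total complex acyclic, and Corollary \ref{euler-char-exact-polygon} converts acyclicity into the vanishing of the alternating sum of Euler characteristics once the $\Z/2$-degrees of the structure maps are known; the paper then reads off those degrees from the second part of Theorem \ref{main}. (A small logical inversion: the total complex is acyclic by Proposition \ref{Ngon}, and that is \emph{why} the spectral sequence converges to zero, not the other way around.)

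The genuine gap is in your identification of the parity shift $\sigma_i\equiv i+\epsilon_i$, which is the entire content of the even-$N$ case. First, the ``$-1$ per edge'' does not come from the topological part of the index of the cobordism; it comes from the dimension of the parametrizing associahedron $\mathcal K_{k-j+1}$ (equivalently, from the shifts $C_j[j]$ in the total complex), as in \eqref{degree-Z/2-ele}. Second, and more seriously, the correction $\epsilon_j'$ has nothing to do with ``the parity of the second Chern number of the extended bundle'': the absolute $\Z/2$-grading is normalized precisely so that the degree of a cobordism map is $\iota(W)=\tfrac{N^2-1}{2}\bigl(\chi(W)+\sigma(W)+\Delta b_0+\Delta b_1\bigr)$, a quantity depending only on the homology of the cobordism and its ends, with all bundle/energy data cancelling. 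The case analysis in \eqref{epsilon-i} records $\iota(\overline W^{j-1}_{j})$ for the elementary $2$-handle cobordism: the sign of $(\lambda+j\mu)\cdot\nu$ versus $(\lambda+(j-1)\mu)\cdot\nu$ controls whether $\sigma(\overline W^{j-1}_{j})$ is $0$ or $\pm1$, and the degenerate clauses account for the jump of $b_1(Y_j)$ when $(\lambda+j\mu)\cdot\nu=0$ (the paper cites \cite[Subsection 42.3]{KM:monopoles-3-man} for exactly this computation). Your ``direct obstruction calculation'' is not carried out, and as formulated it targets a quantity ($c_2$ of the extension) that cannot affect the $\Z/2$-degree; so the key step $\sigma_i\equiv i+\epsilon_i$ remains unproved. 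The final verification for $\nu=\lambda$ or $\mu$ is fine.
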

\noindent
Hypothetically assume that the definition of $\SU(N)$-instanton Floer homology can be extended to integral homology spheres as finitely generated abelian group such that Theorem \ref{surgery-polygon-thm} continues to hold. Then we would be able to apply Corollary \ref{Euler-char} to knot complements in integral homology spheres and derive \eqref{order-N-surgery}.

\subsection{Outline of Contents}
To prove Theorem \ref{surgery-polygon-thm}, we need to construct a series of maps between various instanton Floer chain complexes associated to the 3-manifolds $Y_j$. These maps are defined with the aid of a family of $4$-manifolds $W^j_k$, which are constructed in Section \ref{topology}. We shall define a family of metrics on each $W^j_k$ that is parametrized by an {\it associahedron}. Section \ref{metrics} of the paper is devoted to the definition of this family of metrics. The main result of this relatively long section is summarized in Theorem \ref{Wmetrics}. An impatient reader may want to focus only on understanding the statement of Theorem \ref{Wmetrics} after glancing at the rest of Section \ref{metrics}.

Counting solutions of the {\it Anti-Self-Duality} equation with respect to the families of metrics on 4-manifolds $W^j_k$  gives rise to the definition of maps of the exact $(N+1)$-gon in Theorem \ref{surgery-polygon-thm}. A priori, these maps satisfy the identities required for an exact $(N+1)$-gon with some additional terms. The additional terms, which we need to show that they sum up to zero, can be divided into two parts. In order to obtain vanishing of the first series of maps, we have to make a careful choice of our families of metrics in Section \ref{metrics}. We achieve this goal using {\it Gibbons-Hawking} metrics on the {\it ALE spaces} of type $A_{N-1}$. The main part of the proof of Theorem \ref{surgery-polygon-thm} is to show that the additional terms of the second type give us the zero map. The essential step to carry out this task is given in Subsection \ref{comp-red}. In the remaining subsections of Section \ref{ASD-mod-spaces}, we review some general facts about the moduli space of ASD equation.

The proof of the main theorem is given in Section \ref{Floer-main}. Section \ref{perturbations} is devoted to some technical results about regularity of the moduli spaces involved in our construction. Most steps in the proof of Theorem \ref{surgery-polygon-thm} can be replicated to address Conjecture \ref{surgery-polygon-conj}. Thus we work with arbitrary $N$ for the most of the paper. Only in Subsection \ref{XNl-reg}, we have to limit ourselves to the case that $N\leq 4$. Reproducing the result of this subsection for arbitrary $N$ seems to require new gluing techniques for the moduli spaces of ASD connections, and we hope to address that issue elsewhere. The statement of our main theorem is motivated by the role of physicists' Coulomb branch in $\SU(N)$-instanton Floer homology. We also hope to discuss this circle of ideas in a separate paper.

\subsection{Conventions}

We use the following orientation conventions throughout the paper. Let  $M$ be a manifold with boundary. Unless otherwise specified, the boundary of $M$ is oriented with the outward-normal-first convention. If $L$ is a $\U(1)$-bundle over an oriented manifold, we use the fiber-first convention to orient the total space $L$. In this paper, $L(k,1)$, as an oriented 3-manifold, is identified with the $\U(1)$-bundle of Euler number $k$ over the 2-dimensional sphere. If $M$ and $N$ are two oriented manifolds, then $M\times N$ is oriented by the first-factor-first convention. Unless otherwise is specified, all manifolds in the present article are oriented. We will also write $-M$ for the manifold $M$ with the reverse orientation.

In various places in the paper, we need bump functions to glue differential forms, metrics, etc. on different manifolds. Thus we fix two smooth functions:
\begin{equation}\label{phi-1-2}
	\varphi_1, \varphi_2:[0,1] \to [0,1]
\end{equation}
such that $\varphi_1+\varphi_2=1$, $\varphi_1(x)=1$ on $[0,\frac{1}{3}]$ and $\varphi_2(x)=1$ on $[\frac{2}{3},1]$. These functions will be used in several places in the paper.

We will write $[n]$ for the set of integers $\{0,1,2,\dots,n-1\}$. In general, if we write $\{x_1,x_2, \dots,x_n\}$ for a finite subset of $\R$, then we assume that $x_1<x_2<\dots<x_n$.

In what follows, all chain complexes, $A_{\infty}$-categories, etc. are defined over $\Z/2\Z$. In particular, Theorem \ref{surgery-polygon-thm} is proved only for instanton Floer homology with coefficients in $\Z/2\Z$. We hope this allows the main geometrical ideas of the paper to stand out. Although we have not checked the details, we believe studying orientations of the moduli spaces involved in the proof of Theorem \ref{surgery-polygon-thm} would give rise to an extension of Theorem for arbitrary coefficient ring. 

{\it Acknowledgements.}
We thank Paul Kirk, Tomasz Mrowka and Paul Seidel for helpful conversations. We are very grateful to the  {\it Simons Center for Geometry and Physics} for providing an opportunity to work on this project. We especially would like to thank Simon Donaldson, Kenji Fukaya and John Morgan who organized the program {``\it Mathematics and Physics of Gauge Fields''}.

\section{Homological Algebra of the Surgery Polygon} \label{exact-polygon}
In this section, we define exact $n$-gons and exact $n$-cubes. It is shown in Subsection \ref{n-cube} that any exact $n$-cube induces an exact $(n+1)$-gon. We shall obtain the exact $(N+1)$-gon of Theorem \ref{surgery-polygon-thm} by constructing an exact $N$-cube and then applying this algebraic construction.
\subsection{Exact $n$-gons}
A slight variation of the following proposition, commonly called {\it triangle detection lemma}, appears in Oszv\'ath and Szab\'o's pioneering work \cite{OS:HF-br}. The original idea seems to go back to Seidel and Kontsevich.

\begin{prop}\label{3-detec}
	For each $i \in \Z/3\Z$, let $(C_i,d_i)$ be a chain complex, with homology $H_i$.
	Suppose that for all $i\in \Z/3\Z$ we are given maps
	\[
	  f_i:C_i \to C_{i+1} \hspace{.5cm} g_i: C_i \to C_{i+2} \hspace{.5cm} h_i: C_i \to C_{i+3},
	\]
	which satisfy the following properties:
	\begin{eqnarray*}
		d_i^2&=& 0 \label{identity-0}\\
		d_{i+1}  f_i + f_i  d_i  &=& 0 \label{identity-1}\\
		d_{i+2} g_i + f_{i+1}  f_i + g_i  d_i &=& 0 \label{identity-2} \\
		d_{i+3} h_i + f_{i+2}  g_i + g_{i+1}  f_i + h_i  d_i &=& 1. \label{identity-3}
	 \end{eqnarray*}
	Then the map
	\[
	  C_i \xrightarrow{(f_i,g_i)} \text{Cone}(f_{i+1}):=(C_{i+1}\oplus C_{i+2}[1],d_{i+1}+d_{i+2}+f_{i+1})  \\
	\]
	is a chain map which induces an isomorphism on homologies.
	Moreover, the following sequence of homology groups is exact:
	\[
	  \xymatrix{
	  H_i \ar[rr]^{(f_i)_*}& &
	  H_{i+1}\ar[dl]^{(f_{i+1})_*}\\
	  & H_{i+2} \ar[ul]^{(f_{i+2})_*} &
	  }
	\]
\end{prop}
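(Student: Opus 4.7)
The plan is to verify that $\phi_i := (f_i, g_i)$ is a chain map, construct a candidate chain-homotopy inverse, and then upgrade the resulting split-injection on homology to an isomorphism using the cyclic symmetry; the exact triangle then follows from the long exact sequence of the mapping cone $\mathrm{Cone}(f_{i+1})$.

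First I would unpack the cone differential $D(a,b) = (d_{i+1}a,\, f_{i+1}a + d_{i+2}b)$ on $C_{i+1} \oplus C_{i+2}[1]$ and verify $D \circ \phi_i = \phi_i \circ d_i$: the two component equations are exactly the second and third hypotheses of the proposition. Guided by the cyclic symmetry of the data, I would then introduce a candidate inverse
\[
\psi_i \colon \mathrm{Cone}(f_{i+1}) \to C_i, \qquad \psi_i(a,b) = g_{i+1}(a) + f_{i+2}(b),
\]
indices taken mod $3$. The same identities, applied at cyclically shifted indices, show $\psi_i$ is a chain map. The key calculation is
\[
\psi_i \circ \phi_i(x) \;=\; g_{i+1} f_i(x) + f_{i+2} g_i(x) \;=\; x + (d_i h_i + h_i d_i)(x),
\]
where the second equality is precisely the fourth hypothesis. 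Thus $h_i$ is an explicit chain homotopy between $\psi_i \circ \phi_i$ and $\mathrm{id}_{C_i}$, and $(\phi_i)_*$ is split-injective on homology.

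The main obstacle is upgrading this split-injection to an isomorphism (equivalently, showing that $\mathrm{Cone}(\phi_i)$ is acyclic). My approach here would be to rewrite $\mathrm{Cone}(\phi_i) = C_i \oplus C_{i+1}[1] \oplus C_{i+2}[2]$ in a more symmetric form: a direct comparison of differentials identifies it, up to an overall shift, with $\mathrm{Cone}\!\bigl(\psi_{i-1} \colon \mathrm{Cone}(f_i) \to C_{i+2}\bigr)$, the cone of $(x,a) \mapsto g_i(x) + f_{i+1}(a)$. By cyclic symmetry the previous paragraph applies to all three indices, so every $(\phi_j)_*$ is split-injective and every $(\psi_j)_*$ is split-surjective; moreover, the splitting $\psi_j \phi_j \simeq \mathrm{id}$ forces $\ker(\psi_{j*})_n \cong \coker(\phi_{j*})_n$ in each degree. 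Combining the long exact sequences of $\mathrm{Cone}(\phi_i)$ and $\mathrm{Cone}(\psi_{i-1})$ with this identification yields the periodicity relation $\dim \coker(\phi_{i*})_n = \dim \coker(\phi_{(i-1)*})_{n-1}$; iterating three times around the cyclic triple shows that $\dim \coker(\phi_{i*})_n$ is periodic in $n$ with period $3$. Since the instanton Floer chain complexes to which this lemma is applied are finitely generated, such a periodic cokernel must vanish, giving surjectivity and hence the desired isomorphism.

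Finally, with $(\phi_i)_*$ an isomorphism, the long exact sequence of $\mathrm{Cone}(f_{i+1})$ together with $H_*(C_i) \cong H_*(\mathrm{Cone}(f_{i+1}))$ gives
\[
\cdots \to H_n(C_{i+1}) \xrightarrow{(f_{i+1})_*} H_n(C_{i+2}) \to H_n(C_i) \to H_{n-1}(C_{i+1}) \to \cdots,
\]
and tracing through the projection of the cone identifies the connecting map with $(f_i)_*$, which is exactly the cyclic exact triangle displayed in the proposition.
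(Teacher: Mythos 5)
Your verification that $\phi_i=(f_i,g_i)$ is a chain map, your choice of $\psi_i(a,b)=g_{i+1}(a)+f_{i+2}(b)$, and the computation $\psi_i\phi_i=1+d_ih_i+h_id_i$ are all correct, and this is the same first half of the argument the paper runs for the general $n$-gon in Proposition \ref{Ngon} (where $F=\phi_i$ and $G=\psi_i$). The gap is in how you upgrade split-injectivity to an isomorphism. Your comparison of $\mathrm{Cone}(\phi_i)$ with $\mathrm{Cone}(\psi_{i-1})$ and the resulting periodicity of cokernels is consistent but not conclusive: you finish by asserting that a $3$-periodic sequence of cokernel dimensions must vanish ``since the instanton Floer chain complexes \dots are finitely generated.'' This imports hypotheses that are not in the proposition (the $C_i$ are arbitrary chain complexes, with no grading, boundedness, or finiteness assumed), and it also fails in the intended application: the Floer complexes carry only a $\Z/2\Z$ (or relative $\Z/4N$) grading, so ``periodic in $n$ with period $3$'' on a cyclic grading set forces nothing --- in the $\Z/2\Z$-graded case it only says the even and odd parts of the cokernel have equal dimension. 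So the surjectivity step does not go through as written.

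The standard way to finish --- and the one the paper uses for Proposition \ref{Ngon} --- is to compute $\phi_i\psi_i$ directly instead of comparing cones. Set $K(a,b)=(h_{i+1}a+g_{i+2}b,\ h_{i+2}b)$ on $\mathrm{Cone}(f_{i+1})$. Using the fourth hypothesis at indices $i+1$ and $i+2$ and the third hypothesis at index $i+2$, one checks that $\phi_i\psi_i+DK+KD$ is the map $(a,b)\mapsto(a,\ \ast a+b)$ with $\ast=g_ig_{i+1}+f_{i+1}h_{i+1}+h_{i+2}f_{i+1}$, i.e.\ a triangular unipotent chain automorphism of $\mathrm{Cone}(f_{i+1})$. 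Hence $(\phi_i\psi_i)_*$ is an isomorphism, which together with $\psi_i\phi_i\simeq 1$ makes both $(\phi_i)_*$ and $(\psi_i)_*$ isomorphisms; this is exactly the ``$DK+KD-\mathrm{Id}$ is nilpotent'' device in the proof of Proposition \ref{Ngon}. With that repaired, your identification of the maps in the long exact sequence of the cone with $(f_i)_*$ and $(f_{i+2})_*$ is fine and yields the exact triangle.
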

Proposition \ref{3-detec} motivates the following definition:
\begin{definition}
	An \emph{exact n-gon} consists of:
	\begin{itemize}
		\item[(i)] a chain complex $(C_j,d_j)$ for each $j\in \Z$ such that $(C_{j+n},d_{j+n})=(C_j,d_j)$;
		\item[(ii)] a homomorphism $f^j_k$ for each  pair $(j,k)$ of the elements of $\Z$ with $j< k \leq n+j$
		such that $f_{j+n}^{i+n}=f_j^i$.
	\end{itemize}	
	Moreover, the above homomorphisms are required to satisfy the following conditions:
	\begin{equation}\label{exact-n-gon}
	   \sum_{j<k<l}  f_l^k f_k^j =
	   \left\{
	   \begin{array}{lr}
	   	d_l f_l^j + f_l^j d_j  &  l - k < n \\
		d_l f_l^j + f_l^j d_j + 1 & l - k = n
	   \end{array}\right.
	 \end{equation}
\end{definition}
\begin{prop}\label{Ngon}
	Let $(C_j, f_k^j)$ be an exact $n$-gon, let $C = \bigoplus_{j=0}^{n-1} C_j$,
	and define an endomorphism of $C$ by:
	\[
	  D = \sum_{0\leq j \leq n-1} d_j + \sum_{0\leq j< k \leq n-1} f_k^j .
	\]
	Then $(C,D)$ is an acyclic complex.
	Moreover, if $C_i'=C_{i+1}\oplus\cdots \oplus C_{i+n-1}$ and the endomorphism $D_i'$ of $C_i'$ is defined by:
    	\[
	 D_i'=\sum_{i< j < n+i} d_j + \sum_{i< j<k<n+i} f_k^j,
	\]
	then $(C_i',D_i')$ is homotopy equivalent to $(C_i,d_i)$.
\end{prop}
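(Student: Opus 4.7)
The statement has two parts: $(C, D)$ is acyclic, and $(C_i, d_i)$ is chain-homotopy equivalent to $(C_i', D_i')$. My plan is to deduce both simultaneously via a mapping-cone presentation of $(C, D)$ combined with a contracting homotopy supplied by the ``wrap-around'' case ($l-j=n$) of \eqref{exact-n-gon}.

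First I would verify $D^2 = 0$. Expanding the square of $D = \sum_j d_j + \sum_{j<k} f_k^j$, the $(j,l)$-component with $0 \le j < l \le n-1$ equals $d_l f_l^j + f_l^j d_j + \sum_{j<k<l} f_l^k f_k^j$, which vanishes by the first case of \eqref{exact-n-gon} (the relevant span $l - j \le n - 1 < n$). The same expansion gives $(D_i')^2 = 0$. Setting $\phi_0 = \sum_{k=1}^{n-1} f_k^0 \colon C_0 \to C_0'$, the decomposition $C = C_0 \oplus C_0'$ puts $D$ into the block form $D = \bigl(\begin{smallmatrix} d_0 & 0 \\ \phi_0 & D_0' \end{smallmatrix}\bigr)$, so $(C, D) = \mathrm{Cone}(\phi_0)$; the chain-map property $D_0' \phi_0 + \phi_0 d_0 = 0$ is once more \eqref{exact-n-gon} specialized to $j = 0$. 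By cyclic relabeling, $(C, D) = \mathrm{Cone}(\phi_i)$ for every $i$.

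The core step is constructing a contracting homotopy $H$ on $(C, D)$. My natural attempt is to set $H|_{C_k \to C_l} = f_l^{k-n}$ (well defined by periodicity precisely when $l \le k$) on and below the diagonal, and $0$ above. On the diagonal, the collected sum $(DH + HD)|_{C_k}$ becomes $d_k h_k + h_k d_k + \sum_{k-n < m < k} f_k^m f_m^{k-n}$ with $h_k = f_k^{k-n}$; this is exactly the right-hand side of the $l-j = n$ case of \eqref{exact-n-gon} at $j = k-n$ and therefore equals $\mathrm{id}_{C_k}$. Below the diagonal, all terms telescope against the $l-j<n$ case of \eqref{exact-n-gon} with $j = k-n$ and cancel. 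The delicate region is above the diagonal ($k < l$), where the residual expression is a sum $\sum_{m=l-n}^{k} f_l^m f_m^{k-n}$ whose natural source would be a relation at $j = k-n$, $l'=l$ of span $l-k+n > n$, not present among the hypotheses. This is the main obstacle. My intention is either to augment $H$ by upper-triangular components involving the forward maps $f_l^k$, closing the required identities by combining several instances of the first case of \eqref{exact-n-gon}, or to carry out an induction on $n$: starting from an exact $n$-gon, replace two adjacent vertices by a mapping cone to extract an exact $(n-1)$-gon and use Proposition~\ref{3-detec} as the base case.

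Once acyclicity of $(C, D)$ is established, the long exact sequence of the mapping cone shows that $\phi_i$ is a quasi-isomorphism. Because the paper works throughout over $\mathbb Z / 2$, every chain complex of $\mathbb Z/2$-vector spaces splits (non-canonically) as its homology plus an acyclic, and hence contractible, complement, so any quasi-isomorphism is automatically a chain-homotopy equivalence. This yields the desired $(C_i, d_i) \simeq (C_i', D_i')$.
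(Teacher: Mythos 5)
Your computation of $DH+HD$ is correct as far as it goes (diagonal gives the identity via the $l-j=n$ relation, strictly-below-diagonal terms cancel via the $l-j<n$ relation over $\Z/2\Z$), but you stop at exactly the point where the argument is actually finished, and neither of your proposed workarounds is carried out. The missing observation — which is the paper's entire proof of acyclicity — is that you do not need $DH+HD=\mathrm{Id}$. Your homotopy $H$ is precisely the paper's $K=\sum_{0\leq j\leq k\leq n-1} f_{n+j}^k$, and the residual above-diagonal terms assemble into an endomorphism $N$ of $C$ that strictly increases the index $j$ of the summand $C_j$ ($0\leq j\leq n-1$), hence is nilpotent. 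Therefore $DH+HD=\mathrm{Id}+N$ is an automorphism of the complex $(C,D)$ (it is a chain map because $DH+HD$ always is), and it is null-homotopic via $H$; on homology it is simultaneously an isomorphism and zero, forcing $H_*(C,D)=0$. Trying to augment $H$ by forward maps so as to kill $N$ exactly, or to set up an induction on $n$, is unnecessary and considerably harder than this one-line nilpotence remark; as written, your proof of acyclicity is incomplete.

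For the second part, your route differs from the paper's: you present $(C,D)$ as $\mathrm{Cone}(\phi_i)$ with $\phi_i=\sum_k f_k^i\colon C_i\to C_i'$, deduce from acyclicity that $\phi_i$ is a quasi-isomorphism, and upgrade to a homotopy equivalence using that every quasi-isomorphism of complexes of $\Z/2\Z$-vector spaces is a homotopy equivalence. This is valid under the paper's standing convention of $\Z/2\Z$ coefficients, though it is less explicit (and less portable to general coefficients) than the paper's argument, which writes down the maps $F$, $G$ and explicit homotopies $f_n^0$ and $\sum_{1\leq j\leq k\leq n-1} f_{n+j}^k$ directly from the $n$-gon data. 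Once the nilpotence point above is inserted, your overall argument goes through.
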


\begin{proof}
	Since the homomorphisms $d_j$ are differential and the maps $f^j_k$ satisfy \eqref{exact-n-gon}, the endomorphism $D$ is a differential.
	Consider the endomorphism of $C$ given by
	\[ K = \sum_{0 \leq j \leq k\leq n-1} f_{n+j}^k. \]
	Identities in \eqref{exact-n-gon} imply that $DK + KD-{\rm Id}$ is a nilpotent endomorphism of $C$, hence $(C,D)$ is acyclic.
	
	For the second part, without loss of generality, we can assume that $i=0$.
	We can again use \eqref{exact-n-gon} to show that $D_0'$ is a differential and the maps $F:C_0 \to C'_0$ and $G:C'_0 \to C_0$, defined as below, are chain maps:
	\[
	  F=\left(
	  \begin{array}{c}
	 	f^0_1\\
		f^0_2\\
		\vdots\\
		f^0_{n-1}
	  \end{array}
	   \right)\hspace{1cm}
	  G=\left(
	  \begin{array}{cccc}
	 	f^1_n&f^2_n&\dots&f^{n-1}_n
	\end{array}
	  \right)
	\]
	The chain maps $F$ and $G$ determine a homotopy equivalence of $C_0$ and $C'_0$. The map $GF$ is homotopic to ${\rm Id}$ using the homotopy given by $f^0_n$.
	The map $FG$ is homotopic to an automorphism of $C'_0$ using the homotopy:
	\[ K = \sum_{1 \leq j \leq k\leq n-1} f_{n+j}^k. \]
\end{proof}

\begin{remark}
	Note that an exact $n$-gon need not give rise to an exact sequence on homology!
	For instance, we could consider the $4$-gon
	\[ C_0=C_2=\Z/2\Z \hspace{1cm} C_1=C_3=0 \]
	where all the differentials are equal to zero. We also define $f^{j+2}_j$ to be $1$ and all the remaining maps
	to be zero.
\end{remark}
\begin{cor}\label{spectral-sequence}
	Let $(C_j, f_k^j)$ be an exact $n$-gon. Then:
	\begin{itemize}
	\item[(i)] there is a spectral sequence that converges to the trivial vector space and its second page is equal to:
       \[
         \bigoplus_{0\leq j \leq n-1} H_*(C_j).
       \]
       The differential on this page is given by:
       \[
         \sum_{0\leq j \leq n-2}(f_{j+1}^j)_*;
       \]
  	\item[(ii)] there is a spectral sequence that converges to $H_*(C_i)$ and its second page is equal to:
	       \[
         \bigoplus_{i< j<n+i} H_*(C_j).
       \]
       The differential on this page is given by:
       \[
         \sum_{i< j<n+i-1}(f_{j+1}^j)_*.
       \]
	\end{itemize}
\end{cor}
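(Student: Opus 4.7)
The plan is to read off both spectral sequences from the standard spectral sequence of a bounded filtered chain complex, applied to the acyclic complex $(C,D)$ and the complex $(C'_i,D'_i)$ produced by Proposition \ref{Ngon}.

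For part (i), I would equip the acyclic complex $(C,D)=\bigl(\bigoplus_{j=0}^{n-1} C_j,\ \sum d_j+\sum_{j<k} f_k^j\bigr)$ with the decreasing filtration
\[
  F^pC = \bigoplus_{j\geq p} C_j.
\]
This filtration is preserved by $D$: each $d_j$ fixes $C_j$ and each $f_k^j$ with $k>j$ strictly increases the $j$-index, so $D(F^pC)\subseteq F^pC$. Since the filtration is bounded ($F^0C=C$ and $F^nC=0$), the associated spectral sequence converges strongly to $H_*(C,D)$, which is the zero group by Proposition \ref{Ngon}.

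Now I would identify the low pages. The $E_0$ page is $\bigoplus_p F^pC/F^{p+1}C \cong \bigoplus_p C_p$ with induced differential $\sum_p d_p$, since only the "diagonal" part of $D$ survives modulo $F^{p+1}C$. Taking homology gives a page whose $p$-th piece is $H_*(C_p)$. The next differential is computed by the usual zig-zag: lift a class in $H_*(C_p)$ to a cycle $c\in C_p$, apply $D$ to obtain $\sum_{k>p} f_k^p(c)$, and project to $F^{p+1}C/F^{p+2}C\cong C_{p+1}$; this projection picks out exactly $f_{p+1}^p(c)$. Hence the induced differential is $\sum_p (f_{p+1}^p)_*$, matching the statement (with the convention in the corollary that calls this the "second page"). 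This gives (i).

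For part (ii), I would run the identical argument on the complex $(C'_i,D'_i)$ from Proposition \ref{Ngon}, using the analogous filtration $F^p C'_i = \bigoplus_{j\geq p,\ i<j<n+i} C_j$. The spectral sequence converges to $H_*(C'_i,D'_i)$, and Proposition \ref{Ngon} identifies this with $H_*(C_i,d_i)$. The same zig-zag computation shows that the page $\bigoplus_{i<j<n+i} H_*(C_j)$ carries differential $\sum_{i<j<n+i-1}(f_{j+1}^j)_*$, as claimed.

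There is no genuine obstacle here; the content is entirely formal once Proposition \ref{Ngon} is in hand. The only care needed is (a) checking that the filtration is preserved by $D$, which follows because every off-diagonal $f_k^j$ strictly increases the filtration index, and (b) matching the pagination convention used in the statement with the standard one, which is what the zig-zag computation above accomplishes.
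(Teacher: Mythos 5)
Your proposal is correct and is essentially the paper's own proof: the paper also takes the filtration $F_p=\bigoplus_{j\geq p}C_j$ of $(C,D)$ (resp.\ of $(C_i',D_i')$) and invokes Proposition \ref{Ngon} to identify the abutment. You have simply spelled out the standard identification of the low pages and the zig-zag computation of the induced differential, which the paper leaves implicit.
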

\begin{proof}
	Take the filtration $F_0\supset F_1\supset \cdots \supset F_{n-1}$ of $(C,D)$ where $F_i=C_i\oplus\cdots \oplus C_{n-1}$. The spectral sequence associated to this filtration has
	the desired properties in the first part by Proposition \ref{Ngon}.
	For the second part, without loss of generality, we can assume that $k=0$. Then the filtration  $F_1\supset \cdots \supset F_{n-1}$ on $C_0'$ from Proposition \ref{Ngon}
	gives the desired spectral sequence.
\end{proof}

\begin{cor}\label{euler-char-exact-polygon}
	Let $(C_j, f_k^j)$ be an exact $n$-gon.
	Suppose that each $C_j$ admits a $\Z/2\Z$-grading
	such that $f_k^j$ has degree $k - j - 1$ whenever $0 \leq j < k \leq n-1$.
	Let $\chi(C_j)$ denote the Euler characteristic of $C_j$.  Then:
\[ \sum_{j=0}^{n-1} (-1)^{j} \chi(C_j) = 0 .\]
\end{cor}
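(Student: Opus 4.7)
The plan is to exploit the acyclicity of $(C,D)$ established in Proposition \ref{Ngon} and compute its Euler characteristic in two ways. The only subtlety is that $D$ is a sum of maps $d_j$ (degree $1$) and $f_k^j$ (degree $k-j-1$), which are \emph{not} all of the same $\Z/2\Z$-degree with respect to the naive grading on $C = \bigoplus_j C_j$. I would fix this by a standard grading shift.

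More precisely, I would define a new $\Z/2\Z$-grading $\widetilde{\gr}$ on $C$ by declaring, for a homogeneous element $x \in C_j$ of original degree $\deg(x)$, that
\[
   \widetilde{\gr}(x) = \deg(x) + j \pmod 2.
\]
With this shift, the differential $d_j \colon C_j \to C_j$ still has degree $1$, and the map $f_k^j \colon C_j \to C_k$ acquires new degree $(k - j - 1) + (k - j) \equiv 1 \pmod 2$. Hence $D$ is homogeneous of degree $1$ with respect to $\widetilde{\gr}$, so $(C, D, \widetilde{\gr})$ is a genuine $\Z/2\Z$-graded chain complex.

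Next I would invoke Proposition \ref{Ngon} to conclude that $(C, D)$ is acyclic, and therefore its Euler characteristic vanishes. Now computing the Euler characteristic of $C$ with respect to the shifted grading, the shift by $j$ on $C_j$ swaps even and odd parts whenever $j$ is odd, so
\[
   0 = \chi(C, \widetilde{\gr}) = \sum_{j=0}^{n-1} (-1)^{j}\, \chi(C_j),
\]
which is precisely the claim.

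There is no real obstacle here: the only thing one must verify is that the grading shift $\epsilon_j = j \pmod 2$ makes every $f_k^j$ homogeneous of the same parity as $d_j$, which is an immediate arithmetic check. This reduction to the Euler characteristic of an acyclic graded complex is the natural categorification of the alternating-sum identity.
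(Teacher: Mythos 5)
Your argument is correct and is exactly the paper's proof: the paper likewise regrades $C=\bigoplus_j C_j[j]$ so that $D$ becomes homogeneous of odd degree, and then deduces the identity from the acyclicity of $(C,D)$ established in Proposition \ref{Ngon}. The parity check that $f_k^j$ acquires degree $2(k-j)-1\equiv 1\pmod 2$ is the same verification the paper leaves implicit.
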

\begin{proof}
	The differential $D$ on $C = \bigoplus_{j=0}^{n-1} C_j[ j ]$ has odd degree.
	Since $(C,D)$ is acyclic,
	\[  \sum_{j=0}^{n-1} (-1)^j \chi(C_j)  = \chi(C) = 0. \]
\end{proof}

\subsection{Exact $n$-cubes}\label{n-cube}
Next, we describe a structure called an exact $n$-cube, from which an exact $(n+1)$-gon can be constructed.
We first define a directed graph $G_n$ with vertices labeled by the subsets of $[n]$ and edges:
\vspace{-5pt}
\begin{itemize}
  \item $\beta_{S,i}:S\to S\sqcup \{i\}$ whenever $i\notin S\subset [n]$.
  \item  $\delta : [n] \to \emptyset $ which is called the {\it connecting edge}.
\end{itemize}
A length $k$ (directed) \emph{path} in $G_n$ is defined as a sequence of $k$ consecutive edges. We only deal with paths with length at most $n+1$. These paths can be divided into two types depending on whether they contain the connecting edge or not. A path of the first type can be described by a pair $(S,\sigma)$ where $S\subset [n]$ and $\sigma :[k]\to [n]\backslash S$ is an injection. This path is the sequence of
edges $$S\to S\sqcup \{\sigma(0)\}\to S\sqcup \{\sigma(0), \sigma(1)\}\to \cdots \to S\sqcup \Im \sigma$$
A path of the second type can be described by a pair $(\sigma,\tau)$ where $\sigma:[k]\to [n]$ and $\tau:[l]\to [n]$ are injections with disjoint images. This path is the sequence of
edges:
\[
  S=[n]\backslash \Im \sigma \to S\sqcup \{\sigma(0)\}\to S\sqcup \{\sigma(0),\sigma(1)\}\to \cdots  \to [n] \xrightarrow{\delta} \emptyset \to \{\tau(0)\}
\to \cdots \to \Im\tau
\]

Suppose we are given a chain complex $(C_S,d_S)$, for each vertex $S$, and a homomorphism $f_q:C_S\to C_T$, for each path $q$ from a vertex $S$ to a vertex $T$. If $q$ is a path of the first type (respectively, second type) corresponding to the pair $(S,\sigma)$ (respectively, $(\sigma,\tau)$), then $f_q$ is also denoted by $f_{S,\sigma}$ (respectively, $f_{\sigma,\tau}$). For any two vertices $S$ and $T$, which are connected to each other by a path of length at most $n+1$, define:
$f^S_T :C_S\to C_T$ as follows:
\[
  f^S_T:=\sum_{q:S\to T} f_q.
\]
\begin{definition}\label{ncube}
	We call $(\{C_S\}, \{f_q\})$ an exact $n$-cube if the following equalities hold:
	\begin{align}
	   df^S_T+f^S_Td&=\sum_{ S\subsetneq R\subsetneq T} f^R_T f^S_R,
	   \hspace{1cm}~~~~~~~\text{if}~ S \subsetneq T;  	\label{equ1}\\
	   df^S_T+f^S_Td&=\sum_{\substack{S\subsetneq R \\ \text{or}~R\subsetneq T}}f^R_T f^S_R , 	
	   \hspace{1cm} ~~~~~~~~\text{if}~|T|\le |S|~\text{and}~ S\neq T; \label{equ2}\\
	  df^S_T+f^S_Td&=\text{1}+\sum_{\substack{ S\subsetneq R \\ \text{or}~R\subsetneq T}}f^R_T f^S_R , 		\hspace{1cm} ~\text{if}~S=T   \label{equ3}.
	\end{align}
\end{definition}

Any exact $n$-cube gives rise to an exact $(n+1)$-gon. Let $C_j:=\bigoplus_{|S|=j} C_S$ where $0\le j\le n$ and define $f^j_k : C_j\to C_k$ by:
\[ f_k^j := \sum_{\substack{|S| = j \\ |T| = k}} f^S_T \]
The the chain complexes $C_j$ and the maps $f_k^j:C_j \to C_k$ define an exact $(n+1)$-gon.

\subsection{$A_\infty$-categories and Exact Polygons}
We can frame the triangle detection lemma in a more conceptual way using the language of $A_{\infty}$ categories.  For $i\in \Z/3\Z$, suppose that $X_i$ are objects in a strictly unital $A_{\infty}$-category $\mathcal{A}$, and suppose we are given morphisms $\alpha_i: X_i \to X_{i+1}$, whose compositions satisfy the following conditions: \label{a3conditions}
\begin{eqnarray*} \mu_1(\alpha_i) &=& 0 \\
\mu_2(\alpha_{i+1},\alpha_i) &=& 0 \\
\mu_3(\alpha_{i+2},\alpha_{i+1},\alpha_i) &=& 1 \end{eqnarray*}
Then for any strictly unital $A_{\infty}$ functor $\mathcal{F}:\mathcal{A} \to \Ch$, we obtain complexes $C_i = \mathcal{F}(X_i)$ and morphisms between these complexes, as in the triangle detection lemma:
\begin{eqnarray*} f_i &=& \mathcal{F}^1(\alpha_i) \\
 g_i &=& \mathcal{F}^2(\alpha_{i+1},\alpha_i) \\
  h_i &=& \mathcal{F}^3(\alpha_{i+2},\alpha_{i+1},\alpha_i). \end{eqnarray*}
The identities stated in the triangle detection follow precisely from the identities required for $\mathcal{F}$ to be an $A_{\infty}$ functor (given our assumptions on compositions of the maps $\alpha_i$):
\begin{eqnarray*} \sum_{p+q - 1 = k }  \mathcal{F}^p(\alpha_1,\dots, \mu_q( \alpha_{i+1},\dots,\alpha_{i+q}),\dots,\alpha_{k})   \hspace{4 cm}\\
  \hspace{4 cm} = \sum_{i_1+\cdots + i_r = k} \mu_r(\mathcal{F}^{i_1}(\alpha_1,\dots,\alpha_{i_1}),\dots,\mathcal{F}^{i_r}(\alpha_{k-i_r+1},\dots,\alpha_k)) \end{eqnarray*}
Note that we only need these identities for $k \leq 3$.  In general, we will want to consider only the implications of $A_{\infty}$-relations involving compositions of low order.  To formalize this, we make the following definition.

\begin{definition}
	An $A_n$-category is a collection of objects and morphisms,
	with composition maps $\mu_k$ for all $k \leq n$, satisfying all of the $A_{\infty}$-relations which involve only these maps.
	We similarly define the notion of an $A_n$ functor between $A_n$ categories,
	by imposing only the relations which involve multiplications of order less than $n$.
\end{definition}

Denote by $\mathcal{P}_n$ the strictly unital $A_n$ category which is generated freely by a cycle of morphisms $\alpha_i:X_i \to X_{i+1}$, with $i \in \Z/n\Z$, modulo the relations
\[ \mu_{k}(\alpha_i,\dots,\alpha_{i+k-1} ) = \left \{  \begin{array}{l l} 0 & k < n \\ 1 & k =n \end{array}   \right. . \]
Note that this includes the relation $d \alpha_i = \mu_1(\alpha_i) = 0$.  An exact $n$-gon induces an $A_n$ functor $\mathcal{F}:\mathcal{P}_n \to \Ch$.

More generally, we can define an exact $n$-gon in an $A_\infty$ category $\mathcal A$ to be an $A_n$ functor from $\mathcal{P}_n$ to $\mathcal A$. For example, any $n$-periodic exact sequence in an abelian category determines an exact $n$-gon in the associated derived category. In general, if $\mathcal F:\mathcal P_n \to \mathcal A$ is an exact $n$-gon in $\mathcal A$ and $\mathcal F':\mathcal A \to \mathcal A'$ is an $A_\infty$-functor, then we can form the composition $\mathcal F' \circ \mathcal F$, which defines an exact $n$-gon in $\mathcal A'$. Given an object $y$ of an $A_\infty$-category $\mathcal A$, we can form the Yoneda functor ${\mathcal Hom}(Y,\cdot)$ which is an $A_\infty$-functor from $\mathcal A$ to ${\rm Ch}$. Thus we can use this Yoneda functor to construct an exact $n$-gon of chain complexes from an exact $n$-gon in an $\mathcal A$.

\section{Topology of the Surgery Polygon} \label{topology}
The main goal of this section is the definition of a family of cobordisms which play an important role in the proof of Theorem \ref{surgery-polygon-thm}.
\subsection{Dehn Surgeries and Cobordisms}\label{4-man}
Let $\mathcal Y$ be an oriented $3$-manifold with torus boundary.  Let $\lambda$ and $\mu$ be oriented simple closed curves on the boundary, satisfying $\lambda \cdot \mu = 1$.   Given any pair of coprime integers $(p,q)$, we can consider the Dehn filling of slope $p/q$,
\[ Y_{p/q} = \mathcal Y \cup_{f_{p/q}} H, \]
where $H = D^2 \times S^1$ is a standard genus $1$ handlebody and $f_{p/q}:\partial H \to \partial \mathcal Y$ is any diffeomorphism such that $f_{p/q}(\partial D^2 \times \mathrm{\{point\}})$ is homologous to $\mu_{p,q} = p \mu + q \lambda$.  The loop $\{0\}\times S^1$ inside $H$ induces a knot in $Y_{p/q}$ which will be denoted by $K_{p/q}$.

Given two Dehn fillings $Y_{p/q}$ and $Y_{r/s}$ such that $ps - qr = 1$, there is a natural cobordism
\begin{equation} \label{cob-Z}
	Z^{p/q}_{r/s}: Y_{p/q} \to Y_{r/s}.
\end{equation}	
To construct it, we first glue the manifolds $[-2,-1] \times H$ and $[1,2] \times H$ to $[-2,2] \times\mathcal Y$ using diffeomorphisms $\id \times f_{p/q}$ and $\id \times f_{r/s}$. The result of this gluing is a $4$-manifold with three boundary components.  By construction, the first two components are diffeomorphic to $-Y_{p/q}$ and $Y_{r/s}$, and the third component is a manifold $L$, obtained by Dehn fillings of slope $p/q$ and $r/s$ on the two ends of $[-1,1] \times T^2$.  Because the filling curves satisfy $\mu_{p,q} \cdot \mu_{r,s} = ps-qr = 1$, $L$ is diffeomorphic to $S^3$.  Attaching $B^4$, we obtain the desired cobordism $Z^{p/q}_{r/s}$.

The same construction can be also applied if $ps-qr \neq 1$, but in this case $L$ is no longer diffeomorphic to a sphere (hence cannot be filled with $B^4$).  Instead of a simple cobordism from $Y_{p/q}$ to $Y_{r/s}$, we have a 4-manifold with three boundary components $-Y_{p/q}$, $Y_{r/s}$, and a 3-manifold $M^{p/q}_{r/s}$. To describe the diffeomorphism type of $M^{p/q}_{r/s}$ explicitly, choose a curve $\lambda_{r,s}$ such that $\lambda_{r,s} \cdot \mu_{r,s} = 1$, and define integers $a = \mu_{r,s} \cdot \mu_{p,q}$ and $b = \lambda_{r,s} \cdot \mu_{p,q}$.  When $a \neq 0$,  the 3-manifold $M^{p/q}_{r/s}$ is diffeomorphic to the lens space $L(a,b)$, and when $a = 0$, it is diffeomorphic to $S^2 \times S^1$.

Motivated by the definition of $Z^{p/q}_{r/s}$, we introduce the notion of cobordisms with {\it middle ends}. We say $W$ is a cobordism from a 3-manifold $Y$ to $Y'$ with middle end $L$ if:
\[
  \partial W=-Y \sqcup Y' \sqcup L.
\]
We use the notation $W:Y \xrightarrow{L} Y'$ to specify the ends of $W$. Therefore, $Z^{p/q}_{r/s}$ is a cobordism from $Y_{p/q}$ to $Y_{r/s}$ with the middle end $M^{p/q}_{r/s}$. If we have the cobordisms:
\[
  W:Y \xrightarrow{L}Y'\hspace{1cm}W':Y' \xrightarrow{L'}Y''
\]
then we can compose these cobordisms to obtain $W\# W':Y \xrightarrow{L\sqcup L'}Y''$.

Our surgery exact polygon involves only the integer surgeries $Y_0,\dots,Y_N$, and the following infinite periodic sequence of cobordisms between them:

\[ \cdots \xrightarrow{\; \; M^N_0 \; \;} Y_0 \xrightarrow{\; \; M^0_1 \; \;} Y_1 \xrightarrow{\; \; M^1_2 \; \;} \cdots \xrightarrow{\; \; M^{N-1}_N \; \; } Y_N \xrightarrow{\; \; M^N_0 \; \;} Y_0 \xrightarrow{\; \; M^0_1 \; \;}\cdots  \]
where the arrow starting from $Y_j$ is given by the cobordism $Z^j_{j+1}$\footnote{Here we use the cyclic notation, and  when $j=N$, $j+1$ denotes $0$.}. Note that the middle boundary components of most of the cobordisms $Z^j_{j+1}$ are diffeomorphic to $S^3$ - the only nontrivial boundary component is $L(N,1)$, which occurs in the cobordism $Z^N_0$.  It will be useful for us to consider the composition of all these cobordisms in this sequence.  The resulting space, denoted by $\mathfrak W$, has an alternative construction which will be easier for us to use later on.

Let $D^2$ be a Euclidean disk of some fixed large radius (larger than $N+2$ will do), and let:
\begin{equation} \label{B}
	B:=\R \times D^2\setminus\{x_j= (j+ \frac{1}{2},0)\mid j\in \Z\}
\end{equation}
Inside $B$ we consider the arcs $\fd_j = (j-\frac{1}{2},j+ \frac{1}{2}) \times \{0\}$.  We orient these arcs using their increasing parametrization.  For any $j \in \Z$, we define $\overline{j} \in \{0,\dots,N\}$ to be the integer obtained by reducing $j$ mod $N+1$. Form a divisor $\fd $ as follows:
\begin{equation} \label{fd}
	\fd := - \sum_j \; \overline{j} \; \fd_j.
\end{equation}	
Recall that a co-oriented codimension $2$ submanifold $\fD$ in a Riemannian manifold $M$ can be used to construct a complex line bundle $\mathcal{L}(\fD)$ over $M$.  This can be done in a standard way, by choosing a tubular neighborhood $n(\fD)$, pulling back the normal bundle $\nu(\fD)$ to $n(\fD)$, trivializing the pull back of $\nu(\fD)$ on $n(\fD) \setminus \fD$ in the tautological way, and gluing the pull-back bundle on $n(\fD)$ to the trivial line bundle on $M\backslash \fD$ using this trivialization.  More generally, given a divisor $\fD = \sum_i m_i \fD_i$, where each $\fD_i$ is a manifold as above, we can define $\mathcal{L}(\fD) = \bigotimes_i \mathcal{L}(\fD_i)^{m_i}$.  Applying this general construction to the divisor $\fd$, we obtain a complex line bundle $\mathcal{L}$ over $B$. Let $\mathfrak X$ be the $\U(1)$-bundle associated to $\mathcal L$.

The 4-manifold $\fW$ is constructed by gluing $\fX$ to $\R \times \mathcal Y$. The line bundle $\mathcal{L}(\fd)$ admits a smooth section $s$, which vanishes along each $\fd_j$ (with multiplicity $\overline{j}$), and is nonzero everywhere else. Normalizing $s$, we obtain a trivialization $\overline{s}$ of $\fX$ on $B \setminus \fd$.  We can use this trivialization to identify $\left. \fX \right|_{\partial B}$ with $-S^1 \times \partial B =- \mu \times \lambda\times  \R$ where the ``base'' circle is denoted by $\lambda$, and the ``fiber'' circle is denoted by $\mu$. This gives us an orientation reversing identification of $\left. \fX \right|_{\partial B}$ with $\partial (\R\times Y)= \mu \times\lambda \times \R$. Gluing the boundaries of $\fX$ and $\R \times \mathcal Y$, using this identification, gives rise to the manifold $\fW$. Observe that $\fW$ comes with a canonical (smooth) map $f:\fW \to \R$, obtained by gluing the canonical projections of $\fX$ and $\R \times \mathcal Y$ onto $\R$. The 4-manifold $\fW$ also comes with a diffeomorphism $T:\fW \to \fW$ which translates by $N+1$ in the $\R$ direction. The fibers of $\fX$ over the interval $\fd_j$ determines a cylinder in $\fW$. We will write $\Sigma_j$ for these 2-dimensional submanifolds of $\fW$.

\begin{prop} \label{topology-W}
	For any $j \in \{0,\dots,N \}$, the pair $(f^{-1}(j), f^{-1}(j)\cap \Sigma_j)$ is diffeomorphic to $(Y_j, K_j)$.
\end{prop}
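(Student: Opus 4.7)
The plan is to compute $f^{-1}(j)$ directly from the construction. Since $\fW$ is built by gluing $\fX$ to $\R\times \mathcal Y$ along $\fX|_{\partial B}$ and $f$ restricts to the projection onto $\R$ on both pieces, the fiber decomposes as
\[
f^{-1}(j) \;=\; \fX|_{\{j\}\times D^2} \;\cup\; \bigl(\{j\}\times \mathcal Y\bigr),
\]
glued along the torus $\fX|_{\{j\}\times \partial D^2}$. The key observation is that $(j,0)$ is not one of the punctures $x_k=(k+\tfrac{1}{2},0)$, so the slice $\{j\}\times D^2$ lies entirely in $B$; because this slice is contractible, the restricted circle bundle $\fX|_{\{j\}\times D^2}$ is trivializable, hence a solid torus. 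The proof therefore reduces to computing the slope on $\{j\}\times \partial \mathcal Y$ to which the meridian of this solid torus is glued, together with identifying the curve $\Sigma_j\cap f^{-1}(j)$ with the core.

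For the slope, I would use that the section $s$ of $\mathcal L$ vanishes to order $\overline{j}=j$ at the unique point $\fd \cap (\{j\}\times D^2) = \{(j,0)\}$. Pick a local trivialization of $\mathcal L$ on a small disk around $(j,0)$ in which $s = z^{j}\cdot(\text{nowhere zero})$ for a local complex coordinate $z$; then the unit section $\overline{s}$ winds $j$ times in the fiber direction as one traverses a small loop around $(j,0)$, and the transition between the local trivialization and the $\overline{s}$-trivialization is multiplication by $e^{ij\theta}$ in the fiber. The constant section $1$ in the local trivialization extends to a smooth section over all of $\{j\}\times D^2$ whose graph provides a spanning disk for the solid torus; in the $\overline{s}$-coordinates $(\phi,\theta)$ on the boundary torus (fiber and base angles respectively), the boundary of this disk represents the class $(-j)\phi + \theta$. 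Under the identification $\fX|_{\partial B}\cong -\mu\times \lambda\times \R$, the coordinate $\phi$ pairs with $\mu$ (with the orientation flip encoded by the minus sign) and $\theta$ pairs with $\lambda$, so this class maps to $j\mu+\lambda$ on $\{j\}\times \partial \mathcal Y$. This is precisely the defining slope of the Dehn filling $Y_j$, proving $f^{-1}(j)\cong Y_j$.

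For $K_j$ the argument is immediate: by definition $\Sigma_j = \pi^{-1}(\fd_j)$ with $\pi:\fX\to B$ the bundle projection, so
\[
\Sigma_j \cap f^{-1}(j) \;=\; \pi^{-1}\bigl(\fd_j\cap (\{j\}\times D^2)\bigr) \;=\; \pi^{-1}(j,0),
\]
the single $\U(1)$-fiber over the center of $\{j\}\times D^2$. A deformation retraction of $\{j\}\times D^2$ onto $(j,0)$ lifts to a deformation retraction of the solid torus onto this fiber, so the fiber is a core of the solid torus, and under the identification above it is mapped to the core $K_j$ of the Dehn-filling handlebody.

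The main obstacle I anticipate is the sign bookkeeping in the second paragraph: the minus sign in the divisor $\fd = -\sum \overline{j}\,\fd_j$, the direction in which $\overline{s}$ winds, and the orientation-reversing factor $-$ in $-\mu\times \lambda\times \R$ all contribute signs that must be tracked consistently in order to produce the slope $j\mu+\lambda$ rather than $-j\mu+\lambda$ or $j\mu-\lambda$. Once these signs are reconciled, the remainder of the argument is a direct unwinding of the construction.
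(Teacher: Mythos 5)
Your proposal is correct and follows essentially the same route as the paper's proof: identify $f^{-1}(j)\cap\fX$ as a solid torus (the circle bundle over the contractible slice $\{j\}\times D^2$), use the winding of the normalized section $\overline{s}$ around $(j,0)$ to see that the meridian disk's boundary is glued along the slope $\lambda+j\mu$, and observe that the central fiber $\pi^{-1}(j,0)=\Sigma_j\cap f^{-1}(j)$ is the core. The only difference is cosmetic — the paper tracks where $\lambda$ and $\mu$ go and checks that $\lambda+j\mu$ bounds, while you compute the meridian's image directly — and your sign conventions, though set up oppositely to the paper's $\overline{s}(z)=(z/|z|)^{-j}$, are compensated by the orientation flip in identifying $\fX|_{\partial B}$ with $-\mu\times\lambda\times\R$ and land on the correct slope.
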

The 3-manifolds $f^{-1}(j)$ are called the {\it vertical cuts} of $\fW$. We can use the diffeomorphism $T$ to identify $f^{-1}(j)$ and $f^{-1}(k)$ when $\overline j=\overline k$. Therefore, this proposition determines the topology of all 3-manifolds $f^{-1}(j)$.

\begin{proof}
First observe that $f^{-1}(j) \cap \fX$ is a handlebody $H = D^2 \times S^1$, viewed as a circle bundle over $D^2$.  In this case the trivialization $\overline{s}$ is given (away from $0$) by:
\[ \overline{s}(z) = \left(\frac{z}{|z|}\right)^{-j} .\]
It follows that when we glue $\partial H$ to $\partial \mathcal Y$, we identify $\lambda$ with the curve $(e^{\bi\theta},e^{- \bi j\theta})$ and we identify $\mu$ with the curve $(1,e^{\bi\theta})$. Therefore, the curve $\lambda + j \mu$ is identified with $(e^{\bi\theta},1)$, which bounds a disk in $H$.  Hence, by definition, $f^{-1}(j)$ is diffeomorphic to $Y_{j}$.
It is also clear that $f^{-1}(j)\cap \Sigma_j$ is mapped to $K_j$ with respect to this diffeomorphism.
\end{proof}

We now define a second family of separating submanifolds in $\fW$, which are called {\it spherical cuts}. For every pair of integers $(j,k)$ satisfying $0 < k-j \leq 2N+2$, there exists a sphere $S^j_k \subset B$, which is centered at $(\frac{j+k}{2},0)$, the midpoint of $x_j$ and $x_{k-1}$, and its radius is equal to $r_{j,k} = \frac{k-j-1}{2}+\nu(k-j) $ with $\nu:\R_{\geq 0} \to (0,\frac{1}{2})$ being a fixed, strictly increasing function.
In particular, if $j_1 \leq j_2 \leq k_2 \leq k_1$ and $(j_1,k_1)\neq(j_2,k_2)$,
then $S^{j_1}_{k_1}$ does not intersect $S^{j_2}_{k_2}$. The sphere $S^j_k$ is one of the boundary components of the complement of the ball in $B$ which is centered at $(\frac{j+k}{2},0)$ and has radius $r_{j,k}$. Thus, we can use the orientation of $B$ to define an orientation on $S^j_k$. The spherical cut corresponding to $S^j_k$ is given by $M^j_k = \left.\fX\right|_{S^j_k}$.  It is a circle bundle over $S^j_k$ of degree
\[ d^j_k = \sum_{i} - \overline{i} (\fd_i \cdot S_k^j) = \overline{k} - \overline{j}. \]
In the special case $k=j+1$, this simplifies to
\[ d^j_{j+1} = \overline{j+1} - \overline{j}  = \left \{ \begin{array}{ll} 1 & \overline{j} \neq N  \\ -N & \overline{j} = N \end{array}\right. .\]
It follows that the manifolds $M^j_{j+1}$ are either spheres or diffeomorphic to $L(N,1)$.

Let $W^j_{j+1}$ denote the submanifold of $\fW$ bounded by $Y_j$,$Y_{j+1}$, and $M^j_{j+1}$.  This manifold is diffeomorphic to $Z^j_{j+1}$ in \eqref{cob-Z} before filling gluing the $4$-ball. More generally, for $j<k$, we define the compact manifold $W_k^j$ to be the composite cobordism:
\[
  W_k^j=W_{j+1}^j\# W_{j+2}^{j+1} \# \dots \# W_{k}^{k-1}.
\]
The interior of this manifold is diffeomorphic to $f^{-1}((j,k))$. The middle end of $W_k^j$ is a union of lens spaces. Note that this manifold, in general, is different from $Z^j_k$.

For each $0\leq l\leq N$, let $B_N(l)$ denote the submanifold of $B$ bounded by the spheres $S^l_{l+1}$, $S^{l+1}_{l+2}$, $\dots$, $S^{l+N}_{l+N+1}$ and $S^l_{l+N+1}$. Then the fibers of the $\U(1)$-bundle $\fX$ over $B_N(l)$ determine a 4-manifold, denoted by $X_N(l)$, with boundary components $ M^l_{l+1}$, $ M^{l+1}_{l+2}$, $\dots$, $ M^{l+N}_{l+N+1}$ and $M^l_{l+N+1}$. (See Figure \ref{XN(l)} for an example.) The non-trivial boundary components of $X_N(l)$ are $S^1 \times S^2$ and $L(N,1)$. Clearly, the manifolds $X_N(l)$, for different choices of $l$, are diffeomorphic to each other. In order to give an alternative description of $X_N(l)$, let $\sigma_1$ be the exceptional sphere in $\overline{{\bf CP}}^2$ and $\sigma_N$ be the connected sum of $N$ copies of $\sigma_1$ in the connected sum of $N$ copies of $\overline{{\bf CP}}^2$. We also fix a closed loop $\gamma$ and $N$ points in $\#_N \overline{{\bf CP}}^2$ such that these submanifolds and $\sigma_N$ are disjoint form each other. Then removing a regular neighborhood of the $N$ points, $\gamma$ and $\sigma_N$ gives rise to a 4-manifold which is diffeomorphic to $X_N(l)$.

For $1\leq i \leq N-l$, fix a properly embedded path in $B_N(l)$ which connects a point in the boundary of $S^{l+i-1}_{l+i}$ to  a point in the boundary of $S^{N}_{N+1}$. Then the fibers of $X_N(l)$ over this path gives rise to a cylinder, denoted by $e_i$. One boundary component of $e_i$ belongs to the lens space $L(N,1)\subset\partial X_N(l)$, and we orient $e_i$ such that the induced orientation on this boundary component matches with the orientations of the fibers of the $\U(1)$-bundle $\fX$. Similarly, we can define a cylinder $e_i$, for $N-l+1\leq i \leq N$, by fixing a path from the boundary of $S^{l+i}_{l+i+1}$ to $S^{N}_{N+1}$.  Since the restrictions of relative homology classes determined by $e_i$ to the boundary are torsion, the intersection number of $e_i$ and $e_j$ is a well-defined rational number. These intersection numbers are computed in Lemma \ref{int-num}. By Poincar\'e duality, the cylinders $e_i$ also determine cohomology classes on $X_N(l)$, which will be also denoted by the same notation. Similarly, pick a path from a point in the boundary of $S^l_{l+N+1}$ to a point in the boundary of $S^{N}_{N+1}$, and let $\fc$ be the cylinder given by the restriction of the $\U(1)$-bundle to this path. The cylinder $\fc$ has a boundary component in $L(N,1)$. We orient $\fc$ such that the induced orientation on this boundary component of $\fc$ disagrees with the orientation of the fibers of the $\U(1)$-bundle $\fX$. Note that restriction of the homology class of $\fc$ to the boundary of $X_N(l)$ is not torsion. The cohomology classes determined by $e_1$, $\dots$, $e_N$ and $\fc$ give a set of generators for $H^2(X_N(l),\Z)$. A generator for the relations among these cohomology classes is given in Lemma \ref{int-num}.

\begin{figure}
	\centering
	\begin{tikzpicture}
\draw[fill] (0,0) circle (3pt);
\draw[fill] (1,0) circle (3pt);
\draw[fill] (2,0) circle (3pt);\node[right] at (1.65,-0.6) {$M^6_7$};
\draw[fill] (3,0) circle (3pt);
\draw[fill] (4,0) circle (3pt);
\draw[thick] (5,0) circle (3pt); \node[right] at (4.65,-0.6) {$M^9_{10}$};
\draw[fill] (6,0) circle (3pt);
\draw[fill] (7,0) circle (3pt);
\draw[fill] (8,0) circle (3pt);
\draw[fill] (9,0) circle (3pt);\node[right] at (8.65,-0.6) {$M^{13}_{14}$};
\draw[thick] (4.5,0) ellipse (6cm and 3cm);
\draw [middlearrow={latex},thick,red]  (5,2.8pt)--  (5,3);  \node[right] at (5,1.3) {$\mathfrak{c}$};
\draw[middlearrow={latex},thick,blue]  (2,0.1) to [out=45,in=135] (4.9,0.05); \node[above] at (3.7,0.6) {$e_3$};

\draw[middlearrow={latex},thick,blue]  (8.9,0.05) to [out=135,in=45] (5.1,0.05); \node[above] at (7.5,0.75) {$e_9$};
	\end{tikzpicture}
	\caption{This schematic figure sketches $X_{9}(4)$: the small circles represent the 3-manifolds $M^j_{j+1}$
	for $4\leq j \leq 13$. In particular, the white circle represents the lens space $L(9,1)$. The outer ellipse
	sketches $S^1\times S^2$. The (co)homology classes $e_3$, $e_9$ and $\fc$ are also sketched in this figure.}
    	\label{XN(l)}
\end{figure}
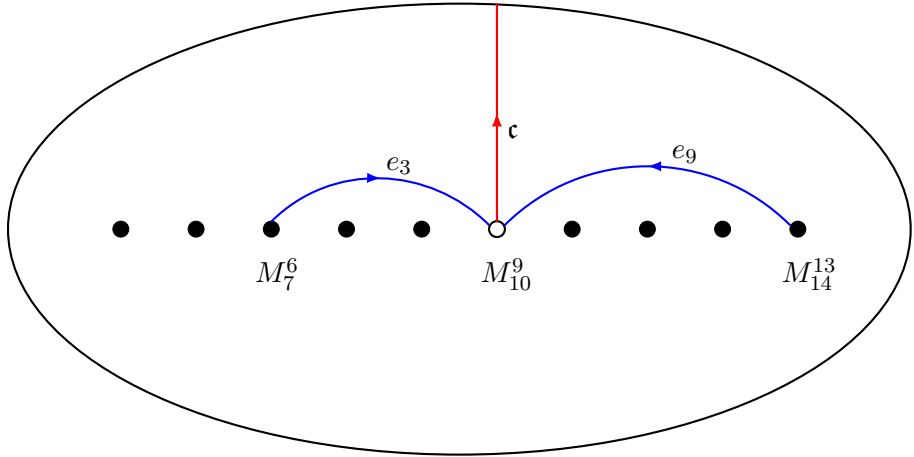

\begin{lemma} \label{int-num}
	 The manifold $X_N(l)$ is simply connected.
	 The intersection of the classes $e_i$ and $e_j$ is equal to $\frac{1}{N}-\delta_{i,j}$.
	  Moreover, the homology class $e_1+\dots+e_N$ is trivial.
\end{lemma}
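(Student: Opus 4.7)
The plan is to prove the three claims in order: simple connectedness, the relation $\sum_i e_i = 0$, and the intersection numbers. All three will be derived from the $\U(1)$-bundle description $\pi\co X_N(l)\to B_N(l)$, where $B_N(l)$ is the region in $\R^3$ bounded by the outer sphere $S^l_{l+N+1}$ and the $N+1$ inner spheres $S_k := S^{l+k}_{l+k+1}$ ($k = 0, \ldots, N$), with bundle degrees $d_k = \overline{l+k+1} - \overline{l+k}$ equal to $-N$ at $k = N-l$ and to $+1$ otherwise.

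For simple connectedness, I would use the fibration $S^1 \to X_N(l) \to B_N(l)$. Since $B_N(l)$ is a $3$-ball with $N+1$ open balls removed, it is simply connected and has $H^1 = 0$, so the homotopy long exact sequence shows $\pi_1(X_N(l))$ is a quotient of $\pi_1(S^1) = \Z$. The fiber is null-homotopic in $X_N(l)$ because for some $k \neq N-l$ (which exists since $N \geq 1$), the restriction of $\fX$ to $S_k$ has degree $+1$, making $M^{l+k}_{l+k+1}$ the Hopf bundle $S^3 \to S^2$, in which the fiber bounds a disk.

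For the relation, the Gysin sequence combined with $H^1(B_N(l)) = 0$ yields an isomorphism $H^2(X_N(l);\Z) \cong H^2(B_N(l);\Z)/\langle c_1 \rangle$. Letting $e'_k \in H^2(B_N(l))$ be Poincar\'e--Lefschetz dual to a pushed-in copy of $S_k$, one has $c_1 = \sum_k d_k e'_k$. Each class $e_i$ equals $\pi^*[\alpha_i]$, and the dual description of $\alpha_i$ (an arc from a point on $S_{j(i)}$ to a point on $S_{N-l}$, with $j(i) = i-1$ for $i \le N-l$ and $j(i) = i$ otherwise) identifies $[\alpha_i] = e'_{j(i)} - e'_{N-l}$. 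Since $j(\cdot)$ is an injection from $\{1,\dots,N\}$ onto $\{0,\dots,N\}\setminus\{N-l\}$, summing gives $\sum_{i=1}^N e_i = \sum_{k \neq N-l} e'_k - N e'_{N-l} = c_1$, which vanishes in the quotient.

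For the intersection numbers $e_i \cdot e_j = \tfrac{1}{N} - \delta_{ij}$, I would use that $\partial e_i$ is torsion in $H_1(\partial X_N(l))$, so the rational intersection is well-defined and equals the rational linking number of the boundary $1$-cycles in $\partial X_N(l)$. The boundary $\partial e_i$ consists of a fiber $F_i^L$ in the lens space $M^N_{N+1} \cong L(N,1)$ and, with sign dictated by the orientation convention on $e_i$, a fiber $F_i^S$ in an $S^3$ boundary component; injectivity of $j(\cdot)$ ensures that $F_i^S$ and $F_j^S$ lie in distinct $S^3$'s whenever $i \neq j$. The $L(N,1)$ contribution to $\mathrm{lk}(\partial e_i, \partial e_j)$ is always $\tfrac{1}{N}$, because $F_i^L, F_j^L$ each generate $H_1(L(N,1);\Z)=\Z/N$ and the linking form of $L(N,1)$ evaluates on generators to $\tfrac{1}{N} \pmod{\Z}$. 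The $S^3$ contribution vanishes for $i \neq j$ and, for $i = j$, reduces to the self-linking of parallel Hopf fibers, contributing $-1$ with the paper's conventions. Combining yields $e_i \cdot e_j = \tfrac{1}{N} - \delta_{ij}$.

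The main technical hurdle will be tracking signs in the boundary-linking formula, in particular matching the paper's orientation convention on $e_i$ (its $L(N,1)$-boundary agrees with the fiber orientation) against the outward-normal boundary convention on $\partial X_N(l)$, and checking that the Hopf self-linking enters with sign $-1$ rather than $+1$. I would verify the overall signs in the base case $N = 2$, $l = 0$, where $X_2(0)$ may be realized as $\overline{\mathbf{CP}}^2 \# \overline{\mathbf{CP}}^2$ with a $-2$-sphere, a loop, and two points removed, and the rational intersection form can be computed against the intersection form $-I_2$ of $\overline{\mathbf{CP}}^2 \# \overline{\mathbf{CP}}^2$.
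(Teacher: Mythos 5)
Your proposal is correct but takes a genuinely different route from the paper. The paper's proof is essentially a one-liner: it invokes the identification, stated just before the lemma, of $X_N(l)$ with the complement in $\#_N \overline{{\bf CP}}^2$ of neighborhoods of $N$ points, a loop and the sphere $\sigma_N = E_1+\dots+E_N$, extends $e_1,\dots,e_N$ to the exceptional generators $E_1,\dots,E_N$, and reads off all three claims from there (the intersection numbers come from the orthogonal-projection formula $E_i\cdot E_j - \tfrac{(E_i\cdot\sigma_N)(E_j\cdot\sigma_N)}{\sigma_N\cdot\sigma_N} = -\delta_{ij}+\tfrac1N$, and $e_1+\dots+e_N=\sigma_N$ dies in the complement). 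You instead work intrinsically with the circle bundle $\pi\co X_N(l)\to B_N(l)$: the homotopy exact sequence plus null-homotopy of the fiber inside a Hopf-bundle boundary component for $\pi_1$; the Gysin sequence and the identity $\sum_i[\alpha_i]=c_1$ for the relation; and boundary linking forms for the rational intersection numbers. All three steps are sound and your numbers agree with the paper's. What your approach buys is independence from the unproved assertion that $X_N(l)$ embeds in $\#_N\overline{{\bf CP}}^2$; what it costs is the sign bookkeeping in the linking-form step, which you correctly flag and propose to fix by checking $N=2$, $l=0$.

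Two minor corrections to how you set things up. First, ``$e'_k\in H^2(B_N(l))$ Poincar\'e--Lefschetz dual to a pushed-in copy of $S_k$'' is dimensionally off: in a compact oriented $3$-manifold with boundary the dual of a closed surface lives in $H^1(B,\partial B)$, not $H^2(B)$. What you actually use is the basis of $H^2(B)\cong\Hom(H_2(B),\Z)$ dual to the sphere classes $[S_0],\dots,[S_N]$ (equivalently, the Lefschetz duals of the arcs from each inner sphere to the outer boundary); with that reading, $c_1=\sum_k d_k e'_k$ and $[\alpha_i]=e'_{j(i)}-e'_{N-l}$ are correct and the argument goes through. Second, in the $i=j$ case both the $L(N,1)$ term and the $S^3$ term are self-linkings and therefore require a choice of pushoff; the consistent choice is a parallel fiber of the respective circle fibration, which is what produces the $\tfrac1N$ and the $-1$.
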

\begin{proof}
	As it is explained above, $X_N(l)$ can be regarded as a submanifold of $\#_N \overline{{\bf CP}}^2$.
	The cohomology classes $e_1$, $\dots$, $e_N$ can be extended to the set of standard generators
	of $H^2(\#_N \overline{{\bf CP}}^2;\Z)$. The claims of this lemma can be easily seen from this description of
	$e_1$, $\dots$, $e_N$.
\end{proof}

\subsection{Cycles in the Surgery Polygon}

While working with $\U(N)$-instanton Floer homology, in addition to a closed 3-manifold $Y$, we need to fix a $\U(N)$-bundle on $Y$. A $\U(N)$-bundle on $Y$ is determined up to an isomorphism by the cohomology class of its first Chern class. We specify this cohomology class by a {\it $1$-cycle} which represents the Poincar\'e dual of the first Chern class. A $1$-cycle $\gamma$ in $Y$ is a linear combination of oriented submanifolds of dimension $1$ with integer coefficients. As it is explained in the previous subsection, we can associate a line bundle $L$ to any 1-cycle $\gamma$. Then $L\oplus \underline{\C}^{N-1}$ is the $\U(N)$-bundle determined by $\gamma$.

A {$2$-cycle} $c$ in a 4-manifold $W$ is a linear combination of oriented submanifolds of dimension $2$ with integer coefficients. In the case that $W$ has boundary, we assume that each connected component of $c$ is properly embedded in $W$. Similar to the 3-dimensional class, $c$ determines a $\U(N)$-bundle on $W$ for any value of $N$. Suppose $(Y,\gamma)$, $(Y',\gamma')$ and $(L,l)$ are pairs of a 3-manifold and a 1-cycle. We say $(W,c)$ is a cobordism from $(Y,\gamma)$ to $(Y',\gamma')$ with the middle end $(L,l)$ if $W$ is a cobordism from $Y$ to $Y'$ with the middle end $L$, and $c$ is a $2$-cycle in $W$ such that:
\[
  \partial c=-\gamma'\sqcup \gamma \sqcup l.
\]
Then the boundary restriction of the $\U(N)$-bundle associated to $c$ is canonically isomorphic to the $\U(N)$-bundles associated to $\gamma$, $\gamma'$ and $l$. The advantage of cycles to cohomology classes for representing $\U(N)$-bundles is that a cycle determines a $\U(N)$-bundle up to a canonical isomorphism. This allows us to compose cobordisms of pairs without any ambiguity.


We introduce a family of pairs of manifolds and cycles which are related to surgery polygons. Let $\mathcal Y$ be the 3-manifold with torus boundary from the previous subsection. Let also $w\subset \mathcal Y$ be a 1-cycle that is away from the boundary of $\mathcal Y$. Suppose $S$ is a subset of $[N]$ with $j$ elements. Recall that $S$ represents a vertex of the graph $G_N$. To $S$, we associate a pair of a 3-manifold and a 1-cycle. The 3-manifold depends only on $j$ and is given by $Y_j$. The 1-cycle $\gamma_S\subset Y_j$ is given by $w\sqcup \sigma_S\cdot K_j$, where $\sigma_S$ denotes the sum of the elements of $S$. Next, suppose that $i\in [N]$ does not belong to the set $S$. Then we have the edge morphism $\beta_{S,i}:S \to S'$ where $S':= S\sqcup \{i\}$. We associate a cobordism of pairs as below to $\beta_{S,i}$:
\[
  (W^j_{j+1},c_{S,i}):(Y_j,\gamma_S) \xrightarrow{(M^{j}_{j+1},\,l_{S,i})} (Y_{j+1},\gamma_{S'}).
\]
Half of the cylinder $\Sigma_j$ (respectively, $\Sigma_{j+1}$) belongs to $W^j_{j+1}$ and it intersect $Y_j$ (respectively, $Y_{j+1}$) in the knot $K_j$ (respectively, $K_{j+1}$) by Proposition \ref{topology-W}. We define $c_{S,i}$ as:
\begin{equation}\label{cS}
  [j,j+1]\times w\sqcup\sigma_S \cdot (\Sigma_j\cap W^j_{j+1})\sqcup \sigma_{S'} \cdot (\Sigma_{j+1}\cap W^j_{j+1})
\end{equation}
The cycle $l_{S,i}$ in $M_{j}^{j+1}$ is a union of the fibers of the Hopf fibration. Since $M_{j}^{j+1}$ is a sphere, the homology class of this cycle is trivial.   There is also a cobordism of pairs associated to the edge $\delta$ of $G_N$ as follows:
\[
  (W^N_{N+1},c_\delta):(Y_n, \gamma_{[N]})\xrightarrow{(M^{N}_{N+1},\,l_{\delta})} (Y_0, \gamma_{\emptyset})
\]
The cycle $c_\delta$ is again given by \eqref{cS} where $S=[N]$, $S'=\emptyset$ and $j=N$. The cycle $l_\delta$ in the lens space $M^{N}_{N+1}$ is also a union of the fibers of its $\U(1)$-fibration. The homology class of this cycle is equal to $\frac{N(N-1)}{2}$ times a generator of $H_1(M^{N}_{N+1};\Z)$.

\section{Geometry of the Surgery Polygon} \label{metrics}

The main goal of this section is to construct families of metrics on the 4-manifolds which are constructed in the previous section.

\subsection{Families of Cylindrical Metrics}\label{family-general}

Suppose $W$ is a compact smooth 4-dimensional manifold. Let $W^+$ be the result of gluing $[-1,0)\times \partial W$ to $W$ by identifying $\{-1\}\times \partial W$ with the boundary of $W$.
We call each connected component of $(-1,0)\times \partial W\subset W^+$ an {\it end} of $W^+$. We also fix a metric $h$ on each connected component $Y$ of $\partial W$. Then $h$ determines the metric:
\begin{equation} \label{cylinder}
	\frac{1}{r^2}dr^2+h(y)
\end{equation}
on the end $(-1,0)\times Y$, which is called the {\it cylindrical} metric associated to $h$. If we define the coordinate $t=-\ln(|r|)$ on this end, then \eqref{cylinder} has the form $dt^2+h$, which justifies the terminology of cylindrical metrics. A metric $g$ on $W^+$ is cylindrical if its restrictions to the ends of $W^+$ have the form \eqref{cylinder}.

A metric $g$ on $W^+$ is {\it asymptotically cylindrical} if for each connected component $Y$ of $\partial W$ there is a quadratic form $h'$ on $(-1,0)\times Y$ such that:
\begin{equation} \label{cylinder-2}
	g=\frac{1}{r^2}dr^2+h(y)+h'(r,y).
\end{equation}
Here $h$ is the metric associated to $Y$ and $h'$ satisfies the following decay condition. There is a positive constant $\delta$ and for each integer $k$, there is a constant $C_k$ such that:
\[
  |\nabla^k h'(r,y)|\leq C_k|r|^\delta
\]
Here the covariant derivative $\nabla^k$ and the point-wise norm $|\cdot|$ are defined with respect to the metric $g$. Under this decay assumption, we say $g$ is asymptotic to $h$ on the cylindrical end associated to $Y$.

A {\it family of smooth cylindrical metrics} on a 4-manifold $W$ parametrized by a smooth manifold $K$ consists of a cylindrical metric $g(\bx)$ on $W$ for each $\bx\in K$ such that $g(\bx)$ depends smoothly on $\bx$ and the restriction of $g(\bx)$ for each end $(-1,0)\times Y$ of $W$ is given by \eqref{cylinder}. In particular, the restriction of $g(\bx)$ on the ends is independent of $\bx$. More generally, we can consider {\it smooth families of asymptotically cylindrical metrics} by allowing the metrics in the family to have the form \eqref{cylinder-2} on the ends. For our purposes, it is important to work with even more general families of metrics that can degenerate in a controlled way. In the present article, we limit ourselves to families of such metrics which are parametrized by {\it admissible polyhedra}.

An admissible polyhedron is a polyhedron $K$ such that each face $\overline F$ of $K$ with co-dimension $l$ has the form of the product $K_0\times \dots \times K_l$ with each $K_i$ being an admissible polyhedron of lower dimension. We reserve the notation $F$ for the interior of the face $\overline F$ which is by definition the set of points in $\overline F$ with a neighborhood diffeomorphic to an open ball in a Euclidean space. Therefore, $F$ is identified with $K_0^\circ\times \dots K_l^\circ$ where $K_i^\circ$ is the interior of the polyhedron $K_i$. If $\overline F_i'$ is a face of $K_i$, then $\overline F'=\overline F_1'\times \dots \times \overline F_l'$ is also a face of $K$ and the product structure on $\overline F'$ is induced by the product structures of the faces $F_i'$.

An admissible polyhedron is also required to have a parametrized regular neighborhood for each face. For any face $\overline F=K_0\times \dots \times K_l$ of a polyhedron $K$, a regular neighborhood $U_F$ and a diffeomorphism $\Psi_F:U_F \to \overline F\times (1,\infty]^l$ is fixed such that for $p \in \overline F$:
\[
  \Psi_F(p)=(p,\infty,\dots,\infty)
\]
If $\overline F_i'$ is a face of $K_i$ with the parametrized regular neighborhood $U_{F_i'}$, then the open neighborhood $U_{F'}$ of the face $\overline F'=\overline F_1'\times \dots \times \overline F_l'$ is given by:
\[
  U_{F'}=\Psi_{F}^{-1}(U_{F_0'}\times \dots \times U_{F_l'}\times (1,\infty]^l)
\]
and the map $\Psi_{F'}$ is equal to the composition of $\Psi_F:U_{F'} \to U_{F_0'}\times \dots \times U_{F_l'}\times (1,\infty]^l$
and the map $\Psi_{F'}^{F}$ defined on $U_{F_0'}\times \dots \times U_{F_l'}\times (1,\infty]^l$ as follows:
\[
  \Psi_{F'}^{F}=(\Psi_{F_0'},\dots, \Psi_{F_l'},{\rm Id}_{(1,\infty]^l}).
\]
In the following, we use the maps $\Psi_F$ to regard $F\times (0,\infty]^l$ as open subsets of $K$. The main example of admissible polyhedra for us is the {\it associahedron} whose definition is reviewed in the next subsection.

A family of cylindrical metrics on a compact 4-manifold $W$ parametrized by an admissible polyhedron $K$ is a manifold $\mathbb W$ together with a projection map $\pi:\mathbb W \to K$ such that each fiber of $\pi$ is a smooth 4-manifold and a Riemannian metric on each fiber is fixed. The space $\mathbb W$ as a manifold has the following form:
\begin{equation}\label{familyW}
  \mathbb W=W^+\times K\backslash \left ( \bigcup_{F} Y_F \times F\right)
\end{equation}
and $\pi$ is induced by the projection map from $W^+\times K$ to $K$. Here $Y_F$, for each $F$,  is a closed 3-dimensional submanifold $Y_F$ of $W^+$. We also assume that a collar neighborhood of $Y_F$ is identified with the set $(-2,2)\times Y_F$. The 3-manifolds $Y_F$ and the metrics on the fibers of $\pi$ are required to satisfy some additional constraints.

If the face $\overline F$ has the form $K_0\times \dots \times K_l$, then we require that $Y_F$ and $W^+\backslash Y_F$ have respectively $l$ and $l+1$ connected components. We denote the connected components of $Y_F$ with $Y_1$, $\dots$, $Y_l$. The connected components of $W\backslash (-1,1)\times Y_F$ are also denoted by $W_0$, $\dots$, $W_l$. Therefore, $W^+\backslash Y_F$ can be identified with the disjoint union of the cylindrical 4-manifolds $W_0^+$, $\dots$, $W_l^+$. Each connected component of $Y_F$ is called a {\it cut} of $W$ and is equipped with a metric. Let $\overline F_i'$ be a face of $K_i$ with codimension $l_i$ and $\overline F'= \overline F_0'\times \dots \times  \overline F_l'$ be the corresponding face of $K$. For each $i$, there is a closed 3-manifold $Y_{F_i'}\subset W_i^+$ with $l_i$ connected components such that $Y_{F'}=Y_F\sqcup Y_{F_0'}\sqcup \dots \sqcup  Y_{F_l'}$. The metrics on the cuts, that $Y_F$ and $Y_{F'}$ have in common, are required to be equal to each other.

For the open face $F$, the space $\pi^{-1}(F)$ can be identified with $(W_0^+\sqcup\dots \sqcup W_l^+)\times K_0^\circ\times \dots \times K_l^\circ$. For each $1\leq i \leq l$ and each $\bx_i \in K_i^\circ$, there is a metric $g(\bx_i)$ on $W_i^+$ varying smoothly with respect to $\bx_i$ such that the metric on the fiber of $\mathbb W$ over the point $(\bx_0,\dots,\bx_l)\in F$ is given by $(g(\bx_0), \dots, g(\bx_l))$. The restriction of the metric $g(\bx_i)$ to the cylindrical ends of $W_i^+$ needs to be given as in \eqref{cylinder} where $h$ is determined by the metrics on the boundary components of $W$ and the cuts. In particular, the restriction of $g(\bx_i)$ to the cylindrical ends is independent of $\bx_i$.

The metrics on the fibers of $\mathbb W$ over $F$ determines a family of metrics on the fibers of $\mathbb W$ over the regular neighborhood $F \times (1,\infty]^l$ of $F$. To define this family, fix a 1-parameter family of smooth functions $u_s:[-1,1]\to \R^{>0}$ for $s\in [1,\infty)$ such that:
\vspace{-5pt}
\begin{enumerate}
	\item[(i)] $u_s$ depends smoothly on $s$;
	\item[(ii)] $u_s(-r)=u_s(r)$;
	\item[(iii)] $u_s(r)=\frac{1}{|r|}$ for $t\in [-1,-\frac{2}{3}]\cup[\frac{2}{3},1]$;
	\item[(iv)] $\int_{-1}^1 u_s(r)dt = s $;
	\item[(v)] The functions $u_s$ on compact subsets of $[-1,0)\cup(0, 1]$ converges uniformly in all
	derivatives to the function $u_{\infty}(r) = \frac{1}{|r|}$, as $s \to \infty$.
\end{enumerate}
\vspace{-5pt}
Now, let $p=(\bx_0,\dots,\bx_l,s_1,\dots,s_l)$ be an element of $F \times (0,\infty]^l$. If $s_i\in (0,\infty)$, then the regular neighborhood $(-1,1)\times Y_i$ is a subset of the 4-manifold $\pi^{-1}(p)$. We call this neighborhood the {\it neck} associated  to $Y_i$. On this neighborhood, we fix the metric $u_{s_i}(r)^2dr^2+h_i$ where $h_i$ is the metric on $Y_i$. We can also consider the cylindrical coordinate $t$:
\begin{equation} \label{cylindrical-coordinate}
  t(r):=\int_{0}^r u_{s_i}(\rho)d\rho
\end{equation}
Note that $t\in (-\frac{s_i}{2},\frac{s_i}{2})$ for $r \in (-1,1)$. Then the chosen metric on $(-1,1)\times Y_i$ has the form $dt^2+h_i$. If $s_i=\infty$, then $((-1,0)\cup(0, 1))\times Y_i$ is a subset of $\pi^{-1}(p)$ and we fix the metric $\frac{1}{r^2}dr^2+h_i$ on this 4-manifold. These metrics can be extended by $g(\bx_0)$, $\dots$, $g(\bx_l)$ to a metric on $\pi^{-1}(p)$. As a final requirement on the family of metrics $\mathbb W$, we demand that the metrics on the fibers of $\mathbb W$ over the regular neighborhood $F \times (1,\infty]^l$ agrees with the above family of metrics.

More generally, we can consider families of asymptotically cylindrical metrics. Similar to the cylindrical case, a family of asymptotically cylindrical metrics on $W$ parametrized by an admissible polyhedron $K$ consists of a manifold $\mathbb W$ as in \eqref{familyW}, a projection map $\pi$ and a metric on each fiber of the map $\pi$. Let $F=K_0^\circ\times \dots \times K_l^\circ$ be an open face as above such that $W^+\backslash Y_F=W_0^+\sqcup\dots \sqcup W_l^+$. For each $\bx_i\in K_i$, there is an asymptotically cylindrical metric $g(\bx_i)$ on $W_i^+$ such that $g(\bx_i)$ on each end of $W_i^+$ is asymptotic to the metric which is fixed on the corresponding boundary component of $W_i$. The metric $g(\bx_i)$ is also required to vary smoothly with respect to $\bx_i \in K_i^\circ$. The metric on the fiber of $\mathbb W$ over the point $(\bx_0,\dots,\bx_l)$ is given by $(g(\bx_0),\dots,g(\bx_l))$.

The above metrics on $F$ can be used to construct a family of metrics on the fibers of $\mathbb W$ over $F\times (0,\infty]^l$. To define the metric associated to $p=(\bx_0,\dots,\bx_l,s_1,\dots,s_l)\in F\times (0,\infty]^l$, we firstly focus on the neck associated to the connected component $Y_i$ of $Y_F$. Suppose $W_0^+$ (respectively, $W_1^+$) is the connected component of $W^+\backslash Y_F$ that has $Y_i$ (respectively, $-Y_i$) as a connected component of its boundary. If $s_i\in(1,\infty)$, then the regular neighborhood $(-1,1)\times Y_i$ is a subset of $\pi^{-1}(p)$. The restrictions of the metrics $g(\bx_0)$ and $g(\bx_1)$ to the ends associated to $Y_i$ have the following form:
\begin{equation*}
	g(\bx_0): \frac{1}{r^2}dr^2+h_i(y)+h'(r,y) \hspace{1cm}g(\bx_1): \frac{1}{r^2}dr^2+h_i(y)+h''(r,y).
\end{equation*}
for appropriate choices of quadratic forms $h'$ and $h''$. We can use these metrics to define the following metric on $(-1,1)\times Y_i$ for the parameter $s_i$:
\begin{equation} \label{glued-metric-1}
	dt^2+h_i(y)+
	\varphi_1(\frac{t+s_i}{2s_i})h'(-e^{-\frac{s_i}{2}-t},y)+
	\varphi_2(\frac{t+s_i}{2s_i})h''(-e^{-\frac{s_i}{2}+t},y)
\end{equation}
where $t\in (-\frac{s_i}{2},\frac{s_i}{2})$ is defined in \eqref{cylindrical-coordinate} and $\varphi_1$, $\varphi_2$ are given in \eqref{phi-1-2}. In the case that $s_i=\infty$, then $((-1,0)\cup(0, 1))\times Y_i$ is a subset of $\pi^{-1}(p)$ and we fix the following metric on this space:
\begin{equation}\label{glued-metric-2}
	\left\{
		\begin{array}{cc}
			\frac{1}{r^2}dr^2+h_i(y)+h'(r,y)&r\in (-1,0)\\
			\frac{1}{r^2}dr^2+h_i(y)+h''(-r,y)&r\in (0,1)\\	
		\end{array}
	\right.
\end{equation}
The metrics in \eqref{glued-metric-1} and \eqref{glued-metric-2} are equal to $g(\bx_0)$ (respectively, $g(\bx_0)$) in a neighborhood of $\{-1\}\times Y_i$ (respectively, $\{1\}\times Y_i$). Therefore, we can extend these metrics on the necks to a metric $\widetilde g$ on $\pi^{-1}(p)$ using $g(\bx_0)$, $\dots$, $g(\bx_l)$. For a family of asymptotically cylindrical metrics the metrics $g$ and $\widetilde g$ are required to have similar asymptotic behavior on $F\times (0,\infty]^l$.

To state the precise version of the property about asymptotic behavior of $g$ and $\widetilde g$, suppose $\lambda$ is the function on $\pi^{-1}(p)$ which is equal to $-\ln(|r|)$ on an end $(-1,0)\times Y$ where $Y$ is a connected component of $\partial W$, is equal to $\frac{s_i}{2}-|t|$ on the neck $(-1,1)\times Y_i$ where $t$ is defined as in \eqref{cylindrical-coordinate}, and is equal to $0$ on the remaining part of $\pi^{-1}(p)$. Then for a family of asymptotically cylindrical metrics there is a positive constant $\delta$, and for any integer $k$, there is a constant $C_k$ such that for $F$ and for $(\bx_0,\dots,\bx_l,s_1,\dots,s_l)\in F\times (1,\infty]^l$ the following inequality holds
\[
  |\nabla^k(g-\widetilde g)|\leq C_k e^{-\delta (\lambda+\tau)}.
\]
Here $\tau$ is equal to the minimum of the parameters $s_1$, $\dots$, $s_l$.

\subsection{Associahedron}

In this subsection, we review the definition of a family of polytopes which are known as {\it associahedra}. The families of metrics that we use in this paper are all parametrized by associahedra. These polytopes, discovered by Stasheff \cite{St:assoc}, also play an important role in the definition of the $A_\infty$-structure of Fukaya categories in symplectic geometry \cite{FOOO1,FOOO2}. In the first part of this subsection, we review the definition of a few combinatorial notions necessary for the definition of an associahedron. Then we define associahedra as the moduli spaces of points in the 3-dimensional Euclidean space.

\subsubsection*{Ribbon Trees}

\begin{definition}
	Let $\Gamma$ be a graph with vertex set $V(\Gamma)$ and edge set $E(\Gamma)$.
	A \emph{ribbon structure} on $\Gamma$ consists of a choice, for every vertex $v \in V(\Gamma)$,
	of a cyclic ordering of the set of all edges incident to $v$.
\end{definition}	
\begin{example} \label{ribbon-planar}
	Any planar graph has a preferred ribbon structure,
	in which edges incident to a common vertex are ordered in the counterclockwise sense.
\end{example}
\begin{definition}	
	A graph $T$ is a {\it tree}, if it is contractible.
	We say $T$ is {\it marked}, if one of the vertices of degree $1$ is marked as the {\it root} of $T$.
	If $T$ has no vertices of degree $2$, we say it is \emph{reduced}.
	In a reduced tree, any edge incident to a vertex with degree $1$ is called a \emph{leaf},
	and any other edge is called an \emph{interior edge}. An edge of $T$ is called a {\it non-root leaf}, if
	it is a leaf which is not incident to the root. An {\it $n$-ribbon tree} is a reduced
	and marked ribbon tree with $n+1$ leaves.
\end{definition}
Let $T$ be an $n$-ribbon tree. We shall need the following notation about $T$:
\begin{itemize}
	\item[(i)] The set of vertices with degree at least 3 are called the {\it interior vertices} of $T$
		      and are denoted by
		      $V^{{\rm Int}}(T)$. If $v\in V^{{\rm Int}}(T)$, then the integer number
		      $d(v)$ is defined such that the degree of $v$ is equal to $d(v)+1$.
	\item[(ii)] For $e\in E(T)$, let the endpoints of $e$ be labeled with $s(e)$ and $t(e)$ such that
		       the unique path from $s(e)$ to the root does not contain the edge $e$.
		       Then the unique path from $t(e)$ to the root starts with the edge $e$.
		       The vertices $s(e)$ and $t(e)$ are respectively called the
		       {\it source} and the {\it target} of $e$. We say $e$ is an {\it outgoing} edge of $s(e)$
		       and the {\it incoming} edge of $t(e)$. The set of all interior edges are denoted by $E^{{\rm Int}}(T)$.
		       Note that $|E^{{\rm Int}}(T)|=|V^{{\rm Int}}(T)|-1$.
	\item [(iii)]	Since each vertex $v$ has a unique incoming edge, the ribbon structure on $v$ is equivalent
			to an ordering of the outgoing edges. Therefore, we sometimes confuse the two types of ordering.
\end{itemize}

\begin{example}
	Two ($6$)-ribbon trees are given in Figures \ref{(6)-ribbon-tree} and \ref{s(6)-ribbon-tree}.
  	The roots of these two trees are labeled with $l_0$ and the remaining leaves are denoted by $l_1$, $l_2$, $\dots$, $l_6$.
	The planar description of these graphs determine their ribbon strcutures.
	The tree in Figure \ref{(6)-ribbon-tree} has three interior vertices $v_0$, $v_1$, $v_2$ and two interior edges $e_1$, $e_2$.
	The sources of $e_1$ and $e_2$ are both equal to $v_0$. The targets of $e_1$ and $e_2$ are repectively equal to $v_1$ and $v_2$.
	The ribbon tree in Figure \ref{s(6)-ribbon-tree} has only two interior vertices $\widetilde v$ and $v_2$. The only interior edge of this graph
	has $\widetilde v$ as its source and $v_2$ as its target.
	
	\begin{figure}
    \begin{minipage}{.5\textwidth}
	\begin{tikzpicture}
		\draw[thick] (-1,0) to (-1,-1) ;   \node[above] at (-1,0) {$l_0$} ; \node[right] at (-1,-1) {$v_0$};
		\draw[thick] (-1,-1)  to  (-5,-3)  ;  \node[below] at (-5,-3) {$l_1$};
		\draw[thick] (-1,-1)  to  (-2,-2)  ;   \node[left] at (-2,-2) {$v_1$};
		\draw[thick] (-1,-1)  to  (0,-2)   ;  \node[left] at (0,-2) {$v_2$};
		\draw[thick] (-2,-2)  to  (-2,-3)  ;  \node[below] at (-2,-3) {$l_3$};
		\draw[thick] (-2,-2)  to  (-2.8,-3) ;  \node[below] at (-2.8,-3) {$l_2$};
		\draw[thick] (-2,-2)  to  (-1.2,-3) ;  \node[below] at (-1.2,-3) {$l_4$};
		\draw[thick] (0,-2)   to  ( 0.8 ,-3  ) ; \node[below] at (0.8,-3) {$l_6$};
		\draw[thick] (0,-2)   to  ( -0.8 ,-3  ) ; \node[below] at (-0.8,-3) {$l_5$};
		\node[left] at (-1,-1.7) {$e_1$};  \node[left] at (.2,-1.5) {$e_2$};
	\end{tikzpicture}
	\captionof{figure}{A ($6$)-ribbon tree}\label{(6)-ribbon-tree}
	\end{minipage}%
    \begin{minipage}{.5\textwidth}
	\begin{tikzpicture}
\draw[thick] (-1,0) to (-1,-1) ;   \node[above] at (-1,0) {$l_0$} ; \node[right] at (-1,-1) {$\widetilde v$};
\draw[thick] (-1,-1)  to  (-5,-3)  ;  \node[below] at (-5,-3) {$l_1$};
\draw[thick] (-1,-1)  to  (0,-2)   ;  \node[left] at (0,-2) {$v_2$};
\draw[thick] (-1,-1)  to  (-2,-3)  ;  \node[below] at (-2,-3) {$l_3$};
\draw[thick] (-1,-1)  to  (-2.8,-3) ;  \node[below] at (-2.8,-3) {$l_2$};
\draw[thick] (-1,-1)  to  (-1.2,-3) ;  \node[below] at (-1.2,-3) {$l_4$};
\draw[thick] (0,-2)   to  ( 0.8 ,-3  ) ; \node[below] at (0.8,-3) {$l_6$};
\draw[thick] (0,-2)   to  ( -0.8 ,-3  ) ; \node[below] at (-0.8,-3) {$l_5$};
\end{tikzpicture}
\captionof{figure}{A ($6$)-ribbon tree obtained from shrinking edge $e_1$ in Figure \ref{(6)-ribbon-tree}}\label{s(6)-ribbon-tree}
	\end{minipage}
\end{figure}

\end{example}

\begin{definition}
	Let $e$ be an interior edge in an $n$-ribbon tree $T$ joining vertices $s(e)$ to $t(e)$.
	One can collapse the graph, by removing $e$ and identifying $s(e)$ with $t(e)$.
	We say that $T'$, the resulting graph, is obtained by {\it shrinking the edge $e$ of $T$}.
	The tree $T'$ inherits a ribbon structure of its own, by merging the cyclic orderings at $v$ and $w$.
	To be a bit more detargeted, the set $V(T')$ is the union of $V(T)\backslash \{s(e),t(e)\} $
	and a new vertex $\widetilde v$.
	The labeling of the outgoing edges of any vertex in
	$V(T)\backslash \{s(e),t(e)\} \subset V(T')$ is inherited from the ordering of the outgoing
	edges of the corresponding vertex of $T$. Suppose $e$ is labeled as the $k^{\rm th}$ outgoing
	edge of $s(e)$. Then the set of the outgoing edges of $\widetilde v$, as an ordered set, is given
	as below:
	\begin{equation*}
		e_1,\dots,e_{k-1},f_1,f_2,\dots,f_{d(t(e))},e_{k+1},\dots,e_{d(s(e))}
	\end{equation*}
	where:
	\begin{equation*}
		e_1,e_2,\dots,e_{d(s(e))}\hspace{2cm} f_1,f_2,\dots,f_{d(t(e))}
	\end{equation*}
	are respectively the ordered set of the outgoing edges of $s(e)$ and $t(e)$. Therefore,
	$T'$ is also an $n$-ribbon tree and:
	\begin{equation*}
		|V^{in}(T')|=|V^{in}(T)|-1.
	\end{equation*}
	In general, if a series of edge shrinking in an $n$-ribbon tree $T$ gives rise to the graph
	$S$, then we write $T<S$. We also write $T\leq S$ if
	$ T< S$ or $ T= S$.
\end{definition}

\begin{example}
	The (6)-ribbon tree in Figure \ref{s(6)-ribbon-tree} is obtained  by shrinking the edge $e_1$ in the (6)-ribbon tree of Figure \ref{(6)-ribbon-tree}.

\end{example}

Next, we introduce a different method to encode the information of a ribbon tree.
\begin{definition}
	A subset $A$ of $[n]$ is called a {\it cyclic bisection} if it has the following form:
	\[
	  \hspace{2.5cm}A=\{i,\,i+1,\, \dots,\, j\} \hspace{2cm}
	\]
	with $j-i\geq1$.
	More generally, we can define a cyclic bisection of an ordered set $S=\{l_1,\dots,l_n\}$ by identifying $S$ with $[n]$ in
	an order-preserving way. Two cyclic bisections $A_0$ and $A_1$ of $S$ are {\it crossed}
	if $A_0\cap A_1$ is non-empty and none of $A_0$ and $A_1$ contain the other set.
	Otherwise, we say the two cyclic bisections are {\it uncrossed}.
\end{definition}
\begin{figure}
	\centering
	\begin{tikzpicture}
                \draw[thick] (0,0)    circle [radius=2];
                \foreach \a in {0,1,...,6} { 
                \draw[fill] ( 360/7 *\a:2cm) circle (1pt); 
                }
                \foreach \a in {0,1,...,6}{
                \node at (360/7*\a:2.32cm) {$l_\a$} ; }
                \node[red] at (35:1.7cm) {$A_2$};
                \node at (130:0.8cm) {$A_0$};
                \node at (300:1.65cm) {$A_1$};
                \draw[thick] (77:2cm) to (230:2cm);
                \draw[thick] (240:2cm) to (336:2cm);
                \draw[thick,red] (15:2cm) to (135:2cm);
	\end{tikzpicture}
	\caption{Three cyclic bi-sections of a set of $7$ elements with a cyclic ordering}
	\label{cyclic-bisection}
\end{figure}
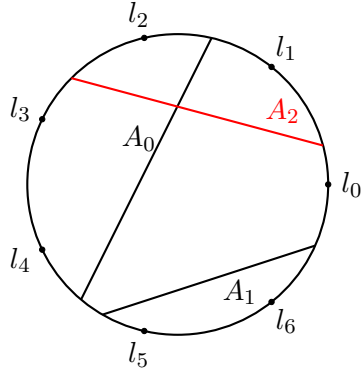
\begin{example}
	Figure \ref{cyclic-bisection} demonstrates various cyclic bi-sections of the set $S=\{l_1,\dots,l_6\}$.
	In this figure, we put the elements of $S\cup \{l_0\}$ on a circle such that the order of the set is given by removing
	$l_0$ and moving on the punctured circle in the counter-clockwise direction.
	Any line segment in Figure \ref{cyclic-bisection} divides the set $\{l_0,l_1,\dots,l_6\}$ into two sets, and the subset which
	does not contain $l_0$ is the corresponding cyclic bisection.
	The bisections $A_0=\{l_2,l_3,l_4\}$ and
	$A_2=\{l_1,l_2\}$ are crossed. However,
	each of these bisections and $A_1=\{l_5,l_6\}$  are
	uncrossed. The presentation of cyclic bisections as in Figure \ref{cyclic-bisection} justifies our terminology.
\end{example}

The set of non-root leaves of a ribbon tree $T$ admits a natural ordering. To obtain this ordering we shrink all the interior edges of $T$. The ribbon structure on the final graph determines the ordering among the non-root leaves of $T$. Therefore, we can talk about cyclic bisections of the non-root leaves of $T$. Let $e$ be an interior edge of $T$. Removing $e$ divides the set of leaves into two sets with at least two elements.
The set that does not contain the root determines a cyclic bisection of the set of non-root leaves. Given two interior edges of $T$, the corresponding cyclic bisections of the leaves are uncrossed. Therefore, an $n$-ribbon tree $T$ with $k$ interior edges gives rise to $k$ distinct cyclic bisections of the leaves of $T$, which are mutually uncrossed. For example, the cyclic bisections associated to the edges $e_0$ and $e_1$ of the $n$-ribbon tree in Figure \ref{(6)-ribbon-tree} are respectively equal to $A_0$ and $A_1$ given in Figure \ref{cyclic-bisection}. The following lemma asserts that an $n$-ribbon tree is determined uniquely by the set of its cyclic bisections:

\begin{lemma} \label{bisection-to-tree}
	Suppose $S$ is an ordered set.
	Suppose also $k$ mutually uncrossed cyclic bisections $\{A_i\}_{1\leq i \leq k}$ of $S$ are given.
	Then there is a unique $n$-ribbon tree $T$ with $k$ interior edges
	such that the set of non-root leaves of $T$ is $S$
	and the cyclic bisections associated to its interior edges are equal to $\{A_i\}_{1\leq i \leq k}$.
\end{lemma}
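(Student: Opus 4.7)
The plan is to proceed by induction on $k$, the number of cyclic bisections. For the base case $k=0$, the unique $n$-ribbon tree with no interior edges is the corolla: a single interior vertex adjacent to the root $l_0$ and to all elements of $S$ as leaves, with the ribbon structure at the interior vertex determined by the given ordering of $S$. Since there are no interior edges, the condition on bisections is vacuous, and both existence and uniqueness are immediate.

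For the inductive step, I would pick an element $A_i$ of the family that is minimal with respect to inclusion. The uncrossed hypothesis forces every other $A_j$ to be either disjoint from $A_i$ or to strictly contain it. I then collapse $A_i \subset S$ to a single new element $a$, obtaining a smaller ordered set $S'$, and project the remaining bisections to $S'$ by replacing $A_j \cap A_i$ by $\{a\}$ when $A_j \supsetneq A_i$ and leaving $A_j$ unchanged when $A_j \cap A_i = \emptyset$. One checks that the projected family remains mutually uncrossed and that each projected set still has at least two elements (which uses $|A_j| \geq 2$ together with the fact that when $A_j \supsetneq A_i$ the complement $A_j \setminus A_i$ is nonempty). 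By the inductive hypothesis there is a unique ribbon tree $T'$ realizing this data on $S'$. I then form $T$ from $T'$ by replacing the leaf labeled $a$ with a new interior vertex $w$ attached to $|A_i|$ new leaves labeled by the elements of $A_i$ in their inherited cyclic order, so that the edge formerly incident to $a$ becomes the interior edge corresponding to $A_i$.

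Uniqueness proceeds along the same lines. In any tree $T$ realizing the prescribed data, the interior edge corresponding to a minimal $A_i$ must have as its target a vertex whose other neighbors are precisely the leaves in $A_i$, for otherwise a bisection properly smaller than $A_i$ would appear among the interior edges of $T$. Shrinking this edge then produces a ribbon tree on $S'$ whose interior bisections are exactly $\{A_j'\}_{j \neq i}$, which is unique by induction; so $T$ itself is unique. The main technical point to verify is that the ribbon structure is compatible with contraction and expansion: the cyclic order at the merged vertex produced by the shrinking operation must agree with the cyclic order on $S$ restricted appropriately, and conversely the orders inherited at $w$ and at the parent of $w$ under the expansion must match those dictated by $S$. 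Reducedness of $T$ is automatic since $|A_i| \geq 2$ forces $w$ to have degree $|A_i|+1 \geq 3$. I expect this ribbon-structure bookkeeping to be the trickiest part of carrying out the plan, though it is purely combinatorial.
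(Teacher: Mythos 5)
Your proof is correct and follows essentially the same route as the paper's: induction on $k$, collapsing a minimal bisection $A_i$ to a single point, applying the inductive hypothesis to the projected family on the smaller ordered set, and then expanding back. The only difference is that you spell out the uniqueness argument (via minimality forcing the target vertex of the edge for $A_i$ to be adjacent exactly to the leaves of $A_i$), which the paper leaves as an easy check.
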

\begin{proof}
	This claim can be proved by induction on $k$. By changing the indices if necessary,
	we can assume that $A_1$ is minimal among the sets $\{A_i\}_{1\leq i \leq k}$.
	Thus $A_i$, for any $i \geq 2$, either contains $A_1$ as a proper subset or it is disjoint from $A_1$.
	By collapsing the elements of $A_1$ into one element $\widetilde v$, define a new set $S'$ out of $S$.
	The set $S'$ inherits an order from $S$.
	For any $i\geq 2$, if $A_i$ contains $A_1$, define $A_i'$ to be the following subset of $S'$:
	\[
	  A_i'= A_i \backslash A_1 \cup \{\widetilde v\}
	\]
	Otherwise, $A_i'$ is defined to be $A_i$. The sets $\{A_i'\}_{2\leq i \leq k}$ form
	mutually uncrossed cyclic bisections of the set $S'$. Therefore, the induction hypothesis can be used to
	construct a tree $T'$ with $k-1$ interior edges whose induced cyclic bisections are give by
	$\{A_i'\}_{2\leq i \leq k}$. Thus we can construct $T$ by adding the elements of $A_1$
	to the set of vertices of $T'$ and connecting the vertex $\widetilde v$ to the elements of $A_1$. The ribbon structure on $T'$
	and the ordering on the set $A_1$ induce a ribbon structure on $T$. It is clear that the set of non-root leaves
	of $T$ is equal to $S$ and the cyclic bisections
	associated to $T$ are given by $\{A_i\}_{1\leq i \leq k}$.
	Moreover, it is easy to check that $T$ is the unique tree with these properties.
\end{proof}

\subsubsection*{Compactifications of Arrangements of Points}
Let $\bx=\{x_1,x_2,\dots,x_n\}$ be a subset of $\R=\R \times \{(0,0)\}\subset \R^3$ that contains $n$ distinct points. We assume
\begin{equation} \label{arrangment}
  x_1<x_2<\dots<x_n
\end{equation}
where ``$<$'' is defined with respect to the natural order of the real line. We say a set of balls $\fD$ as below:
\[
   B_{r_1}(x_1)\hspace{.7cm} B_{r_2}(x_2)
  \hspace{.7cm}  \cdots \hspace{.7cm} B_{r_n}(x_n) \hspace{.7cm} B_R(\bar{x})
\]
defines a set of {\it territory balls}, if for $1\leq i \neq j\leq n$:
\begin{equation} \label{territory-cond}
	x_i\notin B_{4r_j}(x_j)\hspace{2cm} B_{r_j}(x_j)\subset B_{R/4}(\bar{x})
\end{equation}
We call $B_{r_j}(x_j)$ the territory ball of the point $x_j$. The ball $B_{R}(\bar{x})$ is called the {\it parent territory ball}. Note that \eqref{territory-cond} implies that the distance of a point $z$ in $B_{r_j}(x_j)$ from any other point in the same ball is less than the distance of $z$ from a point in the ball $B_{r_i}(x_i)$ with $i\neq j$. In particular, the balls $B_{r_1}(x_1),B_{r_2}(x_2) ,  \cdots , B_{r_n}(x_n)$ are disjoint.

We say two arrangements of points
$\bx_0$ and $\bx_1$
are {\it affinely equivalent} to each other if there is an affine linear transformation that maps $\bx_0$ to $\bx_1$.
The space of all arrangements of points as above modulo this equivalence relation forms a space $\mathcal P_n$. Two sets of territory balls $\fD_0$ and $\fD_1$ for affinely equivalent arrangements $\bx_0$ and $\bx_1$ are equivalent to each other if $\fD_0$ is mapped to $\fD_1$ by the affine linear transformation which maps $\bx_0$ to $\bx_1$. The space of all territory balls modulo this relation defines a fiber bundle $\mathcal D_n$ over $\mathcal P_n$ with contractible fibers. For each $n$, we fix a smooth section of the fiber bundle $\mathcal D_n$. Later in this section, we require that these sections satisfy some constraints.

The space $\mathcal P_n$ is diffeomorphic to an open ball of dimension $n-2$ and we shall review two different compactifications of this space. We can compactifiy the space $\mathcal P_n$ by allowing some, but not all, of the inequalities in \eqref{arrangment} to be equalities. The resulting compact space is a simplex of dimension $n-2$. We say this compactification of $\mathcal P_n$ is the {\it weak compactification} of $\mathcal P_n$. Alternatively, there is a more refined compactification of this moduli space which is called {\it associahedron} and is denoted by $\mathcal{K}_n$ \cite{St:assoc}. We also call
$\mathcal{K}_n$ the {\it strong compactification} of $\mathcal P_n$.

Given an $n$-leafed tree $T$, define $F_T$ to be the following space:
\begin{equation} \label{P-TandNP-T}
	 F_T := \prod_{v\in V^{\rm Int}(T)} \mathcal P_{d(v)}
\end{equation}
Then the associahedron $\mathcal{K}_n$, as a set, is defined to be the following disjoint union:
\begin{equation} \label{KNdecom}
	\mathcal{K}_n := \bigsqcup_{T} F_T
\end{equation}
where the union is over all $n$-ribbon trees. For each $n$-ribbon tree $T$, we define a map $\Phi_T:N(F_T) \to \mathcal{K}_n$ where $N(F_T)$ is given as below:
\begin{equation*}
	N(F_T):= F_T\times (M_0,\infty]^{|E^{\rm Int}(T)|}.
\end{equation*}
Here $M_0$ is a constant greater than $1$. For the definition of $\Phi_T$ we can assume that $M_0=1$. But as we move forward throughout the paper, we need to increase the value of this constant.

 Let $T$ have $l+1$ interior vertices and label
 interior vertices and the interior edges of $T$ respectively
 with $v_0$, $v_1$, $\cdots$, $v_l$ and $e_1$, $\cdots$, $e_l$. Consider an element $(\bx_0,\dots,\bx_l,s_1,\dots,s_{l})\in N(F_T)$. In the case that all parameters $s_i$ are equal to $\infty$, define $\Phi_T$ as follows:
\[
  \Phi_T(\bx_0,\dots,\bx_l,\infty,\dots,\infty):=(\bx_0,\dots,\bx_l)\in F_T \subset \mathcal{K}_n.
\]
Otherwise, choose a finite $s_i$ with the smallest index $i$. Without loss of generality, we assume that this parameter is $s_1$, $s(e_1)=v_0$ and $t(e_1)=v_1$. Suppose $T'$ is obtained from $T$ by shrinking the edge $e_1$. Let also $\widetilde v$ denote the new vertex of $T'$. We shall define $\widetilde \bx$ as an element of $\mathcal P_{d(\widetilde v)}$. Then the map $\Phi_T$ is defined inductively by the following property:
\begin{equation*}
	\Phi_T(\bx_0,\bx_1,\bx_2,\dots,\bx_l,s_1,\dots,s_{l})=
	\Phi_{T'}(\widetilde \bx,\bx_3,\dots,\bx_l,s_2,\dots,s_{l})
\end{equation*}

Let also $\bx_0$ (respectively, $\bx_1$) be represented by the arrangement of points $\{x_1,\dots,x_{d}\}$ (respectively, $\{x_1',\dots,x_{d'}'\}$)
where $d=d(v_0)$ and $d'=d(v_1)$.
The sections of the bundles $\mathcal D_{d}$ and $\mathcal D_{d'}$ determine territory balls:
\[
   B_{r_1}(x_1)\hspace{.5cm} B_{r_2}(x_2)
  \hspace{.5cm}  \cdots \hspace{.5cm} B_{r_{d}}(x_{d}) \hspace{.5cm}
  B_R(\bar{x})
\]
and:
\[
    B_{r_1'}(x_1')\hspace{.5cm} B_{r_2'}(x_2')
  \hspace{.5cm}  \cdots \hspace{.5cm} B_{r_{d'}'}(x_{d'}') \hspace{.5cm}
   B_{R'}(\bar{x}')
\]
We can make a correspondence between the set of outgoing edges of $v_0$ and the set of points $x_i$ using the ordering of these two sets. Suppose that the point $x_j$ is matched with the edge $e_1$. There is a unique affine transformation which maps the ball $B_{R'}(\bar{x}')$ to
$B_{e^{-s_1}r_j}(x_j)$. Use this transformation to replace the point $x_j$ with the image of the points $x_1'$, $\dots$, $x_{d'}'$. This arrangement of points defines
$\widetilde \bx \in \mathcal P_{d_1+d_2-1}$ which is independent of the choices of the representatives for $\bx_0$ and $\bx_1$.

\begin{example}
	Let $T$ and $T'$ denote the (6)-ribbon trees of Figures \ref{(6)-ribbon-tree} and \ref{s(6)-ribbon-tree}.
	Figures \ref{x123} and \ref{xx2} represent points in $F_T$ and $F_{T'}$. The arrangement $\widetilde{\bx}$ is obtained by
	merging  $\bx_1$ and $\bx_2$.
	Choices of territory balls for these arrangements are also illustrated in Figures \ref{x123} and \ref{xx2}.
	Intersections of territory balls
	with the line $\R\times \{(0,0)\}$ are demonstrated by two round brackets around each point.
	The intersections of parent territory balls with the line $\R\times \{(0,0)\}$ are also
	sketched by square brackets. In order to have a clearer figure,
	\eqref{territory-cond} is not completely satisfied in Figures \ref{x123} and \ref{xx2}.

	\begin{figure}
		\centering
\begin{tikzpicture}[scale=1]
\draw [thick,->]  (0,2)--(10,2); \node at (0.5,2) {[}; \node at (9.5,2) {]}; \node at (-0.5,2) {$\mathbf{x}_0:$};
\draw[fill] (2,2) circle (1pt);  \node at (1.5,2) {(}; \node at (2.5,2) {)};
\draw[fill] (4,2) circle (1pt);  \node at (2.8,2) {(}; \node at (5.2,2) {)};
\draw[fill] (8,2) circle (1pt);  \node at (7,2) {(}; \node at (9,2) {)};

\draw [thick,->]  (0,0)--(10,0); \node at (0.5,0) {[}; \node at (9.5,0) {]}; \node at (-0.5,0) {$\mathbf{x}_1:$};
\draw[fill] (3,0) circle (1pt);  \node at (2,0) {(}; \node at (4,0) {)};
\draw[fill] (6,0) circle (1pt);  \node at (5.5,0) {(}; \node at (6.5,0) {)};
\draw[fill] (8,0) circle (1pt);  \node at (7.6,0) {(}; \node at (8.4,0) {)};
\draw[dashed] (0.5,0) to (3,2);
\draw[dashed] (9.5,0) to (5,2);

\draw [thick,->]  (0,-2)--(10,-2); \node at (0.5,-2) {[}; \node at (9.5,-2) {]}; \node at (-0.5,-2) {$\mathbf{x}_2:$};
\draw[fill] (2,-2) circle (1pt);  \node at (1,-2) {(}; \node at (3,-2) {)};
\draw[fill] (6,-2) circle (1pt);  \node at (4.8,-2) {(}; \node at (7.2,-2) {)};

\end{tikzpicture}
	\captionof{figure}{The arrangements of points $\bx_0, \bx_1, \bx_2$ give an element of $F_T$ with $T$ being the (6)-ribbon tree of Figure
	\ref{(6)-ribbon-tree}. Territory balls for these arrangements are also sketched in this figure.}\label{x123}
\end{figure}
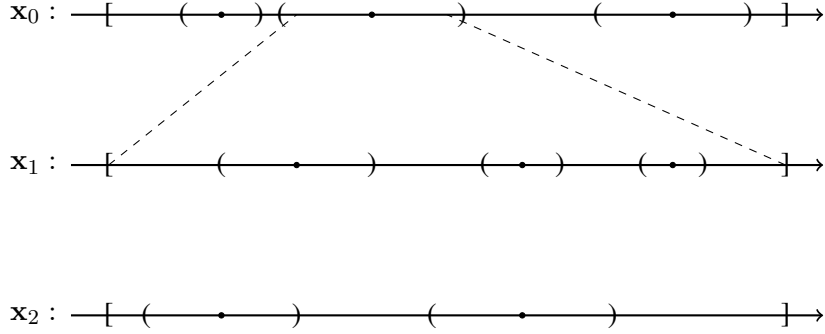

\begin{figure}
\centering
\begin{tikzpicture}[scale=1]
\draw [thick,->]  (0,2)--(10,2); \node at (0.5,2) {[}; \node at (9.5,2) {]}; \node at (-0.5,2) {$\widetilde{\mathbf{x}}:$};
\draw[fill] (2,2) circle (1pt);  \node at (1.5,2) {(}; \node at (2.5,2) {)};

\draw[fill] (3.555,2) circle (1pt);  \node at (3.333,2) {(}; \node at (3.777,2) {)};
\draw[fill] (4.4,2) circle (1pt);  \node at (4.34,2) {(}; \node at (4.46,2) {)};
\draw[fill] (4.666,2) circle (1pt);  \node at (4.62,2) {(}; \node at (4.735,2) {)};

\draw[fill] (8,2) circle (1pt);  \node at (7,2) {(}; \node at (9,2) {)};

\draw [thick,->]  (0,0)--(10,0); \node at (0.5,0) {[}; \node at (9.5,0) {]}; \node at (-0.5,0) {$\mathbf{x}_2:$};
\draw[fill] (2,0) circle (1pt);  \node at (1,0) {(}; \node at (3,0) {)};
\draw[fill] (6,0) circle (1pt);  \node at (4.8,0) {(}; \node at (7.2,0) {)};

\end{tikzpicture}
\captionof{figure}{The arrangements of points $\widetilde{\bx}, \bx_2$ give an element of $F_{T'}$ with $T'$ being the (6)-ribbon tree of Figure
	\ref{s(6)-ribbon-tree}. Territory balls for these arrangements are also sketched in this figure.}\label{xx2}
\end{figure}

\end{example}

We equip $\mathcal{K}_n$ with the weakest topology such that the maps $\Phi_T$ are all continuous.  The space $\mathcal{K}_n$ with this topology is a compact Hausdorff space
\cite{FOn, FOOO1,FOOO2, Se:Pic}. In fact, $\mathcal{K}_n$ admits the structure of a smooth manifold with corners and is a polytope of dimension $n-2$. For example, the spaces $\mathcal{K}_3$ and $\mathcal{K}_4$ are respectively an interval and a pentagon. If $M_0$ is large enough, then the maps $\Phi_T$ are diffeomorphisms. Therefore, they define charts for $\mathcal{K}_n$. We assume this requirement on $M_0$ holds for $n\leq 2N+1$.
\begin{remark}
	In \cite{FOn, FOOO1,FOOO2, Se:Pic}, $\mathcal{K}_n$ is regarded as the compactification of the moduli space of
	2-dimensional (conformal) discs with the choice of $(n+1)$ marked points on the boundary. This moduli space can
	be identified with $\mathcal P_n$ in the following way. Any conformal structure on the 2-dimensional disc
	is conformally equivalent to the standard conformal structure on the upper half-plane $\H\subset \C$.
	This conformal equivalence can be chosen such that one of the marked points is mapped to $\infty$
	in the boundary of $\H$.
	Then the remaining marked points determine an arrangement of points on the real line in $\C$, which
	is well-defined up to an affine transformation. Let $\bx=\{x_1,\dots,x_n\}$ be an element of $\mathcal P_n$.
	A territory ball $B_{r_j}(x_j)$ determines a half-disc $D_j$
	in $\H$ with the center $x_j$ and radius $r_j$.
	We can also define a half-disc associated to $B_R(\bar{x})$. The interior of the complement of
	this half-disc, denoted by $D_\infty$, can be regraded as an open neighborhood of $\infty$.
	The discs $D_1$, $\dots$, $D_n$, $D_\infty$ define disjoint open neighborhoods of the marked points.
	Such auxiliary structures are called {\it strip-like ends} in \cite{Se:Pic}.
\end{remark}

An unsatisfactory point in the definition of $\Phi_T$ is its dependence on the labeling of the interior edges of $T$. This issue can be fixed by requiring that the sections of the bundles $\mathcal D_{n}$ satisfy a compatibility condition. Suppose $T$ is an $n$-ribbon tree with exactly one interior edge. Then any element of $F_T$ consists of two arrangements of points. Moreover, we can use the above gluing construction to define a (well-defined) gluing map from $\Phi_T: F_T \times (M_0,\infty)\to\mathcal P_n$. For any element $(\bx_1,\bx_2,s)\in F_T\times (M_0,\infty)$, the territory balls for $\bx_1$ and $\bx_2$ induce a set of territory balls for the point $\Phi_T (\bx_1,\bx_2,s)$. That is to say, the sections of the bundles $\mathcal D_{n'}$, for $n'<n$, induce a section of the bundle $\mathcal D_n$ over the following open subset of $\mathcal P_n$:
\begin{equation} \label{Un}
  \mathcal U_n=\bigcup_{T} \Phi_T(F_T\times (M_0,\infty))
\end{equation}
where the union is over all $n$-ribbon trees with one interior edge. A priori, this section might not be well-defined because the spaces $\Phi_T(F_T\times (M_0,\infty))$ overlap with each other.
 \begin{definition}\label{compatible-territory}
	Suppose that sections of the fiber bundles $\mathcal D_{n}$ are fixed for $n\leq n_0$.
	We say that these sections of $\mathcal D_n$ are {\it compatible with gluing}, if for each $n$ the section of
	$\mathcal D_n$ over $\mathcal U_n$ agrees with the one induced by the sections of the bundles
	$\mathcal D_{n'}$ with $n'<n$.
 \end{definition}

The following lemma is the analogue of Lemma 9.3 in \cite{Se:Pic}. For the remaining part of this paper, we assume that the claim of this lemma holds for $n_0=2N+1$.
\begin{lemma} \label{consistency-territory-balls}
	For any $n_0$, if $M_0$ is large enough, then the sections of the bundles $\mathcal D_n$
	with $n\leq n_0$ can be chosen such that they are compatible with gluing.
\end{lemma}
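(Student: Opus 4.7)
The plan is to induct on $n_0$. For $n_0 \leq 3$ there is essentially nothing to check and any smooth sections will do, so I will concentrate on the inductive step. Suppose compatible sections of $\mathcal D_{n'}$ have been fixed for all $n' < n_0$; these choices together with the gluing construction recalled just before \eqref{Un} yield a candidate section of $\mathcal D_{n_0}$ on the open subset $\mathcal U_{n_0} \subset \mathcal P_{n_0}$. The two tasks are (i) to check that this candidate is well defined on all of $\mathcal U_{n_0}$, and (ii) to extend it smoothly to $\mathcal P_{n_0}$.

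For (i), an ambiguity can arise only at a configuration $\bx \in \mathcal U_{n_0}$ that lies in the overlap $\Phi_{T_1}(F_{T_1}\times (M_0,\infty)) \cap \Phi_{T_2}(F_{T_2}\times (M_0,\infty))$ for two distinct $n_0$-ribbon trees $T_1, T_2$, each with a single interior edge. Any such configuration admits a finer cluster decomposition encoded by some $n_0$-ribbon tree $T$ with at least two interior edges, both $T_1$ and $T_2$ being obtained from $T$ by shrinking all but one interior edge. The two candidate territory balls at $\bx$ then correspond to two iterated gluings of the territory balls attached to the interior vertices of $T$, performed in different orders. Provided $M_0$ is large enough, all intermediate arrangements lie inside the open sets $\mathcal U_{n'}$ with $n'<n_0$ on which the inductive sections were themselves produced by gluing, and invoking the inductive hypothesis shows that the two orders yield the same equivalence class of territory balls. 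For (ii), since the fibers of $\mathcal D_{n_0}$ are contractible, a standard partition-of-unity argument extends any smooth section from $\mathcal U_{n_0}$ to a smooth global section on $\mathcal P_{n_0}$; enlarging $M_0$ once more if necessary guarantees that the extension continues to coincide with the gluing-induced section on the region that actually enters the compatibility condition of Definition \ref{compatible-territory}.

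The hard part is the consistency check in (i), which is the combinatorial-geometric core of the lemma and a direct analogue of Seidel's Lemma 9.3 in \cite{Se:Pic}. The requirement that $M_0$ be large plays a quantitative role there: it forces each glued territory ball to sit deep inside its parent, so the strict inequalities \eqref{territory-cond} are satisfied with room to spare and remain valid under the affine identifications used in either of the two orders of gluing. Once this robustness is in place, the two iterated gluings manifestly define the same point of $\mathcal D_{n_0}$, and the remainder of the argument is formal.
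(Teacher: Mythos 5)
Your proposal is correct and follows essentially the same route as the paper's (sketched) proof: induction on $n$, using the inductive compatibility hypothesis to see that the gluing-induced section is well defined on the overlaps constituting $\mathcal U_n$ for $M_0$ large, and then extending to all of $\mathcal P_n$ via contractibility of the fibers of $\mathcal D_n$. The extra detail you supply for the overlap check — passing to a common refinement tree $T\le T_1,T_2$ and comparing the two orders of iterated gluing — is exactly the content the paper's one-line appeal to the induction hypothesis is standing in for.
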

\begin{proof}[Sketch of the proof]
	The sections of the bundle $\mathcal D_n$ can be constructed by induction.
	Suppose these sections are constructed for $n'<n$. The induction assumption shows that the induced section
	of $\mathcal D_n$ on the open set $\mathcal U_n$ is well-defined if $M_0$ is large enough.
	Since the fibers of $\mathcal D_n$ are contractible, this section can be extended to
	$\mathcal P_n$.
\end{proof}

For an $n$-leafed tree $T$ with $l$ interior edges, the space $F_T$ forms an open face of co-dimension $l$. From the description of the topology, it is straightforward to see that the corresponding face, denoted by $\overline F_T$, is given by:
\[
  \bigcup_{T'\leq T} F_{T'}.
\]
This also implies that $\overline F_T$ is equal to $\prod_{v\in V^{\rm Int}(T)} {\mathcal{K}_{d(v)}}$. Therefore, each face of the associahedron $\mathcal{K}_n$ is a cartesian product of smaller associahedra.

Co-dimension one faces of the associahedra $\mathcal{K}_n$ are labeled by $n$-ribbon trees with one interior edge. Therefore, Lemma \ref{bisection-to-tree} implies that the set of co-dimension one faces are in correspondence with the cyclic-bisections of the set $[n]$. Suppose $A$ is a cyclic bi-section of $[n]$. We write $T_{A}$ and $\overline F_{A}$ for the corresponding $n$-ribbon tree and the corresponding face of $\mathcal{K}_n$. We also define $U_{A}$ to be the following subspace of $\mathcal{K}_n$:
\[
  U_{A}:=\bigcup_{T\leq T_{A}} \Phi_T(F_T\times (M_0,\infty]^{|E^{\rm Int}(T)|}).
\]
The space $U_{A}$ is clearly an open neighborhood of $\overline {F}_{A}$.

Let $T$ be an $n$-ribbon tree such that $T\leq T_{A}$. Suppose $T$ has $l$ interior edges denoted by $e_1$, $\dots$, $e_{l}$  such that the cyclic bisection associated to $e_1$ is $A$. Let $\bx= (\bx_0,\dots,\bx_l,s_1,\dots,s_{l})\in F_T\times (M_0,\infty]^{l}$ and define:
\[
  \pi_{A}(\bx):=\Phi_T(\bx_0,\dots,\bx_l,\infty,s_2,\dots,s_{l})
\]
By choosing $M_0$ large enough, the point $ \pi_{A}(\bx)$ and the number $l_{A}(\bx):=s_1$ depend only on $\bx$ (and not on $T$). This allows us to identify $ U_{A}$ with $\overline F_{A}\times (M_0,\infty]$. That is to say, $ U_{A}$ is a regular neighborhood of the face $\overline F_{A}$, as in Subsection \ref{family-general}.


More generally, we define $U_T$, for an $n$-ribbon tree $T$ with $l$ interior edges, to be the following neighborhood of $\overline{F}_T$:
\[
  U_T:=\bigcap_{T\le T_{A}} U_{A}
\]
Similar to $U_{A}$, $U_T$ can be identified with $\overline{F}_T\times (M_0,\infty]^{l}$. Note that if $T_A\geq T$, then the projection $\pi_A$ maps $U_T$ to itself . If $A'$ is another cyclic bisection that $T_{A'}\geq T$, then compatibility of the sections of the bundles $\mathcal D_{n'}$ with respect to gluing implies that the maps $\pi_A$ and $\pi_{A'}$ commute with each other. Define $\pi_T:U_T \to \overline{F}_T$ to be the map given by composing all projection maps $\pi_A$ such that $T_A\geq T$. Above discussion shows that this map is independent of the order of composition of the projection maps $\pi_A$. The map $\pi_T$ defines the first factor of the diffeomorphism from $U_T$ to $\overline{F}_T\times (M_0,\infty]^{l}$. The component of this diffeomorphism in $(M_0,\infty]^{l}$ can be also defined similarly.

\begin{prop} \label{open-sets}
	For two cyclic bisections $A_0$ and $A_1$, the open sets
	$U_{A_0}$ and $U_{A_1}$ intersect if and only if $A_0$ and $A_1$ are uncrossed. More generally,
	suppose $T_1$ and $T_2$ are two $n$-ribbon trees.
	Then $U_{T_1}$ and $U_{T_2}$ intersect if and only if there is an $n$-ribbon tree $T$ such that
	$T\le T_1$ and $T\le T_2$.
\end{prop}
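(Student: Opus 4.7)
The plan is to prove (b), from which (a) follows as the two-edge special case $T_1 = T_{A_0}$, $T_2 = T_{A_1}$. For the $(\Leftarrow)$ direction of (b), given a common refinement $T$ with $T \leq T_1$ and $T \leq T_2$, every interior edge $A$ of $T_1$ or of $T_2$ is automatically an interior edge of $T$, hence $T \leq T_A$. Therefore $U_A$ appears in the defining intersection $U_T = \bigcap_{T \leq T_A} U_A$, and $U_T \subseteq U_A$ follows. Intersecting over all interior edges of $T_1$ (respectively $T_2$) yields $U_T \subseteq U_{T_1} \cap U_{T_2}$; since $U_T$ contains the non-empty face $\overline{F}_T$, the intersection is non-empty. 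The $(\Leftarrow)$ direction of (a) is then the two-edge special case, with the required ribbon tree supplied by Lemma \ref{bisection-to-tree}.

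For the $(\Rightarrow)$ direction of (b), I would reduce to (a). A point $p \in U_{T_1} \cap U_{T_2}$ satisfies $p \in U_{A_1} \cap U_{A_2}$ for every pair $(A_1, A_2) \in E^{\rm Int}(T_1) \times E^{\rm Int}(T_2)$, because each $U_{T_i}$ is by definition the intersection of the $U_A$ over interior edges $A$ of $T_i$. Granting (a), each such pair is uncrossed; combined with the tautological mutual uncrossedness of interior edges within the same tree, the union of bisections coming from $T_1$ and $T_2$ is mutually uncrossed. Lemma \ref{bisection-to-tree} then produces a ribbon tree $T$ whose interior edges are exactly these bisections, and by construction $T \leq T_1$ and $T \leq T_2$.

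The substantive step is therefore the $(\Rightarrow)$ direction of (a): if $A_0, A_1$ are crossed, then $U_{A_0} \cap U_{A_1} = \emptyset$. I would argue by contradiction: assume some $p$ lies in the intersection, so there exist ribbon trees $T^0, T^1$ and chart presentations $p = \Phi_{T^0}(q_0) = \Phi_{T^1}(q_1)$ with $A_i \in E^{\rm Int}(T^i)$. The aim is to produce a single ribbon tree $T$ with $T \leq T^0$ and $T \leq T^1$ such that $p$ also lies in the image of $\Phi_T$. Any such $T$ would then contain both $A_0$ and $A_1$ among its interior edges, contradicting the mutual uncrossedness of the bisections attached to the interior edges of a ribbon tree.

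Constructing this common refinement chart is the main obstacle and will rely on the gluing compatibility of the territory-ball sections (Lemma \ref{consistency-territory-balls}) together with the assumption that $M_0$ is sufficiently large. Each chart $\Phi_{T^i}$ presents $p$ as an iterated insertion of smaller arrangements into territory balls, and the compatibility of the chosen sections of the bundles $\mathcal{D}_{n'}$ ensures that the two cluster hierarchies attached to $T^0$ and $T^1$ can be simultaneously realized. I would carry out the merging by induction on $|E^{\rm Int}(T^0)| + |E^{\rm Int}(T^1)|$: at each step one replaces a sub-insertion of $T^1$ that is not already present in the current refinement by a deeper insertion into the corresponding territory ball of the $T^0$-presentation, invoking gluing compatibility to identify the two descriptions. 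The technical point is ensuring that the new neck parameters continue to lie in $(M_0, \infty]$, which is exactly what the largeness of $M_0$ guarantees.
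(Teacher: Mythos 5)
Your treatment of the easy directions is fine and matches the paper: the converse implications follow because a common refinement $T\le T_1,T_2$ gives $U_T\subseteq U_{T_1}\cap U_{T_2}$ with $U_T\supseteq \overline F_T\neq\emptyset$, and part (b)$(\Rightarrow)$ does reduce to part (a)$(\Rightarrow)$ via Lemma \ref{bisection-to-tree} exactly as you say. The problem is the step you yourself identify as "the substantive step": crossed implies disjoint. Your plan is to assume a common point $p$, build a common refinement chart $T\le T^0,T^1$, and then contradict the mutual uncrossedness of the bisections of a single ribbon tree. But such a $T$ provably cannot exist when $A_0$ and $A_1$ are crossed (that is precisely Lemma \ref{bisection-to-tree}), so your merging induction must break down somewhere, and locating and explaining that breakdown \emph{is} the content of the proposition. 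As written, you assert that "gluing compatibility of the territory-ball sections ensures that the two cluster hierarchies can be simultaneously realized" — but under the hypothesis you are trying to contradict, they cannot be, and consistency of the sections (Lemma \ref{consistency-territory-balls}) says nothing about this. The argument is therefore circular at its core, and the sketched induction is not an argument.

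The missing idea is a direct metric one, and it is what the paper uses. Since $U_{A_0}$ and $U_{A_1}$ are open, if they intersect they intersect in the dense subset $\mathcal P_n$, so you may take the common point to be an honest arrangement $\{x_1,\dots,x_n\}$ of distinct points. Membership of this point in $U_{A}$ means the points indexed by $A$ all sit inside a single territory ball in some chart presentation, and the separation condition \eqref{territory-cond} then forces $|x_i-x_j|<|x_i-x_k|$ whenever $i,j\in A$ and $k\notin A$. Two subsets of a metric space each satisfying this "cluster" condition must be nested or disjoint (if $i\in A_0\cap A_1$, $j\in A_0\setminus A_1$, $k\in A_1\setminus A_0$, the two inequalities $|x_i-x_j|<|x_i-x_k|$ and $|x_i-x_k|<|x_i-x_j|$ contradict each other), i.e., $A_0$ and $A_1$ are uncrossed. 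Your proposal never invokes \eqref{territory-cond}, which is the one place the geometry of territory balls actually enters; without it the key direction is unproved.
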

\begin{proof}
	The second part of the proposition is a consequence of the first part.
	To prove the first part of the proposition, let $U_{A_0}$ and $U_{A_1}$ contain a point in their intersection.
	Since these sets are open, we can assume that $\bx \in \mathcal P_n$.
	Therefore, $\bx$ is in correspondence with the arrangement of $n$ points $\{x_1,\dots,x_n\}$ on the real line.
	The assumption \eqref{territory-cond} implies that:
	 \[
	  \hspace{3cm}  |x_i-x_j|<|x_i-x_k|\hspace{2cm} i,\,j \in A_0,\,\,\, k \notin A_0
	 \]
	 A similar claim holds if we replace $A_0$ with $A_1$. Consequently, either the sets $A_0$ and $A_1$
	 are disjoint or one of them contain the other one. That is to say, $A_0$ and $A_1$ are uncrossed.
	 Conversely, if $A_0$ and $A_1$ are uncrossed, then there is an $n$-ribbon tree $T$ such that
	 $T<T_{A_0}$ and $T<T_{A_1}$. This implies that $U_{A_0}$ and $U_{A_1}$
	 both contain the space $ \Phi_T(F_T\times (M_0,\infty]^{|E^{\rm Int}(T)|})$.	
\end{proof}

There is a map from the strong compactification to the weak compactification of $\mathcal P_n$, denoted by $\mathfrak{F}: \mathcal{K}_{n}\to\Delta_{n-2}$, which is equal to the identity map on the space $\mathcal P_n$. Intuitively, the compactification $\mathcal{K}_n$ allows different points to merge. It also has the the additional information that records the rate of convergence of different points to each other. The map $\fF$, called the {\it forgetful map}, forgets this additional information. To give the definition of $\fF$, let $T$ be an $n$-ribbon tree with root $l_0$ and the non-root leaves $l_i$. Suppose the root $l_0$ is connected to the interior vertex $v$ of $T$. Suppose also $e_1$, $\dots$, $e_{d(v)}$ are the edges of $T$ whose sources are equal to $v$. We assume that the labeling of these edges is given by the ribbon structure of the vertex $v$. We also define $n_i$ to be the number of leaves $l_j$ such that the unique path from $l_j$ to $l_0$ contains the edge $e_i$. Any $p\in F_T$ associates an arrangement $\{x_1,\dots,x_{d(v)}\}$ of $d(v)$ points to $v$. Then $\mathfrak{F}(p)$ is the arrangement of $n$ points in which $x_i$ appears with multiplicity $n_i$. It is straightforward to check that the map $\fF\circ \Phi_T$ for any $n$-ribbon tree $T$ is continuous. Therefore, the map $\mathfrak{F}$ is continuous.

\subsubsection*{Annular Decomposition}

Let $T$ be an $n$-ribbon tree with interior vertices $v_0$, $\dots$, $v_l$ and interior edges $e_1$, $\dots$, $e_l$. Suppose $\bx \in \mathcal P_n$ belongs to the open subset $F_T\times (0,\infty)^l$ of $U_T$. Then $\bx=\Phi_T(\bx_0,\dots,\bx_l,s_1,\dots,s_l)$ for appropriate choices of $(\bx_0,\dots,\bx_l)\in F_T$ and $(s_1,\dots,s_l)\in (M_0,\infty)^l$. For a given $i$, suppose $\{x_1,\dots,x_{d(v_i)}\}$ is a representative for $\bx_i$, $B_{r_j}(x_j)$ is the territory ball around $x_j$, and $B_{R}(\overline x)$ is the parent territory ball. Then we can form the  3-dimensional annular regions:
\begin{equation} \label{annulus}
  B_{r_j}(x_j)\backslash \{x_j\}
\end{equation}
around $x_j$ and the annular region $\R^3\backslash B_{R}(\overline x)$ around infinity. These annular regions are in correspondence to the edges of $T$. Each interior edge $e_k$ of $T$ with source $v_{i}$ and target $v_{i'}$ is in correspondence with two annular regions; one of these regions is a 3-dimensional annulus of the form \eqref{annulus} and the other one is an annular region around infinity. On the other hand, each leaf of $T$ is matched with exactly one annular region.

In order to form the punctured Euclidean space $\R^3\backslash \bx$, we glue together annular regions corresponding to the interior edges\footnote{For two different representatives of $\bx$, there is a unique affine transformation that identifies the corresponding punctured Euclidean spaces. Therefore, $\R^3\backslash \bx$ is well-defined up to a canonical isomorphism. See also the definition of $\fB_\bx$ in Subsection \ref{GH-family}.}. Suppose $e_k$ is an interior edge, connecting its source $v_i$ to its target $v_{i'}$. Suppose also the annular region associated to $e_k$ in $\R^3\backslash \bx_i$ and $\R^3\backslash \bx_{i'}$ are respectively denoted by $B_{r}(x)\backslash \{x\}$ and $\R^3\backslash B_{R}(\overline x)$. Remove the ball of radius $e^{-\frac{s_k}{2}}r$ from $B_{r}(x)\backslash \{x\}$ to obtain $A_1$. The space $A_1$ is an annulus with inner radius $e^{-\frac{s_k}{2}}r$ and outer radius $r$. We also take the intersection of $\R^3\backslash B_{R}(\overline x)$ with the closure of the ball centered at $\overline x$ and with radius $e^{\frac{s_k}{2}}R$ to obtain $A_2$. Then $A_2$ is also an annulus with inner radius $R$ and outer radius $e^{\frac{s_k}{2}}R$. We glue $A_1$ along its inner sphere to $A_2$ along its outer sphere using an affine transformation. We write $A(e_k,\bx,T)$ for the the union of the  following subsets of $A_1$ and $A_2$ after gluing:
\[
  A_1\cap B_{e^{-\frac{M_0}{2}}r}(x)\hspace{1cm} A_2\backslash  \overline{B_{e^{\frac{M_0}{2}}R}(\overline x)}
\]
The space $A(e_k,\bx,T)$ is also an annular region where the ratio of its outer radius to the inner radius is equal to $e^{s_k-M_0}$. We call $A(e_k,\bx,T)$ the {\it neck} of $\R^3\backslash \bx$ associated to the edge $e_k$ of $T$. Applying the above gluing construction to all interior edges gives rise to $\R^3\backslash \bx$.

We can also associate subspaces of $\R^3\backslash \bx$ to each leaf and each interior vertex of $T$. Suppose $l$ is a leaf of $T$ and $A(l,\bx,T)$ is the subspace of $\R^3\backslash \bx$ induced by the annular region associated to $l$. We call $A(l,\bx,T)$ the {\it end} of $\R^3 \backslash \bx$ associated to the leaf $l$ of $T$. The complement of the necks and the ends of $\R^3\backslash \bx$ is a union of $l+1$ compact connected spaces, one for each interior vertex. The {\it fat region} of $\R^3\backslash \bx$ associated to the interior vertex $v_i$ of $T$ is the connected component of this space corresponding to $v_i$. The fat region associated to the vertex $v_i$ can be also regarded as a subset of $\R^3\backslash \bx_i$. Decomposing $\R^3\backslash \bx$ as the above disjoint union of necks, ends and fat regions is called the {\it annular decomposition} of $\R^3\backslash \bx$ associated to the tree $T$.

The definition of annular decomposition can be extended to the open neighborhood $F_T\times (0,\infty]^l$ of $F_T$. In this neighborhood the parameters $s_k$ are allowed to be $\infty$. If $s_k=\infty$, then we leave the annular regions associated to the interior edge $e_k$ without change. Furthermore, $A(e_k,\bx,T)$ is the disjoint union of two sets of the following form:
\begin{equation} \label{neck-infinite}
  B_{e^{-\frac{M_0}{2}}r}(x)\backslash \{x\}\hspace{1cm} \R^3\backslash  \overline{B_{e^{\frac{M_0}{2}}R}(\overline x)}.
\end{equation}
According to Proposition \ref{open-sets}, for any point $p\in \mathcal{K}_n$, there is a unique $n$-ribbon tree $T$ such that $T$ is minimal with respect to the relation $\leq$ and $p \in F_T\times (0,\infty]^l$. The annular decomposition of $p$ (without reference to any ribbon tree) is defined as the annular decomposition with respect to this minimal tree.

Suppose again $T$ is a ribbon tree with interior vertices $v_0$, $\dots$, $v_l$ and interior edges $e_1$,$\dots$, $e_l$. Suppose $\bx\in \mathcal P_n$ is equal to $\Phi_T(\bx_0,\dots,\bx_l,s_1,\dots,s_l)$ where $ (\bx_0,\dots,\bx_l)\in F_T$ and $(s_1,\dots,s_l)\in (0,\infty)^l$. Let $\gamma_0, \dots, \gamma_l:\R^3\backslash \bx \to \R$ be the functions determined by the properties (i), (ii) and (iii) below.
\begin{itemize}
	\item[(i)] The function $\gamma_i$ is equal to $1$ on the fat region associated to the interior vertex $v_i$ and vanishes
	on the fat region associated to any other vertex.
	\item[(ii)] If $l$ is a leaf incident to the vertex $v_i$, then $\gamma_i$ is equal to $1$
	on the end associated to the leaf $l$.
	\item[(iii)] Suppose the neck $A(e_k,\bx,T)$ associated to the interior edge $e_k$ is a 3-dimensional annulus
	with inner radius $r$ and the outer radius $e^{\tau}r$. If $v_i$ is the target of $e_k$, then $\gamma_i$ at $y\in A(e_k,\bx,T)$
	is defined as follows:
	\[
	  \gamma_i(y)=\varphi_1(\frac{\ln(\frac{|y|}{r})}{\tau})
	\]
	where the function $\varphi_1$ is given in \eqref{phi-1-2}.
	If $v_i$ is the source of $e_k$, then $\gamma_i$ at $y\in A(e_k,\bx,T)$
	is defined as follows:
	\[
	  \gamma_i(y)=\varphi_2(\frac{\ln(\frac{|y|}{r})}{\tau}).
	\]
	where the function $\varphi_2$ is given in \eqref{phi-1-2}.
	If $v_i$ is not incident to $e_k$, then $\gamma_i$ vanishes on $A(e_k,\bx,T)$.
\end{itemize}
The functions $\gamma_0$, $\dots$, $\gamma_l$ can be used to glue a list of functions $\{f_i:\R^3\backslash \bx_i \to \R\}_{0\leq i \leq l}$ and to form a function $f:\R^3\backslash \bx \to \R$. The function $f$ is defined as below:
\[
  f:=\sum_{i=0}^l \gamma_i f_i
\]
We say $f$ is given by gluing $f_0$, $\dots$, $f_l$ along the ribbon tree $T$ with parameters $(s_1,\dots,s_l)$. We can apply a similar gluing process in the case that $f_i$ are differential forms of the same degree (or any other set of tensorial objects of the same type) on the spaces $\R^3\backslash \bx_i$.

Definition of the functions $\gamma_0$, $\dots$, $\gamma_l$ can be extended to the case that some of the parameters $s_k$ are equal to $\infty$. If $s_k=\infty$, then the neck corresponding to $e_k$ consists of two annular regions as in \eqref{neck-infinite}. In this case, if $v_i$ is the source of $e_k$, then $\gamma_i$ is equal to $1$ on the first set in \eqref{neck-infinite} and vanishes on the second set. If $v_i$ is the target of $e_k$, then $\gamma_i$ is equal to $1$ on the second set in \eqref{neck-infinite} and vanishes on the first set. If $v_i$ is not incident to $e_k$, then $\gamma_i$ vanishes on the neck corresponding to $e_k$. Using this extension, we can glue functions along the ribbon tree $T$ with parameters $(s_1,\dots,s_l)$ where some of the $s_k$ could be $\infty$.

\subsection{Associahedron of Metrics} \label{abst-assoc-family}
Suppose $W$ is a 4-manifold with $n+1$ boundary components $Y_0$, $\dots$, $Y_n$. For a cyclic bisection $A$ of the set $[n]$, let $Y_{A}$ be a cut of $W$ such that $W\backslash Y_A$ has connected components $W^1_{A}$ and $W^2_{A}$ with:
\[
  \partial W^1_{A}=\{Y_{A}\}\cup \{Y_i\mid i \notin A\} \hspace{1cm}
  \partial W^2_{A}=\{-Y_{A}\}\cup \{Y_i\mid i \in A\}
\]
We say the set of cuts $\{Y_{A}\}$ is of  {\it associahedron type}, if for any two uncrossed bisections $A_0$ and $A_1$, the cuts $Y_{A_0}$ and $Y_{A_1}$ are disjoint. We then pick, for every cut $Y_{A}$, a tubular neighborhood $N(Y_{A})$ which is diffeomorphic to $[-2,2] \times Y_{A} $. We also assume that  $N(Y_{A_0})$ and $N(Y_{A_1})$ are disjoint when  $Y_{A_0}$ and $Y_{A_1}$ are disjoint. Suppose $T$ is an $n$-ribbon tree with interior vertices $v_0$, $\dots$, $v_l$ and interior edges $e_1$, $\dots$, $e_l$. Let also $A_1$, $\dots$, $A_l$ denote the cyclic bisections associated to $T$. Then we define $Y_T$ to be the union of the disjoint cuts $Y_{A_1}$, $\dots$, $Y_{A_l}$. Thus each interior edge of $T$ is in correspondence with one of the connected components of $Y_A$ and each leaf of $T$ is in correspondence with one of the boundary components of $W$. The 4-manifold $W\backslash (-1,1)\times Y_T$ has $l+1$ connected components, one for each interior vertex of $v$. We denote the connected component of $W\backslash (-1,1)\times Y_T$ associated to the interior vertex $v_i$ by $W_{i}$. The set of the boundary components of $W_{i}$ are given by the 3-manifolds associated to the edges which are incident to $v_i$.

\begin{example} \label{asso-type-cuts}
	The $4$-manifold $W_{k}^{j}$, introduced in Section \ref{topology},
	has $k-j+2$ boundary components:
	\[
	  -Y_j\hspace{1cm}M^{j}_{j+1}\hspace{1cm}M^{j+1}_{j+2}\hspace{1cm}\dots\hspace{1cm}M^{k-1}_{k}
	  \hspace{1cm}Y_k
	\]
	 The vertical and the spherical cuts provide
	a set of cuts of associahedron type for $W^j_k$.
\end{example}

For a manifold $W$ as above, we can construct a family of metrics parametrized by the associahedron $\mathcal K_{n}$ where the cut associated to the face $F_T$ of $\mathcal K_{n}$ is equal to $Y_T$. We first choose a background Riemannian metric $g_0$ on $W$ which has cylindrical ends along the boundary components $Y_0$, $\dots$, $Y_n$. We equip $Y_A$ with a Riemannian metric $h_{A}$, and on $N(Y_{A})$ we construct a smooth $1$-parameter family of metrics $g_{A}(s)$, for $s \in [0,\infty)$, with the following properties:
\hspace{-5pt}
\begin{enumerate}
	\item[(i)] For all values of $s$, $g_{A}(s)$ extends to a smooth metric on $W$
	which is equal to $g_0$ on $W \;\setminus \; N(Y_A)$.
	\item[(ii)] For $s<1$, we have $g_{A}(s) = g_0$ on $N(Y_{A})$.
	\item[(iii)] For $s\geq 2$, the restriction of $g_{A}(s)$ to the region
	$Y_{A} \times [-2,-1) \cup Y_{A} \times (1,2]$ is independent of $s$.
	\item[(iv)]For $s\geq 2$, we have $g_{A}(s) = u_s(r)^2dr^2 + h_{A}$ on $Y_{A} \times [-1,1]$, where
	 the function $u_s(t)$ is fixed in Subsection \ref{family-general}.
\end{enumerate}
\hspace{-5pt}
For each cyclic bisection $A$, we also fix a function $\gamma_A:\mathcal K_{n} \to \R\cup \{\infty\}$ such that $\gamma_{A}$ is zero on the complement of $\overline F_{A}\times (M_0,\infty]\subset \mathcal K_{n}$, and $\gamma_{A}(\bx,s)=s-M_0$ for $(\bx,s) \in \overline F_{A}\times (M_0+1,\infty] \subset U_{A}$. Then Proposition \ref{open-sets} implies that for any $p\in \mathcal K_n$, the numbers $\gamma_{A_0}(\bx)$ and $\gamma_{A_1}(\bx)$ are both non-zero only if $A_0$ and $A_1$ are uncrossed.

We can now describe the family of metrics on $W$. Fix an element $\bx$ in $\mathcal K_{n}$. We need to define a (possibly broken) metric $g(\bx)$ on $W$. We firstly define this metric on $N(Y_{A})$ for a cyclic bisection $A$. If $\gamma_{A}(\bx)$ is a finite number, then the metric $g(\bx)$ on the tubular neighborhood $N(Y_{A})$ is equal to $g_{A}(\gamma_{A}(\bx))$. Otherwise, we define $g(\bx)$ to be defined as follows:
\[
  g(\bx)=
  \left \{
  \begin{array}{ll}
 	g_{A}(2) &\text{on }Y_{A} \times [-2,-1) \cup Y_{A} \times (1,2]\\
	\frac{1}{r^2}dr^2 + h_{A}&\text{on }Y_{A} \times [-1,1]
  \end{array}
  \right.
\]
On the complement of the tubular neighborhoods $N(Y_{A})$, we define $g(\bx)$ to be equal to $g_0$. The properties of the metrics $g_A$ and the functions $\gamma_A$ show that $g(\bx)$ is a well-defined metric and these metrics together form a family of metrics on $W$ parametrized by $\mathcal K_{n}$.

\subsection{Families of Gibbons-Hawking Metrics on ALE spaces}\label{GH-family}
In the previous subsection, we introduced a general construction to define a family of metrics parametrized by an associahedron. In particular, this construction can be applied to $W^j_k$. For our purposes, we need to modify this family of metrics. In this subsection, we construct families of metrics on some 4-manifolds which appear as submanifolds of $W^j_k$. The modified family of metrics on $W^j_k$ shall be discussed in the next subsection.

\subsubsection*{Gibbons-Hawking Metric}

Suppose $x_1$, $\dots$, $x_n$ are distinct points in $\R=\R \times \{(0,0)\}\subset \R^3$. Suppose also  $m_0$, $m_1$, $\dots$, $m_n$ is an increasing sequence of integer numbers. Then define:
\begin{equation} \label{Green-function}
  \hspace{2cm} u(q)=\sum_{i=1}^{k}\frac{m_i-m_{i-1}}{|q-x_i|}\hspace{2cm}q\in \R^3
  \setminus\{x_1,\dots,x_n\}.
\end{equation}
The function $u$ is harmonic and hence $\alpha:=*du$ defines a closed 2-form  on the space $\R^3\setminus\{x_1,\dots,x_n\}$ where $*$ is the Hodge operator associated to the Euclidean metric.
Integrality of the numbers $m_i$ implies that $\alpha$ represents an integral cohomology class on $\R^3\setminus\{x_1,\dots,x_n\}$. For $0\leq i \leq n$, let $\fd_i$ be the straight path $(x_i,x_{i+1})$ on the line $\R \times \{(0,0)\}$ which is oriented in the increasing direction. Here we assume that $x_0=-\infty$ and $x_{n+1}=\infty$. Then the cohomology class of $\alpha$ is Poincar\'e dual to:
\begin{equation} \label{divisor}
  -\sum_{i=0}^n m_i\fd_i.
\end{equation}
There is also a $\U(1)$-bundle $L$ over $\R^3\setminus\{x_1,\dots,x_n\}$ whose first Chern class is represented by $\alpha$. The divisor in \eqref{divisor} determines a canonical choice of this bundle. The total space of $L$ in a punctured neighborhood of $x_i$ is diffeomorphic to $\R^{>0} \times L(m_{i-1}-m_i,1)$ where the first factor parametrizes $|q-x_i|$.

The set $A=\{x_1,\dots,x_n\}$ represents an element $\bx\in \mathcal P_{n}$. Given another representative $A'=\{x_1',\dots,x_n'\}$ for $\bx$, there is a unique affine diffeomorphism $\Phi^A_{A'}:\R^3\setminus \{x_1,\dots,x_n\}\to \R^3\setminus \{x_1',\dots,x_n'\}$. We define $\fB_\bx$ to be:
\begin{equation} \label{can-ball}
  \fB_{\bx}=(\bigsqcup_{A} \R^3\setminus A)/\sim
\end{equation}
where $A$ runs over all representatives of $\bx$ and $\sim$ is the equivalence relation defined by the maps $\Phi^A_{A'}$. A differential form $\beta$ on $\fB_\bx$ is given by differential forms $\beta_A$ on $\R^3\setminus A$, for any $A$, such that $(\Phi^A_{A'})^*(\beta_{A'})=\beta_A$. Similarly, we define any other geometrical object on $\fB_\bx$. The function $u$ in \eqref{Green-function} does not give rise to a function on $\fB_\bx$. However, $\alpha=*du$ determines a well-defined differential 2-form on $\fB_\bx$, which we denote by $\alpha_{\bm,\bx}$. Similarly, $Q_{\bm,\bx}$ defined by the quadratic form:
\[
 u^2(dx^2+dy^2+dz^2)
\]
is well-defined on $\fB_{\bx}$. The definition of annular decomposition is also compatible with respect to the equivalence relation $\sim$ in \eqref{can-ball}. Therefore, it makes sense to talk about annular decomposition of $\fB_\bx$. We also define $\fX_{\bm,\bx}$ by defining an analogous equivalence relation on the set of all $\U(1)$-bundles $L$, defined for different choices of representatives of $\bx$. If we are only interested in the diffeomorphism type of the 4-manifold $\fX_{\bm,\bx}$ and the choice of $\bx$ is not important for us, then we denote this space by $\fX_\bm$.

Let $\omega$ be a connection on $\fX_{\bm,\bx}$ whose curvature is equal to $\alpha_{\bm,\bx}$. This connection is given by a 1-form on the total space of $\fX_{\bm,\bx}$, which we also denote by $\omega$. Consider the following metric on $\fX_{\bm,\bx}$:
\begin{equation} \label{GH-metric}
	g:=Q_{\bm,\bx}+\omega^2
\end{equation}
Since $\R^3\setminus\{x_1,\dots,x_n\}$ is simply connected, any other connection $\omega'$ with curvature $\alpha_{\bm,\bx}$ is the pull-back of the connection $\omega$ by an automorphism of the bundle $\fX_{\bm,\bx}$. Therefore, the metrics induced by $\omega$ and $\omega'$ are isometric.

\begin{prop} \label{GH-prop}
	The metric $g$ on $\fX_{\bm,\bx}$ is an anti-self-dual asymptotically cylindrical metric with positive scalar curvature.
\end{prop}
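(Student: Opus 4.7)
The plan is to recognise $g$ as conformal to the classical Gibbons--Hawking hyperk\"ahler metric on $\fX_{\bm,\bx}$, use this both to deduce the ASD property and to compute the scalar curvature, and finally to read off the asymptotic cylindrical behaviour at each end from the asymptotic behaviour of $u$. Concretely, I rewrite
\[
  g \;=\; u\cdot \widetilde g, \qquad \widetilde g \;:=\; u\,(dx^2+dy^2+dz^2) \;+\; u^{-1}\omega^{\,2}.
\]
Since $u$ is harmonic and the curvature of $\omega$ equals $*du$, $\widetilde g$ is exactly the Gibbons--Hawking ansatz and is therefore hyperk\"ahler, hence in particular Ricci-flat and half conformally flat. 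Because the decomposition $W=W^+\oplus W^-$ of the Weyl tensor in dimension four depends only on the conformal class, $g$ is also half conformally flat; with the orientation fixed by the fiber-first convention of the introduction the vanishing half is $W^+$, so $g$ is anti-self-dual.

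Next I compute $R_g$ via the standard four-dimensional conformal change formula. Writing $g=\phi^2 \widetilde g$ with $\phi=u^{1/2}$ and using $R_{\widetilde g}=0$,
\[
  R_g \;=\; \phi^{-3}\bigl(-6\,\Delta_{\widetilde g}\phi + R_{\widetilde g}\phi\bigr) \;=\; -6\,u^{-3/2}\,\Delta_{\widetilde g}\,u^{1/2}.
\]
Working in an orthonormal coframe adapted to the $\U(1)$-action one checks $\sqrt{\det \widetilde g}=u$ and, for any $\U(1)$-invariant function $f$, $\Delta_{\widetilde g}f=u^{-1}\Delta_{\R^3}f$. Harmonicity of $u$ then yields $\Delta_{\R^3}u^{1/2} = -\tfrac14 u^{-3/2}|\nabla u|^2$, so that
\[
  R_g \;=\; \tfrac32\,u^{-4}\,|\nabla u|^2 \;\geq\; 0,
\]
which is strictly positive away from the (measure-zero) critical locus of $u$.

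For the asymptotically cylindrical claim I analyse each end of $\fX_{\bm,\bx}$ separately. Near a puncture $x_i$, set $r=|q-x_i|$ and $t=-\log r$. The expansion $u=(m_i-m_{i-1})/r+O(1)$ gives
\[
  u^{2}(dr^2+r^2 g_{S^2}) \;\longrightarrow\; (m_i-m_{i-1})^2\bigl(dt^2+g_{S^2}\bigr),
\]
while $\omega$ asymptotes to the Chern connection on the circle bundle $L(m_i-m_{i-1},1)\to S^2$ of the appropriate degree; the subleading terms in $u$ produce corrections decaying like $e^{-\delta t}$ in every $C^k$-norm, giving the asymptotic form \eqref{cylinder-2}. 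The end at infinity in $\R^3$ is handled identically, with $r\to\infty$ and leading coefficient $m_n-m_0$, producing a cylindrical end modelled on $L(m_n-m_0,1)$. The only step requiring genuine care is the scalar-curvature computation with its conformal-weight bookkeeping; the ASD claim is in essence the remark that hyperk\"ahler implies half conformally flat, and the asymptotic analysis is a direct expansion in $1/r$.
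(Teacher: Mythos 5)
Your treatment of the anti-self-duality and the scalar curvature is correct and is essentially the paper's argument verbatim: pass to the hyperk\"ahler Gibbons--Hawking metric $\widetilde g=g/u$, invoke conformal invariance of $W^{\pm}$, and compute $R_g=\tfrac32 u^{-4}|\nabla u|^2$ via the conformal change formula together with $\Delta_{\widetilde g}(\pi^*f)=\pi^*(u^{-1}\Delta_{\R^3}f)$. (Your remark that this vanishes on the critical locus of $u$ is, if anything, more careful than the paper's ``always positive.'')

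The gap is in the asymptotically cylindrical claim, and it is precisely the point on which the paper spends most of its proof. You write that ``$\omega$ asymptotes to the Chern connection on the circle bundle,'' but $\omega$ is only constrained to be \emph{some} connection on $\fX_{\bm,\bx}$ with curvature $\alpha=*du$; nothing in that specification forces it to approach the model connection $\pi^*\mu_{m_{i-1},m_i}$ on the ends, and the metric $g=Q_{\bm,\bx}+\omega^2$ is asymptotically cylindrical only for such a choice. Since any two admissible $\omega$'s are related by a bundle automorphism (so the resulting metrics are isometric), it suffices to \emph{exhibit} one good choice, but this must actually be done: one fixes a reference connection $\omega'$ equal to the model pullback connection on each end, observes that $a:=\alpha-F(\omega')$ decays exponentially in the cylindrical coordinate together with all derivatives, and then constructs a $1$-form $b$ with $db=a$ enjoying the same exponential decay --- by integrating the $dt$-component of $a$ from the end inward on each cylindrical piece and correcting by a compactly supported primitive on the complement (this last step uses the vanishing of the relevant compactly supported cohomology). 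Setting $\omega=\omega'+b$ then yields the decay condition \eqref{cylinder-2}. Without this construction the assertion that the corrections to the cross-sectional metric $h=(m_i-m_{i-1})^2 I_0+\mu_{m_{i-1},m_i}^2$ decay like $e^{-\delta t}$ in every $C^k$-norm is unsupported; the decay of the $Q_{\bm,\bx}$-part follows from the expansion of $u$ as you say, but the connection part does not follow from an expansion of $u$ alone.
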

\begin{proof}
        Let $\widetilde g$ be the metric $g/u$, a metric in the same conformal class as $g$. (This metric depends on the choice of the representative $\bx$.) The metric $\widetilde g$, introduced by Gibbons and Hawking, is hyper-K\"ahler \cite{GH:multi-ins}. In particular, it is an anti-self-dual metric and has vanishing scalar curvature. Thus, the metric $g$ is also anti-self-dual. The scalar curvature of $g$ is given by the following formula:
        \begin{align*}
        	R&=6u^{-\frac{3}{2}}\Delta_{\widetilde g}(u^{\frac{1}{2}})
       \end{align*}
        where $\Delta_{\widetilde g}$ is the Laplace-Beltrami operator\footnote{We use the the convention of differential geometry for the sign of Laplace-Beltrami operator, i.e., $\Delta=d^*d$.}
        with respect to the metric $\widetilde g$. In general, if $\widetilde f:L \to \R$ is the pull-back of a function $f$ on $\R^3\setminus\{x_1,\dots,x_n\}$, then $\Delta_{\widetilde g}(\widetilde f)$ is equal to the pull-back of $u^{-1}\Delta(f)$ where $\Delta(f)$ is the standard Laplace operator. Therefore, the scalar curvature of $g$ is given by the following expression which is always positive:
        \begin{equation*}
        	\frac{3}{2}u^{-4}(u_{x_1}^2+u_{x_2}^2+u_{x_3}^2)
        \end{equation*}
        	
        The metric $g$ has a nice asymptotic behavior. Firstly let $k=1$ and $x_1$ be the origin in $\R^3$. Let also $u_0$ be the harmonic function $\frac{m_1-m_0}{r}$, where $r=|q|$. The associated $\U(1)$-bundle, the 2-form, the connection and the metric are denoted by $L_0$, $\alpha_0$, $\omega_0$ and $g_0$. The associated divisor is also equal to $-m_0\fd_0-m_1\fd_1$ which is invariant with respect to the dilation maps $\phi_\lambda(q)=\lambda\cdot q$. The 2-form $\alpha_0$ is also invariant with respect to $\phi_\lambda$. Moreover, the contraction of $\alpha_0$ in the radial direction vanishes.

        The bundle $L_0$ is the pull-back of a $\U(1)$-bundle $l_{m_0,m_1}$ on $S^2$ with respect to the radial projection map $\pi$. Here $S^2$ denotes the sphere of radius one in $\R^3$ centered at the origin with the non-standard orientation. Let $\fr,\fl\in S^2$ be the points $(1,0,0)$ and $(-1,0,0)$. Then $l_{m_0,m_1}$ is the $\U(1)$-bundle on $S^2$ associated to the divisor $m_0\fl-m_1\fr$. For each choice of $m_0$ and $m_1$, we fix a connection $\mu_{m_0,m_1}$ on $l_{m_0,m_1}$ whose curvature is represented by a harmonic 2-form (with respect to the standard metric $\sigma$ on $S^2$). We can pick $\omega_0$ to be the pull-back connection $\pi^*(\mu_{m_0,m_1})$. Then the metric $g_0$ can be written as:
        \begin{equation}\label{g0}
        	g_0=(m_1-m_0)^2\frac{dr^2}{r^2}+\pi^*((m_1-m_0)^2I_0+\mu_{m_0,m_1}^2)
        \end{equation}
        where the quadratic form $I_0$ on $l_{m_0,m_1}$ is the pull-back of the standard metric $\sigma$ on $S^2$. We reparametrize $L_0$ using the (orientation preserving) diffeomorphism $\Phi: \R \times l_{m_0,m_1} \to L_0$, defined as follows:
        \begin{equation} \label{repar}
        	\Phi(t,x)=e^{-t}\cdot x.
        \end{equation}	
	Then $g_0$ is the product metric $(m_1-m_0)^2dt^2+h$ with $h$ being the metric $(m_1-m_0)^2I_0+\mu_{m_0,m_1}^2$ on $ l_{m_0,m_1}$.
        	
        Next, we consider the more general case that $k>1$. Let the following balls be the set of territory balls for $\{x_1,\dots,x_n\}$:
        \[
           B_{r_1}(x_1)\hspace{.7cm} B_{r_2}(x_2)
          \hspace{.7cm}  \cdots \hspace{.7cm} B_{r_n}(x_n) \hspace{.7cm} B_R(\overline {x})
        \]
        For $1\leq i \leq k$, let $D_i$ be the punctured ball $B_{r_i}(x_i)\setminus \{x_i\}$, and $D_\infty$ be the interior of the complement of $B_R(\overline x)$. We also define $\widetilde D_i$ to be the punctured closed ball $\overline {B_{r_i/2}(x_i)}\setminus \{x_i\}$ and $\widetilde D_\infty$ to be the complement of $B_{2R}(\overline x)$. Consider the diffeomorphisms $\Psi_i: (0,\infty)\times S^2 \to D_i$ and $\Psi_\infty: (-\infty,0)\times S^2 \to D_\infty$ defined as follows:
        \begin{equation}\label{cylinder-diffeo}
          \Psi_i(t,y)=r_i\cdot e^{-t}\cdot y\hspace{1cm}\Psi_i(t,y)=R\cdot e^{-t}\cdot y
        \end{equation}

        As in \eqref{repar}, we can fix a canonical isomorphism of $\fX_{\bm,\bx}|_{D_i}$ with the cylinder $(0,\infty) \times l_{m_{i-1},m_i}$ and a canonical isomorphism of $\fX_{\bm,\bx}|_{D_\infty}$ with $(-\infty,0) \times l_{m_0,m_k}$ by lifting the above diffeomorphisms. We also fix a connection $\omega'$ on $\fX_{\bm,\bx}$ such that it agrees with the pullback of the connection $\mu_{m_{i-1},m_i}$ on $\fX_{\bm,\bx}|_{D_i}$ and the pullback of the connection $\mu_{m_0,m_k}$ on $\fX_{\bm,\bx}|_{D_\infty}$. Then $a:=\alpha_{\bm,\bx}-F(\omega')$ induces a 2-form on $\fB_{\bx}$ which satisfies the following exponential decay condition for any point $(t,y)\in D_i$ or $D_\infty$ and any integer number $k$:
        \begin{equation} \label{decay}
          \hspace{4cm} |\nabla^ka(t,y)|\leq C_k e^{-\delta |t|} \hspace{1cm} (t,y)\in D_i \text{ or } D_\infty.
        \end{equation}
        Here $(t,y)$ is defined using the maps in \eqref{cylinder-diffeo}, $\delta$ is a positive real number independent of $k$ and $C_k$ is a positive constant.
	Analogous to $M_0$ in the previous section, we might need to increase the value of constants $\delta$ and $C_k$ as we move forward throughout the paper.
        The above norm is defined with respect to the cylindrical metric $dt^2+\sigma$ on $D_i$ rather than the Euclidean metric.

        The 2-form $a$ is closed and we wish to find a 1-form $b$ with the similar decay condition as in \eqref{decay} such that $db=a$.
        For $1\leq i \leq n$ or $i=\infty$, we can write the restriction of $a$ to $D_i$ in the following way:
	\[
          \left . a\right|_{D_i}=\alpha_i(t)+\beta_i(t) dt
        \]
        where $\alpha(t)$ and $\beta(t)$, for each $t$, are 2- and 1-forms on $S^2$. Then define:
	\begin{equation} \label{integrate-2-form-1}
          \hspace{2cm} b_i(t)=-\int_t^\infty \beta_i(s)\,ds \hspace{1cm}1\leq i \leq n
	\end{equation}
        and
	\begin{equation} \label{integrate-2-form-2}
          \hspace{2cm} b_\infty (t)=\int_{-\infty}^t \beta_\infty(s) \,ds
	\end{equation}
        Then $b_i$ has the similar decay condition as in \eqref{decay} over $D_i$ and $db_i=a$. Suppose $\phi_i$ is a function on $\fB_\bx$ which is zero outside of $D_i$ and is equal to $1$ on a neighborhood of $\widetilde D_i$. Then:
        \begin{equation} \label{reminder}
          a-d(\phi_1b_1+\dots+\phi_nb_n+\phi_\infty b_\infty)
        \end{equation}
        is a closed 2-form on $\R^3$ which is supported in the complement of the union of $\widetilde D_i$. Since $\fB_{\bx}\backslash \bigcup \widetilde D_i$ has trivial second cohomology with compact support, we can find a 1-form $b'$ supported outside of the union of $\widetilde D_i$'s such that $db'$ is equal to the 2-form in \eqref{reminder}. Therefore, the exterior derivative of:
        \[
          b:=\phi_1b_1+\dots+\phi_nb_n+\phi_\infty b_\infty+b'
        \]
        is equal to $a$ and $b$ is exponentially decaying as in \eqref{decay}. In particular, $\omega:=\omega'+b$ defines a connection on $\fX_{\bm,\bx}$ such that $F(\omega)=\alpha_{\bm,\bx}$. Given this, it is straightforward to show that
        $g$ is an asymptotically cylindrical metric.

        We can give an explicit choice for the 1-form $b'$. Although we will not use it here, this canonical choice will be useful for us later when we need to repeat the above construction in family.
        To construct the 1-form $b'$, let $U_i$, for $1\leq i\leq n-1$ be a small closed neighborhood of the path along the $x$-axis in $\R^3$ which connects $\widetilde D_i$ to $\widetilde D_{i+1}$.
        (See Figure \ref{integrate}.) By applying Poincar\'e lemmas for regular cohomology and compactly supported cohomology \cite[Chapter 3]{BoTu:diff-form},
        we can reduce our problem into finding a compactly supported 1-form $b'$ in
        $V:=\R^{3}\backslash \( \bigcup \widetilde D_i\cup \bigcup U_i\)$ such that $db'$ is equal to a given 2-form compactly supported in $V$.
        Projection\footnote{To obtain a smooth map, we can enlarge $V$ slightly and then use the red lines to identify $V$ and $S^2\times \R$}
        along the red lines in Figure \ref{integrate} identifies $V$ with $S^2\times \R$.
        By another application of Poincar\'e Lemma for cohomology with compact support,
        we can reduce our problem into finding a function $f$ on $S^2$ such that $df$ is equal to a given 1-form.
        This problem has a unique solution if we require that the integral of $f$ over $S^2$ is equal to $0$.
        \begin{figure}
        	\centering
                \begin{tikzpicture}
                        \draw[thick] ([shift=(10.5:1)]0,0)  arc (10.5:349.5:1) ;
                        \draw[thick] ([shift=(190.5:1)]10,0) arc (190.5:529.5:1);
                        \draw[thick] ([shift=(8.2:1.3)]5,0)    arc  (8.2:171.8:1.3);
                        \draw[thick] ([shift=(188.2:1.3)]5,0)    arc  (188.2:351.8:1.3);

                        \draw[thick] (0.98,0.2)--(3.72,0.2);
                        \draw[thick] (0.98,-0.2)--(3.72,-0.2);
                        \draw[thick] (6.28,-0.2)--(9.02,-0.2);
                        \draw[thick] (6.28,0.2)--(9.02,0.2);

                        \foreach \a in {0,1,...,6}{
                        \draw[thick,red] (90+180/6*\a:1)-- (90+180/6*\a:3); }
                        \begin{scope}[shift={(10,0)}]
                        \foreach \a in {0,1,...,6}{
                            \draw[thick,red] (270+180/6*\a:1)-- (270+180/6*\a:3); }
                         \end{scope}

                        \draw[thick,red] (5,1.3)--(5,3);
                        \draw[thick,red] (5,-1.3)--(5,-3);

                        \draw[thick,red] (0.8,0.6)--(0.8,3);
                        \draw[thick,red] (0.8,-0.6)--(0.8,-3);
                        \draw[thick,red] (4.2,1.02)--(4.2,3);
                        \draw[thick,red] (4.2,-1.02)--(4.2,-3);
                        \draw[thick,red] (5.8,1.02)--(5.8,3);
                        \draw[thick,red] (5.8,-1.02)--(5.8,-3);
                        \foreach \a in {1,2,3}{
                        \draw[thick,red] (1+0.75*\a,0.2)--(1+0.75*\a,3);
                        \draw[thick,red] (1+0.75*\a,-0.2)--(1+0.75*\a,-3);
                        }
                        \begin{scope}[shift={(5,0)}]

                        \draw[thick,red] (4.2,0.6)--(4.2,3);
                        \draw[thick,red] (4.2,-0.6)--(4.2,-3);
                        \foreach \a in {1,2,3}{
                        \draw[thick,red] (1+0.75*\a,0.2)--(1+0.75*\a,3);
                        \draw[thick,red] (1+0.75*\a,-0.2)--(1+0.75*\a,-3);}
                         \end{scope}
                \end{tikzpicture}
        	\captionof{figure}{In this example, there are three territory balls. The ribbon neighborhood $V_1$ (respectively, $V_2$) connects the second territory ball to the first one (respectively, third one).}\label{integrate}
        \end{figure}
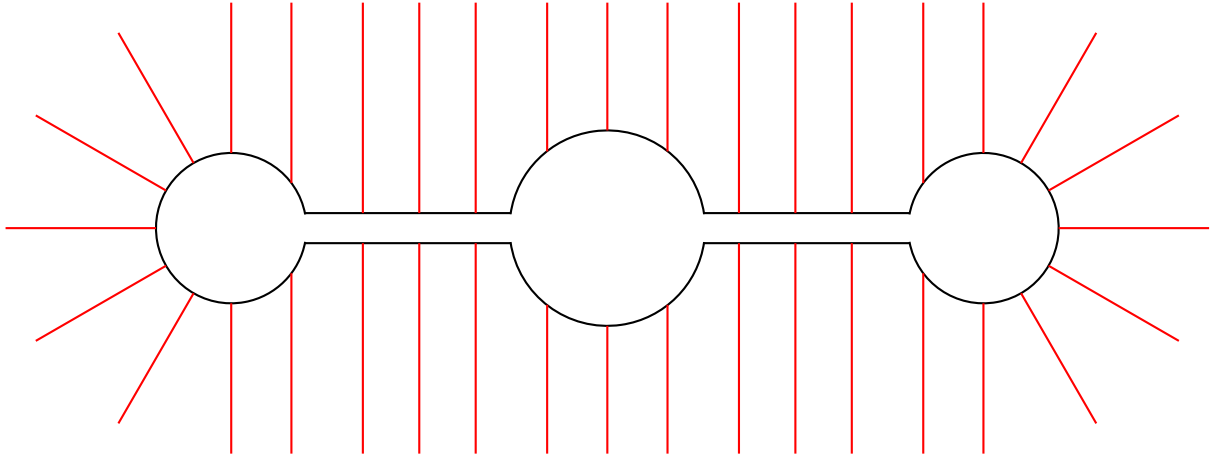
\end{proof}

\begin{definition}
	We call the 4-manifold $\fX_{\bm,\bx}$ the Gibbons-Hawking manifold 
	associated to the parameters $\bm$ and $\bx$.
	The metric $g$ in \eqref{GH-metric} is also called the Gibbons-Hawking metric 
	on $\fX_{\bm,\bx}$.
\end{definition}

The definition of GH metrics can be extended to a slightly more general choices of parameters $\bm$ and $\bx$. For any sequence of (not necessarily increasing) integers $(m_0,\dots,m_n)$ and $\{x_1,\dots,x_n\}\subset \R$, we can form a $\U(1)$-bundle $L$ over $\R^3\setminus \{x_1,\dots,x_n\}$. Let $\H$ be the subspace of $\R^3$ given by $y\geq 0$ and $z=0$. Thus, $\H$ can be identified with the standard upper half-plane. There is a M\"obius transformation acting on $\H$ which maps $x_i$ to $\infty$. This diffeomorphism of $\H$ is unique up to post-composition with an affine transformation. Therefore, it induces a diffeomorphism from $\R^3\setminus \{x_1,\dots,x_n\}$ to $\R^3\setminus \{x_1',\dots,x_n'\}$ which is well-defined up to post-composition with an affine transformation of $\R^3\setminus \{x_1',\dots,x_n'\}$. (In particular, $(x_1',\dots,x_n')$ induces a well-defined element of $\mathcal P_n$.) This diffeomorphism can be lifted to a diffeomorphism of $L$ and the $\U(1)$-bundle $L'$ associated to $(m_i,m_{i+1},\dots,m_n,m_0,\dots,m_{i-1})$ and $(x_1',\dots,x_n')$. If $L$ admits a Gibbons-Hawking metric, then we can push-forward this metric to $L'$. In summary, we can extend the definition of Gibbons-Hawking manifold $\fX_{\bm,\bx}$ to the case that $\bm=(m_0,\dots,m_n)$ is increasing possibly after shifting parameters.

\subsubsection*{Associahedron of Gibbons-Hawking Metrics}

Fix a sequence of increasing integers $\bm=(m_0,m_1,\dots,m_n)$, let $\fX_\bm$ be the Gibbons-Hawking manifold associated to $\bm$ and the points $x_i=i+\frac{1}{2}$ for $1\leq i \leq n$. Suppose $A$ is a cyclic bi-section of $[n]$ given by the set $\{j,j+1,\dots,k\}$. Analogous to Section \ref{topology}, let $S^j_k$ be the sphere that is centered at $(\frac{j+k}{2},0,0)$ and has radius $\frac{k-j-1}{2}+\nu(k-j)$ with $\nu$ being the same function as in Section \ref{topology}. Then $\R^3\backslash S^j_k$ has two connected components; one component contains the points $x_i$ with $j\leq i < k$, and the other component contains the remaining $x_i$'s. Let $M^j_k$ be the 3-manifold given by the fibers of $\fX_\bm$ over $S^j_k$. Then the set of cuts given by 3-manifolds $M^j_k$, for various choices of $A$, is of associahedron type. Thus, associated to this set of cuts, there is a family of metrics $\bbX_\bm$ on $\fX_\bm$ parametrized by $\mathcal {K}_n$. As in Subsection \ref{abst-assoc-family}, we denote the union of all cuts associated to an $n$-ribbon tree $T$ by $Y_T$. For each sphere $S^j_k$, we fix a regular neighborhood $[-2,2]\times S^j_k$ such that if $S^{j_1}_{k_1}$ and $S^{j_2}_{k_2}$ are disjoint then the regular neighborhoods $[-2,2]\times S^{j_1}_{k_1}$ and $[-2,2]\times S^{j_2}_{k_2}$ are also disjoint. These regular neighborhoods induce regular neighborhoods $[-2,2]\times M^j_k$ of the spherical cuts $M^j_k$.

The connected components of $\fX_\bm\backslash Y_T$ are also Gibbons-Hawking manifolds. Let $v$ be one of the interior vertices and $e_1$, $\dots$, $e_{d(v)}$ be the outgoing edges of $v$, labeled using the ribbon structure around the vertex $v$. Let $l_1$, $\dots$, $l_n$ denote the non-root leaves of $T$, again ordered using the ribbon structure. We write $u_i$ for the vertex of degree one incident to $l_i$. Let the integers $i_0$, $\dots$, $i_{d(v)}$ be chosen such that
\begin{equation} \label{multip}
  0\leq i_0 < i_1<\dots<i_{d(v)}\leq n
\end{equation}
and the unique path from the root to any of the vertices $u_{i_{j-1}+1}$, $\dots$, $u_{i_{j}}$ contains the edge $e_j$. Then we say $v$ has type $(i_0,\dots,i_{d(v)})$. The connected component of $\fX_\bm\backslash Y_T$ corresponding to the vertex $v$ is a Gibbons-Hawking manifold associated to the parameters $\bm'=(m_{i_0},m_{i_1},\dots,m_{i_{d(v)}})$.

\begin{example}
	Suppose $T$ is the (6)-ribbon tree given in Figure \ref{(6)-ribbon-tree}.
	The types of the vertices $v_0$, $v_1$ and $v_2$ of $T$ are listed below:
	\[
	  v_0: i_0=0, i_1=1, i_2=4, i_3=6\hspace{0.7cm}v_1: i_0=1, i_1=2, i_2=3, i_3=4\hspace{0.7cm}
	  v_2: i_0=4, i_1=5, i_2=6
	\]
\end{example}

The main goal of this subsection is to modify the family of metrics $\bbX_\bm$ such that each metric in the family is a Gibbons-Hawking metric. To achieve this goal, we need to do some preliminary work.

Fix an $n$-ribbon tree  $T$ with $l$ interior edges whose interior vertices and edges are labeled with $v_0$, $\dots$, $v_l$ and $e_1$, $\dots$, $e_l$. Let $(d(v_i)+1)$-tuple $\bm_i$ is given by the
type of the vertex $v_i$ and the $n$-tuple $\bm$. The open face $F_T$ is identified with $\mathcal P_{d(v_0)}\times \dots \times \mathcal P_{d(v_l)}$. Given $(\bx_0,\dots, \bx_l) \in F_T$ and $(s_1,\dots,s_l)\in (M_0,\infty)^l$, we can form the 2-form $\alpha_{\bm_i,\bx_i}$ and the quadratic form $Q_{\bm_i,\bx_i}$ on the space $\fB_{\bx_i}$. Suppose $\bx=\Phi_T(\bx_0,\dots,\bx_l,s_1,\dots,s_l) \in \mathcal P_n$ and $\widetilde \alpha$ (respectively, $\widetilde Q$) is the result of gluing $\alpha_{\bm_0,\bx_0}$, $\dots$, $\alpha_{\bm_l,\bx_l}$ (respectively, $Q_{\bm_0,\bx_0}$, $\dots$, $Q_{\bm_l,\bx_l}$). Recall from the previous subsection that there are functions $\gamma_0,\dots, \gamma_l:\fB_{\bx}\to \R$ such that:
\[
  \widetilde \alpha=\sum_{i=0}^l\gamma_i\alpha_{\bm_i,\bx_i} \hspace{1cm}\widetilde Q=\sum_{i=0}^l\gamma_iQ_{\bm_i,\bx_i}.
\]
We wish to compare $(\alpha_{\bm,\bx}, Q_{\bm,\bx})$ with $(\widetilde \alpha,\widetilde Q)$. We need to fix a metric $J_\bx$ on $\fB_{\bx}$ to perform this comparison.

For any $2\leq d\leq n$ and any $\bx\in \mathcal P_d$, we wish to pick a metric $J_\bx$ on $\fB_\bx$ such that $J_\bx$ satisfies the following two conditions:
\vspace{-5pt}
\begin{itemize}
	\item[(i)] We identify the interior of each territory ball in $\fB_\bx$ with $(0,\infty)\times S^2$ as in \eqref{cylinder-diffeo}.
	Similarly, we identify the interior of the complement of the parent ball with
	$(-\infty,0)\times S^2$. The metric $J_\bx$ in these open sets are given by $dt^2+\sigma$ where $\sigma$ is the standard metric on $S^2$.
	\item[(ii)] The second constraint is a consistency condition with respect to the inductive nature of the spaces
	$\mathcal P_d$, similar to the condition in Definition \ref{compatible-territory}.
	Suppose $T$ is a $d$-ribbon tree with two interior vertices $v_0$ and $v_1$.
	Suppose $(\bx_1,\bx_2)\in F_T$ and $\bx=\Phi_T(\bx_0,\bx_1,s)$ for $s\in (0,\infty)$. Then the metrics $J_{\bx_1}$ and
	$J_{\bx_2}$ on $\fB_{\bx_1}$ and $\fB_{\bx_2}$ can be glued
	to each other to define a metrics on $\fB_{\bx}$. We require that $J_\bx$ agrees with this metric when $s\in (M_0,\infty)$.
\end{itemize}
\vspace{-5pt}
A family of such metrics can be constructed by induction on $d$. In the case that $d=2$, we need to define one metric that extends a given metric on the territory balls and the complement of the parent ball. We choose these extensions in an arbitrary way. To carry out the induction step, we note that the first condition determines the desired metrics on the territory balls and the complement of parent balls. The second condition determines the metrics on the subset $\mathcal U_d$ of $\mathcal P_d$. Analogous to Proposition \ref{consistency-territory-balls}, we can use the induction step to show that the metrics determined by (ii) are consistent. We extend these metrics in an arbitrary way to complete the induction step.

We also need to define a function $\lambda_\bx:\fB_\bx \to \R$ for any $2\leq d\leq n$ and any $\bx\in \mathcal P_d$. This function is defined in terms of the annular decomposition of $\fB_\bx$. The function $\lambda_\bx$ vanishes on any fat region of $\fB_\bx$, is equal to $t$ on an end $(0,\infty) \times S^2$, and that is equal to $\tau-|t|$ on a neck $(-\tau,\tau) \times S^2$. Note that we again identify ends and necks by cylinders of the form $(a,b)\times S^2$ using diffeomorphisms similar to the maps in \eqref{cylinder-diffeo}.

\begin{prop} \label{exp-decay-Q-alpha}
	The constants $\delta$ and $C_k$ can be chosen such that:
	\begin{equation}\label{exponential-decay}
	  |\nabla^k(\alpha_{\bm,\bx}-\widetilde \alpha)|_{\bx}\leq C_k e^{-\delta(\tau+\lambda_\bx)} \hspace{1cm}
	  |\nabla^k(Q_{\bm,\bx}-\widetilde Q)|_{\bx}\leq C_k e^{-\delta(\tau+\lambda_\bx)}
	\end{equation}
	where $\nabla^k$ is defined with respect to the metric $J_\bx$, $|\cdot |_{\bx}$ is the point-wise norm with respect to $J_\bx$ and
	$\tau=\min_{i}(s_i)$.
\end{prop}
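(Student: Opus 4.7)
The plan is to reduce the estimate to pointwise bounds on the underlying harmonic potentials, which we then control via multipole expansions. Fix a local chart representing $\bx$; the affine identifications in the annular decomposition produce compatible local representatives of $\bx_0,\dots,\bx_l$, in which $\alpha_{\bm,\bx}=*du$ and $\alpha_{\bm_i,\bx_i}=*du_i$ are both given by explicit Green functions in a common coordinate system. Because the cutoffs $\gamma_i$ form a partition of unity adapted to the decomposition (on each neck the relation $\varphi_1+\varphi_2=1$ is used), we have $\alpha_{\bm,\bx}-\widetilde\alpha=*d(u-u_i)$ on the fat region of $v_i$, and on the neck attached to an outgoing edge $e_k$ of $v_i$ with target $v_{i'}$,
\[
  \alpha_{\bm,\bx}-\widetilde\alpha=*d\bigl(\gamma_i(u-u_i)+\gamma_{i'}(u-u_{i'})\bigr)+*\bigl(d\gamma_i\wedge(u_i-u_{i'})\bigr).
\]
A parallel computation, using $\sum_i\gamma_i=1$ to rewrite $u^2-\sum_i\gamma_iu_i^2=\sum_i\gamma_i(u-u_i)(u+u_i)$, gives
\[
  Q_{\bm,\bx}-\widetilde Q=\sum_i\gamma_i(u-u_i)(u+u_i)(dx^2+dy^2+dz^2).
\]
Together with $u_i-u_{i'}=(u_i-u)+(u-u_{i'})$, these identities reduce the proposition to pointwise bounds on $u-u_i$ and its derivatives.

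The central input is a multipole expansion. Each puncture $x_j^{(i)}\in\fB_{\bx_i}$ corresponds, via the gluing along an outgoing edge $e_k$ of $v_i$, to a cluster $C_j$ of punctures of $\fB_\bx$ of diameter $O(e^{-s_k}r_j)$ in the chart at $v_i$, where $r_j$ is the territory radius at $x_j^{(i)}$. By the construction of $\bm_i$ from the type of $v_i$, the total mass of $C_j$ equals $m_j^{(i)}-m_{j-1}^{(i)}$, so the monopole term of the cluster's Newtonian potential coincides with the single-point contribution of $x_j^{(i)}$ to $u_i$. The remainder is bounded, on the complement of a fixed enlargement of $C_j$, by a constant multiple of $\diam(C_j)/|q-x_j^{(i)}|^2$, with one extra factor of $|q-x_j^{(i)}|^{-1}$ per Euclidean derivative. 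A symmetric argument carried out in the target chart $\fB_{\bx_{i'}}$ handles the target half of each neck.

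To convert to the $J_\bx$-norm, use the cylindrical change of coordinate $|q-x_j^{(i)}|=r_je^{-t}$ and the identity $g_{\rm euc}=|q-x_j^{(i)}|^2 J_\bx$, under which a $p$-form $\omega$ satisfies $|\omega|_{J_\bx}=|q-x_j^{(i)}|^p\cdot|\omega|_{g_{\rm euc}}$. A direct check confirms that the monopole-scaled quantities $\alpha_{\bm_i,\bx_i}$, $Q_{\bm_i,\bx_i}$ and their iterated $J_\bx$-covariant derivatives are uniformly bounded near each puncture, consistent with Proposition \ref{GH-prop}; the multipole remainder then picks up an extra factor $\diam(C_j)/|q-x_j^{(i)}|=O(e^{-s_k+\lambda_\bx})$ in $J_\bx$-norm on the source half of the neck, after translating the cylindrical coordinate near the puncture into $\lambda_\bx$ via $\lambda_\bx=\tfrac{s_k}{2}-|t|$. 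Since $\tau\le s_k$ and $\lambda_\bx\le s_k/2$, an elementary case split on whether $\tau\ge 2\lambda_\bx$ yields $s_k\ge\delta\tau+(1+\delta)\lambda_\bx$ for any $\delta\le\tfrac{1}{3}$, giving $e^{-s_k+\lambda_\bx}\le e^{-\delta(\tau+\lambda_\bx)}$. On fat regions $\lambda_\bx=0$ and the plain multipole bound $e^{-s_k}\le e^{-\delta\tau}$ is immediate. The factors $\gamma_i$ and their derivatives are uniformly $J_\bx$-bounded by construction from the fixed functions of \eqref{phi-1-2}.

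The main point requiring care is the uniformity of the constants $C_k$ and $\delta$ as the parameters $(\bx_0,\dots,\bx_l,s_1,\dots,s_l)$ range over $F_T\times(M_0,\infty]^l$ and over all faces $F_T$ of $\mathcal K_n$. Compactness of each factor $\overline F_T$, continuity of the multipole remainder in the cluster positions, and the choice of $M_0$ large enough that all annular regions are well-separated (so that the multipole series converges with a uniform geometric ratio) combine to supply the uniform constants $\delta$ and $C_k$ required in \eqref{exponential-decay}.
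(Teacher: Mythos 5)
Your proof is correct and follows essentially the same route as the paper's: an explicit Green's-function (multipole) comparison in which each cluster's total mass matches the point mass determined by the type of the vertex, remainder bounds that are exactly the paper's estimates \eqref{alpha-ineq-1} and \eqref{alpha-ineq-2}, conversion of Euclidean to $J_\bx$-norms on necks and ends, and induction plus compactness of the associahedra for uniformity of $\delta$ and $C_k$. The only cosmetic difference is that you glue at the level of potentials, which produces the cross term $*\bigl(d\gamma_i\wedge(u_i-u_{i'})\bigr)$; since $\widetilde\alpha=\sum_i\gamma_i\alpha_{\bm_i,\bx_i}$ is defined by gluing the $2$-forms themselves, one can instead write $\alpha_{\bm,\bx}-\widetilde\alpha=\sum_i\gamma_i(\alpha_{\bm,\bx}-\alpha_{\bm_i,\bx_i})$ and avoid that term altogether (your term is still harmless, because $d\gamma_i$ is supported deep in the neck where the conformal factor relating the Euclidean and cylindrical norms is exponentially small).
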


\begin{proof}
	We verify the inequalities in \eqref{exponential-decay} on a neck region for constants $\delta$ and $C_k$ which are independent of
	$s_1$, $\dots$, $s_l$. Analogous arguments can be used to prove
	similar inequalities for fat regions and cylindrical ends. Moreover, An inductive argument and compactness of associahedra can be employed
	 to show that these constants can be made independent of $\bx_0$, $\dots$, $\bx_l$.
	
	We assume that the neck region is in correspondence with
	the interior edge $e_1$. We also assume that the source and the target of $e_1$ are $v_0$ and $v_1$.
	Moreover, $e_1$ appears as the first element with respect to the ribbon structure of the vertex $v_0$. Then
	the neck region has the form $(\frac{M_0-s_1}{2},\frac{s_1-M_0}{2})\times S^2$ and the restriction of the metric $J_\bx$
	to this region is given by the cylindrical metric $dt^2+\sigma$. The 2-form $\widetilde \alpha$
	on $[\frac{s_1-M_0}{6},\frac{s_1-M_0}{2})\times S^2$ is equal to $\alpha_{\bm_0,\bx_0}$, on
	$[\frac{M_0-s_1}{6},\frac{s_1-M_0}{6}]\times S^2$ is equal to a convex linear combination of $\alpha_{\bm_0,\bx_0}$, $\alpha_{\bm_1,\bx_1}$,
	and on $(\frac{M_0-s_1}{2},\frac{M_0-s_1}{6}]\times S^2$ is equal to $\alpha_{\bm_1,\bx_1}$. Therefore, it suffices to show that
	$\alpha_{\bm,\bx}-\alpha_{\bm_0,\bx_0}$ has the desired decay properties on $(\frac{M_0-s_1}{6},\frac{s_1-M_0}{2})\times S^2$. A similar
	argument can be applied to $Q_{\bm,\bx}-\widetilde Q$.
	
	Suppose $\bx$ and $\bx_0$ are represented
	as below:
	\[
	  \bx:x_1<x_2<\dots<x_n \hspace{1cm}\bx_0:x_1'<x_2'<\dots<x_{d(v_0)}'
	\]
	Let the vertex $v_0$ have type $(i_0,\dots,i_{d(v_0)})$.
	In order to obtain $\bx$ from $\bx_0$, we replace each point $x_j'$ of $\bx_0$ with $i_j-i_{j-1}$ points which belong to
	the ball $B_{r_j/4}(x_j')$ with $r_j$ being the radius of the territory ball around $x_j'$. In fact, we can substitute
	$B_{r_j/4}(x_j)$ with $B_{e^{-\tau} r_j/4}(x_j)$ because $s_1$, $\dots$, $s_l$ are larger than $\tau$.
	We also need to include $i_0+n-i_{d(v_0)}$ points which belong to the complement of
	the ball $B_{2e^{\tau}R}(\overline x')$ where $B_{R}(\overline x')$ is the parent ball of $\bx_0$.
	Among these points, $i_0$ elements are less than $x_1'$ and $n-i_{d(v_0)}$ elements are greater than $x_{d(v_0)}'$.
	Moreover, the neck region associated to $e_1$ is the region between the balls centered at $x_1'$ and with radii
	$r_1e^{-\frac{M_0}{2}}$ and $r_1e^{-s_1+\frac{M_0}{2}}$.
	
	The 2-form $\alpha_{\bm,\bx}$ is given by:
	\[
	  \sum_{k=1}^{n}(m_{k-1}-m_k)\frac{(x-x_k)dy\wedge dz+ydz\wedge dx+zdx\wedge dy}{|q-x_k|^3}
	\]
	Suppose $q$ is a point in the neck region associated to the edge $e_1$. This region can be regarded as a subset
	of $\fB_{\bx_0}$. The 2-form $\alpha_{\bm_0,\bx_0}$ at a point $q$ in this region can be described as follows.
	In the above sum, replace $x_k$ with $x_j'$ if $x_k\in B_{r_j/4}(x_j')$.
	Moreover, remove the terms that the corresponding $x_k$ belongs to the complement of $B_{R}(\overline x')$.
	Then the resulting expressions is equal to $\alpha_{\bm_0,\bx_0}$ at the point $q$.

	The above description of $\alpha_{\bm_0,\bx_0}$ implies that
	$\nabla^k(\alpha_{\bm,\bx}-\widetilde \alpha)$ at a point $q$ in the neck region associated to the edge $e_1$ can be bounded by finding
	appropriate upper bounds for the expressions of the form:
	\begin{equation}\label{alpha-ineq-1}
	  |\nabla^k(\frac{(x-x_k)dy\wedge dz+ydz\wedge dx+zdx\wedge dy}{|q-x_k|^3}-
	  \frac{(x-x_j')dy\wedge dz+ydz\wedge dx+zdx\wedge dy}{|q-x_j'|^3})|_\bx
	\end{equation}
	where $x_k$ belongs to the territory ball of $x_j'$, and:
	\begin{equation}\label{alpha-ineq-2}
	  |\nabla^k(\frac{(x-x_k)dy\wedge dz+ydz\wedge dx+zdx\wedge dy}{|q-x_k|^3})|_\bx.
	\end{equation}	
	where $x_k$ belongs to the complement of $B_{R}(\overline x')$.
	It is straightforward to see that there is a constant $C$ such that the expression in \eqref{alpha-ineq-1} is less than
	$C\frac{|x_k-x_j'||q-x_1'|}{|q-x_j'|^2}$ and the expression in \eqref{alpha-ineq-2} is less than
	$C\frac{|q-x_1'|}{|q-x_k|}$. These inequalities imply that \eqref{alpha-ineq-1} and \eqref{alpha-ineq-2}  on the region between the
	balls centered at $x_1'$ with radii $r_1e^{-\frac{M_0}{2}}$ and $r_1e^{-\frac{M_0}{2}-2\frac{s_1-M_0}{3}}$ are less than
	$C_ke^{-\delta(\tau+\lambda_\bx)}$ for an appropriate choices of $\delta$ and $C_k$.
	A similar argument can be employed to estimate the difference between $\alpha$ and $\alpha_1$ on the
	region between the balls centered at $x_1'$ with radii
	$r_1e^{-\frac{M_0}{2}-\frac{s_1-M_0}{3}}$ and $r_1e^{-s_1+\frac{M_0}{2}}$. By combining these results, we can
	verify \eqref{exponential-decay} over the neck region associated to the edge $e_1$.
\end{proof}

As the next step, we need to fix a connection on $\omega_{\bm',\bx}$ on $\fX_{\bm',\bx}$ for any $2\leq d\leq n$, $\bm'=(m_{i_0},\dots,m_{i_d})$ and $\bx \in \mathcal K_d$ such that the curvature of $\omega_{\bm',\bx}$ is equal to $\alpha_{\bm',\bx}$ and these families of connections satisfy the analogue of Proposition \ref{exp-decay-Q-alpha}. As in the proof of Proposition \ref{GH-prop}, we firstly fix a connection $\omega_{\bm',\bx}'$ on $\fX_{\bm',\bx}$ and then modify it by adding an appropriate 1-form. The connections $\omega_{\bm',\bx}'$ are required to satisfy the following two conditions:
\begin{itemize}
	\item[(i)] On each territory ball and on the complement of the parent ball,
	the connection $\omega_{\bm',\bx}'$ is the pull-back of
	a connection of the form $\mu_{m,m'}$.
	\item[(ii)] Suppose $T$ is a $d$-ribbon tree with two interior vertices $v_0$ and $v_1$.
	Suppose $\bm'_0$ and $\bm'_1$ are defined using $\bm'$ and the types of the vertices $v_0$,  $v_1$.
	Suppose $(\bx_1,\bx_2)\in F_T$ and $\bx=\Phi_T(\bx_0,\bx_1,s)$ for $s\in (0,\infty)$. Then the $\U(1)$-connections
	$\omega_{\bm_1',\bx_1}'$ and $\omega_{\bm_2',\bx_2}'$ on $\fX_{\bm_1',\bx_1}$ and $\fX_{\bm_2',\bx_2}$ can be glued
	to each other to define a connection on $\fX_{\bm',\bx}$. We require that $\omega_{\bm',\bx}'$ agrees with this connection
	when $s\in (M_0,\infty)$.
\end{itemize}
These two conditions are similar to the properties that the metrics $J_\bx$ satisfy. The same inductive argument can be used to prove the existence of the family of connections satisfying these two properties.

\begin{prop}
	The constants $\delta$ and $C_k$ can be chosen such that if $a_{\bm',\bx}:=\alpha_{\bm',\bx}-F(\omega_{\bm',\bx})$, then:
	\begin{equation} \label{difference-approx}
	  |\nabla^k(a_{\bm',\bx})|_{\bx}\leq C_ke^{-\delta\cdot \lambda_\bx}
	\end{equation}
	where $\nabla^k$ is defined with respect to the metric $J_\bx$ and
	$|\cdot |_{\bx}$ is the point-wise norm with respect to $J_\bx$.
\end{prop}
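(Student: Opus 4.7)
The plan is to mirror the structure of the proof of \autoref{exp-decay-Q-alpha}: verify the inequality separately on fat regions, on cylindrical ends, and on necks of the annular decomposition of $\fB_\bx$, with constants made uniform by an induction on $d$ combined with compactness of the associahedra $\mathcal K_d$. On fat regions the function $\lambda_\bx$ vanishes, so the bound reduces to a pointwise estimate of $a_{\bm',\bx}$ over a compact piece of $\fB_\bx$, hence is immediate for any sufficiently large $C_k$.

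For a cylindrical end corresponding to a territory ball $B_{r_j}(x_j)$, condition (i) says that $\omega_{\bm',\bx}'$ is the pull-back of some $\mu_{m,m'}$ under the radial projection $\pi$ to $S^2$, so $F(\omega_{\bm',\bx}')$ equals $\pi^*(F(\mu_{m,m'}))$. Writing the harmonic function defining $\alpha_{\bm',\bx}$ as
\[
  u(q) = \frac{m_{i_j}-m_{i_{j-1}}}{|q-x_j|} + u'(q),
\]
where $u'$ is harmonic and smooth on a fixed neighborhood of $x_j$, one checks that $*du$ restricted to that territory ball decomposes as the pull-back $\pi^*(F(\mu_{m_{i_{j-1}},m_{i_j}}))$ plus $*du'$. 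Thus $a_{\bm',\bx}=*du'$ on this region. In Euclidean coordinates $u'$ and all its derivatives are uniformly bounded on $B_{r_j/2}(x_j)$; passing to the cylindrical coordinate $t=-\log|q-x_j|$ using the asymptotically cylindrical metric $J_\bx$ (whose conformal factor near $x_j$ scales like $|q-x_j|^{-1}$), each unit-size Euclidean form picks up a factor of $e^{-\delta t}$ for suitable $\delta>0$. The same argument applied to the complement of the parent territory ball gives the analogous estimate at the ``end at infinity.''

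On a neck of $\fB_\bx$ corresponding to an interior edge $e_k$ of a ribbon tree $T\leq T_{\min}$ (the minimal tree with $\bx\in F_{T_{\min}}\times(M_0,\infty]^l$), condition (ii) guarantees that $\omega_{\bm',\bx}'$ agrees with the connection built by gluing the two connections $\omega_{\bm'_i,\bx_i}'$ from the pieces adjacent to $e_k$; in particular, on each half of the neck $\omega_{\bm',\bx}'$ remains a pull-back of some $\mu_{m,m'}$. The argument of the previous paragraph then applies verbatim on each half, and on the transition region one applies the triangle inequality using the expressions \eqref{integrate-2-form-1}--\eqref{integrate-2-form-2} for the connecting 1-forms together with the decay of the contributing harmonic terms in the relevant cylindrical coordinate. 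In all three cases the measurement with respect to the fixed auxiliary metric $J_\bx$ (rather than the Gibbons-Hawking metric itself) is handled as in the proof of \autoref{exp-decay-Q-alpha}, since the two metrics differ by a conformal factor bounded uniformly on the piece in question.

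The main obstacle is making the constants $\delta$ and $C_k$ uniform in $\bx\in\mathcal K_d$, because as $\bx$ approaches the boundary the ratios between territory balls and the geometry of $\fB_\bx$ degenerate. The resolution, exactly as in \autoref{exp-decay-Q-alpha}, is an induction on $d$: the base case $d=2$ is handled directly by compactness; for the inductive step, on a neighborhood $U_T=\overline F_T\times(M_0,\infty]^l$ of a boundary stratum one uses the consistency condition (ii) for the family $\{\omega_{\bm',\bx}'\}$ and Proposition~\ref{exp-decay-Q-alpha} to compare $a_{\bm',\bx}$ with the glued object built from $a_{\bm'_0,\bx_0},\dots,a_{\bm'_l,\bx_l}$, whose decay is controlled by induction. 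Compactness of the complement of the $U_T$'s in $\mathcal K_d$ then yields uniform constants on the whole associahedron.
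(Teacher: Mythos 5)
Your argument is essentially the paper's own: the pointwise estimate for a fixed $\bx$ (your explicit splitting of $u$ into its leading monopole term plus a smooth harmonic remainder on each end is exactly what underlies the decay estimate \eqref{decay}, which the paper simply cites), followed by the comparison of $a_{\bm',\bx}$ with the glued $\widetilde b$-type object near boundary strata via Proposition \ref{exp-decay-Q-alpha} and condition (ii) on the connections, with uniform constants obtained by induction on $d$ and compactness of $\mathcal K_d$. The proposal is correct and matches the paper's (also sketched) proof, just with the end and neck estimates spelled out in more detail.
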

\begin{proof}
	Given a fixed $\bm'$ and $\bx$, we firstly show that there are constants $C_k$ and $\delta$ such that \eqref{difference-approx} holds. This is essentially a consequence of \eqref{decay}.
	Next, let $T$ be a $d$-ribbon tree, $(\bx_0,\dots, \bx_l) \in F_T\subset \mathcal K_d$ and $(s_1,\dots,s_l)\in (M_0,\infty)^l$ such that $\bx=\Phi_T(\bx_0,\dots,\bx_l,s_1,\dots,s_l)$.
	Let also $\bm_i$ be the vector associated to $\bx_i$ by the $d$-tuple $\bm'$. Then the 2-forms $\alpha_{\bm_i,\bx_i}$ determines a 2-form $\widetilde \alpha$ on $\mathcal B_\bx$
	and Proposition \ref{exp-decay-Q-alpha} asserts that $\nabla^k(\widetilde \alpha-\alpha_{\bm',\bx})$ is bounded by $C_ke^{-\delta\cdot \lambda_\bx}$. This observation can be used to show that
	there are constants $C_k$ and $\delta$ which work for every point in $\Phi_T(\{(\bx_0,\dots,\bx_l)\}\times(M_0,\infty)^l)$. Next, we can show that the constants $C_k$ and $\delta$ can be made independent
	of $\bx$ using induction and compactness of associahedra.
\end{proof}

\begin{prop} \label{1-form-modify}
	There is a 1-form $b_{\bm',\bx}$ for any $2\leq d\leq n$, $\bm'=(m_{i_0},m_{i_1},\dots,m_{i_d})$
	and $\bx\in \mathcal P_d$ satisfying:
	\begin{itemize}
	\item[(i)] $d(b_{\bm',\bx})=a_{\bm',\bx}$;
	\item[(ii)] $|\nabla^kb_{\bm',\bx}|_{\bx}\leq C_k e^{-\delta\cdot \lambda_\bx}$
	\item[(iii)] Fix $(\bx_0,\dots, \bx_l) \in F_T$ and
	$(s_1,\dots,s_l)\in (M_0,\infty)^l$ and let:
	\begin{equation}\label{nbhd-face-T}
		\bx=\Phi_T(\bx_0,\dots,\bx_l,s_1,\dots,s_l) \in \mathcal P_n.
	\end{equation}	
	Suppose $\widetilde b$
	is the result of gluing $b_{\bm_0,\bx_0}$, $\dots$, $b_{\bm_l,\bx_l}$.
	The constants $C_k$ and $\delta$ can be chosen such that:
	\begin{equation}\label{exponential-decay-2}
	  |\nabla^k(b_{\bm',\bx}-\widetilde b)|_{\bx}\leq C_k e^{-\delta(\tau+\lambda_\bx)}
	\end{equation}
	where $\nabla^k$ is defined with respect to the metric $J_\bx$, $|\cdot |_{\bx}$
	is the point-wise norm with respect to $J_\bx$ and
	$\tau=\min_{i}(s_i)$.
	\end{itemize}
\end{prop}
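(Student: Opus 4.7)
The plan is to prove this by induction on $d$, combining the single-manifold construction from Proposition~\ref{GH-prop} with a gluing argument that enforces (iii). The base case $d=2$ is essentially Proposition~\ref{GH-prop} itself, provided we use the \emph{canonical} choice of $b$ sketched at the end of its proof: integrate the $dt$-component of $a_{\bm',\bx}$ on each cylindrical end via \eqref{integrate-2-form-1}--\eqref{integrate-2-form-2}, patch with the cut-off functions $\phi_i$, and add the correction $b'$ obtained by the radial projection onto $S^2\times\R$ together with the unique zero-average primitive on $S^2$. The exponential decay (ii) follows from the decay of $a_{\bm',\bx}$ on the cylindrical ends, which in turn requires a companion to Proposition~\ref{exp-decay-Q-alpha} for $F(\omega_{\bm',\bx}')-\widetilde{F(\omega')}$. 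That companion is a straightforward consequence of the compatibility conditions (i)--(ii) imposed on the reference connections $\omega_{\bm',\bx}'$.

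For the inductive step, fix $\bm'$ of length $d+1$ and consider a ribbon tree $T$ with $l\ge 1$ interior edges. On the neighborhood $U_T\cong \overline F_T\times (M_0,\infty]^l$, the induction hypothesis provides forms $b_{\bm_i,\bx_i}$ at each interior vertex $v_i$. Gluing them along $T$ with the cut-off functions $\gamma_0,\dots,\gamma_l$ from Subsection~4.2 produces a 1-form $\widetilde b_T$ on $\fB_\bx$ that would satisfy (ii) with the inherited constants. Its differential differs from $a_{\bm',\bx}$ only in the neck regions, and by Proposition~\ref{exp-decay-Q-alpha} together with the companion bound on the reference connections, the closed 2-form $a_{\bm',\bx}-d\widetilde b_T$ is bounded by $C_k e^{-\delta(\tau+\lambda_\bx)}$. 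The inductive property (iii) at level $d-1$ guarantees that on any overlap $U_T\cap U_{T'}$, the candidates $\widetilde b_T$ and $\widetilde b_{T'}$ differ by a form of the same exponentially small order.

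To assemble a globally defined 1-form on $\mathcal P_d$, I would pick a partition of unity $\{\chi_T\}$ on $\mathcal K_d$ subordinate to the open cover $\{U_T\}$ (including the open cell corresponding to the trivial tree, where the base-case construction is used directly), chosen so that $\chi_T$ depends only on the gluing parameters $s_1,\dots,s_l$ and not on the $\bx_i$. Set
\[
\widetilde b_{\bm',\bx}\ :=\ \sum_T \chi_T(\bx)\,\widetilde b_T.
\]
The error $\eta_{\bm',\bx}:=a_{\bm',\bx}-d\widetilde b_{\bm',\bx}$ is then a closed 2-form on $\fB_\bx$ that decays like $C_k e^{-\delta\lambda_\bx}$ globally and like $C_k e^{-\delta(\tau+\lambda_\bx)}$ near faces. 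Applying the zero-average Poincar\'e primitive from the proof of Proposition~\ref{GH-prop} to $\eta_{\bm',\bx}$ (using the metric $J_\bx$ to define ``average zero'' on each $S^2$-slice) yields a 1-form $c_{\bm',\bx}$ with the same decay. Define $b_{\bm',\bx}:=\widetilde b_{\bm',\bx}+c_{\bm',\bx}$; then (i) holds by construction, (ii) follows from the bounds on $\widetilde b_{\bm',\bx}$ and $c_{\bm',\bx}$.

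The main difficulty is verifying (iii). Agreement of $b_{\bm',\bx}$ with the glued form $\widetilde b$ from (iii) up to error $C_k e^{-\delta(\tau+\lambda_\bx)}$ requires that the zero-average primitive itself is stable under gluing, which ultimately relies on the inductive compatibility of both the metric $J_\bx$ and the 2-form $\alpha_{\bm',\bx}$ under gluing, as encoded in Proposition~\ref{exp-decay-Q-alpha} and the construction of $J_\bx$. The fact that $\chi_T$ depends only on the neck parameters, and not on the $\bx_i$, is essential so that on deeper faces the definition restricts correctly. Finally, uniformity of the constants $C_k,\delta$ across $\mathcal K_d$ is obtained exactly as in Proposition~\ref{exp-decay-Q-alpha}: induction supplies uniformity on a neighborhood of $\partial\mathcal K_d$, and compactness of $\mathcal K_d$ handles the interior. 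The hard technical point is thus checking that the canonical zero-average Poincar\'e primitive commutes, up to exponentially small error, with the neck-stretching gluing operation---this is where the canonical character of the construction, rather than any ad hoc choice of $b'$, is used essentially.
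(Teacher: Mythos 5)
Your overall strategy shares the essential ingredients with the paper's proof --- the neck-and-end primitive construction from the proof of Proposition \ref{GH-prop} (integrate the $dt$-component along the cylinder, take the coexact primitive of the slice $2$-form on $S^2$, patch with cut-offs, and finish with the compactly supported Poincar\'e lemma on the fat regions), bump functions depending only on the gluing parameters, and the decay estimates of Proposition \ref{exp-decay-Q-alpha} --- but the organization is genuinely different. The paper does not run an induction on $d$ followed by a partition of unity over trees and a single global correction. Instead, for each $d$ it writes $b_{\bm',\bx}=b^0_{\bm',\bx}+b^1_{\bm',\bx}+\cdots$ as a telescoping sum indexed by the dimension of the strata of $\mathcal K_d$: near each $0$-dimensional face $F_T$ one applies the annular-decomposition primitive for the tree $T$ to $a_{\bm',\bx}$ and damps it by $\varphi(s_1)\cdots\varphi(s_l)$; near each $1$-dimensional face one applies the same construction to the residual $a_{\bm',\bx}-db^0_{\bm',\bx}$; and so on up to the top cell. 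Property (iii) then follows from the locality, linearity, and decay-preservation of the annular-decomposition primitive, since near a face the form $b_{\bm',\bx}$ is by construction assembled from data on fat regions that are identified with fat regions of the $\fB_{\bx_i}$.

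The gap in your write-up is exactly the step you flag at the end and do not carry out: you never verify that your global correction $c_{\bm',\bx}$ satisfies (iii), i.e.\ that the ``zero-average Poincar\'e primitive'' commutes with neck-stretching up to $O(e^{-\delta(\tau+\lambda_\bx)})$. This is not a routine technicality. The primitive of Proposition \ref{GH-prop} is defined relative to a choice of territory balls and ribbon neighborhoods; on $\fB_\bx$ near a face there are several competing decompositions (one for each tree $T$ with $\bx\in U_T$), so ``the'' canonical primitive operator is not yet well defined in family, let alone gluing-compatible, and making it so amounts to redoing the stratified construction. The paper's organization is designed precisely to avoid such a lemma: each correction $b^k$ is cut off so as to vanish on the deep neighborhoods of the lower-dimensional strata, so near a face the form is \emph{equal} there to the glued lower-dimensional construction rather than equal to it only after a globally defined correction. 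A smaller point to spell out: differentiating $\sum_T\chi_T\widetilde b_T$ produces the extra term $\sum_T d\chi_T\wedge\widetilde b_T$, which is controlled only because $\sum_T d\chi_T=0$ together with exponential smallness of $\widetilde b_T-\widetilde b_{T'}$ on overlaps; the latter needs the inductive (iii) applied through intermediate subtrees, not just at the vertices of a single tree.
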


We can use Proposition \ref{1-form-modify} to define a Gibbons-Hawking manifold $\fX_{\bm',\bx}$ for each $d$-tuple $\bm'=(m_{i_0},\dots,m_{i_d})$ and $\bx\in \mathcal K_d$. The metric on this space is given by:
\begin{equation} \label{family-GH-metric}
  g_{\bm',\bx}=Q_{\bm,.\bx}+(\omega_{\bm',\bx}'+b_{\bm',\bx})^2.
\end{equation}

\begin{proof}[Proof of Proposition \ref{1-form-modify}]
	Let $\bx\in \mathcal K_d$ and $a$ be a closed 2-form on $\fB_\bx$ which decays exponentially on the ends.
	Given an annular decomposition of $\fB_\bx$ associated to a $d$-ribbon tree $T$,
	we can define a 1-form $b$ such that $db=a$
	by modifying the proof of Proposition \ref{GH-prop}. For an interior edge $e$ of $T$,
	let the associated neck region to $e$ have the form $(-\frac{\tau}{2},\frac{\tau}{2})\times S^2$ in the cylindrical coordinate.
	Analogous to \eqref{integrate-2-form-1} and \eqref{integrate-2-form-2},
	we can define a 1-form $b_e$ on this neck region such that $db_e=a$. Let the restriction of $a$ to the neck region be
	equal to $\alpha(t)+\beta(t)dt$ with
	$\alpha(t)\in \Omega^2(S^2)$ and $\beta(t)\in \Omega^1(S^2)$.
	There is a unique 1-form $b_e(0)$ on $S^2$ such that $db_e(0)$
	is equal to $\alpha(0)$ and $d^*(b_e)$ vanishes\footnote{We use the
	standard metric on $S^2$ to define $d^*$.}. Then we define:
	\begin{equation*}
		b_e(t):=b_e(0)+\int_0^t\beta(s)\,ds
	\end{equation*}
	Let $\phi_e:\R\to [0,1]$ be a bump function supported in
	$(-\frac{\tau}{2},\frac{\tau}{2})$ such that $\phi_e$ is equal to $1$ on $(-\frac{\tau}{2}+1,\frac{\tau}{2}-1)$.
	The closed 1-form $a-d(\phi_eb_e))$ is supported outside the subset
	$(-\frac{\tau}{2}+1,\frac{\tau}{2}-1)\times S^2$ of the neck region.
	By repeating the same construction for all neck regions and
	 using the 1-forms in  \eqref{integrate-2-form-1} and \eqref{integrate-2-form-2},
	 we can essentially reduce the problem to the case that $a$ is supported in fat regions. (In fact, we have to
	 slightly enlarge the fat regions.) Now we apply the last step of the proof of
	 Proposition \ref{GH-prop} to each fat region to complete the construction of the desired 1-form $b$.
	 This construction has three properties which are useful for us:
	 \begin{itemize}
	 	\item[(i)] this construction can be performed in family for all
		elements $\bx\in \mathcal K_d$ which have an annular decomposition with respect to the ribbon tree $T$;
	 	\item[(ii)] this construction is linear in $a$;
		\item[(iii)] If $a$ and its derivatives are exponentially decaying on an end,
		then $b$ and its derivatives are also exponentially decaying with the same exponent. A similar claim also holds
		for neck regions.
	\end{itemize}
	
	The above construction allows us to construct the 1-forms $b_{\bm',\bx}$
	for any $2\leq d\leq n$, $\bm'=(m_{i_0},m_{i_1},\dots,m_{i_d})$ and $\bx\in \mathcal P_d$.
	The 1-form $b_{\bm',\bx}$ is given as a sum of 1-forms:
	\[
	  b_{\bm',\bx}=b_{\bm',\bx}^0+b_{\bm',\bx}^1+\dots+b_{\bm',\bx}^n
	\]
	such that if $\bx$ is given as in \eqref{nbhd-face-T} for a $d$-ribbon tree with $l$ interior edges,
	then:
	\[
	  \hspace{4cm} b_{\bm',\bx}^k=0\hspace{2cm}\text{if }\,\,\bx\in \Phi_T(F_T\times (2M_0,\infty)^l),\,\, k> \dim(F_T)=d-l-2
	\]
	The 1-forms $b_{\bm',\bx}^k$ are constructed by induction on $k$. Firstly let $k=0$. Let $F_T$ be a $0$-dimensional face
	of an associahedron $\mathcal K_d$ with $d\leq n$. Then for any point $\bx\in \Phi_T(F_T\times (M_0,\infty)^{d-2})$, we can
	consider the the annular decomposition of $\fB_\bx$ with respect to the tree $T$. We apply the construction of the previous
	paragraph to define a 1-from ${\widetilde b}_{\bm',\bx}^0$ for each point in this neighborhood. Let also
	$\varphi:\R\to [0,1]$ is a smooth bump function supported in $(M_0,\infty)$ which is equal to $1$ on $(2M_0,\infty)$. Then
	If $\bx=\Phi_T(\bx_0,\dots,\bx_l,s_1,\dots,s_l)$, then
	$b_{\bm',\bx}^0:=\varphi(s_1)\dots \varphi(s_l)\cdot{\widetilde b}_{\bm',\bx}^0$. This 1-form can be extended trivially to
	the points $\bx$ which are not close to the vertices of $\mathcal K_d$. By construction, $db_{\bm',\bx}^0=a_{\bm',\bx}$ if
	$\bx\in \Phi_T(F_T\times (2M_0,\infty)^{d-2})$ for a $0$-dimensional face $F_T$. Moreover, $b_{\bm',\bx}^0$ satisfies
	the decay condition in Item (ii) of the statement of the proposition. Item (iii) is also satisfied if $F_T$ is $0$-dimensional.
	Next, we consider the case that $F_T$ is $1$-dimensional. We repeat the above construction for the points in
	$\Phi_T(F_T\times (M_0,\infty)^{d-3})$ and the $2$-form $a_{\bm',\bx}-db_{\bm',\bx}^0$. In the present case,
	we again use the annular region with respect to $T$ and define a 1-form $\widetilde b_{\bm',\bx}^1$ for any point
	in $\Phi_T(F_T\times (M_0,\infty)^{d-3})$. Then we modify this 1-form by the bump function $\varphi$ and define a
	$1$-form $b_{\bm',\bx}^1$ on $\mathcal P_d$ which is equal to $\widetilde b_{\bm',\bx}^1$
	on $\Phi_T(F_T\times (2M_0,\infty)^{d-3})$. Repeating this construction for the faces of all dimensions in $\mathcal K_d$
	provides the 1-forms satisfying the required properties.
\end{proof}

Now we can come back to our main task of constructing a family of metrics on $\fX_\bm$ parametrized by $\mathcal K_n$. We observed that if $T$ is an $n$-ribbon tree with $l$ interior vertices $v_0$, $\dots$, $v_l$, then $\fX_\bm\setminus Y_T$ has $l+1$ connected components $Z_0^+$, $\dots$, $Z_l^+$, and $Z_l^+$ is diffeomorphic to the Gibbons-Hawking manifold $\fX_{\bm_i}$ wehre $\bm_i=(m_{i_0},m_{i_1},\dots,m_{i_{d(v_i)}})$. Here $(i_0,i_1,\dots,i_{d(v_i)})$ is the type of the vertex $v_i$. We already defined a family of Gibbons-Hawking manifolds $\{\fX_{\bm_i,\bx_i}\}_{\bx_i\in \mathcal K_{d(v_i)}}$. Therefore, we can push-forward the Gibbons-Hawking metrics on these manifolds to $Z_i^+$ using a family of diffeomorphisms:
\[
  f_{\bm_i,\bx_i}:\fX_{\bm_i,\bx_i} \to Z_i^+.
\]
To be more precise, $((f_{\bm_0,\bx_0})_*(g_{\bm_0,\bx_0}),\dots, (f_{\bm_l,\bx_l})_*(g_{\bm_l,\bx_l}))$ would define a metric $\fX_\bm\setminus Y_T$ for each $(\bx_0,\dots,\bx_l)\in F_T$.
\begin{prop}\label{fix-diffeo}
	For any $n$-ribbon tree $T$, and any choice of $(\bm_i,\bx_i)$, the diffeomorphism $\fX_{\bm_i,\bx_i}$
	can be chosen such that the metrics
	$((f_{\bm_0,\bx_0})_*(g_{\bm_0,\bx_0}),\dots, (f_{\bm_l,\bx_l})_*(g_{\bm_l,\bx_l}))$ form a family of asymptotically
	cylindrical metrics $\bbX_\bm$ on $\fX_\bm$ parametrized by $\mathcal K_n$.
\end{prop}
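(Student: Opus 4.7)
The plan is to build the diffeomorphisms $f_{\bm_i,\bx_i}$ so that the pushforward of the Gibbons-Hawking metric on each $Z_i^+$ is asymptotically cylindrical with a fixed model metric on each end, and so that the pieces glue to give the family structure described in Subsection \ref{family-general}. First, I would fix once and for all an identification of each spherical cut $M^j_k \subset \fX_\bm$ with the $\U(1)$-bundle $l_{m_j,m_k}$ over $S^2$, and equip each cut with the model metric $h^j_k := (m_k-m_j)^2 I_0 + \mu_{m_j,m_k}^2$ from formula \eqref{g0}. For each vertex $v_i$ of $T$ with type $(i_0,\dots,i_{d(v_i)})$, each end of $\fX_{\bm_i}$ corresponds to either a boundary component of $\fX_\bm$ or one of the spherical cuts $M^j_k$ bordering $Z_i^+$; use the diffeomorphisms $\Phi$ and $\Psi_j$ of \eqref{repar}--\eqref{cylinder-diffeo} to identify a collar of each end of $\fX_{\bm_i,\bx_i}$ with a cylinder over the corresponding bundle, and then choose $f_{\bm_i,\bx_i}$ to send these collars isometrically (in the cylindrical model) onto the regular neighborhoods $[-2,2] \times M^j_k$ fixed in Subsection \ref{abst-assoc-family}.

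With these choices, the proof of Proposition \ref{GH-prop} shows that $(f_{\bm_i,\bx_i})_*(g_{\bm_i,\bx_i})$ is an asymptotically cylindrical metric on $Z_i^+$ with model metric $h^j_k$ on each end, and that the exponential decay estimates depend smoothly on $\bx_i \in \mathcal{K}_{d(v_i)}$. In particular, whenever an edge $e_k$ of $T$ joins two vertices $v_{i}$ and $v_{i'}$, the two asymptotic model metrics on $M^{j_k}_{k_k}$ coming from $Z_i^+$ and $Z_{i'}^+$ agree. Therefore the metrics on the $Z_i^+$ give, for each $(\bx_0,\dots,\bx_l) \in F_T$, a metric on the fibers of $\pi$ over the open face, as required in the definition of a family of asymptotically cylindrical metrics.

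Next I would extend this to the regular neighborhood $U_T \cong \overline{F}_T \times (M_0,\infty]^l$ using the gluing procedure of Subsection \ref{family-general}: at a point $(\bx_0,\dots,\bx_l, s_1,\dots,s_l)$, one forms $\tilde g$ on the glued necks via formulas \eqref{glued-metric-1}--\eqref{glued-metric-2}, using the functions $u_{s_k}$ and the bump functions $\varphi_1,\varphi_2$. To verify that this really defines a family of asymptotically cylindrical metrics, I need to compare $\tilde g$ with the actual Gibbons-Hawking metric $g_{\bm,\bx}$ on $\fX_{\bm,\bx}$ obtained by gluing $\bx_0,\dots,\bx_l$ along $T$ with parameters $(s_1,\dots,s_l)$. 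This is the heart of the argument and the main obstacle: writing $g_{\bm,\bx} = Q_{\bm,\bx} + (\omega'_{\bm,\bx} + b_{\bm,\bx})^2$, the required estimate
\[
 |\nabla^k(g_{\bm,\bx} - \tilde g)| \leq C_k e^{-\delta(\lambda + \tau)}
\]
follows by combining the estimates of Proposition \ref{exp-decay-Q-alpha} (for $Q - \tilde Q$ and $\alpha - \tilde\alpha$), the exponential estimate \eqref{difference-approx} for the discrepancy between curvature and connection, and the estimate \eqref{exponential-decay-2} from Proposition \ref{1-form-modify} for $b_{\bm,\bx} - \tilde b$. The remaining algebra is a straightforward expansion of the quadratic $(\omega' + b)^2$ term, using that $\omega'_{\bm,\bx}$ is chosen to agree with $\tilde \omega'$ on the gluing region by the inductive consistency condition.

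Finally, I would check that the definition does not depend on the choice of minimal $T$: if a point $\bx \in \mathcal{K}_n$ lies in $U_{T_1} \cap U_{T_2}$ then by Proposition \ref{open-sets} there is a common $T \le T_1, T_2$, and the inductive compatibility requirements imposed on $J_\bx$, $\omega'_{\bm',\bx}$, and $b_{\bm',\bx}$ in the previous propositions force both descriptions of the fiber metric to agree. The boundary behavior at the faces of $\mathcal{K}_n$ where some $s_k = \infty$ is then automatic from the definitions. This assembles the metrics into the family $\bbX_\bm$ parametrized by $\mathcal{K}_n$.
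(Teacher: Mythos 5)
There is a genuine gap, and it sits exactly where the new content of this proposition lies. You pin down $f_{\bm_i,\bx_i}$ only on the collars of the ends (by sending them isometrically, in the cylindrical model, onto the fixed regular neighborhoods of the cuts), and then treat the extension over the interior of $Z_i^+$ as automatic. But the whole point of Proposition \ref{fix-diffeo} is to produce a family of diffeomorphisms $f_{\bm_i,\bx_i}:\fX_{\bm_i,\bx_i}\to Z_i^+$ that (a) varies smoothly over \emph{all} of $\mathcal K_{d(v_i)}$, and (b) is compatible, near each boundary face of $\mathcal K_{d(v_i)}$, with the diffeomorphisms already chosen for the smaller Gibbons--Hawking pieces that get glued there. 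Your closing paragraph appeals to the "inductive compatibility requirements imposed on $J_\bx$, $\omega'_{\bm',\bx}$, and $b_{\bm',\bx}$" to get agreement on overlaps, but those propositions only make the \emph{metrics on the abstract manifolds} $\fX_{\bm_i,\bx_i}$ consistent; they say nothing about the identifications with the fixed manifold $Z_i^+$. Without controlling the discrepancy between $f_{\bm',\bx}$ and the glued maps $f_{\bm_i,\bx_i}$ on the fat regions as $\bx$ degenerates toward a face (the paper demands $\nabla^k$-closeness of order $C_k e^{-\tau}$), the pushed-forward metrics need not satisfy the asymptotic condition $|\nabla^k(g-\widetilde g)|\leq C_k e^{-\delta(\lambda+\tau)}$ over the regular neighborhoods $F\times(M_0,\infty]^l$, which is what "family of asymptotically cylindrical metrics" means here.

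The paper's proof is devoted precisely to this: it restricts to a class of "nice diffeomorphisms" between $\U(1)$-bundles over solids of revolution (induced by diffeomorphisms of the planar regions $\Omega$ respecting the divisor), observes that the space of such diffeomorphisms is non-empty and contractible, and then runs an induction over the faces of $\mathcal K_d$ -- first defining $f_{\bm',\bx}$ near the vertices (where conditions on ends and necks plus an arbitrary extension over fat regions suffice), then using contractibility to extend, without obstruction, over edges and higher-dimensional faces while keeping the compatibility condition (iii). You would need to supply some argument of this kind -- a connectedness/contractibility statement for the relevant diffeomorphism groups, or an explicit coherent construction -- before the metric estimates you cite can be invoked. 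The comparison estimates themselves (your "heart of the argument") are indeed the content of Propositions \ref{exp-decay-Q-alpha} and \ref{1-form-modify} and are fine to quote; the missing piece is the coherent family of identifications.
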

\begin{proof}
	To define the maps $f_{\bm_i,\bx_i}$, it is helpful to limit ourselves to a special type of diffeomorphisms.
	Suppose $\H$ denotes the upper half-plane:
	\[
	 \H=\{(x,y)\in \R^2 \mid y\geq 0\}
	\]
	and $\partial \H$ is the subset of $\H$ given by the pairs $(x,0)$.
	Suppose $\Omega$ is a simply connected open subset of $\H$ such that
	$\partial \Omega$ (as a subset of $\H$) is a union of smooth curves,
	$\Omega \cap \partial \H$ consists of $k$ open intervals, and
	$\partial \Omega \cap \partial \H$ consists of the end points of the intervals in $\Omega \cap \partial \H$.
	Let $\mathcal S$ be the solid obtained by rotating $\Omega$ around the axis $\partial \H$. Suppose
	also $D\subset \mathcal S$ is a divisor supported in the $x$-axis. Then we can form the $\U(1)$-bundle $L$
	associated to $D$. The manifolds $Z_i$ are obtained this way by definition.
	
	Suppose $\Omega'$ is another region in $\H$ which gives rise to the manifold $\mathcal S'$. Suppose also $L'$
	is a $\U(1)$-bundle over $L$ associated to a divisor $D'$ supported in the $x$-axis.
	We assume $f$ is a diffeomorphism from a regular neighborhood of $\partial \Omega \cup (\Omega \cap \partial \H)$ in
	$\H$ to a regular neighborhood of $\partial \Omega' \cup (\Omega' \cap \partial \H)$ in $\H'$ which maps
	$\partial \Omega$ to $\partial \Omega'$, $\Omega \cap \partial \H$ to $\Omega' \cap \partial \H$ and the divisor $D$ to $D'$.
	If $\overline f:\Omega\to \Omega'$ is an extension of $f$, then $\overline f$ induces a diffeomorphism from $L$ to $L'$
	in an obvious way.
	We call any such diffeomorphism from $L$ to $L'$ a nice diffeomorphism. The key feature for us is that the
	space of all nice diffeomorphisms from $L$ to $L'$ is a non-empty contractible space \cite{Sm:S2-diff}.

	Let $T$ be an $n$-ribbon tree and $v$ be a vertex of degree $d$ of $T$.
	Let $Z$ be the connected component of $\fX_\bm\backslash Y_T$ corresponding to the vertex $v$.
	Let also $\bm'$ be the $(d+1)$-tuple defined using the type of the vertex $v$ and $\bm$.
	We need to define diffeomorphisms $f_{\bm',\bx}:\fX_{\bm',\bx} \to Z^+$ for each $\bx\in \mathcal P_d$. We construct these diffeomorphisms by induction on $d$ such that
	they satisfy the following conditions:
	\begin{itemize}
		\item[(i)] $f_{\bm',\bx}$ is a nice diffeomorphism depending smoothly on $\bx\in \mathcal P_d$
		\item[(ii)] The diffeomorphism $f_{\bx',\bm}$ maps the fibers of $\fX_{\bm',\bx}$ over the ends (respectively, necks) of $\fB_{\bx}$
			to the corresponding ends (respectively, necks) of $Z$ determined by regular neighborhoods of the spherical cuts fixed earlier in this section.
		\item[(iii)] Suppose $S$ is a $d$-ribbon tree with vertices $v_0$, $\dots$, $v_l$ and the edges $e_1$, $\dots$, $e_l$.
		Suppose also $\bm_i$ is the  $(d(v_i)+1)$-tuple determined by the type of the vertex $v_i$ and the $d$-tuple $\bm'$. For any
		$(\bx_0,\dots,\bx_l)\in \mathcal P_{d(v_0)}\times \dots \times \mathcal P_{d(v_l)}$ and $(s_1,\dots,s_l)\in (M_0,\infty]^l$, let:
		\begin{equation} \label{bxS}
			\bx=\Phi_S(\bx_0,\dots,\bx_l,s_1,\dots,s_l)
		\end{equation}
		The fat regions of $\fB_\bx$ can be identified with the fat regions of $\fB_{\bx_0}$, $\dots$, $\fB_{\bx_l}$.
		In particular, we can compare $\nabla^kf_{\bm',\bx}$ restricted to the fibers of $\fX_{\bm',\bx}$ over fat regions of $\fB_\bx$ to
		the maps $\nabla^kf_{\bm_i,\bx_i}$ restricted to the fibers of $\fX_{\bm_i,\bx_i}$ over fat regions of $\fB_{\bx_i}$. We require that the distance between these maps are
		controlled by $C_ke^{-\tau}$ where $\tau=\min_i(s_i)$.
	\end{itemize}
	
	In order to show how the induction step can be carried out, let $T$ and $v$ be chosen as above. We firstly define $f_{\bm',\bx}$
	in the case that $\bx=\Phi_S(\bx_0,\dots,\bx_l,s_1,\dots,s_l)$ as in \eqref{bxS} for a $d$-ribbon tree $S$ representing a vertex of $\mathcal K_d$.
	Condition (ii) determines $f_{\bm',\bx}$ on the fibers of $\fX_{\bm',\bx}$ over the ends and the necks of $\fB_\bx$. We extend $f_{\bm',\bx}$ to the fibers over fat regions
	such that Conditions (i) and (iii) are satisfied in $\Phi_S(F_S\times (M_0,\infty]^{d-2})$. Next, let $S$ represent an edge of $\mathcal K_d$.
	We can use contractibility of space of nice diffeomorphisms to define $f_{\bm',\bx}$ in the case that $\bx$ is given as in
	\eqref{bxS}. Repeating this construction for all faces of $\mathcal K_d$ allows us to define $f_{\bm',\bx}$ for $\bx\in \mathcal P_d$.
	Propositions \ref{exp-decay-Q-alpha} and \ref{1-form-modify} guarantee that the push-forward metrics define a family of asymptotically cylindrical metrics on $\fX_\bm$
	parametrized by $\mathcal K_n$.
\end{proof}

\subsection{Gibbons-Hawking Families of Metrics on $W^j_k$} \label{GH-family-cob}

Gibbons-Hawking manifolds are relevant for the present paper because they appear as submanifolds of the 4-manifolds $W^j_k$. Suppose $T$ is a $(k-j+1)$-ribbon tree with interior vertices $v_0$, $\dots$, $v_l$. Suppose also $Y_T$ is the cut in $W^j_k$ given by Example \ref{asso-type-cuts}, and $W_0$, $\dots$, $W_l$ are connected components of a the complement of a regular neighborhood of $Y_T$ in $W^j_k$. Suppose the component $W_i$ has the property that any boundary component of $W_i$ is given by a spherical cut $M_{k'}^{j'}\subset Y_T$, with $1\leq k'-j'\leq N$. Then $W_i$ is diffeomorphic to a $\U(1)$-bundle defined over a subspace $\fB$ of $B$ in \eqref{B} which is bounded by spheres $S_{k'}^{j'}$, with $1\leq k'-j'\leq N$. In particular, $W_i$ is diffeomorphic to a Gibbons-Hawking manifold $\fX_\bm$ where $\bm$ is determined by the intersection of $\fB$ with the divisor $\fd$ in \eqref{fd}. In this case, we say $W_i$ is of {\it GH type}. If $W_i$ is not of GH type, then we say it is of {\it NGH type}.

In Theorem \ref{Wmetrics}, we shall construct a family of metrics $\bbW^j_k$ on $W^j_k$ such that all components of GH type appearing in the family has a Gibbons-Hawking metric. In order to construct this family of metrics, we firstly need to fix metrics on the boundary components and cuts that show up in $W^j_k$. We fix arbitrary metrics on 3-manifolds $Y_i$ which are $(N+1)$-periodic in $i$. For a spherical cut or boundary component $M_{k'}^{j'}$, we fix the metric $\sigma+\mu_{m,m'}^2$ for an appropriate choice of $m$ and $m'$. Here $\sigma$ is induced by identification of the spherical base of $M_{k'}^{j'}$ with $S^2$ and $\mu_{m,m'}$ is the $\U(1)$-connection introduced in the proof of Proposition \ref{GH-prop}. For any $Y$ in this family of 3-manifolds, we choose regular neighborhood $[-2,2]\times Y$ such that if two three manifolds $Y$ and $Y'$ in this family are disjoint, then $[-2,2]\times Y$ are $[-2,2]\times Y'$ are also disjoint.

\begin{theorem}\label{Wmetrics}
	There is a family of asymptotically cylindrical metrics $\mathbb W^j_k$ on $W^j_k$ parametrized by $\mathcal {K}_{k-j+1}$ such that the
	following two conditions hold for any $(k-j+1)$-ribbon tree $T$:
	\vspace{-5 pt}
	\begin{itemize}
		\item[(i)] the cut associated to $F_T$ is equal to $Y_T$ given by Example \ref{asso-type-cuts};
		\item[(ii)] let $W$, the connected component of $W^j_k\setminus Y_T$ associated to a vertex $v$, be of GH type diffeomorphic to $\fX_{\bm}$ where
		$\bm$ is a $(d(v)+1)$-tuple $(m_0,\dots,m_{d(v)})$.
		The restriction of the family of metrics $\bbW^j_k$ to $F_T$ induces a family of metrics on $W$ parametrized by $\mathcal K_{d(v)}$. We require that
		the metric corresponding to $\bx\in \mathcal K_{d(v)}$ on $W$ is the pull-back of the Gibbons-Hawking metric on $\fX_{\bm,\bx}$ constructed in the previous subsection.
	\end{itemize}
\end{theorem}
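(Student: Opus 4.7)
The plan is to construct $\mathbb{W}^j_k$ by combining the Gibbons-Hawking families $\mathbb{X}_\bm$ of Proposition \ref{fix-diffeo} on GH-type pieces with auxiliary families on NGH-type pieces, glued along cuts using the framework of Subsection \ref{family-general}. For any $(k-j+1)$-ribbon tree $T$, each interior vertex $v$ yields a component $W_v$ of $W^j_k\setminus Y_T$ which is either of GH type (and then condition (ii) forces its metric family to be the pull-back of $\mathbb{X}_{\bm_v}$) or of NGH type, in which case the family remains to be chosen.

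First I would construct, for each NGH component $Z$ that can appear across all faces of $\mathcal{K}_{k-j+1}$, a family of asymptotically cylindrical metrics on $Z$ parametrized by $\mathcal{K}_d$, where $d+1$ is the number of boundary components of $Z$. Each such $Z$ carries its own collection of cuts of associahedron type, namely the vertical and spherical cuts of $W^j_k$ contained in its interior; applying the construction of Subsection \ref{abst-assoc-family} to $Z$ with an asymptotically cylindrical background metric whose ends match the prescribed metrics on the $Y_i$ and on the spherical cuts yields such a family. The 1-parameter families $g_A(s)$ on each cut are chosen inductively on the face poset so that their $s\to\infty$ limits match the pull-back of the Gibbons-Hawking metric on any GH sub-piece that appears at the adjacent deeper face.

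The global family is then assembled face by face. For $\bx$ in the interior of $\overline{F}_T=\prod_v \mathcal{K}_{d(v)}$ with coordinates $(\bx_v)$, the fiber over $\bx$ is the disjoint union, joined along $Y_T$, of the metric on each $W_v$: the pull-back of the Gibbons-Hawking metric on $\fX_{\bm_v,\bx_v}$ for GH vertices, and the NGH family value at $\bx_v$ otherwise. Over the regular neighborhood $\overline{F}_T\times (M_0,\infty]^l$ of the face, the neck coordinates $s_i$ control the lengths of the $Y_T$ necks via the functions $u_{s_i}$ from Subsection \ref{family-general}. Condition (i) of the theorem then holds by construction of $Y_T$ from Example \ref{asso-type-cuts}.

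The main obstacle is maintaining coherence across faces: when $T'<T$ so that $F_{T'}$ is a sub-face of $\overline{F}_T$, a given component $W_v$ subdivides further, possibly spawning new GH-type sub-pieces whose metric families are rigidly prescribed by Proposition \ref{fix-diffeo}. The NGH family on $W_v$ must therefore restrict on this sub-face to the correct product of these forced Gibbons-Hawking metrics with recursively defined NGH families on the remaining sub-pieces. This is engineered exactly as in the previous subsection's inductive choices of the metrics $J_\bx$ and connections $\omega_{\bm',\bx}'$: at each step of the induction on faces, the freedom in choosing the background metric $g_0$ and the 1-parameter families $g_A(s)$ on NGH pieces is used to enforce compatibility with the already-fixed Gibbons-Hawking data on adjacent GH pieces. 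Once this coherent choice is made, the asymptotically cylindrical property of each fiber follows from the gluing prescription of Subsection \ref{family-general} together with the exponential-decay estimates of Propositions \ref{exp-decay-Q-alpha} and \ref{1-form-modify}.
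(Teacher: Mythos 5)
Your overall strategy coincides with the paper's: induct over the faces of the associahedron (equivalently, over the components of $W^j_k\setminus Y_T$ ordered by the number of boundary components), force the Gibbons--Hawking families of Proposition \ref{fix-diffeo} on GH pieces, and fill in the NGH pieces by an interpolation that restricts correctly on every sub-face. However, the specific interpolation mechanism you invoke --- ``applying the construction of Subsection \ref{abst-assoc-family} to $Z$'' --- cannot deliver condition (ii) as stated. In that construction the metric $g(\bx)$ equals the fixed background $g_0$ outside the collar neighborhoods $N(Y_A)$ for \emph{every} $\bx$; only the necks are modified, via $g_A(\gamma_A(\bx))$. Consequently, on a face $\overline F_{T'}$ where a GH sub-piece $W'\cong\fX_{\bm'}$ appears, the induced family on $W'$ would be $g_0|_{W'}$ with stretched collars, i.e.\ essentially a single metric away from the cuts. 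Condition (ii) instead demands the full family $\{f_{\bm',\bx'\,*}\,g_{\bm',\bx'}\}_{\bx'\in\mathcal K_{d'}}$, and the Gibbons--Hawking metrics $g_{\bm',\bx'}$ genuinely vary over the interior of $W'$ as the points of $\bx'$ move (they are not neck-stretchings of one another). Choosing $g_0$ and the collar families $g_A(s)$ cleverly cannot repair this, because those data are $\bx$-independent off the collars by construction.

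The fix, which is what the paper actually does in the NGH induction step, is to abandon the ``collar-only'' modification and instead take a convex combination
\[
  \frac{\gamma_0(\bx)\,g_0+\sum_{T}\gamma_T(\bx)\, g_T(\bx)}{\gamma_0(\bx)+\sum_{T}\gamma_T(\bx)},
\]
where for each codimension-one face $F_T$ of $\mathcal K_d$ the metric $g_T(\bx)$ is defined on a regular neighborhood of $\overline F_T$ by gluing, with neck parameter $t$, the \emph{inductively constructed families} on the two sub-pieces (in particular the forced Gibbons--Hawking families on GH sub-pieces) evaluated at $\pi_T(\bx)$; the weight $\gamma_T(\bx)=e^t$ dominates near $\overline F_T$, so the family limits to the correct broken, genuinely varying metric there, while $\gamma_0$ takes over in the interior. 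Your proposal correctly identifies where the compatibility must be enforced and correctly handles the face/regular-neighborhood structure and the asymptotically cylindrical estimates, but as written the key interpolation step would fail and needs to be replaced by this weighted average of glued families.
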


\begin{proof}
	A family of metrics parametrized by an associahedron, determines families of metrics on the connected components of the complement of cuts. Each of these families of metrics
	are parametrized by an associahedron. In order to prove the proposition, we inductively construct such families of metrics on any component of $W^j_k\setminus Y_T$
	where $T$ is a $(k-j+1)$-ribbon tree. To be more precise, if $v$ is a vertex of $T$ with degree $d$ and $W$ is a connected component of $W^j_k\setminus Y_T$ corresponding to $v$,
	then by induction on $d$, we construct a family of metrics on $W$ parametrized by $\mathcal K_d$.
	We also show that we can assume the following additional conditions hold:
	\begin{itemize}
		\item Each face $S$ of $\mathcal K_d$ determines a union of cuts $Y_S$ in $W$.
			Let $S$ have vertices $v_0$, $\dots$, $v_l$ and $d_i=d(v_i)$.
			Let also $W_0$, $\dots$, $W_l$ be the connected components of the complement of the regular neighborhood of $Y_S$ in $W$.
			Then $W_i$ also appears as a connected component of the complement of a union of cuts in $W^j_k$.
			We require that the restriction to $\overline F_S$ of the family of metrics on $W$ is given
			by the families of metrics on $W_i$ parametrized by $\mathcal K_{d_i}$.
		\item If $W$ is of GH type diffeomorphic to the space $\fX_{\bm}$, then we demand that the metric on $W$ corresponding to $\bx\in \mathcal K_d$
			is induced by a diffeomorphism $f_{\bm,\bx}:\fX_{\bm,\bx} \to W$.
			Moreover, we require that $f_{\bm,\bx}$ satisfies the same properties as (i), (ii) and (iii) in the proof of Proposition \ref{fix-diffeo}.
	\end{itemize}

In order to preform the induction step, we assume that $W$ is of NGH type. In the case that $W$ is of GH type, the proof is similar to the proof of Proposition \ref{fix-diffeo}.
	Our induction assumption specifies the metrics on $W$ parametrized by the points in the boundary of $\mathcal K_d$. In order to define the metrics associated to the points in
	$\mathcal P_d$, we fix a function $\gamma_T:\mathcal P_d \to \R^{\geq 0}$ for each face $F_T$ of $\mathcal K_d$ with codimension $1$. The function $\gamma_T$ is supported in
	$\Phi_T(F_T\times (M_0,\infty))$. Moreover, the value of $\gamma_T$ at $\Phi_T(\bx_0,\bx_1,t)$is equal to $e^t$ if $(\bx_0,\bx_1)\in F_T$ and $t\geq 2M_0$.
	We also fix a compactly-supported function $\gamma_0:\mathcal K_d \to \R^{\geq 0}$ and a metric $g_0$ on $W$ which has cylindrical ends corresponding to the metrics fixed on vertical and
	spherical cuts. We also assume that $\gamma_0$ is chosen such that $\gamma_0$ and the functions $\{\gamma_T\}_T$ do not have a common zero.
	The metric on $W$ parametrized by $\bx\in \mathcal P_d$ is defined as follows:
	\[
	  \frac{\gamma_0(\bx)g_0+\sum_{T}\gamma_T(\bx)\cdot g_T(\bx)}{\gamma_0(\bx)+\sum_{T}\gamma_T(\bx)}.
	\]
	Here $g_T(\bx)$ is a metric on $W$ which we only need to define for the points in $\Phi_T((M_0,\infty)\times F_T)$. If $\bx=\Phi_T(t,(\bx_0,\bx_1))$, then by induction we have a
	broken asymptotically cylindrical metic on $W$ associated to the point $(\bx_0,\bx_1)$. We glue these two metrics as it is explained in \eqref{glued-metric-1} and the following discussion there.
	The resulting metric determines $\gamma_T(\bx)$. It is straightforward to see that the above family of metrics has the required properties.
\end{proof}

\section{Moduli Spaces of Anti-Self-Dual Connections}\label{ASD-mod-spaces}

This section concerns the moduli spaces of ASD connections that appear in the construction of surgery exact polygons. In the first subsection, we review some general facts about such moduli spaces. Some special properties of moduli spaces of ASD connections over Gibbons-Hawking spaces are discussed in Subsection \ref{Mod-GH}. In the final subsection of the present section, we study the moduli spaces of {\it completely reducible connections} on $X_{N}(l)$.
\subsection{Moduli Spaces on Manifolds with Cylindrical Ends}\label{cyl-end-mod}
\subsubsection*{Chern-Simons Functional and Flat Connections}

Suppose $Y$ is an oriented connected closed 3-manifold and $\gamma$ is a 1-cycle in $Y$. The 1-cycle $\gamma$ determines a $\U(N)$-bundle $E$ on $Y$ whose isomorphism class depends only on the homology class of $\gamma$. There is a Chern-Simons functional $\CS$ on the space of connections on the bundle $E$ which is well-defined up to a constant. For two connections $A_0$ and $A_1$ on $E$, we have:
\[
  \CS(A_1)-\CS(A_0)= \frac{1}{16N\pi^2}\int_{[0,1]\times Y} \tr(\ad(F(\bA)) \wedge \ad(F(\bA))).
\]
Here $\bA$ is a connection on $[0,1]\times Y$ whose restriction to $\{i\}\times Y$ is equal to $A_i$ for $i=0,1$. Moreover, $\ad(F(\bA))$ is a 2-form with coefficients in ${\rm End}(\su(P))$ and $\tr$ is given by taking fiberwise trace. We fix an arbitrary connection on $\Lambda^{N}E$ and focus only on connections on $E$ whose central part is equal to this fixed connection on $\Lambda^{N}E$. This space is invariant with respect to automorphisms of $E$ whose fiberwise determinants are equal to $1$. The stabilizer of a connection $\eta $ with respect to the action of such automorphisms is denoted by $\Gamma_\eta$.

A connection $A$ is a critical point of the Chern-Simons functional, if $F_0(A)\in \Omega^2(Y,\su(E))$ vanishes. In general, for a connection $A$, $F_0(A)$ denotes the trace-free part of the curvature of $A$. We will write $R(Y,\gamma)$ for the solutions of $F_0(A)=0$ modulo the action of automorphisms of $E$ with determinant $1$. An element $\eta$ of $R(Y,\gamma)$ is {\it non-degenerate} if the Hessian of $\CS$ at $\eta$ is non-degenerate modulo the action of the gauge group. This is equivalent to say that the cohomology group $H^1(Y,\ad(\eta))$ is trivial. In general, we will write $h^i(\eta)$ for the dimension of the cohomology group $H^i(Y,\ad(\eta))$. If $\CS$ has degenerate critical points, we can make a small perturbation of the Chern-Simons functional using holonomy perturbations such that the critical points of the resulting functional are non-degenerate \cite{Don:YM-Floer, KM:YAFT}. When it does not make any confusion, we denote the critical points of such perturbed Chern-Simons functional with $R(Y,\gamma)$, too.

A pair $(Y,\gamma)$ is called admissible if there is $\sigma\in H_2(Y,\Z)$ such that the algebraic intersection number of $\gamma$ and $\sigma$ is coprime to $N$. For an admissible pair $(Y,\gamma)$, the elements of $R(Y,\gamma)$ are irreducible. That is to say, for any $\eta\in R(Y,\gamma)$, the stabilizer $\Gamma_\eta$ consists of only scalar transformations. In fact, even after a small perturbation of the Chern-Simons functional the same result holds \cite[Lemma 3.11]{KM:YAFT}.

There are two other families of pairs $(Y,\gamma)$ which appear in this paper. If $Y$ is the lens space $L(p,q)$, then the (unperturbed) Chern-Simons functional has non-degenerate critical points for any choice of $\gamma$. Let $\zeta$ denotes a flat $\U(1)$-connection on $L(p,q)$ whose holonomy along the generator of $H_1(Y,\Z)$ is equal to $e^{2\pi \bi/p}$. Then any flat $\U(N)$-connection on $(L(p,q),\gamma)$ is isomorphic to $\zeta^{i_1}\oplus \dots \oplus \zeta^{i_N}$ where $0\leq i_j < p$ and the sum $i_1+\dots+i_N$ is determined by the homology class of $\gamma$. In particular, all elements of $R(L(p,q),\gamma)$ are reducible. We shall be also interested in the case that $Y=S^1\times S^2$. The space $R(S^1\times S^2,\gamma)$ is empty unless $\gamma$ represents the trivial homology class.

By taking holonomy along the $S^1$ factor, we can identify $R(S^1\times S^2,\emptyset)$ with $T/W$ where $T$ is the space of diagonal matrices in $\SU(N)$ and $W=S_N$ is the Weyl group. The space $T/W$ can be described as the quotient of the Lie algebra:
\begin{equation} \label{abelian-Lie-alg}
	\mathfrak{t} = \{ (t_1,\dots,t_N) : t_1 + \cdots + t_N = 0 \}
\end{equation}	
by the action of the semi-direct product $W \ltimes L$, where $L = \Z^N \cap \mathfrak{t}$ is the root lattice of $\SU(N)$, acting by translation. The Weyl group $W$ also acts by permuting the coordinates. To identify $T/W$ with a standard simplex, we introduce functions $r_i: \mathfrak{t} \to \R$, given by
\[ r_i = \left \{ \begin{array}{ll} t_{i+1} - t_{i}  & i<N \\  t_{1} - t_{N} + 1 &i=N \end{array}\right.\]
Then the map $r = (r_1,\dots,r_N): \mathfrak{t} \to \R^N$ identifies $\mathfrak{t}$ with the plane $r_1 + \cdots + r_N = 1$.  Let $\Delta_{N-1}^\ft$ denote the locus in $\mathfrak{t}$ cut out by the inequalities $r_i \geq 0$ for all $i$.

\begin{prop}
	$\Delta_{N-1}^\ft$ is a fundamental domain for the action of $W \ltimes L$ on $\mathfrak{t}$.
	The elements of $T/W$ with repeated eigenvalues are in correspondence with the boundary of $\Delta_{N-1}^\ft$.
\end{prop}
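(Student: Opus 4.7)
The plan is to recognize $\Delta_{N-1}^\ft$ as the standard fundamental alcove of the affine Weyl group of type $A_{N-1}$, and then conclude from classical facts about affine Weyl groups. First I would observe that the defining inequalities of $\Delta_{N-1}^\ft$ translate cleanly into root-system language: for $i<N$, the condition $r_i \geq 0$ is $\alpha_i(t) \geq 0$, where $\alpha_i = e_i - e_{i+1}$ is the $i$-th simple root of $A_{N-1}$; while $r_N \geq 0$ is the condition $\theta(t) \leq 1$, where $\theta = e_N - e_1$ is (the negative of) the highest root. Thus $\Delta_{N-1}^\ft$ is precisely the fundamental alcove, and the standard result that the fundamental alcove is a strict fundamental domain for the affine Weyl group $W_{\aff} = W \ltimes Q^\vee$ (which for $A_{N-1}$ coincides with $W \ltimes L$, since root lattice and coroot lattice agree) immediately gives the first claim.

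For the reader who wants a self-contained argument, I would give the existence and uniqueness parts directly. For existence: given $t \in \ft$, first apply an element of $W = S_N$ to sort the coordinates so that $t_1 \leq t_2 \leq \cdots \leq t_N$, which ensures $r_i \geq 0$ for $i < N$. If moreover $t_N - t_1 \leq 1$, then $r_N \geq 0$ and we are done. Otherwise, translate by $\lambda = e_1 - e_N \in L$ to replace $(t_1,\ldots,t_N)$ by $(t_1+1,t_2,\ldots,t_N-1)$, re-sort using $W$, and iterate; since the quantity $t_N - t_1$ strictly decreases each time by a fixed positive amount (as long as it exceeds $1$), the process terminates in $\Delta_{N-1}^\ft$. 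For uniqueness, suppose $t, t' \in \interior(\Delta_{N-1}^\ft)$ lie in the same $W_{\aff}$-orbit; the standard argument (e.g.\ minimize $\|t - \rho\|^2$ along the orbit, or use the fact that the walls of $\Delta_{N-1}^\ft$ correspond to a set of simple reflections generating $W_{\aff}$) shows the only element of $W_{\aff}$ carrying the interior into itself is the identity, so $t = t'$.

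For the second assertion, recall that a point of $T/W$ is represented by an element $\diag(e^{2\pi \bi t_1}, \ldots, e^{2\pi \bi t_N})$ with $t \in \ft$, and has a repeated eigenvalue precisely when $t_j - t_k \in \Z$ for some $j \neq k$. Choosing the representative in $\Delta_{N-1}^\ft$, so that $t_1 \leq t_2 \leq \cdots \leq t_N \leq t_1 + 1$, the only way such a relation can occur is $t_{i+1} = t_i$ for some $i < N$, equivalent to $r_i = 0$, or $t_N = t_1 + 1$, equivalent to $r_N = 0$. These are exactly the facets of $\Delta_{N-1}^\ft$, so elements of $T/W$ with repeated eigenvalues correspond precisely to the boundary of $\Delta_{N-1}^\ft$.

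The main obstacle, which is purely expository rather than mathematical, is deciding how much of the affine Weyl group machinery to black-box versus prove. Since the statement only claims a standard fact and the proof above is elementary, I would favor giving the self-contained argument of one short paragraph rather than citing Bourbaki.
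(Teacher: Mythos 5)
Your primary route is correct and genuinely different from the paper's. You identify $\Delta_{N-1}^\ft$ with the fundamental alcove of the affine Weyl group $W\ltimes Q^\vee$ of type $A_{N-1}$ (noting $Q^\vee=L$ here) and invoke the classical fact that the closed alcove meets every orbit exactly once; the paper instead gives a bare-hands construction, producing the required lattice vector $k\in L$ explicitly from the ceilings $\lceil t_i\rceil$ and a sort of the fractional parts, and deducing uniqueness from the uniqueness of $k$ with $w(t-k)\le 1$ for the width function $w$. Your approach buys brevity and places the statement in its standard context; the paper's buys self-containedness and an explicit normal form (which is in the spirit of its later explicit computations with $\Delta_{N-1}^\ft$). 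Your treatment of the second assertion (repeated eigenvalues $\Leftrightarrow$ some $r_i=0$) is correct and is actually more than the paper writes down, since its proof only addresses the fundamental-domain claim.

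Two genuine problems arise, however, in your ``self-contained'' backup argument. First, the claimed monovariant is false: after sorting and translating by $e_1-e_N$, the width $t_N-t_1$ need not decrease at all when there are ties among the extreme coordinates. For $t=(-1,-1,1,1)\in\ft$ the move produces $(0,-1,1,0)$, which re-sorts to $(-1,0,0,1)$, still of width $2$. The procedure does terminate, but you need a different monovariant: for instance $\sum_i t_i^2$ drops by $2(t_N-t_1-1)>0$ at each step, and since the $W\ltimes L$-orbit of $t$ is discrete, only finitely many orbit points have $\sum_i t_i^2$ below the initial value, so the iteration stops. Second, your uniqueness sketch only rules out two \emph{interior} points of $\Delta_{N-1}^\ft$ lying in the same orbit; the proposition (and the bijection with $T/W$ implicit in its second sentence) requires that no two distinct points of the \emph{closed} simplex, boundary included, be identified. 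The classical alcove theorem you cite does give this stronger statement, so your first route is fine, but the self-contained paragraph as written does not establish it.
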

\begin{proof} Define a {\it width} function $w: \R^N \to \R$ by
\[ w(t_1,\dots,t_N) = \sup_{i,j} | t_i - t_j | .\]
Given any vector $t = (t_1,\dots,t_N) \in \mathfrak{t}$, we can find a \emph{unique} vector $k \in L$ such that $w(t-k) \leq 1$.  Indeed, if we define a positive integer $d$ by
\[ d = \sum_{i=1}^N  \lceil t_i  \rceil, \]
and choose a permutation $\sigma$ such that:
\[ t_{\sigma(i)} -  \lceil t_{\sigma(i)}  \rceil \leq t_{\sigma(i+1)} - \lceil t_{\sigma(i+1)} \rceil \]
for every $i$, then the components of $k$ are given by
\[
  k_{\sigma(i)} = \left \{
  	\begin{array}{ll}
		 \lceil t_{\sigma(i)}  \rceil-1 & \sigma(i) \leq d \\
		\lceil t_{\sigma(i)}  \rceil  & \sigma(i) > d
	\end{array}
  \right.
\]

We can then find a permutation $\tau$ such that
\[ t_{\tau(i)}-k_{\tau(i)} \leq t_{\tau(i+1)}-k_{\tau(i+1)} \]
for every $i$.  It follows that the element $(\tau, - k) \in W \ltimes L$ takes $t$ into $\Delta_{N-1}^\ft$.
To complete the proof, we must show that no two points in $\Delta_{N-1}^\ft$ can be taken to one another by an element of $W \ltimes L$.  But this follows from the uniqueness of $k$, the fact that width is preserved by the action of $W$, and the fact that $\tau$ is unique up to right multiplication by an element of the stabilizer of $t-k$.
\end{proof}

\subsubsection*{Moduli Space of ASD Connections}

Let $W$ be a compact oriented smooth four-manifold with non-empty boundary and
$c$ be a 2-cycle. Let $E$ be the $\U(N)$ bundle determined by $c$. The bundle $E$ induces a $\U(N)$-bundle on $W^+$ which is also denoted by $E$. We assume that $(W,c)$ satisfies the following condition:
\begin{condition} \label{noS1S2}
	For any boundary component $(Y,\gamma)$ of $(W,c)$, either the 3-manifold $Y$ is a lens space, or
	the pair $(Y,\gamma)$ is admissible.
\end{condition}
\noindent
We also assume that for each admissible boundary component $(Y,\gamma)$ of $(W,c)$ the Chern-Simons functional is perturbed such that all of its critical points are non-degenerate.

Let $A_0$ and $A_1$ be two connections on $W^+$ such that the restriction of $A_i$ on any end $[-1,0)\times Y$ of $W^+$ is the pullback of a critical point of the corresponding (perturbed) Chern-Simons functional on $(Y,\gamma)$. We say $A_0$ and $A_1$ represent the same {\it path} along $(W,c)$ if there is an automorphism $u$ of $E$ with determinant $1$ such that:
\[
  A_0-u^*A_1
\]
is supported in $W\subset W^+$. The equivalence classes of this relation are called {\it paths along $(W,c)$}. We say the restriction of a path $p$, represented by $A_0$, to a boundary component $(Y,\gamma)$ of $(W,c)$ is equal to $\alpha\in R(Y,\gamma)$ if the restriction of $A_0$ to the end $[-1,0)\times Y$ is the pull-back of a connection that represents $\alpha$.

For a path  $p$ along $(W,c)$ represented by a connection $A_0$, let $\mathcal A_p(W,c)$ be the space of connections of the form $A_0+a$ where $a\in L^2_{k,\delta}(W^+,\Lambda^1\otimes \su(E))$. A differential form $a$ on $W^+$ is in $L^2_{k,\delta}$ if it belongs to $L^2_{k,loc}$. Moreover, the form $|r|^{-\delta}a(r,y)$ on $[-1,0)\times Y$ is required to be an element of $L^2_k$. Here $k\geq 3$ and $\delta$ is a small positive real number. Note that all elements of $\mathcal A_p(W,c)$ has the same central part. For a connection $A\in \mathcal A_p(W,c)$, the Chern-Weil integral:
\begin{equation} \label{energy}
  \kappa(A):=\frac{1}{16N\pi^2}\int_{W^+} \tr(\ad(F(A)) \wedge \ad(F(A))).
\end{equation}
is called the {\it topological energy} of $A$. This integral is independent of $A$ and depends only on $p$. We also define the {\it gauge group} $\mathcal G_p(W,c)$ as follows:
\[
  \mathcal G_{p}(W,c):=\left \{u\in {\rm Aut}(E) \mid \det(u)=1,\, \nabla_{A_0} u \in L^2_{k,\delta}(W, \Lambda^1\otimes\su(E))\right\}.
\]
The quotient of $\mathcal A_{p}(W,c)$ by the action of the gauge group $\mathcal G_{p}(W,c)$ is denoted by $\mathcal B_{p}(W,c)$. We say a connection $A\in \mathcal A_{p}(W,c)$ is irreducible if the stabilizer $\Gamma_{A}$ of the action of $\mathcal G_{p}(W,c)$ has positive dimension.

After fixing a metric on $W^+$ compatible with cylindrical ends, we can define the moduli space of {\it ASD} connections on $W^+$. Firstly assume that the critical points of the Chern-Simons functional for all boundary components of $(W,c)$ are non-degenerate. A connection $A\in \mathcal A_{p}(W,c)$ is ASD if $F_0^+(A)$, the self-dual part of $F_0(A)$, vanishes. This equation is invariant with respect to the action of $\mathcal G_{p}(W,c)$ and the quotient space of solutions forms a subspace $\mathcal M_{p}(W,c)$. In  general, holonomy perturbations of the Chern-Simons functional associated to a boundary component $(Y,\gamma)$ of $(W,c)$ induces a gauge invariant perturbation of the ASD equation on $[-1,0)\times Y$. Using a smooth function supported in $(-1,0)\times Y$, we can extend this perturbation to $W^+$. In the case that we need to perturb the ASD equation to achieve non-degenracy of critical points, a connection is called ASD if:
\begin{equation} \label{pert-ASD}
  F^+_0(A)+U(A)=0
\end{equation}
where $U(A)$ is the sum of the perturbation terms induced by the perturbations of the Chern-Simons functionals. We still denote the space of the all ASD connections modulo the action of the gauge group by $\mathcal M_{p}(W,c)$. Let $(Y_0,\gamma_0)$, $\dots$, $(Y_k,\gamma_k)$ be boundary components of $(W,c)$. Let also $\alpha_i$ be the restriction of $p$ to $(Y_i,\gamma_i)$. We will also write $\mathcal M_{p}(W,c;\alpha_0,\dots,\alpha_k)$ for $\mathcal M_{p}(W,c)$ if we want to emphasize on the limiting connections of the elements of the moduli space of ASD connections. For the sake of exposition, we assume that there is no need to perturb the Chern-Simons functional in the rest of this subsection.

There is a variant of the moduli space $\mathcal M_{p}(W,c)$ which is useful for our purposes. Let $(Y,\gamma)$ be a boundary component of $(W,c)$. Let also $p$ be a path along $(W,c)$ whose restriction to $(Y,\gamma)$ is given by $\alpha\in R(Y,\gamma)$. The restriction of an element of $\mathcal G_p(W,c)$ to $\{t\}\times Y\subset (-1,0)\times Y$ is asymptotic to an element of $\Gamma_\eta$ as $t$ approaches $0$. Let $\mathcal G_{p,\eta}(W,c)$ be the subgroup of $\mathcal G_p(W,c)$ consisting automorphisms which are asymptotic to the identity on the end $(-1,0)\times Y$ of $W^+$. Then the moduli space of ASD connection in $\mathcal A_p(W,c)$ modulo the action of $\mathcal G_{p,\eta}(W,c)$ gives rise to the {\it framed moduli space} $\widetilde {\mathcal M}_{p,\eta}(W,c)$. There is an action of $\Gamma_\eta$ on the based moduli space $\widetilde {\mathcal M}_{p,\eta}(W,c)$ and the quotient space is equal to $\mathcal M_p(W,c)$. This construction can be extended in an obvious way to the case that $(Y,\gamma)$ is replaced with a union of boundary components of $(W,c)$.

To any ASD connection $A$ we can associate the following Fredholm complex:
\begin{equation}\label{ASD-com}
	L^2_{l+1,\delta}(W, \mathfrak {su}(E))  \xrightarrow{\hspace{1mm}d_A\hspace{1mm}}L^2_{l,\delta}(W,  \Lambda^1\otimes \mathfrak {su}(E))
	\xrightarrow{\hspace{1mm}d^+_A\hspace{1mm}} L^2_{l-1,\delta}(W,  \Lambda^+\otimes \mathfrak {su}(E))
\end{equation}
The second map is the linearization of the ASD equation. We say $A$ is {\it regular} if the cohomology group $H_A^2$ associated to the last term of the above complex vanishes. Suppose $(Y,\gamma)$ is the union of all boundary components of $(W,c)$, $\eta$ is an element of $R(Y,\gamma)$. If $A$ is a regular ASD connection representing an element of the framed moduli space $\widetilde {\mathcal M}_{p,\eta}(W,c)$, then $\widetilde {\mathcal M}_{p,\eta}(W,c)$ is smooth in a neighborhood of $[A]$ and the cohomology group $H^1_A$ associated to the middle term in the above complex can be identified with the tangent space of $\widetilde {\mathcal M}_{p,\eta}(W,c)$ at $[A]$. In Section \ref{perturbations}, we review to what extent we can achieve regularity of the moduli spaces that appear in this paper by perturbing the ASD equation.

Next, we consider pairs $(W,c)$ that satisfy the following condition:
\begin{condition} \label{S1S2}
	For any boundary component $(Y,\gamma)$ of $(W,c)$, the 3-manifold $Y$ is a lens space,
	the pair $(Y,\gamma)$ is admissible,
	or the pair is diffeomorphic to $(S^1\times S^2,\emptyset)$. Moreover, there
	is exactly one boundary component which is diffeomorphic to $S^1\times S^2$.
\end{condition}
\noindent
The critical points of the Chern-Simons functional for the pair $(S^1\times S^2,\emptyset)$ are degenerate. However, they are non-degenerate in the Morse-Bott sense and we can construct well-behaved moduli space for the pairs $(X,c)$ satisfying \eqref{S1S2} without much additional work. (See \cite{Taubes:L2-mod-space,MMR:L2-mod-space,Don:YM-Floer}.) We can define paths along $(W,c)$ as before. Let $p$ be a path whose restriction to $S^1\times S^2$ is equal to $\alpha\in \Delta_{N-1}^\ft$. The same constructions as above give rise to the definition of the spaces $\mathcal A_{p}(W,c;\alpha)$, $\mathcal G_{p}(W,c;\alpha)$, $\mathcal B_{p}(W,c;\alpha)$, $\mathcal M_{p}(W,c;\alpha)$ and $\widetilde{\mathcal{M}}_{p,\alpha}(W,c;\alpha)$. It is also important to consider the moduli spaces where the limiting connection $\alpha$ on $S^1\times S^2$ is free to vary.

Suppose $\Delta_{N-1}^{\ft,\circ}$ denotes the interior of the simplex $\Delta_{N-1}^\ft$. Let $p$ be a path along $S^1\times S^2$ represented by a connection $A_0$ such that the restriction of $A_0$ to $S^1\times S^2$ is equal to $\alpha_0\in \Delta_{N-1}^{\ft,\circ}$. We can arrange for a family of connections $A_\alpha$, for $\alpha\in \Delta_{N-1}^{\ft,\circ}$, such that $A_\alpha$ depends smoothly on $\alpha$, the restriction of $A_\alpha$ to $[-1,0)\times (S^1\times S^2)\subset W^+$ is the pull-back of a connection representing $\alpha$, and the restriction of $A_\alpha$ to the complement of a neighborhood of $[-1,0)\times (S^1\times S^2)$ is equal to $A_0$. We can also assume that stabilizers of the restrictions of $A_\alpha$ to $S^1\times S^2$ are all equal to each other. We define $\mathcal A_{p}(W,c;\Delta_{N-1}^{\ft,\circ})$ to be the space of all connections which can be written as $A(\alpha)+a$ where $\alpha \in \Delta_{N-1}^{\ft,\circ}$ and $a \in L^2_{k,\delta}(W^+,\Lambda^1\otimes \su(E))$. The gauge groups $\mathcal G_{p}(W,c;\alpha)$ are independent of $\alpha$ and they act on $\mathcal A_{p}(W,c;\Delta_{N-1}^{\ft,\circ})$. Let $\mathcal B_{p}(W,c;\Delta_{N-1}^{\ft,\circ})$ be the quotient space. The moduli space of the elements in $\mathcal B_{p}(W,c;\Delta_{N-1}^{\ft,\circ})$ represented by the ASD connections are denoted by $\mathcal M_{p}(W,c;\Delta_{N-1}^{\ft,\circ})$. If we only mod out the solutions of the ASD equation by the based moduli space $\mathcal G_{p,\alpha}(W,c;\alpha)$, then the resulting space is denoted by $\widetilde{\mathcal{M}}_{p,\Delta_{N-1}^{\ft,\circ}}(W,c;\Delta_{N-1}^{\ft,\circ})$. The analogue of the ASD complex for a connection $A$ which represents an element of $\mathcal M_{p}(W,c;\Delta_{N-1}^{\ft,\circ})$ and is asymptotic to $\alpha\in \Delta_{N-1}^{\ft,\circ}$ is the following Fredholm complex:
\begin{equation}\label{ASD-comS}
	L^2_{l+1,\delta}(W, \mathfrak {su}(E))
    \xrightarrow{\hspace{1mm}d_A\hspace{1mm}}
    L^2_{l,\delta}(W,  \Lambda^1\otimes \mathfrak {su}(E))\oplus T_\alpha \Delta_{N-1}^{\ft,\circ}
	\xrightarrow{\hspace{1mm}d^+_A\hspace{1mm}}
   L^2_{l-1,\delta}(W,  \Lambda^+\otimes \mathfrak {su}(E))
\end{equation}
A point $[A] \in \mathcal M_{p}(W,c;\Delta_{N-1}^{\ft,\circ})$ is regular if the cohomology group at the last term of the above deformation complex vanishes. The moduli space $\widetilde {\mathcal M}_{p,\Delta_{N-1}^{\ft,\circ}}(W,c;\Delta_{N-1}^{\ft,\circ})$ is a smooth manifold in a neighborhood of a regular connection. We can repeat the same construction in the case that $\Delta_{N-1}^{\ft,\circ}$ is replaced with an open face $\Gamma$ of $\Delta_{N-1}^\ft$. In particular, we can form the moduli spaces $ \mathcal M_{p}(W,c;\Gamma)$, $\widetilde{\mathcal M}_{p,\Gamma}(W,c;\Gamma)$ and the analogue of \eqref{ASD-comS} for an element of $ \mathcal M_{p}(W,c;\Gamma)$. The regularity of an element of $ \mathcal M_{p}(W,c;\Gamma)$ is again defined using the associated ASD complex. Even more generally, if $\mathcal V$ is a subspace of $\Delta_{N-1}^{\ft}$, we define:
\[
  \mathcal M_{p}(W,c,\mathcal V)=
  \bigcup_{\beta\in \mathcal V}\mathcal M_{p(\beta)}(W,c,\beta)
\]
where $p(\beta)$ is given by composing $p$ and a path in $R(S^1\times S^2)$ from the endpoint of $p$ to $\beta$.

Suppose  $(W,c)$ is a 4-manifold satisfying either Condition \ref{noS1S2} or Condition \ref{S1S2}. Suppose $p$ is a path along $(W,c)$ and $A\in \mathcal A_p(X,c)$ is an arbitrary connection. The ASD operator associated to $A$, denoted by $\mathcal D_A$, is defined as follows:
\begin{equation}\label{ASDop}
  \mathcal D_A:= d_A^\ast + d_A^+ :L^2_{l,\delta}(X,  \Lambda^1\otimes \mathfrak {su}(E)) \to
   L^2_{l-1,\delta}(W, \mathfrak {su}(E))
	\oplus L^2_{l-1,\delta}(W,  \Lambda^+\otimes \mathfrak {su}(E))
\end{equation}
If the positive number $\delta$ is small enough, then the ASD operator is Fredholm. Moreover, $\ind(\mathcal D_A)$ depends only on the path $p$. Therefore, we define $\ind(p)$ to be the index of the operator $\mathcal D_A$.

If $(W,c)$ satisfies Condition \ref{noS1S2}
 and $A$ is an ASD connection representing a regular element of $\mathcal M_p(W,c;\alpha)$, then the based moduli space $\widetilde{\mathcal M}_{p,\alpha}(W,c;\alpha)$ is a smooth manifold in a neighborhood of $[A]$ whose dimension is equal to $\ind(\mathcal D_A)+h^0(\alpha)$. If $(W,c)$ satisfies Condition \ref{S1S2}
 and $A$ is an ASD connection representing a regular element of $\mathcal M_p(W,c;\Gamma)$ for a face $\Gamma$ of $\Delta_{N-1}^\ft$, then the based moduli space $\widetilde{\mathcal M}_{p,\Gamma}(W,c;\Gamma)$ is a smooth manifold in a neighborhood of $[A]$ whose dimension is equal to $\ind(\mathcal D_A)+h^0(\alpha)+\dim(\Gamma)$ where $\alpha$ denotes an element of $\Gamma$.

\subsubsection*{Moduli Spaces for a Family of Metrics}

We can form moduli spaces of ASD connections with respect to a family of metrics on a 4-manifold. Suppose $(W,c)$ is a pair satisfying Condition \ref{noS1S2}. Suppose we are also given a family of smooth metrics on $W$ parametrized by a smooth manifold $K$. That is to say, $\bbW:=W^+\times K$ admits a metric on each fiber $W^+\times \{g\}$ which varies smoothly with respect to $g\in K$. For a path $p$ along $(W,c)$, we define the moduli space of ASD connections with respect to this family of metrics to be the following subspace of $\mathcal B_p(W,c)\times K$:
\begin{equation} \label{par-mod-space}
  \mathcal M_p(\bbW,c):=\bigcup_{g\in K} \mathcal M_p(W,c;g)
\end{equation}
where $\mathcal M_p(W,c;g)$ denotes the moduli space $\mathcal M_p(W,c)$ defined with respect to the metric $g$. A similar construction can be used to define framed moduli spaces $\widetilde {\mathcal M}_p(\bbW,c)$. We say $([A],g)\in \mathcal M_p(\bbW,c)$ is a regular element of $ \mathcal M_p(\bbW,c)$ if the linearization of $F_0^+(A)$ as a map from $L^2_{l,\delta}(X,  \Lambda^1\otimes \mathfrak {su}(E))\oplus T_gK$ to $L^2_{l-1,\delta}(X,  \Lambda^+\otimes \mathfrak {su}(E))$ is surjective. Note that this condition is weaker than $[A]$ being a regular element of $ \mathcal M_p(W,c;g)$. If the latter property holds, we say $\mathcal M_p(\bbW,c)$ is {\it fiberwise regular} at $([A],g)$. The definition of $\mathcal M_p(\bbW,c)$ can be adapted to the pairs satisfying Condition \ref{S1S2} and one can form the moduli spaces $\mathcal M_p(\bbW,c;\Gamma)$ for any face $\Gamma$ of $\Delta_{N-1}^{\ft}$. In this case, the linearization of the ASD equation will be a map from $L^2_{l,\delta}(X,  \Lambda^1\otimes \mathfrak {su}(E))\oplus T_\alpha \Gamma \oplus T_gK$ to $L^2_{l-1,\delta}(X,  \Lambda^+\otimes \mathfrak {su}(E))$. We say an element $([A],g)$ is a regular element of the moduli space if this map is surjective.

It is important for our purposes to extend the definition of moduli space of ASD connections to families of metrics containing broken metrics. Suppose $\bbW$ is a family of metrics on a smooth 4-manifold $W$ parametrized by an admissible polyhedron $K$. Let also the projection map from $\bbW$ to $K$ be denoted by $\pi$. We assume that $c$ is a 2-cycle on $W$, which satisfies either Condition \ref{noS1S2}. We also assume that all components of $c$ are transversal to the cuts in the family of metrics $\bbW$. In particular, for any cut $Y$ of $\bbW$, the intersection $Y\cap c$ defines a 1-cycle on $Y$. We require that the pair $(Y,Y\cap c)$, for any cut $Y$, is either admissible or $Y$ is a lens space. We shall define a moduli space $\mathcal M_p(\bbW,c)$ which is equipped with a projection map ${\rm Pr}: \mathcal M_p(\bbW,c) \to K$ as in the case of family of smooth metrics. By definition $\pi^{-1}(K^{\circ})$ is a family of smooth metrics parametrized by $K^{\circ}$, the interior of $K$. The subspace $({\rm Pr})^{-1}(K^{\circ})$ of $\mathcal M_p(\bbW,c)$ is defined to be $\mathcal M_p(\pi^{-1}(K^{\circ}),c)$ where the latter moduli space is given by \eqref{par-mod-space}.

Next, we describe the fibers of $\mathcal M_p(\bbW,c)$ over the interior points of a codimension one face $\overline F$ of $K$. Let $Y_F$ be the cut associated to the face $F$ and $\gamma_F:=Y_F\cap c$. Then the complement of a regular neighborhood of $Y_F$ in $W$ has two connected components  denoted by $W_0$ and $W_1$. By our assumption on admissible polyhedra, the face $\overline F$ has the form $K_0\times K_1$. Moreover, $\pi^{-1}(\bbW)$ is determined by two families of metrics $\bbW_0$, $\bbW_1$ on $W_0$, $W_1$ parametrized by $K_0$ and $K_1$. If $\pi_i:\bbW_i \to K_i$ are the projection maps, then $\pi_i^{-1}(K_i^0)$ defines a family of smooth metrics on $W_i$ and we can form the based moduli space $\mathcal M_{p_i,\eta}(\bbW_i,c_i)$ where $c_i=W_i\cap c$, $p_i$ is a path along $(W_i,c_i)$, and $\eta\in R(Y_F,\gamma_F)$. We define:
\begin{equation} \label{fiber-codim}
	\mathcal M_p(\pi^{-1}(F),c)=\bigcup_{\substack{\eta\in R(Y_F,\gamma_F)\\ p_0\# p_1=p}} \widetilde{\mathcal M}_{p_0,\eta} (\pi^{-1}(K_0^{\circ}),c_0) \times_{\Gamma_\eta}
	\widetilde{\mathcal M}_{p_1,\eta} (\pi^{-1}(K_1^{\circ}),c_1)
\end{equation}
Here $p_0\# p_1$ denotes the path along $(W,c)$ obtained by gluing the paths $p_0$, $p_1$ along $(W_0,c_0)$, $(W_1,c_1)$. The space in \eqref{fiber-codim} is equal to ${\rm Pr}^{-1}(F)$, the fibers of $\mathcal M_p(\bbW,c)$ over $F=K_0^\circ\times K_1^\circ$.  An element $[A_0,A_1]$ in \eqref{fiber-codim} is regular if $A_0$ is a regular element of $ \widetilde{\mathcal M}_{p_0,\eta} (\pi^{-1}(K_0^{\circ}),c_0)$ and $A_1$ is a regular element of $\widetilde{\mathcal M}_{p_1,\eta} (\pi^{-1}(K_1^{\circ}),c_1)$.
The fibers of $\mathcal M_p(\bbW,c)$ over the faces of higher codimensions and the regularity of elements of these fibers can be defined similarly. The family of metrics over each face of codimension $k$ is induced by the families of metrics on $k+1$ manifolds. Then the moduli space over the interior of the face is given then by fiber products of based moduli spaces associated to the $k+1$ families of metrics. We follow the standard approach to define a topology on $\mathcal M_p(\bbW,c)$. (See, for example, \cite{KM:monopoles-3-man}.) If  $\eta$ is a flat connection on one of the boundary components of $W$, the based moduli spaces  $\widetilde {\mathcal M}_{p,\eta}(\bbW,c)$ can be also defined in a similar way.

Let $(W,c)$ be a pair satisfying Condition \ref{S1S2} and $\bbW$ is a family of metrics on $W$ parametrized by an admissible polyhedron $K$. We assume that $\bbW$ satisfies similar conditions as above. In particular, for any cut $Y$ in this family, either $(Y,Y\cap c)$ is admissible or $Y$ is a lens space. Then we can follow a similar approach to defining the moduli spaces $\mathcal M_p(\bbW,c;\Gamma)$ and $\widetilde {\mathcal M}_{p,\Gamma}(\bbW,c;\Gamma)$ for an open face $\Gamma$ of $\Delta_{N-1}^\ft$.

\subsection{ASD Connections on Gibbons-Hawking Spaces} \label{Mod-GH}
The following proposition in the case of closed 4-manifolds is proved in \cite{AHS:SDonSD}. Essentially the same proof, which uses the Weitzenb\"{o}ck formula,
can be used to verify this proposition:

\begin{prop}\label{regularGH}
	Suppose $X$ is a 4-manifold with an ASD asymptotically cylindrical metric which has positive scalar curvature.
	Suppose $c$ is a 2-cycle and $p$ is a path along $(W,c)$.
	Then any ASD connection associated to the triple $(X,c,p)$ is regular. In particular, the based moduli spaces for the triple
	$(X,c,p)$ are smooth manifolds.
\end{prop}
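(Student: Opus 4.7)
The plan is to adapt the classical Atiyah--Hitchin--Singer vanishing argument for $H^2_A$ to the asymptotically cylindrical setting. The goal is to show that for any ASD connection $A$ representing a point of the moduli space associated to $(X,c,p)$, the cokernel of
\[
  d_A^+:L^2_{l,\delta}(X,\Lambda^1\otimes\su(E))\longrightarrow L^2_{l-1,\delta}(X,\Lambda^+\otimes\su(E))
\]
(or the analogue involving the tangent direction $T_\alpha\Delta^{\ft,\circ}_{N-1}$ in the $S^1\times S^2$ case) vanishes. By the standard duality between weighted Sobolev spaces, an element of this cokernel is represented by a smooth self-dual $\su(E)$-valued $2$-form $\psi$ satisfying $d_A^*\psi=0$ and lying a priori in $L^2_{-\delta}$.

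The first step is to promote the weighted $L^2$ membership of $\psi$ to genuine exponential decay on every cylindrical end. Because each boundary component is either admissible (with Chern--Simons functional perturbed to have non-degenerate critical points) or a lens space (where critical points are already non-degenerate), the asymptotic translation-invariant operator on each end has a spectral gap around zero; hence any $L^2_{-\delta}$ solution of $d_A^*\psi=0$ is in fact exponentially decaying together with all covariant derivatives. (For the $S^1\times S^2$ ends appearing under Condition~\ref{S1S2}, the additional tangential constraint coming from the $T_\alpha\Delta^{\ft,\circ}_{N-1}$ factor plays the role of the missing indicial root along the flat directions.) This decay validates every integration by parts in what follows.

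Next, I would exploit self-duality of $\psi$: since $\ast\psi=\psi$, one has $d_A^*\psi=-\ast d_A\psi$, so the condition $d_A^*\psi=0$ is equivalent to $d_A\psi=0$. The Weitzenb\"ock formula for self-dual two-forms with values in a bundle carrying a connection $A$ reads
\[
  (d_Ad_A^*+d_A^*d_A)\psi=\nabla_A^*\nabla_A\psi+\tfrac{R}{3}\psi-2W^+(\psi)+\{F_A^+,\psi\}.
\]
The hypothesis that the metric is ASD gives $W^+=0$, the hypothesis that $A$ is ASD gives $F_A^+=0$, and positivity of $R$ gives $R/3>0$. Pairing with $\psi$, integrating over $X$, and using the decay from the previous step to discard boundary terms, one obtains
\[
  0=\int_X|\nabla_A\psi|^2\,dV+\int_X\tfrac{R}{3}|\psi|^2\,dV.
\]
Both integrands are non-negative, and $R>0$ forces $\psi\equiv 0$. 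This establishes $H^2_A=0$, so $A$ is regular and the based moduli spaces are smooth manifolds near $[A]$ by the implicit function theorem on the weighted Sobolev spaces.

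The main technical obstacle is the decay step on the ends, particularly in the $S^1\times S^2$ Morse--Bott situation where the asymptotic operator has a kernel along the critical set $\Delta^{\ft,\circ}_{N-1}$; the cure is to incorporate the tangential variable, so that the enlarged operator becomes invertible modulo exponentially decaying terms and the standard indicial-root analysis applies. Once the decay is in hand, the Weitzenb\"ock computation is pointwise and proceeds exactly as on a closed manifold.
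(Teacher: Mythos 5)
Your proposal is correct and is essentially the paper's own argument: the paper simply cites Atiyah–Hitchin–Singer and notes that "essentially the same proof, which uses the Weitzenböck formula, can be used," which is exactly the vanishing argument you spell out (ASD metric kills $W^+$, ASD connection kills $F_A^+$, positive scalar curvature forces the cokernel element to vanish). Your added care about exponential decay of the cokernel representative on the cylindrical ends, needed to justify the integration by parts, is the right (standard) way to fill in the detail the paper leaves implicit.
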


Let $\fX_\bm$ be a Gibbons-Hawking manifold of type $\bm=(m_0,\dots,m_n)$ and $\bbX_\bm$ be the family of metrics on $\fX_\bm$ parametrized by $\mathcal K_n$. This family consists of asymptotically cylindrical metrics. However, it can be approximated by families of cylindrical metrics. Fix a positive real number $T_0$. Using inductive arguments as in the proofs of Proposition \ref{fix-diffeo} and Theorem \ref{Wmetrics}, we can kill the exponentially decaying term on the subsets $(T_0,\infty)\times Y$ of the ends $(0,\infty)\times Y$ and the subsets $(-\tau+T_0,\tau-T_0)\times Y$ of the necks $(-\tau,\tau)\times Y$ that appear in the family. This gives rise to a family of cylindrical metrics which approaches the original family as $T_0$ goes to infinity. We still denote the resulting family of metrics with $\bbX_\bm$.

\begin{cor} \label{regular-GH}
	Let $c$ be a 2-cycle on $\fX_\bm$ and $p$ be a path along $(\fX_{\bm},c)$. Let the restriction of $p$ to one of the boundary
	components of $\fX_\bm$ be $\eta$. Assume that $\ind(p)\leq 1-h^0(\eta)$. Then any element of the moduli space
	 $\widetilde {\mathcal M}_{p,\eta}(\mathbb X_\bm,c)$ is fiberwise regular if $T_0$ is large enough. In particular, $\widetilde {\mathcal M}_{p,\eta}(\mathbb X_\bm,c)$
	 is either empty or at least $(n-2)$-dimensional for large enough values of $T_0$.	
\end{cor}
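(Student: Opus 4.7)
The plan is to combine Propositions \ref{GH-prop} and \ref{regularGH} with an Uhlenbeck--Floer compactness argument. For each $\bx \in \mathcal K_n$, the fiber of the un-flattened asymptotically cylindrical family $\bbX_\bm$ decomposes as a disjoint union of Gibbons--Hawking manifolds $\fX_{\bm_i,\bx_i}$ indexed by the interior vertices of the associated ribbon tree. Correspondingly, the based moduli space decomposes as a fiber product over the intermediate flat connections on the cuts, and fiberwise regularity reduces to regularity on each component. On each such component, Proposition \ref{GH-prop} provides an ASD asymptotically cylindrical metric of positive scalar curvature, so Proposition \ref{regularGH} yields regularity of every ASD connection. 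Hence the corollary holds for the un-flattened family; what must be shown is that cylindrical flattening at parameter $T_0$ preserves this regularity once $T_0$ is large.

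To pass to finite $T_0$, I would argue by contradiction. If the claim failed, there would exist a sequence $T_n \to \infty$, parameters $\bx_n \in \mathcal K_n$, and ASD connections $[A_n]$ with respect to the flattened metrics $g_n$ that fail to be fiberwise regular, carrying non-zero obstruction classes $\phi_n \in \coker(d_{A_n}^+)$ normalized in $L^2$. After passing to a subsequence, $\bx_n$ converges to some $\bx_\infty \in \mathcal K_n$; on any fixed compact subset of the (possibly broken) fiber over $\bx_\infty$, the flattened metrics $g_n$ converge in $C^\infty$ to the original Gibbons--Hawking metric at $\bx_\infty$, since the flattening modifications are supported at distance $\geq T_n$ from the core and the un-flattened metric is already exponentially close to the cylindrical model there.

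The main analytic hurdle is to extract a non-vanishing limit from $(A_n,\phi_n)$. Here the hypothesis $\ind(p) \leq 1 - h^0(\eta)$ is essential: any instanton bubble, or any loss of energy sliding off along a cylindrical end or neck at a non-trivial intermediate flat connection, would drop the index of the limiting configuration by an amount strictly exceeding the budget $1-h^0(\eta)$, and is therefore excluded. Standard Uhlenbeck compactness then produces a $C^\infty_{\mathrm{loc}}$ limit $A_\infty$ representing an element of $\widetilde{\mathcal M}_{p,\eta}$ for the un-flattened family at $\bx_\infty$, with no loss of components. Using the exponential decay of obstruction classes along the ends---afforded by non-degeneracy of the limiting flat connections on all boundary components and cuts (lens spaces are rational homology spheres and the remaining asymptotic connections are non-degenerate after the perturbations of Section \ref{cyl-end-mod})---together with the $C^k$-smallness of the flattening perturbation on any fixed compact piece, the normalized $\phi_n$ converge on compact sets to a non-zero $\phi_\infty \in \coker(d_{A_\infty}^+)$. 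This contradicts regularity of $A_\infty$ established in the first paragraph.

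With fiberwise regularity, each non-empty fiber of $\widetilde{\mathcal M}_{p,\eta}(\bbX_\bm,c)$ is a smooth manifold of dimension $\ind(p)+h^0(\eta)\geq 0$, and combining with $\dim \mathcal K_n = n-2$ yields the claimed lower bound on the dimension of the total space. The principal obstacle will be controlling the normalized obstruction classes under the combined limits $T_n \to \infty$ and $\bx_n \to \bx_\infty$, i.e.\ ensuring that the $L^2$-mass of $\phi_n$ does not entirely escape to infinity along the lengthening ends and necks; this is the content of the third paragraph and relies crucially on the index bound to prevent such escape scenarios from being compatible with the ASD equation.
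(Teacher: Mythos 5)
Your proposal is correct and follows essentially the same route as the paper: a contradiction/compactness argument in which a sequence of non-regular ASD connections for the flattened metrics with $T_0\to\infty$ is shown, via the index hypothesis (which rules out bubbling and energy escaping along ends and necks, using Proposition \ref{regularGH} on the Gibbons--Hawking pieces and the translation-invariance bound on $\R\times L(p,q)$), to converge strongly to a connection that is ASD and hence regular for a (possibly broken) Gibbons--Hawking metric. Your explicit tracking of normalized obstruction classes is just an unwound version of the openness-of-regularity step the paper handles by citation, so no substantive difference.
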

If $\index(p)$ is small enough, then the moduli space $\widetilde {\mathcal M}_{p,\eta}(\mathbb X_\bm,c)$ is automatically empty. Therefore, there are finitely many paths $p$ which are relevant for the above corollary. In particular, we can assume that the desired large constant $T_0$ given by the corollary is independent of $p$.
\begin{proof}
	For an integer number $k$, let $T_0=k$ and suppose that there is $p$ such that the moduli space
	$\widetilde {\mathcal M}_{p,\eta}(\mathbb X_\bm,c)$ for $T_0=k$ has an element $[A_k]$ which is not
	fiberwise regular. Suppose also $g_k$ is the metric on $\fX_\bm$ with respect to which $A_k$ is ASD. Since $\mathcal K_n$ is compact,
	the metrics $g_k$, after passing to a subsequence, converge to a (possibly broken) asymptotically cylindrical Gibbons-Hawking
	metric $g_\infty$ on $\fX_\bm$. Firstly, let $g_\infty$ be a non-broken metric. Then we claim that the connections $A_k$,
	up to action of the gauge group and after passing to a subsequence, are strongly
	convergent to a connection $A_\infty$ on $\fX_\bm$ which is ASD with respect to $g_\infty$.
	A priori, standard Floer-Uhlenbeck compactness theorems\footnote{These compactness theorems go back to Floer's original paper on instanton Floer homology \cite{Fl:I}.
	A good reference for these compactness results is \cite[Section 5.1]{Don:YM-Floer}. Although \cite{Fl:I,Don:YM-Floer} are mainly concerned with the Lie group $\U(2)$, the compactness theorems there
	can be adapted to the case of higher rank unitary Lie groups without any change.
	A concise review of the results that we use here is given in \cite[Subsection 6.1]{AX:suture-higher}.
	See \cite{Don:YM-Floer,AX:suture-higher} for the definition of weak chain convergence.} imply that the connections $A_k$,
	up to action of the gauge group and after passing to a subsequence, are weakly chain convergent to a connection on $(\fX_\bm,c)$.
	Here the convergence could be
	weak because of the bubbling of instantons and we need to consider chain convergence because part of the energy of connections
	 $A_k$ might slide off the ends of $\fX_\bm$.
	 However, neither of these two phenomena happen as it can be easily checked using
	 the following observations. Firstly, let $q$ be a path along $(\fX_\bm,c)$ whose restriction to one of the boundary components
	is $\lambda$ and $\ind(p)<h^0(\lambda)$. Then Proposition \ref{regularGH} implies that
	the moduli space $\mathcal M_q(\fX_\bm,c;g_\infty)$ is empty. Secondly, let $A$ be a non-flat ASD connections on
	$\R\times L(p,q)$ with the product metric associated to the round metric on the lens space $L(p,q)$.
	If $A$ is asymptotic to a flat connection $\chi$ on $L(p,q)$, then $\dim(\Gamma_A)<\dim(\Gamma_\chi)$. Therefore,
	$\ind(p)\geq 1-h^0(\chi)$. In fact, we can conclude $\ind(p)\geq 2-h^0(\chi)$ because of translational symmetry on $\R\times L(p,q)$.
	
	The connection $A_\infty$ is a regular element of $\mathcal M_p(\fX_\bm,c;g_\infty)$ because $g_\infty$ is a Gibbons-Hawking metric.
	Since the connections $A_k$ are strongly convergent to $A_\infty$, the connection $A_k$, for large enough values of $k$,
	has to be a regular element of $\mathcal M_p(\fX_\bm,c;g_k)$. This contradicts our assumption that $A_k$ is not a fiberwise
	regular element of $\mathcal M_{p}(\bbX_\bm,c)$. In the case that $g_\infty$ is a broken metric, a similar argument as in the previous
	paragraph shows that the connections $A_k$, up to action of the gauge group and after passing to a subsequence,
	converge to an element of $\mathcal M_p(\fX_\bm,c;g_\infty)$, i.e., a broken ASD connection.
	Again, $A_\infty$ is regular because $g_\infty$ is a (broken)
	Gibbons-Hawking metric. Moreover, regularity of $A_\infty$ implies regularity of $A_k$, for large enough values of $k$.
	(See, for example, \cite[Proposition 3.9]{Don:YM-Floer}.)
\end{proof}

In Subsection \ref{GH-family-cob}, we constructed a family of asymptotically metrics on cobordisms $W_k^j$ for any pair $k\geq j$. In particular, we can apply this construction to define $\bbW_k^j$ in the case that $j=0$ and $k=2N+1$. This family of metrics induces families of metrics $\bbW_k^j$ on $W_k^j$ for $0 \leq k<j\leq 2N+1$. If $k'=k+N+1$, $j'=j+N+1$ and $0\leq j,j',k,k'\leq 2N+1$, then an examination of our construction from Subsection \ref{GH-family-cob} shows that we can assume $\bbW_k^j=\bbW_{k'}^{j'}$. We also use the above trick for a large value of $T_0$ to turn these families into families of cylindrical metrics. In fact, we choose $T_0$ large enough, such that Corollary \ref{regular-GH} holds for families of Gibbons-Hawking metrics on GH components of $\bbW_{2N+1}^0$. Form now on, when we work with the family of metrics $\bbW_k^j$, we implicitly assume that $0\leq j,j',k,k'\leq 2N+1$ and the above modification is applied to $\bbW_k^j$.

Next, we look more closely at moduli spaces on the Gibbon-Hawking manifold where $n=2$ and $(m_0,m_1,m_2)$ are given as below:
\[
  (i-1,i,i+1)\hspace{1cm}(i+N,i,i+1) \hspace{1cm}(i+N,i,i+N-1)
\]
These manifolds appear as GH components in the families of metrics $W^j_k$. They have three boundary components. In the first case, the only non-trivial boundary component is ${\bf RP}^3$. In the other two cases, the 4-manifolds have two non-trivial boundary components which are $L(N,1)$ and $L(1-N,1)$.

\begin{prop}\label{dual}
	Let the 2-cycle $c$ on $\fX_\bm$ have the form $j_0\fd_0+j_1\fd_1+j_2\fd_2$:
	\begin{itemize}
		\item[(i)] Let $(m_0,m_1,m_2)=(i-1,i,i+1)$. Consider the 2-cycle $\overline c=j_0\fd_0+(j_2-j_1+j_0)\fd_1+j_2\fd_2$.
		Then $c$ and $\overline c$ induce the same cycle $\gamma$ on the boundary component ${\bf RP}^3$.
		Let $\alpha$ be a flat connection on $({\bf RP}^3,\gamma)$.
		Then there is a path $\overline p$ based at $\alpha$ on $(\fX_\bm,\overline c)$ such that the moduli spaces
		$\widetilde {\mathcal M}_{p,\alpha}(\bbX_\bm,c;\alpha)$ and
		$\widetilde {\mathcal M}_{\overline p,\alpha}(\bbX_\bm,\overline c;\alpha)$
		are diffeomorphic to each other.
		\item[(ii)] Let $(m_0,m_1,m_2)=(i+N,i,i+1)$ or $(i+N,i,i+N-1)$. Suppose $\alpha$ is a flat connection on $L(1-N,1)$
		or the trivial boundary component of $\fX_\bm$. Suppose also $\alpha'$ is the following flat connection on
		$L(N,1)$:
		\[
		  \alpha'=1\oplus \zeta \oplus \dots \oplus \zeta^{N-1}
		\]
		Suppose $p$ is a path whose restrictions to $L(1-N,1)$, $L(N,1)$ are $\alpha$, $\alpha'$ and
		$\ind(p)=-\dim(\Gamma_\alpha)$.
		Then the $0$-dimensional moduli space $\widetilde {\mathcal M}_{p,\alpha}(\bbX_\bm,c;\alpha,\alpha')$ is empty.
	\end{itemize}
\end{prop}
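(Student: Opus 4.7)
Both parts exploit the positive scalar curvature of Gibbons--Hawking metrics (Proposition~\ref{GH-prop}), which through the Weitzenb\"ock identity forces regularity (Proposition~\ref{regularGH}, Corollary~\ref{regular-GH}) and rigidity of ASD connections.

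For part (i), the plan is a line-bundle twist argument. When $\bm=(i-1,i,i+1)$, both jumps $m_{j}-m_{j-1}$ equal $1$, so the $S^{1}$-fiber of $\fX_{\bm}\to B$ collapses over each of $x_{1},x_{2}$, and consequently $\fd_{1}$ lifts to a closed $2$-sphere $\Sigma\subset\fX_{\bm}$ disjoint from the boundary $\partial\fX_{\bm}\cong{\bf RP}^{3}$. The difference cycle $\overline c-c=(j_{0}+j_{2}-2j_{1})\fd_{1}$ is then homologous to $(j_{0}+j_{2}-2j_{1})[\Sigma]\in H_{2}(\fX_{\bm},\Z)$, so the $\U(N)$-bundles $E,\overline E$ determined by $c,\overline c$ satisfy $\overline E\cong E\otimes\mathcal M$, where $\mathcal M=\mathcal O(\Sigma)^{\otimes(j_{0}+j_{2}-2j_{1})}$ is topologically trivial over $\partial\fX_{\bm}$. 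Because tensoring by a line bundle induces a canonical isomorphism $\mathfrak{su}(E)\cong\mathfrak{su}(\overline E)$, fixing any reference connection on $\mathcal M$ which is flat near $\partial\fX_{\bm}$ yields a smooth, gauge-equivariant bijection between $\SU(N)$-ASD connections on the two bundles preserving the asymptotic framing at $\alpha$ and the metric parameter in $\mathcal{K}_{2}$. Defining $\overline p$ as the image of $p$ under this bijection gives the desired diffeomorphism of based moduli spaces.

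For part (ii), the plan is to show that the extremal index hypothesis $\ind(p)=-\dim\Gamma_{\alpha}$ forces any putative element of $\widetilde{\mathcal M}_{p,\alpha}(\bbX_{\bm},c;\alpha,\alpha')$ to be completely reducible, and then to rule out the existence of such a reducible connection by a topological obstruction. The connection $\alpha'=1\oplus\zeta\oplus\cdots\oplus\zeta^{N-1}$ has stabilizer equal to the maximal torus $T\subset\SU(N)$, so on the $L(N,1)$ end the connection admits a reduction to $T$; combined with positive scalar curvature and the minimality of $\ind(p)$ (so the based moduli space has expected dimension $0$), the Weitzenb\"ock argument should propagate this asymptotic reduction to a global splitting $A=\bigoplus_{k=1}^{N}A_{k}$, with each $A_{k}$ a $\U(1)$-ASD connection on a line bundle $\mathcal L_{k}\to\fX_{\bm}$. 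The monodromies of $\mathcal L_{k}$ on $L(N,1)$ are then forced to exhaust the characters $\{1,\zeta,\ldots,\zeta^{N-1}\}$, which fixes the residues $c_{1}(\mathcal L_{k})$ modulo $N$ at the relevant puncture; the constraints that $\bigotimes_{k}\mathcal L_{k}$ recover the line bundle determined by $c$ and that the asymptotic on the remaining end equals $\alpha$ then overdetermine the $\mathcal L_{k}$, and for both choices $\bm=(i+N,i,i+1)$ and $\bm=(i+N,i,i+N-1)$ one verifies that no assignment of $c_{1}(\mathcal L_{k})$ is compatible with $\ind(p)=-\dim\Gamma_{\alpha}$.

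The main obstacle will be making the step from asymptotic torus-reducibility at $\alpha'$ to global complete reducibility rigorous at this extremal index. I expect this to follow from a vanishing argument in the deformation complex of the putative ASD connection, using positive scalar curvature to kill off-diagonal components of the linearization and the index-additivity formula under direct sum decomposition to show that the index contribution of each diagonal piece is pinned down. Once the reducibility step is established, the topological/Chern--Weil obstruction is a finite combinatorial verification; the bookkeeping here is reminiscent of Lemma~\ref{int-num}'s description of $H^{2}(X_{N}(l),\Z)$ and should reduce to an integrality obstruction on how the $N$ required distinct $\Z/N$-characters can be realized by compactly-supported line bundles over an $A_{N-1}$-type Gibbons--Hawking space.
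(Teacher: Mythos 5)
Both halves of your proposal contain genuine gaps, and in each case the paper's actual argument is a different (and softer) mechanism.

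For (i), the line-bundle twist cannot produce the required change of $2$-cycle. Tensoring the rank-$N$ bundle $E$ by a line bundle $\mathcal M$ does give a canonical identification $\su(E)\cong\su(E\otimes\mathcal M)$ and hence a diffeomorphism of moduli spaces, but it changes the first Chern class by $N\,c_1(\mathcal M)$, not by $c_1(\mathcal M)$. Here $H^2(\fX_\bm;\Z)\cong\Z$ and $\PD(\overline c-c)=(j_0+j_2-2j_1)\PD(\fd_1)$ is $2(j_0+j_2-2j_1)$ times the generator, so realizing it as $N\,c_1(\mathcal M)$ with $\mathcal M$ trivial on the $\mathbf{RP}^3$ boundary forces $N\mid(j_0+j_2-2j_1)$; in the application this difference is $\sigma(j'-j+1)-\sigma(j'-j)$, a nonzero difference of elements of $[N]$, so the divisibility never holds. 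The identification that does work is complex conjugation on $\su(N)$: one checks from the relation $\pi^*a_1+\pi^*a_2=0$ in $H^2(\fX_\bm)$ (the Euler class relation for this $A_1$-type space) that $\PD(\overline c)=-\PD(c)$, so the conjugate bundle of the one determined by $c$ is exactly the one determined by $\overline c$, and conjugation carries ASD connections to ASD connections and fixes $\alpha$ up to isomorphism since every flat $\U(1)$-connection on $\mathbf{RP}^3$ is self-conjugate.

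For (ii), the step you flag as the main obstacle --- promoting torus-reducibility of the limit $\alpha'$ to global complete reducibility of $A$ at the extremal index --- is indeed unproven, and positive scalar curvature does not supply it: the Weitzenb\"ock argument kills $H^2_A$ (regularity), not the off-diagonal part of $H^1_A$. The paper avoids reducibility entirely with a stabilizer count. Since $\Gamma_A\subseteq\Gamma_{\alpha'}$ and $\alpha'=1\oplus\zeta\oplus\cdots\oplus\zeta^{N-1}$ has stabilizer the maximal torus, $\dim\Gamma_A\leq N-1$; on the other hand $\alpha$ is a sum of $N$ characters of a group of order at most $N-1$, so two must coincide and $\dim\Gamma_\alpha>N-1$. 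The framed moduli space $\widetilde{\mathcal M}_{p,\alpha}$ fibers over $\mathcal M_p$ with fibers $\Gamma_\alpha/\Gamma_A$ of dimension at least $\dim\Gamma_\alpha-\dim\Gamma_{\alpha'}>0$, so it cannot be a nonempty $0$-manifold. This renders your case-by-case Chern--Weil bookkeeping unnecessary.
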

\begin{proof}
	Complex conjugation, as an involution on $\su(N)$, maps $p$
	to a path $\overline p$ along $(\fX_\bm,\overline c)$. This map induces a diffeomorphism at the level of moduli spaces.
	The claim in (ii) is also obvious, because the stabilizer of any element of ${\mathcal M}_{p}(\bbX_\bm,c)$ is a subset of
	$\Gamma_{\alpha'}$. On the other hand, $\dim(\Gamma_\alpha)>\dim(\Gamma_{\alpha'})$. Therefore, the moduli space
	$\widetilde {\mathcal M}_{p,\alpha}(\bbX_\bm,c;\alpha,\alpha')$ is either empty or its dimension is at least
	$\dim(\Gamma_\alpha)-\dim(\Gamma_{\alpha'})$.
\end{proof}

\subsection{Completely Reducible ASD Connections on $X_N(l)$} \label{comp-red}

In this subsection, we study a special family of ASD connections on $X_N(l)$. Before we take up this task, we gather some notations which will be used throughout the rest of the paper. Let $v=(x_1,\dots,x_N)$ be a vector in $\R^N$. Then for $1\leq p\leq \infty$, the $l^p$ norm of $v$ is defined as:
\[
  |v|_p:=\(\sum_{i=1}^N x_i^p\)^{\frac{1}{p}} \hspace{0.75cm}1\leq p< \infty\hspace{2cm}
  |v|_\infty:=\max_{1\leq i \leq N}\{|x_i|\}
\]
We also introduce the following notation:
\[
[v]_+:=\sum_{i=1}^N x_i
\]
Recall that $\ft$ denote the space of all vectors $v\in \R^N$ with $[v]_+=0$. There is a standard map from $\R^N$ to $\ft$ which maps a vector $v$ as above to the vector:
\[
  \overline v:=v-(\frac{[v]_+}{N},\frac{[v]_+}{N},\dots,\frac{[v]_+}{N})
\]
The vector $\overline v$ is called the {\it normalization} of $v$. The space $\ft$ can be also identified with the standard Cartan subalgebra of $\su(N)$ by mapping a vector $w=(y_1,\dots, y_N)\in \ft$ to a diagonal matrix whose $i^{\rm th}$ diagonal entry is equal to $2\pi \bi y_i$.

We shall also work with a special family of vectors in $\R^N$. For $0\leq i \leq N-1$:
\begin{equation}\label{lambdai}
	\lambda_i=(\underbrace{0,\dots,0}_{N-i},\underbrace{1,\dots,1}_{i})
\end{equation}
Then the normalization of $\lambda_i$ is equal to:
\begin{equation} \label{lambdai}
  \overline{\lambda_i}=(\underbrace{-\frac{i}{N},\,\dots,\,-\frac{i}{N}}_{N-i},\,
  \underbrace{\frac{N-i}{N},\,\dots,\,\frac{N-i}{N}}_{i})
\end{equation}
The vertices of the simplex $\Delta^\ft_{N-1}$ in Subsection \ref{cyl-end-mod} are given by the vectors $\overline {\lambda_0}$, $\dots$, $\overline {\lambda_{N-1}}$ in \eqref{lambdai}.
\subsubsection*{Completely Reducible Connections}
Suppose $X$ is a negative-definite 4-manifold with an asymptotically cylindrical end modeled on a 3-manifold $Y$. A Hermitian line bundle $L$ on $X$ supports an ASD connection, whose curvature has finite $L^2$ norm, if and only if the restriction of $c_1(L)$ to $Y$ is a torsion element of $H^2(Y,\Z)$. The space of all such ASD connections, up to isomorphism, forms a torus of dimension $b^1(X)$. In particular, this ASD connection, up to isomorphism, is unique in the case that $H_1(Y,\R)$ is trivial. By taking the direct sum of such abelian ASD connections, we can produce higher rank ASD connections. Any such ASD connection is called a {\it completely reducible} connection. The primary concern of this subsection is to study completely reducible $\U(N)$-connections on the negative definite manifold $X_N(l)$ equipped with the family of metrics from Section \ref{metrics}.

Recall that we introduced cohomology classes $e_1$, $\dots$, $e_N$ on $X_N(l)$ in Subsection \ref{4-man}, which generate the group of cohomology classes in $H^2(X_N(l),\Z)$ whose restriction to the boundary is torsion. For $1\leq j \leq k$, suppose $L_j$ is a Hermitian line bundle on $X_N(l)$ whose first Chern class is given by:
\begin{equation} \label{c1}
	i_j^1e_1+i_j^2e_2+\dots+i_j^{N}e_{N}.
\end{equation}
It is convenient to introduce a vector $v_j\in \Z^N$ and a $k$-tuple of vectors defined as below:
\[
  v_j=(i_j^1,\dots,i_j^N)\hspace{1cm}\bv=(v_1,v_2,\dots,v_k).
\]
We also assume that $[v_j]_+$ is a non-negative integer numbers less than $N$. Since $e_1+\dots+e_N$ is the a cohomology class of $X_N(l)$, we can always subtract a multiple of this vector from $v_j$ to ensure this assumption holds. Fix a metric $g$ on $X_N(l)$ with asymptotically cylindrical ends and let $B_j(g)$ be the unique ASD connection on $L_j$. This connection also determines flat connections $\beta_j(g)$ and $\chi_j$ on the non-trivial boundary components $S^1\times S^2$ and $L(N,1)$. The flat $\U(1)$-connection $\chi_j$ is uniquely determined by the first Chern class of the underlying bundle which is equal to $[v_j]_+$ times the standard generator. On the other hand, the flat connection $\beta_j(g)$ depends on the metric $g$.

Let $\widetilde B_{\bv}(g)$ be the completely reducible ASD $\U(k)$-connection given as the direct sum of the connections $B_j(g)$. We also define $c_\bv$ to be the 2-cycle associated to $\widetilde B_{\bv}(g)$. The connections $\widetilde B_{\bv}(g)$ for different choices of the metric $g$ have different central parts. To avoid this, we fix a $\U(1)$-connection $A_0$ on $X_N(l)$ associated to the 2-cycle $c_\bv$. For simplicity, we can assume that the restriction of $A_0$ on the end corresponding to $S^1\times S^2$ is trivial. By adding a central 1-form to  $\widetilde B_{\bv}(g)$, we can define a new completely reducible connection $B_\bv(g)$ whose central part is equal to $A_0$. We will write $\beta_\bv(g)$ and $\chi_\bv$ for the restriction of the connection $B_\bv(g)$ to $S^1 \times S^2$ and $L(N,1)$. Note that $h^0(\beta_{\bv}(g))=h^1(\beta_{\bv}(g))$ by the K\"unneth formula.

\begin{lemma} \label{ind-Bv}
	For $B_\bv(g)$ chosen as above:
   \begin{equation} \label{comp-ind}
	\ind(\mathcal D_{B_\bv(g)})=\sum_{1\leq i, j\leq k}|v_i-v_j|_2^2-\sum_{1\leq i,j \leq k}|[v_i]_+-[v_j]_+|
	-h^0(\chi_\bv)-h^0(\beta_{\bv}(g))
   \end{equation}
Moreover, $\ind(\mathcal D_{B_\bv(g)})\geq -h^0(\chi_\bv)-h^0(\beta_{\bv}(g))$
	and the equality holds if and only if the
	vectors $v_j$, possibly after a permutation of indices, satisfy the following relations:
	\begin{equation} \label{vector-cond}
		v_1 \geq v_2 \geq \dots \geq v_k \hspace{1cm}	|v_i-v_j|_{\infty}\leq 1.
	\end{equation}
	Here $v\geq u$ for two vectors $u,v\in \R^N$, if each component of $v$ is not less than the corresponding component of $u$.
\end{lemma}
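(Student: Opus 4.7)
The strategy is to exploit the reducibility of $B_\bv(g)$ to split the coupled ASD operator into abelian pieces, compute each piece with Atiyah--Patodi--Singer on $X_N(l)$, and then use the intersection pairing formula of Lemma \ref{int-num} to convert topological terms into the expression in $\bv$.

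First, I would decompose. Writing $\End(E) = \bigoplus_{i,j=1}^{k} L_i \otimes L_j^{-1}$ and $\su(E) \otimes \C = \End(E) \ominus \underline{\C}$, the coupled ASD operator at $B_\bv(g) = \bigoplus B_j(g)$ splits as a direct sum of abelian ASD operators twisted by the $\U(1)$-connections $B_i(g) \otimes B_j(g)^{-1}$ on $L_i \otimes L_j^{-1}$. Since the real index of $\mathcal D^{\su(E)}$ equals the complex index of its complexification, this gives
\[
\ind(\mathcal D_{B_\bv(g)}) = \sum_{i,j=1}^k \ind_\C(\mathcal D_{B_i \otimes B_j^{-1}}) - \ind_\C(\mathcal D_{0}).
\]
For each summand, I would apply APS on the cylindrical-end manifold $X_N(l)$, whose boundary is $S^1 \times S^2 \sqcup L(N,1) \sqcup (\text{spheres})$. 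The bulk contribution for the pair $(i,j)$ is $-(c_1(L_i) - c_1(L_j))^2 - \tfrac{1}{2}(\chi(X_N(l)) + \sigma(X_N(l)))$, while the boundary contribution is the APS term $-\tfrac{1}{2}(\eta + h)$ summed over the ends.

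Next I would compute the topological contribution using Lemma \ref{int-num}: since $e_a \cdot e_b = \tfrac{1}{N} - \delta_{a,b}$,
\[
(c_1(L_i) - c_1(L_j))^2 = \tfrac{1}{N}\bigl([v_i]_+ - [v_j]_+\bigr)^2 - |v_i - v_j|_2^2.
\]
Summing over $(i,j)$ produces the leading $\sum |v_i - v_j|_2^2$ term of \eqref{comp-ind}, plus a term $-\tfrac{1}{N}\sum ([v_i]_+-[v_j]_+)^2$ that must be absorbed by boundary contributions. The sphere boundary components carry only trivial flat $\U(1)$-connections and contribute only via the universal $\chi + \sigma$ piece, which cancels once the trivial summand is subtracted. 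On $L(N,1)$ the restriction of $B_i \otimes B_j^{-1}$ is a flat $\U(1)$-connection determined by $[v_i]_+ - [v_j]_+ \pmod N$; the classical Atiyah--Patodi--Singer computation for lens space $\U(1)$-eta invariants contributes, after summing over $(i,j)$, a quantity that exactly cancels $-\tfrac{1}{N}\sum ([v_i]_+-[v_j]_+)^2$ and leaves a residual linear term $-\sum |[v_i]_+ - [v_j]_+|$. The $h$-terms and the kernel contributions on $S^1 \times S^2$ assemble into the corrections $-h^0(\chi_\bv) - h^0(\beta_\bv(g))$.

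For the second claim, the lower bound follows once I verify the pointwise inequality: for any integer vector $w \in \Z^N$, $|w|_2^2 \geq |[w]_+|$. This holds because $|w|_2^2 \geq |w|_1 \geq |[w]_+|$, the first step using $x^2 \geq |x|$ for $x \in \Z$ and the second being the triangle inequality. Applied to $w = v_i - v_j$ and summed, this gives $\ind(\mathcal D_{B_\bv(g)}) \geq -h^0(\chi_\bv) - h^0(\beta_\bv(g))$. Equality forces equality in both steps for every pair: all entries of $v_i - v_j$ lie in $\{-1, 0, 1\}$ (so $|v_i - v_j|_\infty \leq 1$) and all nonzero entries have a common sign (so $v_i$ and $v_j$ are comparable under the coordinate-wise order). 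Comparability of every pair means the vectors $\{v_j\}$ form a chain, which after reindexing is the condition $v_1 \geq v_2 \geq \cdots \geq v_k$.

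The main obstacle will be the careful bookkeeping of the eta and $h$ terms on $L(N,1)$ and $S^1 \times S^2$: one must verify that the explicit lens-space eta formula produces the precise cancellation against $-\tfrac{1}{N}\sum ([v_i]_+-[v_j]_+)^2$ and that the kernel contributions from repeated holonomies reassemble into exactly $h^0(\chi_\bv)$ and $h^0(\beta_\bv(g))$ (including the correct handling of the trivial summand subtracted to pass from $\End(E)$ to $\su(E)$). Once this bookkeeping is done, the algebraic inequality in the last paragraph is elementary.
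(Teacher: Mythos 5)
Your proposal follows essentially the same route as the paper: the index is computed by decomposing the adjoint bundle into abelian summands, applying the APS index formula with the intersection pairing $e_a\cdot e_b=\tfrac{1}{N}-\delta_{ab}$ of Lemma \ref{int-num} for the bulk term and the lens-space $\rho$-invariant formula $h^0(\zeta^j)+\rho(\zeta^j)=2-4|j|+\tfrac{4j^2}{N}$ for the boundary term, and the second part rests on the identical chain $\sum|[v_i]_+-[v_j]_+|\leq\sum|v_i-v_j|_1\leq\sum|v_i-v_j|_2^2$ with the same equality analysis. The bookkeeping you defer is likewise left as ``straightforward to check'' in the paper, so the argument is correct and matches.
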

\begin{proof}
	According to the index formula:
	\begingroup\makeatletter\def\f@size{10}\check@mathfonts
	\begin{align*}
		\ind(\mathcal D_{B_\bv(g)})+h^0(\beta_{\bv}(g))+h^0(\chi_\bv)&=4k\cdot \kappa(A)-(k^2-1)(\frac{\chi(X_N(l))+\sigma(X_N(l))}{2})
		+\frac{h^0(\chi_\bv)+\rho(\chi_\bv)}{2}\\	
		&=4k\cdot \kappa(A)-\frac{k^2-1}{2}
		+\frac{k-1+\sum_{ i<j}h^0(\chi_{i}-\chi_{j})+\rho(\chi_{i}-\chi_{j})}{2}
	\end{align*}
	\endgroup
	Here $\rho(\chi_\bv)$ is the $\rho$-invariant of the flat connection $\ad{\chi_\bv}$ which is defined on a vector bundle of rank $N^2-1$ \cite{APS:II}.
	It is straightforward to check that:
	\begin{equation} \label{energy-formula}
		4k\cdot \kappa(A)=
		\sum_{1\leq i,j \leq k}|v_i-v_j|^2_2-\frac{1}{N}\sum_{1\leq i,j \leq k}([v_i]_+-[v_j]_+)^2
	\end{equation}
	Moreover, the following relation for the $\U(1)$-connection $\zeta^j$, with $|j|\leq N$, is proved in \cite{APS:II}:
	\begin{equation*}
		h^0(\zeta^j)+\rho(\zeta^j)=2-4|j|+\frac{4j^2}{N}.
	\end{equation*}
	Here $\rho$ and $h^0$ are defined with respect to the 2-dimensional real representation of $\U(1)$. The formula for the index of $B_\bv(g)$ is a consequence of these relations.

	In order to verify the second part of lemma, note that we have the following inequalities:
	\begin{equation*}
		\sum_{1\leq i,j \leq k}|[v_i]_+-[v_j]_+|\leq \sum_{1\leq i,j \leq k}|v_i-v_j|_1\hspace{1cm}
		 \sum_{1\leq i,j \leq k}|v_i-v_j|_1\leq\sum_{1\leq i, j\leq k}|v_i-v_j|_2^2
	\end{equation*}	
	In the first inequality, equality holds if and only if the first condition in \eqref{vector-cond} is satisfied.
	In the second one, equality holds if and only if the second condition in \eqref{vector-cond} is satisfied.
\end{proof}

We make the following elementary observation about the vectors that satisfy \eqref{vector-cond}:
\begin{lemma} \label{sharp-bv}
	Suppose the vectors $v_1$, $\dots$, $v_k$ satisfy the conditions in \eqref{vector-cond} and:
	\[
	 v_1+v_2+\dots+v_k=(s_1,\dots,s_k)
	\]
	where $0\leq s_j \leq k-1$. Then $v_l=(i_l^1,\dots,i_l^N)$ with:
	\[
	  i_l^j=\left\{
	  \begin{array}{ll}
		1&l\leq s_j\\
		0&l> s_j\\
	  \end{array}
	  \right.
	\]
	In particular, $|v_l|_1$ is equal to the number of $s_j$'s which are greater than or equal to $l$.
\end{lemma}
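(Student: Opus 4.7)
The plan is to argue coordinate-by-coordinate. Fix $j \in \{1,\dots,N\}$ and consider the sequence of $j$-th entries $i_1^j, i_2^j, \dots, i_k^j$. The first condition in \eqref{vector-cond} gives $i_1^j \geq i_2^j \geq \dots \geq i_k^j$, while the second condition implies in particular that $i_1^j - i_k^j \leq 1$. Hence the sequence takes at most two distinct values, which are consecutive integers $c$ and $c+1$, with all occurrences of $c+1$ preceding all occurrences of $c$ by monotonicity.

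Let $s_j'$ denote the number of indices $l$ with $i_l^j = c+1$, so $s_j' \in \{0, 1, \dots, k\}$, and the remaining $k - s_j'$ indices have $i_l^j = c$. Then
\[
\sum_{l=1}^{k} i_l^j = k c + s_j'.
\]
By hypothesis this sum equals $s_j \in \{0, 1, \dots, k-1\}$, so $kc + s_j' = s_j$ with $0 \leq s_j' \leq k$ and $0 \leq s_j \leq k-1$. If $c \geq 1$ then $kc + s_j' \geq k > k-1$, a contradiction; if $c \leq -1$ then $kc + s_j' \leq -k + k = 0$ with equality forcing $s_j' = k$ and $s_j = 0$, but then relabeling $c \mapsto 0$ and $s_j' \mapsto 0$ gives the same configuration. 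In all cases we may therefore take $c = 0$ and $s_j' = s_j$, yielding $i_l^j = 1$ for $l \leq s_j$ and $i_l^j = 0$ for $l > s_j$, as required.

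For the final statement, since every entry of $v_l$ is either $0$ or $1$,
\[
|v_l|_1 = \sum_{j=1}^{N} i_l^j = \#\{\, j : i_l^j = 1 \,\} = \#\{\, j : s_j \geq l \,\},
\]
which is the number of $s_j$'s that are at least $l$. The argument is entirely elementary; the only mild subtlety is the ambiguity in the choice of the base value $c$ when a coordinate column is constant, which is resolved by the normalization hypothesis $0 \leq s_j \leq k-1$. No step presents a real obstacle.
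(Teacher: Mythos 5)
Your proof is correct. The paper states this lemma as an ``elementary observation'' and supplies no proof at all, so there is nothing to compare against; your coordinate-by-coordinate argument (monotone column taking at most two consecutive values, then pinning down the base value $c=0$ from the constraint $0\leq s_j\leq k-1$) is exactly the intended elementary verification, and your handling of the degenerate constant-column case is sound.
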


From now on, we assume that $k=N$. Let $l$ be a non-negative integer number which is not greater than $N$. Let:
\begin{equation} \label{sigma-tau}
  \sigma: [N-l]\to [N] \hspace{1cm} \tau: [l] \to [N]
\end{equation}
be two injective maps. For any choice of $\sigma$ and $\tau$ as above, let $w_{\sigma,\tau}$ be a 2-cycle representing  the following cohomology class:
\begin{equation}\label{sigmatau}
   \sigma(0)e_1+\dots+\sigma(N-l-1)e_{N-l}+\tau(0)e_{N-l+1}+\dots+\tau(l-1)e_{N}
\end{equation}

\begin{prop}\label{bi-per-con}
	For a metric $g$ on $X_N(l)$, the $\U(N)$-bundle associated to $w_{\sigma,\tau}$
	supports a completely reducible connection $B_\bv(g)$ with:
	\begin{equation} \label{cond-ind-limit-flat}
	 \ind(\mathcal D_{B_{\bv}(g)})=-h^0(\chi)-h^0(\beta)\hspace{2cm} \chi=1\oplus \zeta \oplus \dots \oplus \zeta^{N-1}
	\end{equation}
	if and only if the image of the maps $\sigma$ and $\tau$ are disjoint.
\end{prop}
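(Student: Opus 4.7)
The plan is to reduce the index equality in \eqref{cond-ind-limit-flat} to a purely combinatorial condition on the coordinate sums of the vectors $v_1,\dots,v_N$, and then translate that condition into image-disjointness of $\sigma$ and $\tau$. By Lemma \ref{ind-Bv} we have the inequality $\ind(\mathcal D_{B_\bv(g)})\geq -h^0(\chi_\bv)-h^0(\beta_\bv(g))$, with equality precisely when the $v_j$, after reordering, satisfy \eqref{vector-cond}. Therefore the requirement \eqref{cond-ind-limit-flat} forces two conditions: first, the $v_j$ satisfy \eqref{vector-cond}, and second, $h^0(\chi_\bv)=h^0(\chi)$. Since $\chi=1\oplus\zeta\oplus\cdots\oplus\zeta^{N-1}$ has stabilizer equal to the maximal torus of $\SU(N)$, its $h^0$ is the minimum value $N-1$; for $h^0(\chi_\bv)=\sum_k m_k^2-1$ (where $m_k$ are the multiplicities of the powers $\zeta^{[v_j]_+}$) to equal $N-1$, all multiplicities must equal $1$, i.e.\ $\{[v_j]_+:j=1,\dots,N\}=\{0,1,\dots,N-1\}$ as sets.

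Now I would invoke Lemma \ref{sharp-bv}. Under condition \eqref{vector-cond}, writing $\bv=(v_1,\dots,v_N)$ with $v_l=(i_l^1,\dots,i_l^N)$ and $(s_1,\dots,s_N)=v_1+\cdots+v_N$, the lemma gives $i_l^j=1$ iff $l\leq s_j$, from which
\[
[v_l]_+ \;=\; \#\{\,j:s_j\geq l\,\}.
\]
The sequence $\bigl(\#\{j:s_j\geq l\}\bigr)_{l=1}^N$ is monotonically non-increasing and takes values in $\{0,1,\dots,N\}$. For it to be a permutation of $\{0,1,\dots,N-1\}$, the consecutive differences must all equal $1$, starting from $N-1$ at $l=1$ and ending at $0$ at $l=N$; this happens if and only if $\{s_1,\dots,s_N\}=\{0,1,\dots,N-1\}$ as a multi-set. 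On the other hand, by the very definition of $\bv$ as a decomposition of the $\U(N)$-bundle associated to $w_{\sigma,\tau}$, the sum $\sum_j v_j$ realizes the Chern class \eqref{sigmatau}, giving
\[
s_j \;=\; \begin{cases} \sigma(j-1) & 1\leq j\leq N-l,\\ \tau(j-N+l-1) & N-l<j\leq N.\end{cases}
\]
Thus $\{s_1,\dots,s_N\}=\mathrm{Im}(\sigma)\sqcup\mathrm{Im}(\tau)$ as a multi-set, and since $|\mathrm{Im}(\sigma)|+|\mathrm{Im}(\tau)|=(N-l)+l=N$ with both images injective into the $N$-element set $[N]$, equality with $\{0,1,\dots,N-1\}$ is equivalent to $\mathrm{Im}(\sigma)\cap\mathrm{Im}(\tau)=\emptyset$.

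This establishes the ``only if'' direction. For the ``if'' direction, given $\sigma,\tau$ with disjoint images, take $(s_1,\dots,s_N)$ as above (which is now a permutation of $\{0,\dots,N-1\}$) and define $\bv=(v_1,\dots,v_N)$ by the formula of Lemma \ref{sharp-bv}. Then $\sum_j v_j$ represents $c_1(w_{\sigma,\tau})$ so the line bundles $L_j$ with $c_1(L_j)=v_j$ give a splitting of the $\U(N)$-bundle associated to $w_{\sigma,\tau}$; the completely reducible connection $B_\bv(g)$ on this splitting satisfies \eqref{vector-cond} and has $\chi_\bv\cong\chi$, so by Lemma \ref{ind-Bv} it achieves the index equality \eqref{cond-ind-limit-flat}. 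The argument is essentially combinatorial once Lemmas \ref{ind-Bv} and \ref{sharp-bv} are in hand; the only step requiring a moment's care is the verification that $h^0(\chi_\bv)=h^0(\chi)$ is equivalent to the simplicity of the spectrum $\{[v_j]_+\}$, which follows from the explicit description of the stabilizer of a diagonal flat $\U(N)$-connection on $L(N,1)$.
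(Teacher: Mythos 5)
Your proof is correct and follows essentially the same route as the paper's: the equality case of Lemma \ref{ind-Bv} together with the staircase description of Lemma \ref{sharp-bv} reduces everything to the statement that the multiset of column sums $\{s_j\}$ — i.e.\ the coefficients $\sigma(i),\tau(j)$ in \eqref{sigmatau} — must equal $\{0,1,\dots,N-1\}$, which by cardinality is exactly disjointness of the images. The only cosmetic differences are that you extract this via the identity $[v_l]_+=\#\{\,j:s_j\geq l\,\}$ rather than by displaying the matrix \eqref{matrix}, and that you spell out the converse direction which the paper dismisses as straightforward.
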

\begin{proof}
	Lemmas \ref{ind-Bv} and \ref{sharp-bv} imply that all the entries of the vector $v_j$ are equal to either $0$ or $1$.
	Then the second condition in \eqref{cond-ind-limit-flat} imply that
	the vectors $v_1$, $\dots$, $v_N$ is given by the rows
	of the following matrix after a permutation of its columns:

	\begin{equation}\label{matrix}
	  \left(
	  \begin{array}{ccccc}
	  	0&0&\dots&0&0\\
		0&0&\dots&0&1\\
		0&0&\dots&1&1\\
		\vdots&\vdots&\ddots&\vdots&\vdots\\
		0&1&\dots&1&1\\
	  \end{array}
	  \right)
	\end{equation}
	Therefore, the set $\{\sigma(0),\dots,\sigma(N-l-1),\tau(0),\dots,\tau(l-1)\}$ is equal to $\{0,1,\dots,N-1\}$. This implies one direction of the proposition. The other direction is straightforward.
\end{proof}
A pair $(\sigma,\tau)$ of two injective maps as in \eqref{sigma-tau} with disjoint images is called a {\it bi-permutation} of type $l$ associated to the set $[N]$.  For a bi-permutation $(\sigma,\tau)$ of type $l$ and any metric $g$ on $X_N(l)$ with cylindrical ends, Proposition \ref{bi-per-con} asserts that there is a completely reducible ASD connection $B_{\sigma,\tau}(g)$ on $X_N(l)$ such that the associated 2-cycle is $w_{\sigma,\tau}$ and the limiting flat connection of $B_{\sigma,\tau}(g)$ on the lens space end is equal to $\chi$ in \eqref{cond-ind-limit-flat}.

Fix a reference bi-permutation $(\sigma_0,\tau_0)$, and let $S$, $[N]\setminus S$ denote the image of the maps $\sigma_0$, $\tau_0$. Let $S_n$ be the symmetric group on the set $\{1,\dots,n\}$. Given any element $(f,g) \in S_{N-l}\times S_l$, we can form the bi-permutation $(\sigma_f,\tau_g)$ of type $l$ which is defined as:
\begin{equation} \label{sigmaf-taug}
  \sigma_f(i)= \sigma_0(f^{-1}(i+1)-1)\hspace{1cm}
  \tau_g(j)=\tau_0(g^{-1}(j+1)-1)\hspace{1.3cm} i\in [N-l],\, j\in [l]
\end{equation}
This gives a transitive and faithful action of $S_{N-l}\times S_l$ on the set of all bi-permutations $(\sigma,\tau)$ of type $l$ such that ${\rm image}(\sigma)=S$ and ${\rm image}(\tau)=[N]\setminus S$.

\begin{remark} \label{comp-red-barXN}
	Let $\overline{X}_N$ be the result of gluing $S^1\times D^3$ to $X_N(l)$ along the boundary component
	$S^1\times S^2$. The cohomology group $H^2(\overline{X}_N,\mathbf{Z})$ is generated by
	$e_1,\cdots, e_N$ modulo the relation $e_1+\cdots +e_N=0$.
	Given a  cylindrical metric  $g$ on  $\overline{X}_N$ and a vector:
	\[
	  \mathbf{v}=(v_1,\cdots,v_k), v_i \in \mathbf{Z}^{N},
	\]	
	 we can form a completely reducible ASD $\U(k)$-connection $\overline{B}_\mathbf{v}(g)$.
	 Let $\chi_\bv$ denote the limiting flat connection of $\overline{B}_\mathbf{v}(g)$ on $L(N,1)$.
	A similar argument as in the proof of Lemma \ref{ind-Bv} can be used to show that:
	 \begin{equation}\label{ind-Bv2}
	  \ind(\mathcal D_{\overline{B}_\bv(g)})=
	  \sum_{1\leq i, j\leq k}|v_i-v_j|_2^2-\sum_{1\leq i,j \leq k}|[v_i]_+-[v_j]_+|-h^0(\chi_\bv)
	\end{equation}
	We will need this index formula and the following variant of Lemma \ref{sharp-bv} in the final section of the
	paper.
	
	Suppose a flat $\U(N)$-connection $\chi=\zeta^{s_1}\oplus \cdots \oplus \zeta^{s_N}$ on $L(N,1)$
	is given such that $0\leq s_j\leq N-1$. Analogous to  Lemma \ref{sharp-bv}, we define $v_l=(i_l^1,\dots,i_l^N)$ as follows:
	\[
	  i_l^j=\left\{
	  \begin{array}{ll}
		1&l\leq s_j\\
		0&l> s_j\\
	  \end{array}
	  \right.
	\]
	Let $\overline{B}_\mathbf{v}(g)$ be the flat connection associated to $ \bv=(v_1,\cdots,v_N)$.
	Then $\overline{B}_\mathbf{v}(g)$ has the limiting flat connection $\chi$ and
	$\ind (\mathcal D_{\overline{B}_\mathbf{v}(g)})=-h^0(\chi)$. Let $B_1\oplus\cdots \oplus B_N$
	be the decomposition of $\overline{B}_\mathbf{v}(g)$ into $\U(1)$-connections. Then for any
	subset $\{i_1,\cdots,i_k\}\subset \{1,\cdots, N\}$, we have:
	\[	
	  \ind (\mathcal D_{B_{i_1}\oplus \cdots \oplus B_{i_k}})=-h^0(\zeta^{i_1}\oplus \cdots \oplus \zeta^{i_k}).
	\]
\end{remark}

\subsubsection*{Holonomy Maps}

In Subsection \ref{Mod-GH}, for any $0\leq l \leq N$, we fix a family of metrics $\bbW^l_{l+N+1}$ on $W^l_{l+N+1}$ which is parametrized by the associahedron $\mathcal K_{N+2}$. One of the cuts associated to this family of metrics is equal to $M^l_{l+N+1}$. The complement of a neighborhood of this cut has two connected components, one of which is identified with $X_N(l)$. In particular, the family of metrics $\bbW^l_{l+N+1}$ induces a family of metrics on $X_N(l)$ parametrized by $\mathcal K_{N+1}$ which we denote by $\bbX_{N}(l)$.

Any $g\in \mathcal K_{N+1}$ gives rise to a (possibly broken) metric on $X_N(l)$, which we also denote by $g$. Consider the completely reducible connection $B_{\sigma,\tau}(g)$ for a fixed choice of a bi-permutation $(\sigma,\tau)$ of type $l$. Let the decomposition of $B_{\sigma,\tau}(g)$ as a direct sum of $\U(1)$-connections be given as follows:
\begin{equation} \label{dec-com-red}
  B_{\sigma,\tau}(g)=B_{\sigma,\tau}^1(g)\oplus \dots \oplus B_{\sigma,\tau}^N(g)
\end{equation}
where the $\U(1)$-connection $B_{\sigma,\tau}^j(g)$ is asymptotic to $\zeta^{j-1}$ on the lens space end of $X_N(l)$. The connection $B_{\sigma,\tau}(g)$ is asymptotic to an $\SU(N)$-flat connection $\beta_{\sigma,\tau}(g)$ on $S^1\times S^2$ because we pick the central connection $A_0$ such that its restriction to $S^1\times S^2$ is trivial. The decomposition \eqref{dec-com-red} implies that the holonomy of $\beta_{\sigma,\tau}(g)$ along the $S^1$ factor comes with a preferred choice of diagonalization. Therefore, this holonomy determines a map $\hol_{\sigma,\tau}:\mathcal K_{N+1} \to T$ where $T$ is the standard maximal torus of $\SU(N)$. It is clear that the map $\hol_{\sigma,\tau}$ is smooth in the interior of $\mathcal K_{N+1}$. Standard gluing theory results about (abelian) ASD connections also show that this map is continuous.

There is a canonical way to lift the map $\hol_{\sigma,\tau}:\mathcal K_{N+1} \to T$ to a map $\widetilde \hol_{\sigma,\tau}:\mathcal K_{N+1} \to \ft$ such that $\hol_{\sigma,\tau}=\exp \circ \widetilde \hol_{\sigma,\tau}$. Recall that we introduced a cylinder $\fc$ in Subsection \ref{4-man} such that $\partial \fc=\gamma_0\sqcup -\gamma_N$ where $\gamma_0$ (respectively, $\gamma_N$) is a fiber of the $\U(1)$-fibrations of $S^1\times S^2$ (respectively, $L(N,1)$) with the standard orientation. Gauss-Bonet Theorem and our convention on the orientation of $\fc$ implies that $B_{\sigma,\tau}^j(g)$ is asymptotic to a connection on $S^1\times S^2$ with holonomy $\exp(\theta_j(g))$ where $\theta_j(g)$ is given by:
\begin{equation} \label{thetaj}
	\theta_j(g)=2\pi\bi\frac{(j-1)}{N}+\int_{\fc}F(B_{\sigma,\tau}^j(g)).
\end{equation}
Let $\Theta_{\sigma,\tau}(g)$ be the vector whose $j^{\rm th}$ entry is given by \eqref{thetaj}. Then we define:
\[
 \widetilde \hol_{\sigma,\tau}(g):=\overline{\Theta_{\sigma,\tau}(g)}
\]

\begin{remark}
	To be more precise, we pick $\fc$ such that its intersection with the
	ends corresponding to $S^1\times S^2$ and $L(N,1)$ are cylindrical.
	Moreover, if $Y$ is a cut of the family of metrics $\bbX_N(l)$,
	we also require that in the the regular neighborhood
	of $Y$ the cylinder $\fc$ has the cylindrical form corresponding to a fixed loop in $Y$.	

\end{remark}

\begin{prop} \label{hol-fac-forg}
	The map $\widetilde \hol_{\sigma,\tau}:\mathcal K_{N+1} \to \ft$ factors through the forgetful map
	$\mathfrak{F}: \mathcal K_{N+1}\to\Delta_{N-1}$. That is to say, there is  a map
	$\overline \hol_{\sigma,\tau}:\Delta_{N-1} \to \ft$ such that
	$\widetilde \hol_{\sigma,\tau}=\overline \hol_{\sigma,\tau}\circ \mathfrak{F}$.
\end{prop}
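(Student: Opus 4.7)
The plan is to combine continuity of $\widetilde \hol_{\sigma,\tau}$ with the observation that the holonomy at the $S^1\times S^2$ end is controlled entirely by the outermost piece of the broken decomposition of $X_N(l)$, and this outermost piece is determined by $\mathfrak{F}(\bx)$.

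I would first establish continuity of $\widetilde \hol_{\sigma,\tau}$ on $\mathcal K_{N+1}$. Smoothness on the interior $\mathcal P_{N+1}^\circ$ is immediate from smooth dependence of the Gibbons-Hawking metric and of the harmonic representative of the relevant cohomology class on the puncture configuration. Continuity at boundary strata follows from standard gluing/convergence arguments for abelian ASD connections: as a neck parameter $s_i \to \infty$, the curvature of $B_{\sigma,\tau}^j(g)$ converges in weighted $L^2$ norms to the broken curvature on the pieces, whence the chain integral $\int_{\fc} F(B_{\sigma,\tau}^j(g))$ converges to its broken analogue.

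Since $\mathfrak{F}$ is the identity on the open dense subset $\mathcal P_{N+1}^\circ \subset \mathcal K_{N+1}$, one defines $\overline \hol_{\sigma,\tau} := \widetilde \hol_{\sigma,\tau}$ on the interior; by continuity this extends to a map on all of $\Delta_{N-1}$ provided $\widetilde \hol_{\sigma,\tau}(\bx)=\widetilde \hol_{\sigma,\tau}(\bx')$ whenever $\mathfrak{F}(\bx)=\mathfrak{F}(\bx')$. For $\bx \in \overline F_T$ associated to a ribbon tree $T$ with root vertex $v_0$, let $Z_0$ denote the piece of the broken decomposition of $X_N(l)$ corresponding to $v_0$; this is the unique piece containing the $S^1\times S^2$ end. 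Two points $\bx,\bx'$ with $\mathfrak{F}(\bx)=\mathfrak{F}(\bx')$ have the same multi-set of root-child positions with multiplicities, so their root degrees $d(v_0)$ and their root configurations $\bx_0 \in \mathcal P_{d(v_0)}$ coincide. Consequently $Z_0$, its metric, and the restriction of the $\U(N)$-bundle to $Z_0$ are the same for $\bx$ and $\bx'$. The abelian ASD connection on $Z_0$ in each $\U(1)$-summand of $B_{\sigma,\tau}(g)$ is uniquely determined up to gauge by its topological type together with the asymptotic flat connections on the inner cuts of $Z_0$; the latter are discrete flat connections on lens spaces or spheres, fixed topologically by the first Chern class of the bundle restriction across each cut. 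Hence the restriction of $B_{\sigma,\tau}(g)$ to $Z_0$, and in particular its asymptotic flat connection on $S^1\times S^2$, agrees for $\bx$ and $\bx'$.

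The remaining technical point, which I expect to be the main obstacle, is to show that $\int_{\fc} F(B_{\sigma,\tau}^j(g))$ itself depends only on the data on $Z_0$. The portion of $\fc$ lying outside $Z_0$, together with its traversals of the necks, contributes integrals that modulo integers are determined by the asymptotic flat $\U(1)$-connections on the lens-space cuts. These flat connections are topologically determined by the first Chern classes of the restricted line bundles, so the total contribution from outside $Z_0$ is a topological invariant, independent of the tree structure, and agrees for $\bx$ and $\bx'$. Carrying out this bookkeeping carefully --- tracking how $\fc$ crosses each neck and how the topological contributions combine across pieces --- completes the argument; together with continuity of $\widetilde \hol_{\sigma,\tau}$ it yields the existence of the map $\overline \hol_{\sigma,\tau}$ and the factorization $\widetilde \hol_{\sigma,\tau} = \overline \hol_{\sigma,\tau} \circ \mathfrak{F}$.
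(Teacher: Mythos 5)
Your proposal follows essentially the same route as the paper's proof: both reduce to showing that $\widetilde \hol_{\sigma,\tau}$ is constant on the fibers of $\mathfrak{F}$ by splitting $X_N(l)$ into the piece $W$ containing the $S^1\times S^2$ end --- which, together with its Gibbons--Hawking metric and the restriction of $B^j_{\sigma,\tau}(g)$, is determined by $\mathfrak{F}(\bx)$ --- and a complementary piece $Z$ all of whose boundary components are rational homology spheres, whose contribution to $\int_{\fc}F(B^j_{\sigma,\tau}(g))$ is purely topological. The "bookkeeping" you defer as the main obstacle is precisely the content of the paper's Lemma \ref{pairing-curvature-surface}, which shows that contribution equals $2\pi\bi\, c_1(L)[\fc\cap Z]$ exactly (proved by approximating the properly embedded surface by closed surfaces); note that you need this exact statement rather than your weaker "modulo integers" version, since the factorization is asserted for the lift $\widetilde\hol_{\sigma,\tau}$ valued in $\ft$, not merely for the torus-valued holonomy.
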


We call $\overline \hol_{\sigma,\tau}$ the {\it reduced holonomy map} associated to the bi-permutation $(\sigma,\tau)$. The proof of Proposition \ref{hol-fac-forg} is based on the following elementary lemma:

\begin{lemma}\label{pairing-curvature-surface}
	Suppose $X$ is a 4-manifold such that each connected component of its boundary is a rational homology
	sphere. Suppose $\Sigma$ is an oriented surface with boundary which is properly embedded in $X$.
	Suppose $X^+$ and $\Sigma^+$ are given by adding cylindrical ends to $X$ and $\Sigma$. Suppose $L$ is a
	Hermitian line bundle on $X^+$. Suppose $\alpha$ is a flat connection on $L|_{X^+\backslash X}$ which is pull-back of a connection on
	$L|_{\partial_X}$. Suppose $A$ is a connection on $L$ which is exponentially asymptotic to $\alpha$ on the ends of $X^+$. Then:
	\begin{equation} \label{pairingCL}
		\frac{1}{2\pi\bi}\int_\Sigma F(A)
	\end{equation}
	is independent of the choice of the connection $A$ and is equal to $c_1(L)[\Sigma]$.
\end{lemma}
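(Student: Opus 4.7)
The plan is to separate the statement into its two assertions: independence of the connection, and evaluation of the integral. For the independence part, I would apply a Stokes-type argument on truncations of $\Sigma^+$. Specifically, given two connections $A_0, A_1$ on $L$ both exponentially asymptotic to the pullback $\alpha$, their difference $a := A_1 - A_0$ is a smooth, purely imaginary $1$-form on $X^+$ whose restriction to each cylindrical end $(-1,0) \times Y$ (in the coordinate $t = -\ln|r|$) decays exponentially together with all derivatives. Applied on the truncated surface $\Sigma^+_T = \Sigma \cup ([-T,0] \times \partial\Sigma)$ (in the $t$-coordinate on $\Sigma^+$), Stokes' theorem gives
\[
\int_{\Sigma^+_T} \bigl( F(A_1) - F(A_0) \bigr) \;=\; \int_{\Sigma^+_T} da \;=\; \int_{\{-T\} \times \partial\Sigma} a .
\]
Since $\partial\Sigma$ is compact and $|a|$ decays exponentially in $t$, the right-hand side tends to $0$ as $T \to \infty$, proving $A$-independence.

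For the evaluation, my strategy is to first reduce to a convenient representative. Using Step~1, I am free to replace $A$ by any other connection on $L$ which is exponentially asymptotic to $\alpha$. Starting from the given $A$ and the exponentially decaying tensor $A - \alpha$ on the ends, I would use a cutoff function supported in $[-\tfrac{1}{2},0] \times \partial X$ to modify $A$ on the collar so that on $[-\tfrac{1}{2},0] \times \partial X$ it equals $\alpha$ exactly. After this modification, $F(A) \equiv 0$ on a neighborhood of the cylindrical end of $X^+$, so the cylindrical portion of $\Sigma^+$ contributes nothing to the integral, and $\int_{\Sigma^+} F(A) = \int_{\Sigma} F(A)$.

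Next I would invoke the relative Chern--Weil theory. Because each boundary component $Y$ of $X$ is a rational homology sphere, $H^2(Y;\R) = 0$ and so $c_1(L|_{\partial X})$ is torsion; the flat connection $\alpha$ provides a canonical trivialization of $c_1(L)$ as a real class on $\partial X$. With this trivialization, $c_1(L)$ lifts canonically to a class in $H^2(X,\partial X;\R)$, and the pairing with the relative fundamental class $[\Sigma,\partial\Sigma] \in H_2(X,\partial X;\Z)$ is what is meant by $c_1(L)[\Sigma]$. For the representative $A$ chosen above, the closed $2$-form $\tfrac{1}{2\pi\bi} F(A)$ is pulled back from the fiber direction and vanishes on the collar, so it is a de Rham representative of precisely this relative class. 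Pairing against $[\Sigma,\partial\Sigma]$ then gives $\tfrac{1}{2\pi\bi}\int_\Sigma F(A) = c_1(L)[\Sigma]$.

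The only nontrivial point is the first paragraph of Step~2, namely exhibiting an $A$ flat in a collar neighborhood of $\partial X$; everything else is either the exponential decay estimate or the standard identification of the de Rham representative of the relative Chern class. Once the reduction to a collar-flat connection is in place, both assertions of the lemma follow directly.
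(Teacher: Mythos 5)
Your argument is correct, but it takes a genuinely different route from the paper's. The paper caps off the surface: since $\partial X$ consists of rational homology spheres, $M[\partial\Sigma]=0$ in $H_1(\partial X;\Z)$ for some integer $M$, so $M$ copies of $\Sigma$ can be closed up by a surface mapped into the cylindrical end; pushing that cap out to infinity (where $F(A)$ decays exponentially) and applying ordinary closed-surface Chern--Weil to the resulting class, which lifts $M[\Sigma]$, gives $\int_\Sigma F(A)=\tfrac{1}{M}\lim_t\int_{\widetilde\Sigma}\phi_t^*F(A)=2\pi\bi\, c_1(L)[\Sigma]$. You instead normalize the connection to agree with the flat connection $\alpha$ on a collar and invoke relative Chern--Weil theory, pairing the resulting relative de Rham class with $[\Sigma,\partial\Sigma]$. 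Both proofs use the rational homology sphere hypothesis in essentially the same way ($H_1(\partial X;\Q)=0$, equivalently $H^1(\partial X;\R)=H^2(\partial X;\R)=0$), and your Stokes argument for independence of $A$ is the right one (note that, as in the paper, the integral must be read over the completed surface $\Sigma^+$, since over the compact piece alone the boundary term $\int_{\partial\Sigma}a$ need not vanish). Your approach buys a cleaner argument that avoids constructing the family $\phi_t$ with its bounded-$C^1$ and $M$-to-$1$ conditions; the paper's approach buys freedom from relative cohomology bookkeeping by reducing to the most elementary closed case. Two small points to tighten: the phrase ``pulled back from the fiber direction'' is not meaningful for the curvature of a line bundle, which is simply an imaginary-valued $2$-form on the base; and you should record the one-line naturality check $\langle u,[\Sigma,\partial\Sigma]\rangle=\langle j^*u,\sigma\rangle=\langle c_1(L),\sigma\rangle$ (where $u$ is your relative class and $\sigma\in H_2(X;\Q)$ is the unique lift of $[\Sigma]$) identifying your pairing with the quantity $c_1(L)[\Sigma]$ as the paper defines it in the remark following the lemma.
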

A priori, $[\Sigma]\in H_2(X,\partial X)$. Since $H_1(\partial X,\Q)=H_2(\partial X,\Q)=0$, the relative homology class $\Sigma$ has a unique lift to $H_2(X,\Q)$ and the pairing $c_1(L)[\Sigma]$ is well-defined as a rational number.
\begin{proof}
	This lemma is trivial in the case that $\Sigma$ is a closed Riemann surface.
	We can reduce the general case to this special case.
	Using the assumption on $\partial X$, we can find a closed oriented surface $\widetilde \Sigma$ and a continuous family of smooth maps
	$\phi_t: \widetilde \Sigma \to X$ for $t\in \R^{+}$ such that it satisfies the following properties. For each $t$ define:
	\[
	  \Sigma_1(t):=\phi_t^{-1}(\partial X\times[t\times \infty))\hspace{2cm}\Sigma_2(t):=\phi_t^{-1}(X^+\backslash (\partial X\times[t\times \infty))).
	\]
	The map $\phi_t$ is required to have a bounded $C^1$ norm on $\Sigma_1(t)$.
	There also exists $M$, independent of $t$, such that the map $\phi_t$ is $M$ to $1$ on $\Sigma_2(t)$.
	Moreover, the homology class $(\phi_t)_*([\widetilde \Sigma])$ is a lift of the relative homology class
	 $M[\Sigma]$.
	Then we have:
	\[
	  \int_\Sigma F(A)=\frac{1}{M} \lim_{t \to \infty} \int_{\widetilde \Sigma}\phi_t^*(F(A))=2\pi \bi c_1(L)[\Sigma].
	\]
\end{proof}
\noindent	
The above lemma can be generalized in an obvious was to the case that the metric on $X^+$ is broken.

\begin{proof}[Proof of Proposition \ref{hol-fac-forg}]
	It suffices to show that if $g_1, g_2\in \mathcal K_{N+1}$ are two elements
	in the same fiber of the map $\fF$, then they are mapped to the same
	element of $\ft$ by $\widetilde \hol_{\sigma,\tau}$. Since $g_1$ and $g_2$
	belong to the same fibers of $\fF$, there is a decomposition:
	\[
	  X_N(l)=W \#_Y Z
	\]
	such that $S^1\times S^2$ is a boundary component
	of $W$, all boundary components of $Z$ (including Y) are rational homology spheres,
	and $g_1$, $g_2$ determine broken metrics of $X_N(l)$ compatible
	with the above decomposition that agree with each other
	on $W$. (See Figure \ref{XN(l)-decom}.)
	Note that the metric induced by $g_1$ and $g_2$ on $Z$ might be broken.
	In the definition of $\widetilde \hol_{\sigma,\tau}(g_1)$ and $\widetilde \hol_{\sigma,\tau}(g_2)$,
	the integrals $\int_\fc F(B^j_{\sigma,\tau}(g_1))$ and $\int_\fc F(B^j_{\sigma,\tau}(g_2))$
	have contributions from $W$ and $Z$.
	The contributions from $Z$ are equal to each other by Lemma \ref{pairing-curvature-surface}
	(or by the extension of Lemma \ref{pairing-curvature-surface} to the case of
	broken metrics). Moreover,
	the restrictions of $B^j_{\sigma,\tau}(g_1)$ and $B^j_{\sigma,\tau}(g_2)$ to $W$ agree with each other.
	Therefore, the contributions from $W$ to the integrals are also equal.	
\end{proof}

\begin{prop} \label{hol-maps-k-space-to-kspace}
	Let $k$ be a positive integer number not greater than $N$. Let $(\sigma,\tau)$ be a bi-permutation of type $l$.
	Then the holonomy map $\overline {\hol}_{\sigma,\tau}:\Delta_{N-1} \to \ft$ maps each face of the simplex
	$\Delta_{N-1}$ with dimension $k-1$ into an affine subspace of $\ft$
	with dimension $k-1$.
\end{prop}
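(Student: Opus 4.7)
The idea is to factor $\overline{\hol}_{\sigma,\tau}$ through the Gibbons-Hawking decomposition of $X_N(l)$ induced by the face, so that the variation of the holonomy along the face reduces to a problem on an outer Gibbons-Hawking manifold.

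By Proposition \ref{hol-fac-forg}, $\overline{\hol}_{\sigma,\tau}$ descends to $\Delta_{N-1}$, and it suffices to study its restriction to each face $F$ of dimension $k-1$. Such a face corresponds to a partition of the $N+1$ indexing points of $X_N(l)$ into $k+1$ consecutive nonempty clusters $(n_0,\ldots,n_k)$. For an interior point of $F$ I would choose a lift to a maximally degenerate stratum of $\mathcal K_{N+1}$, namely one in which each cluster has been internally fully collapsed, yielding a broken Gibbons-Hawking metric under which
\[
X_N(l) = X_{\mathrm{out}} \cup X_0 \cup \cdots \cup X_k,
\]
where $X_{\mathrm{out}}$ is the outer Gibbons-Hawking manifold with $k+1$ cluster cutting spheres (all lens spaces) and the original $S^1 \times S^2$ end, and each cluster piece $X_i$ is a (possibly further broken) Gibbons-Hawking manifold with only rational-homology-sphere boundaries.

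The cylinder $\fc$ splits as $\fc = \fc_{\mathrm{out}} \cup \fc_{\mathrm{cluster}}$, with $\fc_{\mathrm{cluster}} \subset X_{i_*}$ where $i_*$ is the cluster containing the $L(N,1)$ end. Applying Lemma \ref{pairing-curvature-surface} to each cluster piece shows that $\int_{\fc \cap X_i} F(B^j_{\sigma,\tau}(g))$ is a topological invariant, independent of $g \in F$. Hence the variation of $\Theta(g) = (\theta_1(g),\ldots,\theta_N(g))$ along $F$ is entirely carried by the outer contribution. On $X_{\mathrm{out}}$, the ASD connection on the summand $L^{\mathrm{out}}_j$ has curvature $*du^{\mathrm{out}}_j$ with $u^{\mathrm{out}}_j(q) = \sum_{i=0}^{k} c^i_j/|q-y_i|$, where the $c^i_j$ are the Chern numbers of $L^{\mathrm{out}}_j$ on the $k+1$ small spheres (subject to $\sum_i c^i_j = 0$ so that $L^{\mathrm{out}}_j$ carries a finite-energy connection on the $S^1\times S^2$ end). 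By linearity of the Green's function in the $c^i_j$,
\[
\int_{\fc_{\mathrm{out}}} F(B^j_{\mathrm{out}}(g)) = \sum_{i=0}^{k} c^i_j \, K_i(g),
\]
where the scalars $K_i(g)$ depend on the positions $\{y_i\}$ and on the shape of $\fc_{\mathrm{out}}$ but not on $j$. Writing $C^i = (c^i_1,\ldots,c^i_N)^T \in \R^N$ and absorbing the constant cluster contributions into a vector $\Theta_0$, this gives $\Theta(g) - \Theta_0 = \sum_{i=0}^{k} K_i(g)\, C^i$ on $F$, with $\sum_i C^i = 0$.

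The image of $F$ in $\R^N$ therefore lies in an affine subspace of dimension at most $k$. \textbf{The hard part}, and the essential content of the proposition, is showing that the image in the quotient $\ft = \R^N / \langle (1,\ldots,1) \rangle$ has dimension at most $k-1$. This reduces to establishing that $(1,\ldots,1) \in \mathrm{span}(C^0,\ldots,C^k)$, or equivalently that there exist scalars $\lambda_i$ with $\sum_i \lambda_i c^i_j = 1$ for every $j = 1,\ldots,N$. I expect this to follow from the nested $\{0,1\}$-structure of the vectors $v_j$ in the matrix \eqref{matrix} (after column permutation by $(\sigma,\tau)$) combined with how the components $v^a_j$ restrict to Chern numbers $c^i_j$ on the cluster cutting spheres via the divisor $\fd$ of $X_N(l)$. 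Verifying this combinatorial identity -- propagating the bi-permutation structure to the cluster cutting-sphere Chern numbers -- is the main technical obstacle and is where the specific choice of $(\sigma,\tau)$ enters decisively.
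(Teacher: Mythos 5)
Your reduction to the outer Gibbons--Hawking piece is the same decomposition the paper uses (its $W$ is your $X_{\mathrm{out}}$, its $Z$ your union of clusters), and the use of Lemma \ref{pairing-curvature-surface} to freeze the cluster contributions is also the paper's step. The gap is in the final dimension count, and the identity you propose to close it is false. Take $N=3$, $l=0$, $\sigma=\mathrm{id}$, and the edge of $\Delta_2$ given by the partition $(i_0,i_1,i_2)=(2,1,1)$ of the four points, so that the $L(3,1)$ sphere sits alone in the last cluster. Here $c_1(L^{(1)})=0$, $c_1(L^{(2)})=e_3$, $c_1(L^{(3)})=e_2+e_3$, each $e_a$ meets the cutting spheres with degrees $(+1,0,-1)$ or $(0,+1,-1)$, and one computes $C^0=(0,0,1)$, $C^1=(0,1,1)$, $C^2=(0,-1,-2)$. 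These satisfy $\sum_i C^i=0$ but span the plane $\{0\}\times\R^2$, which does not contain $(1,1,1)$. So $(1,\dots,1)\in\mathrm{span}(C^0,\dots,C^k)$ fails, and your argument stalls at dimension $k$ rather than $k-1$.

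The missing ingredient is not a relation among the vectors $C^i$ but a relation among the fluxes $K_i(g)$ themselves: they are not independent functions of $g$. The reason is that the restriction map to $H^2(W;\Q)$ kills more than the single class $e_1+\cdots+e_N$. Writing $\widetilde f_j$ for the common restriction to $W$ of the classes $e_a$ whose non-lens-space endpoint lies in the $j$-th cluster, the paper records \emph{two} relations, $i_0\widetilde f_0+\cdots+i_k\widetilde f_k=0$ and $\widetilde f_{j_0}=0$, where $j_0$ is the cluster containing the $L(N,1)$ boundary (any $e_a$ with both endpoints in that cluster can be pushed entirely into $Z$). Hence the classes $c_1(L_j)|_W$ span a subspace of $H^2(W;\Q)$ of dimension at most $k-1$, and since an abelian ASD connection on a torsion-$c_1$ bundle over $W$ is flat, the periods $\int_{\fc\cap W}F(B^j_{\sigma,\tau}(g))$ factor through a fixed linear map on that $(k-1)$-dimensional space. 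In the example above this forces $2K_0+K_1-3K_2\equiv 0$ along the edge, which is exactly the constraint your parametrization cannot see because it treats the $K_i$ as free. With this relation the image already lies in a $(k-1)$-dimensional affine subspace of $\R^N$ before projecting to $\ft$, which is how the paper concludes.
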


\begin{proof}
	Fix a face $\Delta'$ of $\Delta_{N-1}$ with dimension $k-1$. Then there are $k+1$ positive integer numbers $i_0$, $\dots$, $i_k$ such that:
	\[
	  i_0+\dots+i_k=N+1
	\]
	and $\Delta'$ parametrizes the arrangements of $N+1$ points, denoted by $q_1\leq q_2 \leq \dots\leq q_{N+1}$ in $\R$ which satisfy the following property for each $0\leq j \leq k$:
	\[
	  q_{M_j+1}=q_{M_j+2}=\dots=q_{M_j+i_j}
	\]
	where $M_j=i_0+i_1+\dots+i_{j-1}$. As before, two arrangements are equivalent to each other if one of them is mapped to
	the other one by an affine map.
	Let $j_0$ be the smallest non-negative integer number that:
	\[
	  i_0+i_1+\dots+i_{j_0}>N-l.
	\]
	
	The face $\Delta'$ determines a cut of $X_N(l)$ with at most $k+1$ connected components.
	The connected components
	of $X_N(l)$ after removing this cut can be written as the union of a 4-manifold $W$ and $Z$.
	(See Figure \ref{XN(l)-decom}.)
	The 4-manifold $W$ is connected and has $S^1\times S^2$ as one of its boundary components.
	The 4-manifold $Z$
	is the union of the remaining connected components.
	In particular, all of its boundary components are rational homology sphere.
	The boundary of $W$ is equal to:
	\[
	  \partial W=\left ( \coprod_{j\neq j_0} -L(i_j,1) \right)\sqcup L(N+1-i_j)\sqcup S^1\times S^2
	\]
	Gluing $W$ to $Z$ along the boundary components of $W$, which are non-trivial lens spaces, produces $X_N(l)$.
	\begin{figure}
    		\centering
	        	
\begin{tikzpicture}
\node at (3,2) {$W$};
\draw[fill] (0,0) circle (3pt);
\draw[fill] (1,0) circle (3pt);
\draw[fill] (2,0) circle (3pt);
\draw[fill] (3,0) circle (3pt);
\draw[fill] (4,0) circle (3pt);
\draw[thick] (5,0) circle (3pt);
\draw[fill] (6,0) circle (3pt);
\draw[fill] (7,0) circle (3pt);
\draw[fill] (8,0) circle (3pt);
\draw[fill] (9,0) circle (3pt);
\draw[thick] (4.5,0) ellipse (6cm and 3cm);
\draw [middlearrow={latex},thick,red]  (5,2.8pt)--  (5,3);  \node[right] at (5,1.3) {$\mathfrak{c}$};
\draw[middlearrow={latex},thick,blue]  (2,0.1) to [out=45,in=135] (4.9,0.05); \node[above] at (3.7,0.6) {$e_3$};
\draw[middlearrow={latex},thick,blue]  (3,-0.1) to [out=-45,in=-135] (4.9,-0.05);\node[below] at (3.7,-0.45) {$e_4$};

\draw[thick] (1.5,0) ellipse (1.8cm and 1cm);
\draw[thick] (5,0) ellipse (1.35cm and 0.7cm);
\draw[thick] (8.5,0) ellipse (1cm and 0.7cm);

\draw[middlearrow={latex},thick,orange]  (2,-0.97) to [out=-45,in=-135] (4.9,-0.7); \node[below] at (4,-1.3) {$\widetilde{f_1}$};
\draw[middlearrow={latex},thick,orange]  (7,-0.1) to [out=-110,in=-45] (5.5,-0.65);  \node[below] at (6.7,-0.6) {$\widetilde{f_3}$};

\end{tikzpicture}
    		\caption{This schematic figure shows a decomposition of $X_{9}(4)$ associated to an element of a face
		of dimension $3$ in $\Delta_{8}$. In this example, $i_0=4$, $i_1=3$, $i_2=1$ and $i_3=2$ .
		The outer region sketches the 4-manifolds $W$. Two instances
		of cohomology classes $e_i$, two instances of cohomology classes $\widetilde f_i$ and the cylinder $\fc$
		are also sketched in this figure.}
	    	\label{XN(l)-decom}
	\end{figure}
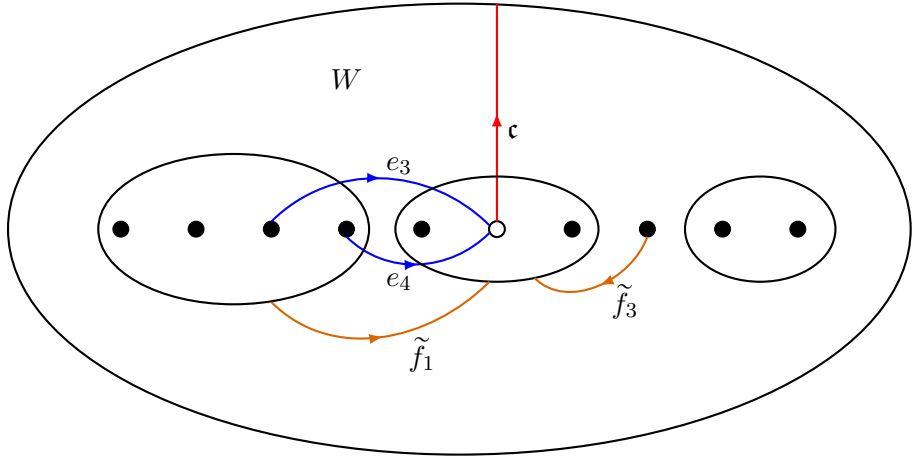

	Suppose $f_i$ is the restriction of the cohomology class
	$e_i \in H^2(X_N(l),\Z)$ to $W$. Then for each
	$0\leq j \leq k$:
	\begin{equation} \label{identity-cohomology}
	  f_{M_j+1}=f_{M+2}=\dots=f_{M_j+i_j}
	\end{equation}
	where $M_j=i_0+i_1+\dots+i_{j-1}$. We denote the common value of these cohomology classes by $\widetilde f_j$.
	These cohomology classes satisfy the following
	relations:
	\begin{equation} \label{relations-fj}
	  \widetilde f_{j_0}=0\hspace{2cm}i_0\widetilde f_0+i_1\widetilde f_1+\dots+i_k\widetilde f_k=0.
	\end{equation}
	The second identity holds because the sum of the cohomology classes $e_i$ vanishes.
	
	Each element of $\Delta'$ determines a metric with cylindrical ends on $W$.
	In particular, we can consider the ASD connection $A_j$ on the Hermitian line bundle $L_j$
	with $c_1(L_j)=\widetilde f_j$. For each $g \in \Delta'$:
	\[
	 a_j(g)=\int_{W\cap \fc} F(A_j)
	\]
	We shall show that each component of $\overline {\hol}_{\sigma,\tau}$ can be expressed affinely in terms of $a_i$'s with universal constants.
	The relationships in \eqref{relations-fj} gives rise to similar relations for $a_j(g)$.
	In light of that, we can conclude that $\overline {\hol}_{\sigma,\tau}(\Delta')$ lives in a
	$(k-1)$-dimensional affine subspace of $\ft$.
	
	In order to prove the above claim, it suffices to show that:
	\[
	  \int_\fc F(B^j_{\sigma,\tau}(g))=\int_{\fc \cap W}F(B^j_{\sigma,\tau}(g))+\int_{\fc \cap Z}F(B^j_{\sigma,\tau}(g))
	\]
	can be written affinely in terms of $a_i$'s.
	Clearly, the first term in the right hand side of the above identity is a linear expression in terms of $a_i$'s
	because the first Chern class of the
	carrying line bundle of $B^j_{\sigma,\tau}(g)|_{\fc\cap W}$ is a linear combination of the classes $\widetilde f_j$.
	By Lemma \ref{pairing-curvature-surface}, the second term in the right hand side of the above identity is also independent of the connection $B^j_{\sigma,\tau}(g)|_{\fc\cap Z}$
	and is determined by the topology of the corresponding line bundle. The isomorphism class of this line bundle is determined by $(\sigma,\tau)$ and is independent of $g$.
	(Note that the connection $B^j_{\sigma,\tau}(g)|_{\fc\cap Z}$ is not
	even well-defined because $g\in \Delta'$ does not fix any metric on $Z$.)
\end{proof}

The method of the proof of the above proposition can be used to determine explicitly the subspace containing the image of each face of $\Delta_{N-1}$. In particular, the image of each vertex can be characterized explicitly. This task is addressed in the next proposition. We denote the vertices of $\Delta_{N-1}$ with $u_k$ for $0\leq k \leq N-1$. Identifying $\Delta_{N-1}$ with the weak compactification of $\mathcal P_{N+1}$, we assume that the vertex $u_k$ corresponds to the arrangement of $(N+1)$ points $q_1$, $\dots$, $q_{N+1}$ such that:
\[
  q_1=\dots=q_{N-k}\hspace{1cm}q_{N-k+1}=\dots=q_{N+1}.
\]
As in the proof of the above proposition, each vertex also determines a decomposition of $X_N(l)$ to two 4-manifolds
$W$, $Z$ and a metric on $W$.

\begin{prop} \label{red-hol-ver}
	Suppose $(\sigma,\tau)$ is a bi-permutation of type $l$.
	Then:
	\begin{equation} \label{hol-sigma-tau-vert}
	  \overline \hol_{\sigma,\tau}(u_k)=\frac{1}{N-k}(\overline {\lambda_{\sigma(0)}}+\overline {\lambda_{\sigma(1)}}
	  +\dots+\overline {\lambda_{\sigma(N-k-1)}})
	\end{equation}
	and:
	\begin{equation} \label{hol-sigma-tau-vert-2}	
  	  \overline \hol_{\sigma,\tau}(u_{k'})=\frac{1}{k'+1}(\overline {\lambda_{\tau(l-1-k')}}+\overline {\lambda_{\tau(l-k')}}
	  +\dots+\overline {\lambda_{\tau(l-1)}})
	\end{equation}
	where $0\leq k' < l\leq k \leq N-1$.
\end{prop}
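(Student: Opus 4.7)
The plan is to compute the limiting holonomy at each vertex $u_k$ of $\Delta_{N-1}$ by using the broken-metric decomposition of $X_N(l)$ at $u_k$, then identifying the $\fc$-flux with topological intersection numbers via Lemma \ref{pairing-curvature-surface}. I will focus on Case 1 ($l \leq k \leq N-1$); Case 2 is treated by a symmetric argument.

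At $u_k$, the first $N-k$ points merge into one cluster and the remaining $k+1$ into another. Since $k \geq l$, the first cluster lies entirely in the $\sigma$-region and does not contain the special position $N-l+1$; the splitting sphere $S^l_{l+N-k}$ bounds a Gibbons-Hawking manifold $Z \cong \fX_{(l-1,l,\ldots,l+N-k-1)}$ with outer boundary $L(N-k,1)$, and the complementary piece $W$ contains $S^1 \times S^2$, $L(N,1)$, and the remaining $S^3$-boundaries. Both endpoints of $\fc$ lie in $W$, so I may take $\fc \subset W$, giving $\int_\fc F(B^j_{\sigma,\tau}) = \int_\fc F(B^j_{\sigma,\tau}|_W)$. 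By Lemma \ref{sharp-bv} and Proposition \ref{bi-per-con}, the $j$-th line-bundle summand $L_j$ has $c_1(L_j) = \sum_m i^m_j e_m$ with $i^m_j \in \{0,1\}$ determined by $(\sigma,\tau)$; restricting to $L(N-k,1)$, which is crossed only by the classes $e_m$ with $m \leq N-k$, the limiting flat connection of $B^j_{\sigma,\tau}|_Z$ on the outer end of $Z$ is $\zeta_{N-k}^{t_j}$ with $t_j = |\{i \in [N-k] : \sigma(i) \geq j\}|$.

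To extract the $S^1\times S^2$-holonomy, I would extend $\fc$ to a properly embedded surface $\widetilde{\fc}$ in $X_N(l)$ by attaching inside $Z$ a cylindrical cocore joining the splitting sphere to one of the $S^3$-punctures. The flux $\int_{\widetilde{\fc}} F(B^j_{\sigma,\tau})$ decomposes into the $W$-integral $\int_\fc F(B^j_{\sigma,\tau}|_W)$ plus a $Z$-contribution; the latter is evaluated by Lemma \ref{pairing-curvature-surface} since every boundary component of $Z$ is a rational homology sphere, while the total flux equals the topological intersection with $c_1(L_j)$ up to the correction $(j-1)/N$ coming from the $L(N,1)$-end. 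Solving for $\theta_j(u_k)$, normalizing to $\overline\Theta_{\sigma,\tau}(u_k)$, and comparing coefficient-wise with the expansion $(\overline\lambda_p)_m = -p/N + [p \geq N-m+1]$ yields \eqref{hol-sigma-tau-vert}. In Case 2 the splitting sphere instead encloses the $\tau$-region together with $L(N,1)$, so $\fc$ necessarily crosses it and the roles of $W$ and $Z$ partly interchange; the same flux-extension argument then produces \eqref{hol-sigma-tau-vert-2}.

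The main obstacle is the orientation bookkeeping across the splitting sphere: aligning the fiber-orientation conventions used for $\fc$ and for the classes $e_m$ with the orientations of $L(N-k,1)$ as boundary of $W$ versus $Z$, and passing correctly between the mod-$(N-k)$ holonomy on the splitting lens space and its lift in $\mathfrak{t}$. A technical simplification will be to work, following Remark \ref{comp-red-barXN}, on the closed-up manifold $\overline X_N$, where the single relation $e_1+\cdots+e_N=0$ keeps the pairing computations tidy. Once the conventions are fixed, the averaged form on the right-hand side emerges because the index $i \in [N-k]$ runs precisely over the line bundles ``captured'' by the Gibbons-Hawking piece, and $(\overline\lambda_{\sigma(i)})_m$ encodes their mod-$N$ holonomy weights.
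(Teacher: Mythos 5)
Your choice of splitting sphere is where the argument breaks down. In the paper's decomposition at the vertex $u_k$ (the one set up in the proof of Proposition \ref{hol-maps-k-space-to-kspace} and pictured in Figure \ref{XN(l)-vertex-decom}), the piece $W$ containing $S^1\times S^2$ contains \emph{no} punctures: both clusters, including the one containing $L(N,1)$, lie inside $Z=Z_0\sqcup Z_1$. This is precisely what makes the computation go: $c_1(L_j)|_W=0$, so $B^j_{\sigma,\tau}(u_k)|_W$ is flat and $\int_{\fc\cap W}F=0$; meanwhile $\fc\cap Z_1$ is properly embedded in a manifold all of whose boundary components are rational homology spheres, so Lemma \ref{pairing-curvature-surface} turns $\int_{\fc\cap Z_1}F(B^j)-\int_{\fc\cap Z_1}F(B^{j-1})$ into the intersection number $\int_{Z_1}e_{i_j}\cup\PD(\fc)$, which is then evaluated from Lemma \ref{int-num}. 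You instead cut only around the first cluster, so that your $W$ still contains $L(N,1)$, the second cluster of punctures, and all of $\fc$. On that piece $L_j$ has nontrivial first Chern class, the connection is not flat, and $\int_{\fc}F(B^j_{\sigma,\tau}|_W)$ is not a topological quantity.

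The proposed repair --- extending $\fc$ to $\widetilde\fc$ and asserting that ``the total flux equals the topological intersection with $c_1(L_j)$ up to the correction $(j-1)/N$ from the $L(N,1)$-end'' --- is circular. A surface with a boundary component on the $S^1\times S^2$ end (where $\partial\fc$ restricts to a generator of $H_1$, not a torsion class, so Lemma \ref{pairing-curvature-surface} does not apply) has flux equal to a topological pairing only after subtracting the holonomy correction at that end, and that correction is exactly the unknown $\theta_j(u_k)$ of \eqref{thetaj} that you are trying to compute. (Moreover, since your $\fc$ is disjoint from your $Z$, it is unclear what ``attaching a cocore inside $Z$'' to $\fc$ produces.) The correct route is to let $\fc$ cross into the rational-homology-sphere component $Z_1$ containing $L(N,1)$, kill the $W$-contribution by flatness, and carry out the intersection-theoretic bookkeeping of \eqref{second-pairing}. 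Your closing observation --- that the $j$-th entry counts which classes $e_m$ with $m\le N-k$ appear in $c_1(L_j)$ --- is indeed the right endpoint, but the path you describe does not reach it; the same objection applies to the deferred Case 2.
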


\begin{proof}
	The image of the vertex $u_k$ with respect to the map $\overline \hol_{\sigma,\tau}$ is determined by the following expression:
	\[
	  \frac{1}{2\pi \bi}\int_\fc F(B^j_{\sigma,\tau}(u_k))=
	  \frac{1}{2\pi \bi} \int_{\fc \cap W}F(B^j_{\sigma,\tau}(u_k))
	  +\frac{1}{2\pi \bi} \int_{\fc \cap Z}F(B^j_{\sigma,\tau}(u_k))
	\]
	The number $j_0$, defined in the proof of Proposition \ref{hol-maps-k-space-to-kspace} is equal to $1$ or $0$
	depending on whether $k\geq l$ or $k< l$. The first integral in the left hand side of the above expression
	vanishes because the carrying bundle of $B^j_{\sigma,\tau}(u_k)$ has vanishing $c_1$ in $W$,
	and hence $B^j_{\sigma,\tau}(g)$ is flat. In order to compute the second integral,
	we shall compute the difference
	$\int_{\fc\cap Z}F(B^j_{\sigma,\tau}(u_k))-\int_{\fc\cap Z}F(B^{j-1}_{\sigma,\tau}(u_k))$.
	By Lemma \ref{pairing-curvature-surface}, this difference is equal to the paring of the restriction of the cohomology
	classes $e_{i_j}$ and $\PD(\fc)$ to $Z$ where:
	\[
	  i_j=\left\{
	  \begin{array}{ll}
	  	\sigma^{-1}(N-j+1)+1 &N-j+1 \in {\rm image}(\sigma)\\
		\tau^{-1}(N-j+1)+N-l+1 &N-j+1 \in {\rm image}(\tau)
  	  \end{array}	
	  \right.
	\]
	Assume that $k\geq l$, $N-k\geq i_j$ as in Figure \ref{XN(l)-vertex-decom}.
	Let also $Z_1$ be the connected component
	of $Z$ which has a the lens space $L(N,1)$ as one of its connected components.
	Then we can write:
	\begin{align*}
		\int_Z e_{i_j}\cup \PD(\fc)&=\int_{Z_1} e_{i_j}\cup \PD(\fc)\\
		&=-\int_{Z_1} e_{i_j}\cup e_{i_j}\\
		&=-\int_{X_N(l)} e_{i_j}\cup e_{i_j}+\int_{X_N(l)\backslash Z_1} e_{i_j}\cup e_{i_j}\\
		&=(-\frac{1}{N}+1)-(-\frac{1}{N-k}+1)
	\end{align*}
	Here $\int_X\alpha$, for an oriented 4-manifold $X$ and $\alpha\in H^4(X,\partial X,\Q)$,
	denotes the pairing of $\alpha$ and the generator of $H_4(X,\partial X,\Q)$. The second equality above holds
	because the restriction of $\PD(\fc)$ and $-e_{i_j}$ to $Z_1$ are equal to each other.
	The last equality is the consequence of applying Lemma \ref{int-num}. Similar arguments
	can be used to show that:
	\begin{equation}\label{second-pairing}
	   \int_{\fc\cap Z}F(B^j_{\sigma,\tau}(u_k))-\int_{\fc\cap Z}F(B^{j-1}_{\sigma,\tau}(u_k))=\left\{
	   \begin{array}{ll}
		\frac{1}{N-k}-\frac{1}{N} &k\geq l,\, N-k\geq i_j\\
		-\frac{1}{N} &k\geq l,\, N-k< i_j\\
		-\frac{1}{N} &k< l,\, N-k>i_j\\
		\frac{1}{k+1}-\frac{1}{N} &k< l,\, N-k\leq i_j	
	   \end{array}
	   \right.
	\end{equation}

	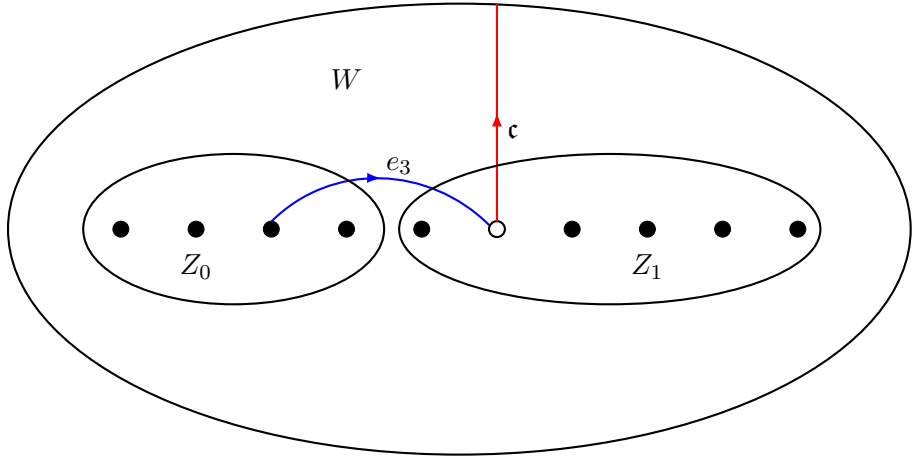
\begin{figure}
    		\centering
	    		\begin{tikzpicture}
\node at (3,2) {$W$};
\draw[fill] (0,0) circle (3pt);
\draw[fill] (1,0) circle (3pt);
\draw[fill] (2,0) circle (3pt);
\draw[fill] (3,0) circle (3pt);
\draw[fill] (4,0) circle (3pt);
\draw[thick] (5,0) circle (3pt);
\draw[fill] (6,0) circle (3pt);
\draw[fill] (7,0) circle (3pt);
\draw[fill] (8,0) circle (3pt);
\draw[fill] (9,0) circle (3pt);
\draw[thick] (4.5,0) ellipse (6cm and 3cm);
\draw [middlearrow={latex},thick,red]  (5,2.8pt)--  (5,3);  \node[right] at (5,1.3) {$\mathfrak{c}$};
\draw[middlearrow={latex},thick,blue]  (2,0.1) to [out=45,in=135] (4.9,0.05); \node[above] at (3.7,0.6) {$e_3$};

\draw[thick] (1.5,0) ellipse (2cm and 1cm);\node[below] at (1,-0.2) {$Z_0$};
\draw[thick] (6.5,0) ellipse (2.8cm and 1cm);\node[below] at (7,-0.2) {$Z_1$};

\end{tikzpicture}
    		\caption{Decomposition of $X_9(4)$ associated to the vertex $u_5$ of $\Delta_{8}$.}
	    	\label{XN(l)-vertex-decom}
	\end{figure}
	Next, for each vertex $u_k$ of $\Delta_{N-1}$, we compute the vector
	$\Theta_{\sigma,\tau}(u_k)$ associated to the bi-permutation $(\sigma,\tau)$, which is defined in \eqref{thetaj} .
	Firstly let $k\geq l$. The first entry of this vector is equal to zero.
	The first case of \eqref{second-pairing} shows that if
	$i_j\leq N-k$, then $j^{\rm th}$ entry of $\Theta_{\sigma,\tau}(u_k)$ is obtained by
	adding $\frac{1}{N-k}$ to the $(j-1)^{\rm st}$ entry.
	Otherwise, these two entries are equal to each other by the second case of \eqref{second-pairing}.
	Therefore, for $k\geq l$:
	\begin{equation} \label{Theta-sigma-tau-k<=l}
	  \Theta_{\sigma,\tau}(u_k)=\frac{1}{N-k}({\lambda_{\sigma(0)}}+ {\lambda_{\sigma(1)}}+\dots
	  +{\lambda_{\sigma(N-k-1)}})
	\end{equation}
	Similarly, for $k'< l$, we can show that:
	\begin{equation} \label{Theta-sigma-tau-k>l}
	  \Theta_{\sigma,\tau}(u_{k'})=\frac{1}{{k'}+1}({\lambda_{\tau(l-1-k')}}+ {\lambda_{\tau(l-k')}}+
	  \dots+{\lambda_{\tau(l-1)}})
	\end{equation}
	An immediate consequence of \eqref{Theta-sigma-tau-k<=l} and \eqref{Theta-sigma-tau-k>l} is the claim in \eqref{hol-sigma-tau-vert}.
\end{proof}

In the next part, we describe some standard simplicial decompositions of $\Delta_{N-1}$, called \emph{bi-barycentric subdivisions}. Such decompositions are relevant to our discussion because the reduced holonomy maps $\overline \hol_{\sigma,\tau}$, which are initially defined for each bi-permutation separately, glue together in a natural way according to bi-barycentric subdivisions.

\subsubsection*{Bi-barycentric Subdivision and Holonomy Maps}

Suppose $\Delta_{N-1}$ is an arbitrary $(N-1)$-dimensional simplex in a Euclidean spaces whose vertices are denoted by $v_1$, $\dots$, $v_N$. Then any point in this simplex has the form of $r_1v_1+r_2v_2+\dots+r_Nv_N$ with:
\begin{equation}\label{simp-rel}
 	r_1+\dots+r_N=1\hspace{1cm}r_i\geq 0
\end{equation}
This coordinate system identifies the simplex with the standard simplex which consists of the points $(r_1,\dots,r_N)\in \R^N$ that satisfy \eqref{simp-rel}. The barycentric subdivision of $\Delta_{N-1}$ is obtained by cutting $\Delta_{N-1}$ with all planes $r_i = r_j$.  This subdivision has cells $\Delta_{N-1}^{f}$, indexed by permutations $f \in S_N$, which are cut out by a series of inequalities:
\[
  r_{f(1)} \geq r_{f(2)} \geq \cdots \geq r_{f(N)}.
\]
It is clear that every such cell is a simplex. The vertices of this simplex are given by:
\[
  \hspace{2.5cm}\frac{1}{k}(v_{f(1)}+\dots+v_{f(k)})\hspace{1cm}1\leq k \leq N
\]

The barycentric subdivision of a simplex also has an interpretation in terms of the moduli spaces of points on a real line. The simplex $\Delta_{N-1}$  can be identified with the space of all $(N+1)$-tuple of points as follows:
\begin{equation} \label{DeltaN-1-points}
  q_0,q_1,\dots,q_N\in \R,\hspace{1cm} q_0\leq q_i
\end{equation}
where at least one of the above inequalities is strict and two arrangements are equivalent to each other if they are related by an affine transformation.
This identification is given by:
\[
  r_i=\frac{q_i-q_0}{\sum_{i=1}^{N} (q_i-q_0)}
\]
Each permutation $f\in S_N$ determines an ordering as follows:
\[
  q_0 \leq q_{f(1)}\leq \dots \leq q_{f(N)}
\]
The moduli space of points satisfying the above ordering forms one of the cells in the barycentric subdivision of $\Delta_{N-1}$. Therefore, any such cell can be identified with the week compactification of $\mathcal P_{N+1}$ in a natural way.

For any $0\leq l \leq N$, the {\it bi-barycentric subdivision of type} $l$ of a simplex $\Delta_{N-1}$ is defined in a similar way. Label the vertices of $\Delta_{N-1}$ as $u_1$, $\dots$, $u_{N-l}$, $u_1'$, $\dots$, $u_l'$ and let $(r_1,\dots,r_{N-l},s_1,\dots,s_l)\in \R^N$ denote the coordinate of a typical point of $\Delta_{N-1}$. Then the bi-barycentric subdivision has cells $\Delta^{(f,g)}_{N-1}$ for each $(f,g)\in S_{N-l}\times S_l$, which are cut out by two series of inequalities:
\[ r_{f(1)} \geq \cdots \geq r_{f(N-l)}\hspace{1cm}s_{g(1)} \geq \cdots \geq s_{g(l)}  \]
Again, it is clear that every such cell is a simplex.  In fact, $\Delta^{(f,g)}_{N-1}$ is the convex hull of the simplices $\Delta_{N-l-1}^{f} \times 0$ and $0 \times \Delta_{l-1}^{g}$. In particular, its vertices are given by:
\[
  \hspace{2.5cm}\frac{1}{k_1}(u_{f(1)}+\dots+u_{f(k_1)})\hspace{1cm}\frac{1}{k_2}(u_{g(1)}'+\dots+u_{g(k_2)}')\hspace{1cm}1\leq k_1 \leq N-l,\,1\leq k_2 \leq l
\]
As long as both $l$ and $N-l$ are nonzero, it follows that the bi-barycentric subdivision of type $l$ is the \emph{join} of the barycentric subdivisions of $\Delta_{N-l-1}$ and $\Delta_{l-1}$.  When $l$ or $N-l$ is equal to $0$, the bi-barycentric subdivision of type $l$ is just the ordinary barycentric subdivision of $\Delta_{N-1}$. In the case $N=3$, the two subdivisions of $\Delta_{N-1}$ is demonstrated in Figure \ref{bi-barycentric-3}.
\begin{figure}
    	\centering

\begin{tikzpicture}
\draw[thick] (-3,0)--(3,0);
\draw[thick] (-3,0)--(0,4);
\draw[thick] (3,0)--(0,4);
\draw[thick,red] (0,0)--(0,4);

\begin{scope}[shift={(8,0)}]
\draw[thick] (-3,0)--(3,0);
\draw[thick] (-3,0)--(0,4);
\draw[thick] (3,0)--(0,4);
\draw[thick,red] (0,0)--(0,4);

\draw[thick,red] (-1.5,2)--(3,0);
\draw[thick,red] (1.5,2)--(-3,0);
 \end{scope}

\end{tikzpicture}
    	\caption{Two bi-barycentric subdivisions of $\Delta_2$}
    	\label{bi-barycentric-3}
\end{figure}
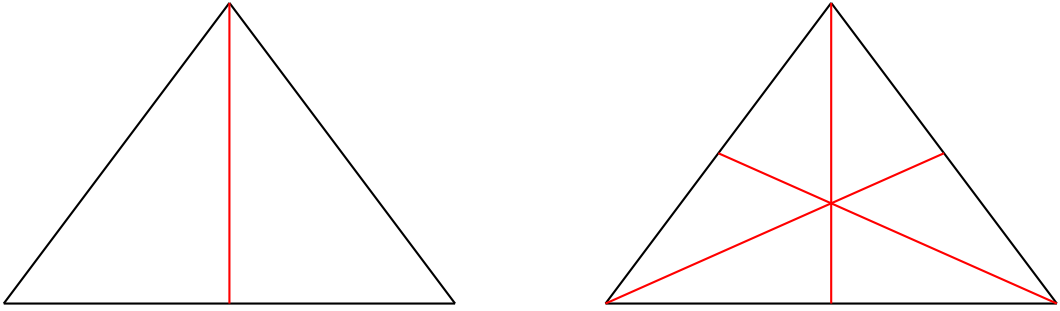

We can give a description of bi-barycentric subdivision in terms of moduli spaces of points. Suppose $\Delta_{N-1}$ is identified with the space of all arrangements of $N+1$ points  $o_1$, $\dots$, $o_{N-l}$, $p$, $q_1$, $\dots$, $q_l$ in $\R$ such that:
\begin{equation}\label{ineq-modui-points}
 o_1,\dots, o_{N-l}\leq p \leq l_{1},\dots, q_{l}
\end{equation}
and not all the above points are equal to each other. Moreover, two arrangements are equivalent to each other if they are related by an affine transformation. This identification is given by:
\[
  r_i=\frac{p-o_i}{\sum_{i=1}^{N-l} (p-o_i)+\sum_{i=1}^{l} (q_i-p)}\hspace{1cm}
  s_j=\frac{q_j-p}{\sum_{i=1}^{N-l} (p-o_i)+\sum_{i=1}^{l} (q_i-p)}
\]
Any pair of permutations $(f,g) \in S_{N-l}\times S_{l}$ determines an ordering of the points as follows:
\begin{equation}\label{ineq-modui-points-ci-bary-cent-cell}
  o_{f(1)}\leq \dots \leq o_{f(N-l)}\leq p \leq q_{g(1)}\leq \dots \leq q_{g(l)}
\end{equation}
The moduli space of points satisfying the above ordering forms one of the cells in the bi-barycentric subdivision of type $l$ of $\Delta_{N-1}$ which we denote by $\Delta^{f,g}_{N-1}$.

Suppose $(\sigma_0,\tau_0)$ is a bi-permutation of type $l$ as in \eqref{sigma-tau} such that their images are given by the subsets $S$ and $[N]\setminus S$ of $[N]$.
Identify the simplex $\Delta_{N-1}$ with the moduli points as in \eqref{ineq-modui-points}. Then any cell of the bi-barycentric subdivision of type $l$ of $\Delta_{N-1}$ is determined by a pair of permutations $(f,g)\in S_{N-l} \times S_l$. Moreover, we can associate a bi-permutation $(\sigma_f,\tau_g)$ of type $l$ to $(f,g)$ as in \eqref{sigmaf-taug} such that images of $\sigma_f$ and $\tau_g$ are also equal to $S$ and $[N]\setminus S$. Identify $\Delta_{N-1}$ with the arrangements of points as in \eqref{ineq-modui-points}. If a point $\bx=(o_1,\dots,o_l,p,q_1,\dots,q_m)$ of this simplex satisfies the inequalities in \eqref{ineq-modui-points-ci-bary-cent-cell}, then define:
\begin{equation} \label{hol-ST}
  \overline \hol_{S}(\bx):=\overline \hol_{\sigma_f,\tau_g}(\bx)
\end{equation}

\begin{prop}
	The map $\overline {\hol}_{S}$ is a well-defined map from $\Delta_{N-1}$ to $\ft$
\end{prop}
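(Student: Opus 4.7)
The map $\overline{\hol}_S$ is defined cell-by-cell over the bi-barycentric subdivision of type $l$, and well-definedness amounts to verifying agreement on every shared codimension-one face. Two cells $\Delta^{f,g}_{N-1}$ and $\Delta^{f',g'}_{N-1}$ are adjacent exactly when $(f,g)$ and $(f',g')$ differ by a single adjacent transposition in either the $f$- or the $g$-component. By the symmetry between $o$-type and $q$-type directions, I will only treat the first case: $g = g'$ and $f, f'$ interchange the values at positions $i$ and $i+1$. The shared face then consists of configurations in $\Delta_{N-1}$ satisfying the additional relation $o_{f(i)} = o_{f(i+1)}$.

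The strategy is to analyze the situation via the forgetful factorization of Proposition \ref{hol-fac-forg}, lifting points of the shared face back to $\mathcal{K}_{N+1}$. The preimage of the shared face under $\mathfrak{F}$ includes broken metrics on $X_N(l)$ in which a small cut $Y$ separates a Gibbons--Hawking piece $W$ containing the two merging sources from the remaining piece $Z$ (which has only rational-homology-sphere boundary components). In this broken picture, the two bi-permutations $(\sigma_f, \tau_g)$ and $(\sigma_{f'}, \tau_g)$ produce completely reducible connections whose underlying $\U(N)$-bundles differ only in the line-bundle data supported on $W$; in fact these two bundle structures are intertwined by the $\mathbb{Z}/2$-symmetry of $W$ exchanging the two merging source points in the base $\mathbb{R}^3$.

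To make this precise I would decompose the curvature integral defining each component of $\widetilde{\hol}$, namely $\theta_j = \tfrac{2\pi \bi (j-1)}{N}+\int_\fc F(B^j_{\sigma,\tau})$, as the sum of integrals over $\fc\cap W$ and $\fc\cap Z$, following the scheme of Proposition \ref{hol-maps-k-space-to-kspace}. By Lemma \ref{pairing-curvature-surface}, the contribution from $Z$ is purely topological and depends only on the cohomology classes of the summand line bundles restricted to $Z$. Since the difference between $\sigma_f$ and $\sigma_{f'}$ is concentrated in the line-bundle data supported on $W$, the restricted classes on $Z$ are unaffected, so the $Z$-contributions for the two bi-permutations agree component-by-component. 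On $W$, the $\mathbb Z/2$-symmetry exchanging the two merging sources in the Gibbons--Hawking model is an isometry of the limiting metric, and it intertwines the two completely reducible connections after reindexing summands by their asymptotic behavior on the lens-space boundary cut $Y$; consequently the curvature integrals along $\fc\cap W$ are exchanged in a way that leaves their sum of contributions to the $S^1\times S^2$ asymptotic invariant in $\ft$.

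The main obstacle will be tracking how the labeling of summands by their asymptotic behavior on $L(N,1)$ transforms under the swap of $\sigma$-values, and verifying that the $\mathbb Z/2$-symmetry of the Gibbons--Hawking piece $W$ acts compatibly with this labeling so that the resulting $\ft$-valued holonomy is genuinely invariant and not merely Weyl-equivalent. Once this compatibility is nailed down, agreement of the two holonomies on the shared boundary face follows, and iterating over all pairs of adjacent cells yields the well-definedness of $\overline{\hol}_S$. The remaining case with $f=f'$ and $g, g'$ differing is entirely analogous, with the Gibbons--Hawking piece now containing two merging $q$-type sources and $\sigma_f$ replaced by $\tau_g$.
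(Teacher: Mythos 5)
Your reduction to pairs of cells sharing a codimension-one face is a reasonable organizational device, but the decomposition you then choose breaks the argument. You isolate the two merging points in a small Gibbons--Hawking piece $W$ and assert that the complementary piece $Z$ ``has only rational-homology-sphere boundary components,'' so that Lemma \ref{pairing-curvature-surface} makes $\int_{\fc\cap Z}F$ purely topological. That assertion is false: a small neighborhood of two interior monopole points cannot contain the outer boundary of $X_N(l)$, so $Z$ necessarily has $S^1\times S^2$ among its boundary components, the hypothesis of Lemma \ref{pairing-curvature-surface} fails for $Z$, and the step that was supposed to dispose of the bulk of the curvature integral collapses. (It cannot be patched by ``same metric and same restricted $c_1$ on $Z$ implies same connection on $Z$'' either, since with an $S^1\times S^2$ boundary the abelian ASD connection on $Z$ is no longer pinned down by its Chern class alone.) The paper cuts the other way: its $W$ is the piece containing $S^1\times S^2$, on which the metric \emph{and} the $2$-cycle literally coincide for the two cells, so the curvature integrals over $\fc\cap W$ agree for free; its $Z$ contains \emph{both} the merging points and $L(N,1)$ and has only lens-space boundary, so Lemma \ref{pairing-curvature-surface} applies there, with the two $2$-cycles matched by a diffeomorphism of $Z$ fixing $[\fc\cap Z]$ and the boundary.

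Your second step is also incomplete by your own admission: the compatibility of the $\Z/2$ symmetry of the small GH piece with the labeling of summands by their $L(N,1)$-asymptotics is flagged as ``the main obstacle'' and left open, and in any case the quantity that must be preserved is each component $\theta_j$ of \eqref{thetaj} separately, not a ``sum of contributions.'' In fact the labeling worry is a red herring --- swapping two columns of the matrix \eqref{matrix} does not change the row sums $[v_j]_+$, so no reindexing of summands occurs --- and since the path defining $\fc$ need not pass through the small GH piece at all, essentially the entire weight of the argument falls on the piece you call $Z$, which is exactly where your argument is broken. To complete a proof along these lines you would have to either further decompose your $Z$ or simply adopt the paper's decomposition.
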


\begin{proof}
	We need to show that the maps $\overline{\hol}_{\sigma_f,\tau_g}$ agree on the overlaps of the simplices
	$\Delta_{N-1}^{f,g}$. An $(N+1)$-tuple $\bx=(o_1,\dots,o_{N-l},p,q_1,\dots,q_l)$ belongs to more
	than one cell $\Delta_{N-1}^{f,g}$, if some of its entries are equal to each other.
	For simplicity, we assume that these numbers satisfy the following order:
	\begin{equation}\label{arrangment-triv}
	  o_{1}\leq \dots \leq o_{N-l}\leq p \leq q_{1}\leq \dots \leq q_{l}
	\end{equation}
	Then $(o_1,\dots,o_l,p,q_1,\dots,q_m)$ belongs to $\Delta_{N-1}^{id,id}$.
	We will write $\bx_0$ for this element of $\Delta_{N-1}^{id,id}$.
	If some of the inequalities in \eqref{arrangment-triv} are equalities, then $(o_1,\dots,o_{N-l},p,q_1,\dots,q_l)$
	determines elements of other cells of the bi-barycentric subdivision of $\Delta_{N-1}$. To avoid cumbersome
	notations, we consider a special case which includes the subtleties of the general case.
	We assume that $p=q_1=q_2$. Then $(o_1,\dots,o_{N-l},p,q_1,\dots,q_l)$ gives rise to an element $\bx_1$ of
	$\Delta_{N-1}^{id,g}$ where $g$ is the permutation of the set $[l]$ that interchanges $1$ and $2$. The assocaited
	bi-permutations to $\bx_0$ and $\bx_1$ are equal to $(\sigma_0,\tau_0)$ and $(\sigma_0,\tau_g)$.
	
	Analogous to the proof of Proposition \ref{hol-fac-forg}, the arrangements $\bx_0$ and $\bx_1$ determine
	a decomposition of $X_N(l)$ as $W\#_Y Z$ where $Y=L(N-2,1)$,
	the non-trivial boundary components of $W$ are $S^1 \times S^2$ and $L(N-2,1)$, and
	the non-trivial boundary components of $Z$ are $-L(N-2,1)$ and $L(N,1)$. The elements $\bx_0$ and $\bx_1$
	also give rise to the same metric $g$ with cylindrical ends on $W$. The difference between $\bx_0$ and $\bx_1$
	is that they determine different 2-cycles in $X_N(l)$. Nevertheless, the restrictions of these 2-cycles to $W$
	agree with each other. Furthermore, there is a diffeomorphism of $Z$ which maps
	the cohomology classes of the 2-cycles determined by $\bx_0$ and $\bx_1$  to each other,
	maps the homology class of the cylinder $\fc \cap Z$ to itself, and is equal to the identity map on the boundary.
	Thus, we can argue as
	in the proof of Proposition \ref{hol-fac-forg} to show that
	$\overline{\hol}_{\sigma_0,\tau_0}(\bx_0)=\overline{\hol}_{\sigma_0,\tau_g}(\bx_1)$.	
\end{proof}

We can compose the map $\overline \hol_{S}$ with the exponential map to obtain a map whose target is the $(N-1)$-dimensional torus $T$ in $\SU(N)$. We compose the resulting map with the quotient map from $T$ to $T/W\cong \Delta_{N-1}^\ft$ to obtain $\hol_{S}:\Delta_{N-1}\to \Delta_{N-1}^\ft$. We shall show that $\hol_{S}$ has degree one in an appropriate sense.

\begin{prop}\label{deg-one}
	There is a map $H_{S}:[0,1]\times \Delta_{N-1} \to \Delta_{N-1}^\ft$ such that
	$H_{S}(0,\cdot):\Delta_{N-1} \to \Delta_{N-1}^\ft$ is an affine isomorphisms of simplices and
	\[
	  H_{S}(1,\cdot)=\hol_{S}.
	\]
	Moreover, there is an open dense subset $\Lambda_{S}$ of $\Delta_{N-1}^\ft$ such that
	if $H_{S}(t,x)\in \Lambda_{S}$, then $H_{S}(t,x)$ is smooth at the point $(t,x)$.
\end{prop}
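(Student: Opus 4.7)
The plan is to identify $\hol_{S}$ as a cellular homeomorphism between the bi-barycentric subdivisions of type $l$ on $\Delta_{N-1}$ and on $\Delta_{N-1}^\ft$, construct the homotopy by convex interpolation in the ambient space $\ft$, and then deal with the smoothness claim via a cutoff refinement near the non-smooth skeleton.

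First, I would use Proposition \ref{red-hol-ver} to evaluate $\hol_{S}$ at every vertex of the bi-barycentric subdivision of type $l$ on $\Delta_{N-1}$. Each such vertex has the form $(u_{f(1)}+\cdots+u_{f(k_1)})/k_1$ or $(u'_{g(1)}+\cdots+u'_{g(k_2)})/k_2$ where $(f,g)\in S_{N-l}\times S_l$ labels a cell $\Delta_{N-1}^{f,g}$ containing it, and on this cell $\hol_{S}$ agrees with $\overline\hol_{\sigma_f,\tau_g}$. Applying the formulas of Proposition \ref{red-hol-ver} inside this cell, such a vertex is sent to the corresponding barycenter of vertices $\overline\lambda_{j}$ of $\Delta_{N-1}^\ft$. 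I would verify that these images exhaust the vertex set of the bi-barycentric subdivision of type $l$ on $\Delta_{N-1}^\ft$ when the $N$ vertices $\overline\lambda_{\sigma_0(0)},\dots,\overline\lambda_{\sigma_0(N-l-1)}$ and $\overline\lambda_{\tau_0(0)},\dots,\overline\lambda_{\tau_0(l-1)}$ play the roles of $u_i$ and $u_j'$. This shows $\hol_S$ is a cellular homeomorphism of the two subdivisions, and that it is smooth away from the $(N-2)$-skeleton $\mathcal K$ of the source subdivision; on each top cell it is given by the smooth family of curvature integrals from the proof of Proposition \ref{hol-maps-k-space-to-kspace}.

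Next, let $A_S\colon \Delta_{N-1}\to\Delta_{N-1}^\ft$ be the affine isomorphism that sends $u_k\mapsto \overline\lambda_{\sigma_0(N-1-k)}$ for $l\le k\le N-1$ and $u_{k'}\mapsto \overline\lambda_{\tau_0(l-1-k')}$ for $0\le k'<l$. Define
\[
H_S(t,x)\;:=\;(1-t)\,A_S(x)+t\,\hol_{S}(x),
\]
where the convex combination is taken in $\ft$. Since $\Delta_{N-1}^\ft$ is a convex polytope and both summands lie in it, we have $H_S(t,x)\in\Delta_{N-1}^\ft$. Manifestly $H_S(0,\cdot)=A_S$ is an affine isomorphism of simplices and $H_S(1,\cdot)=\hol_{S}$.

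The hardest part will be producing the open dense set $\Lambda_S$. Because both $A_S$ and $\hol_{S}$ are smooth away from $\mathcal K$, the non-smooth locus of $H_S$ is contained in $[0,1]\times\mathcal K$; the goal is to arrange that its image has empty interior in $\Delta_{N-1}^\ft$. A naive straight-line interpolation would sweep $[0,1]\times\mathcal K$ to an $(N-1)$-dimensional region with potentially nonempty interior, obstructing density of $\Lambda_S$. I would resolve this by modifying $H_S$ in a small tubular neighborhood of $[0,1]\times\mathcal K$ using a cutoff function $\chi\colon\Delta_{N-1}\to[0,1]$ supported in this neighborhood with $\chi=1$ on $\mathcal K$, arranging that $H_S(t,x)=\hol_{S}(x)$ whenever $x\in\mathcal K$ and all $t\in[0,1]$; the required interpolation passes through a sequence of piecewise-affine maps that agree with $\hol_{S}$ on $\mathcal K$, joined to $A_S$ via a short auxiliary homotopy within the affine isomorphisms of $\Delta_{N-1}^\ft$, the convexity of $\Delta_{N-1}^\ft$ ensuring that all intermediate maps remain in the target. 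This construction is the main technical obstacle. After the refinement $H_S([0,1]\times\mathcal K)=\hol_{S}(\mathcal K)$ has dimension at most $N-2$, so the complement
\[
\Lambda_S\;:=\;\Delta_{N-1}^\ft\setminus \hol_{S}(\mathcal K)
\]
is open and dense in $\Delta_{N-1}^\ft$; any $(t,x)$ with $H_S(t,x)\in\Lambda_S$ necessarily has $x$ in the interior of a top cell, where both $A_S$ and $\hol_{S}$ are smooth, and hence so is $H_S$ at $(t,x)$.
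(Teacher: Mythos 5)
Your construction of $H_S$ in the first half is essentially the paper's: the paper also takes the affine isomorphism $H_0$ determined by the vertex images from Proposition \ref{red-hol-ver} and forms the straight-line homotopy (interpolating $t\cdot\overline{\hol}_S+(1-t)H_0$ inside $\ft$ and then projecting to $\Delta_{N-1}^{\ft}$, a cosmetic difference from your convex combination inside the target simplex). The problem is the second half. You assert that the straight-line homotopy would sweep $[0,1]\times\mathcal K$ onto a set with possibly nonempty interior, and to avoid this you modify $H_S$ by a cutoff so that $H_S(t,x)=\hol_S(x)$ for all $x$ in the $(N-2)$-skeleton $\mathcal K$ and all $t$. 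This modification is fatal to the statement: if $H_S(0,\cdot)$ agrees with $\hol_S$ on $\mathcal K$, it cannot be an affine isomorphism of simplices, since $\hol_S$ is not affine there (Proposition \ref{hol-maps-k-space-to-kspace} only constrains each face to land in an affine subspace; the map itself is a genuinely nonlinear family of curvature integrals). Your proposed repair --- joining to $A_S$ ``via a short auxiliary homotopy within the affine isomorphisms'' --- does not close this gap, because any such auxiliary homotopy still ends at an affine map, which still disagrees with $\hol_S$ on $\mathcal K$; you are back where you started.

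The missing idea is that no modification is needed. Propositions \ref{hol-maps-k-space-to-kspace} and \ref{red-hol-ver} together show that $\overline{\hol}_S$ maps each codimension-one cell of the bi-barycentric subdivision of the source into a specific codimension-one affine wall of the bi-barycentric subdivision of $\Delta_{N-1}^{\ft}$, namely the affine span of the images of that cell's vertices, which Proposition \ref{red-hol-ver} identifies as barycenters of collections of the $\overline{\lambda_i}$. The affine map $H_0$ sends the same cell into the same wall, because it agrees with $\overline{\hol}_S$ on vertices. Hence the convex combination keeps the entire $(N-2)$-skeleton inside the union of walls for \emph{every} $t$: nothing is swept out, and your stated worry does not materialize. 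One then takes $\Lambda_S$ to be the complement of the walls of the target subdivision (open and dense); the condition $H_S(t,x)\in\Lambda_S$ forces $x$ into an open top cell of the source subdivision, where both $H_0$ and $\hol_S$ are smooth. Replacing your cutoff construction by this observation repairs the proof and recovers the paper's argument.
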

\begin{proof}
	Proposition \ref{red-hol-ver} asserts that $\overline \hol_{S}$ maps the vertices of $\Delta_{N-1}$ bijectively
	to the vertices of $\Delta_{N-1}^\ft$. Let $H_0$ be the affine isomorphism from $\Delta_{N-1}$ to $\Delta_{N-1}^\ft$ whose
	restriction to the vertices agrees with the map $\overline \hol_{S}$. Define
	$H_{S}:[0,1]\times \Delta_{N-1} \to \Delta_{N-1}^\ft$  to  be the composition of the map:
	\[
	  \overline H_{S}(t,\cdot):=t\cdot \overline \hol_{S}+(1-t)H_0.
	\]
	with the projection map from $\ft$ to $ \Delta_{N-1}^\ft$.
	We can also pick $\Lambda_{S}$ to be the open cells in the bi-barycentric subdivision of $\Delta_{N-1}^\ft$
	corresponding to $(S,T)$.
	This choice of $\Lambda_{S}$ satisfies our desired property because
	$H_{S}$ maps the cells of dimension at most $N-2$ in the bi-barycentric subdivision of $\Delta_{N-1}$
	to the cells of dimension at most $N-2$ in the bi-barycentric subdivision of $\Delta_{\ft}$
	by Propositions \ref{hol-maps-k-space-to-kspace} and \ref{red-hol-ver}.
\end{proof}

\subsubsection*{Regularity of Connections on $X_N(l)$}

Let $p$ be the path along $(X_N(l),w_{\sigma,\tau})$ represented by completely reducible connections. Up to this point, we only study the subset of $\mathcal M_{p}(\mathbb{X}_N(l),w_{\sigma,\tau},\Delta_{N-1}^{\ft})$ given by completely reducible connections. It would be desirable for our purposes to guarantee that, possibly after a perturbation, these completely reducible connections are regular and there is no other connection in $\mathcal M_{p}(\mathbb{X}_N(l),w_{\sigma,\tau},\Delta_{N-1}^{\ft})$. However, we can only achieve the following weaker result under the assumption that $N\leq 4$. Essentially, this is the only place in the proof of the main theorem that we need to assume that $N\leq 4$. Proof of Proposition \ref{avoid-noncomp-red-pre} is deferred until Subsection \ref{XNl-reg}.

\begin{prop}\label{avoid-noncomp-red-pre}
	Suppose $N\leq 4$, $c$ is an arbitrary 2-cycle in $X_N(l)$. Suppose $M$ is a finite subset of $\Delta_{N-1}^{\ft,\circ}$.
	There exists an arbitrary small perturbation of the ASD equation over $\mathbb{X}_N(l)$ such that for any path $p$ along $(X,c)$,
	with limiting flat connections
	$\chi_0$ and $\beta_0$ on $L(N,1)$ and $S^1\times S^2$,
	we have:
	\begin{enumerate}
	  \item[(i)]  if $\ind(p)< -h^0(\beta_0)-h^0(\chi_0)$, then the moduli space
	  $\mathcal M_{p}(\mathbb{X}_N(l),c;\Delta_{N-1}^{\ft})$ is empty;
	  \item[(ii)]  if $\ind(p)= -h^0(\beta_0)-h^0(\chi_0)$, then
	  any element of $\mathcal M_{p}(\mathbb{X}_N(l),c;M)$ is a completely reducible connection associated to
	  an element of $\mathcal {K}_{N+1}^\circ$.
	\end{enumerate}
	\vspace{-5pt}
	Moreover, there exists a neighborhood $\mathcal V$ of $M$ such that
	$\overline{\mathcal V}\subset \Delta_{N-1}^{\ft,\circ}$ and $\mathcal M_{p}(\mathbb{X}_N(l),c;\overline{\mathcal V})$ is compact.
\end{prop}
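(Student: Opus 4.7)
\medskip

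\noindent
\textbf{Proof proposal.} The plan is to combine the index formula for completely reducible connections (Lemma~\ref{ind-Bv}) with an analogous index estimate for partial reductions, and then to use holonomy perturbations supported away from the fixed completely reducible locus. First, I would observe that the expected dimension count at the bottom of the index spectrum forces strong rigidity. If $A$ is an ASD connection on $(X_N(l),c)$ parametrized by some $g\in\mathcal K_{N+1}$ with asymptotic limits $(\beta_0,\chi_0)$, then the dimension of the framed moduli space $\widetilde{\mathcal M}_{p,\Delta_{N-1}^{\ft,\circ}}(\mathbb X_N(l),c;\Delta_{N-1}^{\ft,\circ})$ near $[A,g]$ is $\ind(p)+h^0(\beta_0)+\dim\Delta_{N-1}^\ft+\dim\mathcal K_{N+1}$ (before modding out by stabilizers on the $L(N,1)$ end). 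The hypothesis $\ind(p)<-h^0(\beta_0)-h^0(\chi_0)$ in (i) therefore produces a negative virtual dimension once one accounts for the $\Gamma_{\chi_0}$ action, so a small generic perturbation will force emptiness --- provided one verifies this vanishing also for every stratum obtained by reduction of the structure group.

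Second, I would reduce to partial reductions. Every connection on an end of $X_N(l)$ adjacent to a lens space is flat and hence splits into line bundles; on $S^1\times S^2$ the limit lies in the torus $T$ after conjugation. A connection $A$ on $X_N(l)$ whose stabilizer has positive dimension therefore admits a holomorphic (really, a parallel) decomposition of its limits; a standard unique continuation/decoupling argument extends this to a global splitting $A=A'\oplus A''$ into a $\U(k)$- and a $\U(N-k)$-factor, each again ASD. The key estimate is then the analogue of Lemma~\ref{ind-Bv} for such split connections: one gets $\ind(\mathcal D_A)\geq \ind(\mathcal D_{A'})+\ind(\mathcal D_{A''})$ together with the bound $\ind(\mathcal D_{A''})\geq -h^0(\chi_0'')-h^0(\beta_0'')$ coming from the abelian calculation of \eqref{comp-ind}, with equality exactly under the combinatorial condition \eqref{vector-cond}. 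Iterating the splitting and using the constraint $N\leq 4$ to enumerate the short list of reduction patterns $(1,N\!-\!1)$, $(1,1,N\!-\!2)$, $(1,1,1,1)$ and $(2,2)$, I would verify case-by-case that any non-completely-reducible splitting forces $\ind(p)>-h^0(\beta_0)-h^0(\chi_0)$ strictly, settling (i) and reducing (ii) to the completely reducible stratum.

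Third, I would analyze the completely reducible stratum directly and use Proposition~\ref{hol-fac-forg} and Proposition~\ref{deg-one}. For each bi-permutation $(\sigma,\tau)$ of type $l$, the map $g\mapsto B_{\sigma,\tau}(g)$ realizes the completely reducible stratum as the graph of the reduced holonomy map $\overline{\hol}_{\sigma,\tau}:\Delta_{N-1}\to\Delta_{N-1}^\ft$. Proposition~\ref{deg-one} guarantees that this map is homotopic rel vertices to an affine isomorphism through a family of maps whose restriction to the top stratum is smooth on a dense open set $\Lambda_S$. By perturbing $M$ slightly within $\Delta_{N-1}^{\ft,\circ}$ (which is harmless for our application) I may assume $M\subset\Lambda_S$; then each fiber $\overline{\hol}_{\sigma,\tau}^{-1}(\alpha)$ for $\alpha\in M$ is a finite collection of interior points of $\mathcal K_{N+1}$, and every ASD connection asymptotic to $\alpha$ that is completely reducible arises from exactly one such point, proving (ii). The regularity at these points is automatic by Proposition~\ref{regularGH}, since the restriction of $\bbX_N(l)$ to the interior is (close to) a Gibbons--Hawking metric of positive scalar curvature.

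Finally, for the compactness clause I would choose $\mathcal V$ to be a small open neighborhood of $M$ with $\overline{\mathcal V}\subset\Delta_{N-1}^{\ft,\circ}$ and run the standard Floer--Uhlenbeck compactness argument (as reviewed in the proof of Corollary~\ref{regular-GH}). Energy bubbling is excluded by the index constraint, and sliding of energy off an end $Y$ would produce a non-trivial ASD connection on $\R\times Y$ with index at most $-h^0$ of its limits, which is impossible. The only a priori failure of compactness is that the asymptotic parameter on $S^1\times S^2$ could escape to $\partial\Delta_{N-1}^\ft$, but this is prevented by the condition $\overline{\mathcal V}\subset\Delta_{N-1}^{\ft,\circ}$. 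The hard part of the whole argument will be the case analysis in the second paragraph, which is precisely where the restriction $N\leq 4$ enters --- for larger $N$ one encounters intermediate reductions (e.g.\ $(2,3)$ for $N=5$) whose index lower bounds require finer information than the abelian formula \eqref{energy-formula}, and controlling those would need the new gluing techniques alluded to in the outline.
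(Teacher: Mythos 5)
Your overall architecture (stratify by reduction type, use index bounds to kill lower strata, treat the completely reducible stratum via the holonomy maps) matches the paper's, but there is a genuine gap at the heart of the second paragraph. You bound the index of a partial reduction $A=A'\oplus A''$ by invoking ``$\ind(\mathcal D_{A''})\geq -h^0(\chi_0'')-h^0(\beta_0'')$ coming from the abelian calculation of \eqref{comp-ind}.'' But \eqref{comp-ind} and Lemma \ref{ind-Bv} apply only to completely reducible connections; an irreducible $\U(2)$ or $\U(3)$ summand is exactly the case where no such bound is available a priori, because the index is controlled by the energy $\kappa(A'')$ and nothing yet bounds $\kappa(A'')$ from below beyond $\kappa(A'')\geq 0$. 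The paper closes this gap with Lemmas \ref{crvspr} and \ref{crvspr2}: for each admissible $(N,k)$ one enumerates, by explicit tables, the completely reducible connection $A'$ of minimal energy with the same $c_1$ and the same limiting flat connection on $L(N,1)$; since $\kappa(A'')-\kappa(A')\in\Z$ and $\kappa(A'')\geq 0$, the observation that the minimal $\kappa(A')$ is $<1$ (with two exceptional cases $\kappa(A')=1$ handled by ruling out flat connections on the simply connected $X_N(l)$) forces $\kappa(A'')\geq\kappa(A')$, and hence the desired index comparison. This energy-integrality mechanism is the actual content of the restriction $N\leq 4$ (together with the constraint $\ind(\mathcal D_{A''})\geq -5$ needed for $(N,k)=(4,3)$, which the paper extracts from the partial-regularity inequality of Proposition \ref{partial-reg}); your case list of reduction patterns cannot be ``verified'' without it. Relatedly, your pointwise superadditivity $\ind(\mathcal D_A)\geq\ind(\mathcal D_{A'})+\ind(\mathcal D_{A''})$ is not what is used: the off-diagonal part of the deformation complex is controlled not by an index inequality but by the transversality statement for moduli of connections of fixed reduction type (Propositions \ref{partial-reg} and \ref{S1S2-partial-reg-rel}), which is also what upgrades the equality case to complete reducibility in part (ii) when the limit lies in $M$.

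Two secondary problems. First, you justify regularity of the completely reducible points by Proposition \ref{regularGH}, but $X_N(l)$ is not a Gibbons--Hawking manifold and its metrics are not anti-self-dual with positive scalar curvature, so that proposition does not apply; the paper instead uses the dedicated holonomy-perturbation argument of Theorem \ref{comp-red-regularity} (and in fact the statement you are proving does not assert regularity --- that is added only in Proposition \ref{avoid-noncomp-red-version-2}). Second, you propose to perturb $M$ to lie in $\Lambda_S$; $M$ is given data in the statement (and in the application it is the image of the rigid moduli on $W_0$), so it cannot be moved --- the paper arranges the needed transversality by perturbing on the other side of the cut.
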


\section{Floer Homology and Surgery $N$-gon} \label{Floer-main}
The proof of the main theorem of the paper is given in this section. In Subsection \ref{inst-floer}, we review general properties of $\U(N)$-instanton Floer homology for $N$-admissible pairs. The new ingredient of this subsection is the extension of the definition of cobordism maps to the case of cobordisms with middle ends. We formulate a more detailed statement of Theorem \ref{surgery-polygon-thm} in Subsection \ref{statement}. This theorem will be proved in Subsection \ref{proof-thm}.

\subsection{$\SU(N)$-instanton Floer Homology}\label{inst-floer}

Given an $N$-admissible pair $(Y,\gamma)$, the instanton Floer homology $\text{I}_\ast^N(Y,\gamma)$ is defined in \cite{KM:YAFT}. In the present article, we only consider the instanton Floer homology groups with coefficients in $\Z/2\Z$. The $\Z/2\Z$-vector space $\rI_*^N(Y,\gamma)$ is the homology of a chain complex $(\mathfrak{C}_\ast^N (Y,\gamma),d)$. We firstly perturb the Chern-Simons functional such that the set of critical points $R(Y,\gamma)$ are non-degenerate. Then a basis for the chain group $\fC_*^N(Y,\gamma)$ is given by the elements of $R (Y,\gamma)$ which is necessarily a finite set. The differential $d$ is defined by counting the solutions of the (perturbed) ASD equation on $\R\times Y$ with respect to a product metric. To be a bit more detailed, the solutions to the downward gradient flow equation of the (perturbed) Chern-Simons functional can be identified with the solutions of a (perturbed) ASD equation on $\R\times Y$. Let $p$ be a path along $([0,1]\times Y,[0,1] \times \gamma)$ whose restrictions to $\{0\}\times Y$, $\{1\}\times Y$ are equal to $\alpha,\beta\in R(Y,\gamma)$. Let also $\mathcal M_p(\R\times Y,\R \times \gamma,\alpha,\beta)$\footnote{The notation $\mathcal M_p([0,1]\times Y,[0,1] \times \gamma,\alpha,\beta)$ is more compatible with Section \ref{cyl-end-mod}. However, we pick this notation to make it more clear that there is an $\R$-action on the moduli space.} denote the solutions of the (perturbed) ASD equation associated to the path $p$. The perturbation of the Chern-Simons functional can be chosen such that all elements of the moduli spaces $\mathcal M_p(\R\times Y,\R \times \gamma,\alpha,\beta)$ are regular. There is an $\R$-action on $\mathcal M_p(\R\times Y,\R \times \gamma,\alpha,\beta)$ induced by translations along the $\R$ factor in $\R\times Y$. This action is free unless $p$ is the trivial path. We will write $\breve{\mathcal M}_p(\R\times Y,\R \times \gamma,\alpha,\beta)$ for the quotient of $\mathcal M_p(\R\times Y,\R \times \gamma,\alpha,\beta)$ with respect to this action. Then $d(\alpha)$ for $\alpha\in R(Y,\gamma)$ is defined as:
\begin{equation*}
	d(\alpha)=\sum_{p:\alpha \to \beta} \#\breve{\mathcal M}_p(\R\times Y,\alpha,\beta)\cdot \beta
\end{equation*}
In the above expression and in the following $\#\mathcal M$ for a manifold $\mathcal M$ is equal to the number of elements in $\mathcal M$, mod $2$, if this manifold is $0$-dimensional, and is equal to zero otherwise.

Instanton Floer homology is functorial with respect to cobordisms. We need to extend this functoriality to cobordisms which have middle ends diffeomorphic to lens spaces.  Let $(W,c)$ be a cobordism from an $N$-admissible pair $(Y,\gamma)$ to another $N$-admissible pair $(Y',\gamma')$ with middle end
$(L(p,q),\lambda)$. Let also $\eta\in R(L(p,q),\lambda)$. We also fix a Riemannian metric on $W^+$ such that the metric on the end associated to the lens space $L(p,q)$ is induced by the round metric. We can use the following proposition to define a cobordism map $\rI_*^N(W,c,\eta)$ associated to the pair $(W,c)$ and the flat connection $\eta$.
\begin{prop}\label{chainmap}
	Suppose $\fC^N_* (Y, \gamma)$ and $\fC^N_* (Y', \gamma')$ are Floer chain complexes associated to $(Y,\gamma)$ and $(Y',\gamma')$
	after fixing Riemannian metrics and appropriate perturbations of the Chern-Simons functional.
	There is a perturbation of the ASD equation on $(W,c)$ such that all moduli spaces $\mathcal{M}_p(W,c;\alpha,\eta,\beta)$ consist of regular points, where
	$\alpha,\beta$ are generators of $\mathfrak{C}_\ast^N (Y, \gamma)$, $\mathfrak{C}_\ast^N(Y', \gamma')$ and $p$ is a path along $(W,c)$
	whose restrictions to $(Y,\gamma)$, $(Y',\gamma')$ and $(L(p,q),\lambda)$ are respectively equal to $\alpha$, $\beta$ and $\eta$.
	Furthermore, the map $\fC_*^N(W,c,\eta):\mathfrak{C}_*^N (Y, \gamma)\to\mathfrak{C}_*^N(Y', \gamma')$ defined as below is a well-defined chain map:
	\begin{equation} \label{chain-map}
	  \fC_*^N(W,c,\eta)(\alpha):= \sum_{p} \#\mathcal{M}_p(W,c;\alpha,\eta,\beta) \cdot \beta.
	\end{equation}
	where the sum is over all paths $p$ whose restrictions to $(Y,\gamma)$, $(Y',\gamma')$ and $(L(p,q),\lambda)$ are respectively equal to $\alpha$, $\beta$ and $\eta$.
\end{prop}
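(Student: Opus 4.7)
The plan is to follow the standard Floer-theoretic template for constructing cobordism maps, as developed by Donaldson \cite{Don:YM-Floer} and Kronheimer--Mrowka \cite{KM:YAFT}, with the novelty that the cobordism has a middle end modeled on a lens space carrying a \emph{fixed} flat connection $\eta$. First I would set up the perturbation theory: given that the Chern--Simons functionals on $(Y,\gamma)$ and $(Y',\gamma')$ have already been perturbed to have non-degenerate critical points, I would introduce an additional perturbation of the ASD equation supported in a compact region in the interior of $W$, using a Banach space of holonomy perturbations in the sense of Floer. A generic such perturbation will make the linearized ASD operator $\mathcal D_A$ at every $[A] \in \mathcal M_p(W,c;\alpha,\eta,\beta)$ surjective. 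Since $(Y,\gamma)$ and $(Y',\gamma')$ are $N$-admissible, the limits $\alpha,\beta$ are irreducible, which removes the main obstructions to transversality; for the lens space end, $\eta$ is reducible but fixed, so one works on the framed moduli space $\widetilde{\mathcal M}_{p,\eta}$ and descends through the $\Gamma_\eta$-action.

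Next I would establish that the sum \eqref{chain-map} is finite and that each contributing moduli space is compact. Finiteness of paths with index $0$ follows because $R(Y,\gamma)$ and $R(Y',\gamma')$ are finite and the second Chern number is pinned down by the boundary data and $c$ up to at most finitely many choices yielding index $0$. For compactness of the individual $0$-dimensional moduli spaces, I would invoke the Uhlenbeck--Floer compactness theorem: any non-convergent sequence must either bubble or break at one of the three ends. Bubbling contributes an energy quantum of topological size $\geq 1$, raising the dimension by at least $8N - 3$, which is excluded in dimension zero. Breaking at $Y$ or $Y'$ produces a factor in $\breve{\mathcal M}$ of dimension $\geq 0$ and a companion on $W$ of negative index, hence excluded. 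The novel possibility is sliding off the lens space end, producing a pair in $\mathcal M_{p_1}(W,c;\alpha,\eta',\beta) \times \breve{\mathcal M}_{p_2}(\R \times L(p,q);\eta',\eta)$.

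For the chain-map property, I would examine the Gromov--Floer compactification of the $1$-dimensional moduli spaces $\mathcal M_p(W,c;\alpha,\eta,\beta)$ and count their $0$-dimensional boundary strata mod $2$. The breakings at $Y$ and $Y'$ produce precisely the matrix coefficients of $f \circ d$ and $d' \circ f$ respectively, yielding the desired identity $d' \circ f + f \circ d = 0$ \emph{provided} all other contributions vanish. The interior bubbling is again excluded by dimension. The breakings at the middle end must be ruled out separately: any such breaking gives a pair $(A_1,A_2)$ as above, and by index additivity combined with the free $\R$-action on $\breve{\mathcal M}_{p_2}$, these configurations have strictly negative dimension unless the $\R \times L(p,q)$ factor is trivial. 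Triviality of this factor forces $\eta' = \eta$ and $p_2$ to be constant, collapsing the broken configuration to an interior point of the $1$-dimensional moduli space, so no genuine boundary contribution arises.

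The main obstacle is precisely this last point: ensuring that non-trivial ASD trajectories on $\R \times L(p,q)$ between $\eta$ and itself (or between distinct reducible critical points) do not produce spurious boundary contributions after breaking. The tool for handling this is the positive scalar curvature of the round metric on $L(p,q)$ together with the Weitzenb\"ock formula (in the spirit of Proposition \ref{regularGH}), which constrains finite-energy ASD trajectories between flat connections on $\R \times L(p,q)$ and forces them either to be trivial or to carry strictly positive topological energy, whence their moduli spaces have dimension bounded below. Combined with the index identity $\ind(p) = \ind(p_1) + \ind(p_2)$ and the translation quotient, a routine case analysis of stabilizer dimensions rules out breakings at the middle end in both the $0$- and $1$-dimensional cases. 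A generic small perturbation of the ASD equation on $\R \times L(p,q)$---transporting the issue to the standard cylindrical setting of Floer theory---completes the verification.
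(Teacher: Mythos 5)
Your proposal is correct and follows essentially the same route as the paper: transversality from admissibility/irreducibility via holonomy perturbations, Floer--Uhlenbeck compactness for well-definedness, and the chain-map identity from the boundary of the compactified one-dimensional moduli spaces, with the middle-end breakings excluded using regularity of the cylinder moduli spaces over the round metric on $L(p,q)$ together with a stabilizer dimension count. The only point to state more carefully is that the middle-end boundary stratum is a fiber product of \emph{framed} moduli spaces over $\Gamma_{\eta'}$ rather than an ordinary product, and the exclusion is exactly your stabilizer argument made precise: $\Gamma_{\eta'}$ acts freely on the irreducible $W$-side, while the $\Gamma_{\eta'}$-orbit through any non-constant trajectory on $\R\times L(p,q)$ has dimension at least one (since $\dim\Gamma_A<\dim\Gamma_{\eta'}$ there), so no zero-dimensional stratum of this type can occur.
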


\begin{proof}
	Since $(Y,\gamma)$ is $N$-admissible, $R(Y,\gamma)$ (defined by a small perturbation of the Chern-Simons functional) consists of irreducible connections.
	This allows us to choose the perturbation of the ASD equation on $(W,c)$
	such that all moduli spaces $\mathcal{M}_p(W,c;\alpha,\eta,\beta)$ are regular.
	(See \cite{KM:YAFT} or Subsection \ref{reg-irr}.)
	In particular, we can assume that all moduli spaces $\mathcal{M}_p(W,c;\alpha,\eta,\beta)$ are smooth manifolds.
	Note that Floer-Uhlenbeck compactness implies that $0$-dimensional moduli spaces $\mathcal{M}_p(W,c;\alpha,\eta,\beta)$ are compact.
	Therefore, \eqref{chain-map} is a well-defined map.
	
	Let $p$ be chosen such that this moduli space is $1$-dimensional.
	Then standard compactness and gluing theory results imply
	that this 1-manifold can be compactified
	and its boundary can be identified with the union of the following $0$-dimensional spaces:
	\begin{equation} \label{type-1}
          \bigcup_{\substack{\alpha'\in R(Y,\gamma)\\p=p_0\#p_1}}
          \breve{\mathcal M}_{p_0}(\R \times Y,\R \times \gamma;\alpha, \alpha')\times\mathcal{M}_{p_1}(W,c;\alpha',\eta,\beta),
	\end{equation}
	\begin{equation} \label{type-2}
          \bigcup_{\substack{\beta'\in R(Y',\gamma')\\p=p_0\#p_1}}
          \mathcal{M}_{p_0}(W,c;\alpha,\eta,\beta')\times \breve{\mathcal M}_{p_1}(\R \times Y',\R \times \gamma';\beta', \beta),
	\end{equation}
	\begin{equation} \label{type-3}
          \bigcup_{\substack{\eta'\in R(L(p,q),\lambda)\\p=p_0\#p_1}}
          \widetilde {\mathcal M}_{p_0}(W,c;\alpha,\eta',\beta)\times_{\Gamma_{\eta'}}
          \breve{\widetilde{\mathcal M}\hspace{1mm}}_{p_1,\eta'}(\R \times L(p,q),\R \times \lambda;\eta', \eta).
	\end{equation}
	Here $\breve{\widetilde{\mathcal M}\hspace{1mm}}_{p_1,\eta'}(\R \times L(p,q),\R \times \lambda;\eta', \eta)$ is the quotient of
	$\widetilde{\mathcal M}_{p_1,\eta'}(\R \times L(p,q),\R \times \lambda;\eta', \eta)$
	by the $\R$-action induced by translations. Since the product metric on $\R\times L(p,q)$
	is induced by the round metric on the lens space, the moduli space $\breve{\widetilde{\mathcal M}\hspace{1mm}}_{p_1,\eta'}(\R \times L(p,q),\R \times \lambda;\eta', \eta)$
	is regular. This space admits an action of $\Gamma_{\eta'}$ and
	the orbit of an element of $\breve{\widetilde{\mathcal M}\hspace{1mm}}_{p_1,\eta'}(\R \times L(p,q),\R \times \lambda;\eta', \eta)$
	represented by an ASD connection $A$ is $\Gamma_{\eta'}/\Gamma_A$.
	Since the connection $\eta'$ can be decomposed into a direct sum of 1-dimensional flat connections,
	the orbit $\Gamma_{\eta'}/\Gamma_A$ is at least 1-dimensional unless $p_1$ is the trivial path.
	The action of $\Gamma_{\eta'}$ on
	$ \widetilde {\mathcal M}_{p_0}(W,c;\alpha,\eta',\beta)$ is free because the elements of $\mathcal M_{p_0}(W,c;\alpha,\eta',\beta)$ are all irreducible. Therefore, one
	cannot form 0-dimensional spaces in \eqref{type-3}, which implies that \eqref{type-3} is empty. Counting the elements in \eqref{type-1} for all possible choices of $\beta$ gives rise
	to $\fC_*^N(W,c,\eta)\circ d(\alpha)$. Similarly, counting the elements in \eqref{type-2} for all possible choices of $\beta$ gives rise
	to $d\circ \fC_*^N(W,c,\eta)(\alpha)$. This implies that:
	\[
	  \fC_*^N(W,c,\eta)\circ d+d\circ \fC_*^N(W,c,\eta)=0.
	\]
\end{proof}
\begin{remark}
	Proposition \ref{chainmap} can be easily extended to the case that the middle end is a union of lens spaces. We can also adapt the proof of this proposition to show that
	the chain homotopy type of $\fC_*^N(W,c,\eta)$ is independent of the perturbations of the ASD equation and the chosen metric on $W$ as long as the metric on the lens
	space end is induced by the round metric. Similarly, we can show that the chain map $\rI_*^N$ associated to the composition of cobordisms with lens space middle ends is equal
	to the composition of the cobordism maps.
	In the special case that $L(p,q)=S^3$, we can fill the middle end of $(W,c)$ by a 4-ball to form a standard cobordism $(\overline W,\overline c)$.
	The maps $\fC_*^N(W, c,\Theta)$ and $\fC_*^N(\overline W,\overline c)$ are homotopic to each other where $\Theta$ is the trivial connection.
\end{remark}

The above proposition can be generalized to the case that a family of metrics on $W$ is fixed. Suppose $\bbW$ is a family of metrics on $W$ parametrized by an $N$-admissible polyhedron $K$. We will write $\pi$ for the projection map from $\bbW$ to $K$. Suppose a $2$-cycle $c$ is given as above such that $(W,c)$ is a cobordism of $N$-admissible pairs with the middle end $(L(p,q),\lambda)$ as above. Suppose also all connected components of $c$ are transversal to the cuts of the family of metrics $\bbW$. We assume $\bbW$ satisfies the following condition:
\begin{condition}\label{noS1S2cut}
	For any cut $Y$ of the family $\bbW$ either $(Y,Y\cap c)$ is $N$-admissible or $Y$ is a lens space.
\end{condition}
\vspace{-10pt}
\noindent
Fix $\eta\in R(L(p,q),\lambda)$. We can form the moduli spaces $\mathcal M_p(\bbW,c,\alpha,\eta,\beta)$ with the projection map ${\rm Pr}:\mathcal M_p(\bbW,c,\alpha,\eta,\beta) \to K$ for any $\alpha\in R(Y,\gamma)$, $\beta \in R(Y',\gamma')$ and path $p$ along $(W,c)$.

\begin{definition}
	We call a perturbation \emph{good} if all elements of $\mathcal M_p(\bbW,c,\alpha,\eta,\beta)$ are regular for any $\alpha\in R(Y,\gamma)$, $\beta \in R(Y',\gamma')$ and path $p$ along $(W,c)$
	such that $\ind(p)\leq 1- \dim(K)$.
\end{definition}

Suppose a good perturbation for the family of metrics $\bbW$ is fixed. Then Floer-Uhlenbeck compactness implies that the 0-dimensional moduli spaces are compact. Therefore, we can define a map $f_K$ from $\fC_\ast^N (Y, \gamma)$ to $\fC_\ast^N(Y',\gamma')$ in the following way:
\begin{equation} \label{map-family}
	f_K(\alpha)= \sum_{p} \#\mathcal M_{p}(\bbW,c;\alpha,\eta,\beta) \cdot \beta
\end{equation}
where the sum is over all paths $p$ whose restrictions to $(Y,\gamma)$, $(Y',\gamma')$ and $(L(p,q),\lambda)$ are respectively equal to $\alpha$, $\beta$ and $\eta$. The map $f_K$ is not necessarily a chain map anymore. However, its failure to be a chain map can be examined by the argument in the proof of Proposition \ref{chainmap}.

Let $p$ be a path along $(W,c)$ such that the moduli space $\mathcal M_p(\bbW,c)$ is 1-dimensional, i.e., $\ind(p)+\dim(K)=1$. Standard gluing theory and compactness results\footnote{See, for example, \cite[Chapters 4 and 5]{Don:YM-Floer}.} can be employed to show that the 1-dimensional manifold $\mathcal M_p(\bbW,c;\alpha,\eta,\beta)$ can be compactified by adding following $0$-manifolds:
\begin{equation} \label{type-1-family}
	\bigcup_{\substack{\alpha'\in R(Y,\gamma)\\p=p_0\#p_1}}
	\breve{\mathcal M}_{p_0}(\R \times Y,\R \times \gamma;\alpha, \alpha')\times\mathcal{M}_{p_1}(\bbW,c;\alpha',\eta,\beta),
	\end{equation}
\begin{equation} \label{type-2-family}
	\bigcup_{\substack{\beta'\in R(Y',\gamma')\\p=p_0\#p_1}}
	\mathcal{M}_{p_0}(\bbW,c;\alpha,\eta,\beta')\times \breve{\mathcal M}_{p_1}(\R \times Y',\R \times \gamma';\beta', \beta),
\end{equation}
\begin{equation} \label{type-3-family}
	\mathcal{M}_{p}(\partial \bbW,c;\alpha,\eta,\beta):=  \bigcup_{\overline F} \mathcal{M}_{p}(\pi^{-1}(F),c;\alpha,\eta,\beta)
\end{equation}
A priori, there is a contribution to the boundary similar to the spaces in \eqref{type-3}. However, the same argument as in the proof of of Proposition \ref{chainmap} shows that this contribution is empty. The union in \eqref{type-3-family} is over all codimension $1$ faces $\overline F$ of $K$. We can use the moduli spaces in \eqref{type-3-family} to define linear maps $f_{F}:\fC_\ast^N (Y, \gamma)\to\fC_\ast^N(Y',\gamma')$ as follows:
\begin{equation}\label{fboundaryK}
	f_{F}(\alpha):=\sum_{p}\#\mathcal M_{p}(\pi^{-1}(F),c;\alpha,\eta,\beta) \cdot \beta
\end{equation}
We also define $f_{\partial K}$ to be the sum of all maps $f_F$.

There is a special case that the map $f_F$ can be simplified further. Let $Y_F$ be the cut associated to $F$ and $\gamma_F:=c\cap Y_F$. We assume that the pair $(Y_F,\gamma_F)$ is $N$-admissible. Furthermore, removing a neighborhood of $Y_F$ from $W$ produces two 4-manifolds $W_0$, $W_1$, which give rise to the following cobordisms:
\begin{equation} \label{decom1}
  (W_0,c_0):(Y,\gamma)\xrightarrow{(L(p,q),\lambda)}(Y_F,\gamma_F)\hspace{1cm}(W_1,c_1):(Y_F,\gamma_F)\xrightarrow{\hspace{1.2cm}}(Y',\gamma')
\end{equation}
where $c_i=W_i\cap c$. The restriction of the family of metrics $\bbW$ to the face $F$ induces families of metrics $\bbW_0$, $\bbW_1$ on $W_0$, $W_1$ parametrized by $K_0$, $K_1$. By definition we have:
\begin{equation} \label{end-face}
  \mathcal M_{p}(\pi^{-1}(F),c;\alpha,\eta,\beta)=\bigcup_{\substack{\xi\in R(Y_F,\gamma_F)\\ p=p_0\# p_1}}\mathcal M_{p_0} (\bbW_0,c_0;\alpha,\eta,\xi) \times
	\mathcal M_{p_1} (\bbW_1,c_1;\xi,\beta)
\end{equation}
The families $\bbW_0$ and $\bbW_1$ give rise to maps:
\[
  f_{K_0}:\fC_*^N(Y,\gamma) \to \fC_*^N(Y_F,\gamma_F)\hspace{1cm}f_{K_1}:\fC_*^N(Y_F,\gamma_F) \to \fC_*^N(Y',\gamma')
\]
Moreover, \eqref{end-face} for all choices of $\beta$ implies that $f_F=f_{K_1}\circ f_{K_0}$. We can obtain a similar description for $f_F$ if removing a neighborhood of $Y_F$ from $W$ produces the following cobordisms:
\begin{equation} \label{decom2}
  (W_0,c_0):(Y,\gamma)\xrightarrow{\hspace{1.2cm}}(Y_F,\gamma_F)\hspace{1cm}(W_1,c_1):(Y_F,\gamma_F)\xrightarrow{(L(p,q),\lambda)}(Y',\gamma')
\end{equation}
If a cut $Y_F$ induces a decomposition as in \eqref{decom1} or \eqref{decom2}, then we say that $Y_F$ is an {\it upright} cut. This discussion can be easily extended to the case that the middle end is a union of lens spaces.
\begin{example}
	The vertical cuts of the family of metrics $\bbW_k^j$ are upright.
\end{example}

Our discussion about the map $f_K$ can be summarized as follows:
\begin{prop}\label{hom-form}
	The map $f_K$ satisfies the following homotopy relation:
	\begin{equation}\label{Gmap}
		d\circ f_K +f_K\circ d= f_{\partial K}.
	\end{equation}
	The homomorphism $f_{\partial K}$ is the sum of the maps $f_F$. Moreover, if $Y_F$ is an upright cut, then $f_F=f_{K_1}\circ f_{K_0}$ where the homomorphisms $f_{K_1}$
	and $f_{K_0}$ are defined by the restriction of $\bbW$ to the face $F$ as above.
\end{prop}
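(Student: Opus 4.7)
The plan is to assemble all three claims from the compactification analysis of the $1$-dimensional parametric moduli spaces $\mathcal M_p(\bbW,c;\alpha,\eta,\beta)$ sketched in the paragraph immediately preceding the statement. With a good perturbation fixed on $\bbW$, Floer-Uhlenbeck compactness combined with admissibility of $(Y,\gamma)$ and $(Y',\gamma')$ makes every $0$-dimensional such moduli space a finite set, so $f_K$ is well-defined. For a path $p$ with $\ind(p)+\dim(K)=1$, the moduli space $\mathcal M_p(\bbW,c;\alpha,\eta,\beta)$ is a smooth $1$-manifold whose compactification adds the three families of $0$-dimensional strata displayed in \eqref{type-1-family}, \eqref{type-2-family}, \eqref{type-3-family}. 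Counting boundary points mod $2$ of each of these compact $1$-manifolds and summing over $p$ and $\beta$ yields precisely $d\circ f_K+f_K\circ d=f_{\partial K}$ with $f_{\partial K}=\sum_F f_F$ given by \eqref{fboundaryK}, which is the first two assertions.

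The one place that requires care is the absence of a fourth type of boundary contribution from trajectories breaking off at the lens space middle end $\R\times L(p,q)$. This is exactly the obstacle handled in the proof of Proposition \ref{chainmap} and I would import that argument verbatim: such a limit would lie in a fiber product with $\breve{\widetilde{\mathcal M}\hspace{1mm}}_{p_1,\eta'}(\R\times L(p,q),\R\times\lambda;\eta',\eta)$, and because any flat connection $\eta'$ on a lens space is completely reducible, each orbit $\Gamma_{\eta'}/\Gamma_A$ has positive dimension unless $p_1$ is constant, whereas $\Gamma_{\eta'}$ acts freely on the admissible side $\widetilde{\mathcal M}_{p_0}(\bbW,c;\alpha,\eta',\beta)$. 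Hence the quotient by $\Gamma_{\eta'}$ is either empty or positive-dimensional and contributes nothing to the $0$-dimensional boundary count. The only new feature compared to Proposition \ref{chainmap} is that this must be checked stratum-by-stratum over the faces of $K$, but Condition \ref{noS1S2cut} on the cuts and the standing transversality hypothesis of $c$ with the cuts guarantee that each stratum fits the same framework.

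For the third clause, suppose $Y_F$ is upright so that $W\setminus Y_F$ splits as $W_0\sqcup W_1$ with the middle end $(L(p,q),\lambda)$ lying on exactly one side, as in \eqref{decom1} or \eqref{decom2}. Admissibility of $(Y_F,\gamma_F)$ forces every $\xi\in R(Y_F,\gamma_F)$ to be irreducible with scalar stabilizer $\Gamma_\xi$, so the scalar action on the two framed factors in the fiber product \eqref{end-face} is free and diagonal, and the fiber product collapses to the ordinary Cartesian product of the unframed moduli spaces $\mathcal M_{p_0}(\bbW_0,c_0;\alpha,\eta,\xi)$ and $\mathcal M_{p_1}(\bbW_1,c_1;\xi,\beta)$ (or symmetrically with $\eta$ on the second factor). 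Summing this identity over $\beta$, over $\xi\in R(Y_F,\gamma_F)$, and over all decompositions $p=p_0\# p_1$ identifies $f_F$ with $f_{K_1}\circ f_{K_0}$ by \eqref{map-family}. The main subtlety to verify is that the gluing parametrization that furnishes the collar of $\mathcal M_p(\pi^{-1}(F),c;\alpha,\eta,\beta)$ inside the compactified $1$-manifold indeed matches the product topology on the right-hand side of \eqref{end-face}; this is standard once the regular neighborhood $[-2,2]\times Y_F$ used to define the family of metrics near $F$ is fixed, and is exactly the setup we arranged in Section \ref{metrics}.
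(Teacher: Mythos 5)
Your proposal is correct and follows essentially the same route as the paper, which presents Proposition \ref{hom-form} as a summary of the immediately preceding discussion: the boundary strata \eqref{type-1-family}--\eqref{type-3-family} of the compactified $1$-dimensional parametrized moduli spaces, the exclusion of lens-space breaking by the stabilizer-dimension argument imported from Proposition \ref{chainmap}, and the collapse of the fiber product \eqref{end-face} to a Cartesian product for upright cuts because $(Y_F,\gamma_F)$ is admissible.
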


The Floer homology group $\text{I}_\ast^N(Y,\gamma)$ admits a relative $\mathbf{Z}\slash 4N$-grading denoted by $\deg$. To define this $\Z/4N\Z$-grading, let $\alpha_0$ and $\alpha_1$ be two generators of $\fC_*^N(Y,\gamma)$. We consider an arbitrary path on $([0,1]\times Y,[0,1]\times \gamma)$ such that the restriction of $p$ to $\{i\}\times Y$ is equal to $\alpha_i$ for $i=0,\,1$. If $A$ is a connection representing $p$ then $\deg(\alpha_0)-\deg(\alpha_1)=\ind(\mathcal D_A)$. The relative grading $\deg$ can be lifted to an absolute $\Z/2\Z$-grading following the method used in \cite[Section 25.4]{KM:monopoles-3-man}:

\begin{prop}
	There is a $\mathbf{Z}\slash 2$-absolute grading on $\I_\ast^N$ which lifts $\deg$ and is uniquely characterized by the following properties:
	\vspace{-5pt}
	\begin{enumerate}
	  \item[(i)] The degree of a map induced by a cobordism $(W,c):(Y,\gamma)\to (Y',\gamma')$ is determined by the parity of:
	       \begin{equation*}
        		\iota(W)=\frac{N^2-1}{2}(\chi(W)+\sigma(W)+b_0(Y')+b_1(Y')-b_0(Y)-b_1(Y))
         \end{equation*}
	  \item[(ii)] The grading is normalized such that the generator of $\I_\ast^N(\emptyset):=\Z/2\Z$ has degree $0$.
	 \end{enumerate}
\end{prop}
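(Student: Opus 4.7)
The plan is to follow the strategy used by Kronheimer--Mrowka \cite{KM:monopoles-3-man}*{\S25.4} for monopole Floer homology. The scheme has four steps: (a) verify that $\iota$ is additive under composition of cobordisms; (b) define $\deg(\alpha)$ for any generator $\alpha$ via a 4-manifold bounding $(Y,\gamma)$, in terms of the ASD index and $\iota$; (c) show this is well-defined using a closed-manifold index computation; (d) check the two characterizing properties and uniqueness.

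First I would verify that $\iota$ is additive (as an integer, not just mod $2$) under a composition $W = W_1 \cup_{Y_1} W_2$ of cobordisms $Y_0 \to Y_1 \to Y_2$. Since $\chi$ of any closed $3$-manifold vanishes, $\chi(W) = \chi(W_1) + \chi(W_2)$, and Novikov additivity gives $\sigma(W) = \sigma(W_1) + \sigma(W_2)$; the $b_0, b_1$ boundary terms telescope, so $\iota(W) = \iota(W_1) + \iota(W_2)$. Next I would define the absolute grading: for any $N$-admissible pair $(Y,\gamma)$, pick a compact 4-manifold-pair $(W_Y,c_Y)$ with $\partial(W_Y,c_Y) = (Y,\gamma)$ (viewed as a cobordism from the empty pair); such pairs exist because every oriented 3-manifold bounds a 4-manifold, and we can arrange for a cycle extending $\gamma$. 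For $\alpha \in R(Y,\gamma)$, choose an asymptotically cylindrical $\U(N)$-connection $A$ on $W_Y^+$ asymptotic to $\alpha$, and set
\[
  \deg(\alpha) := \iota(W_Y) + \ind(\mathcal D_A) \pmod 2.
\]
That this lifts the relative $\Z/4N$-grading is immediate from the formula, since any two paths differ by elements measured by $\ind(\mathcal D_A)$.

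The main obstacle is independence of the choice $(W_Y, c_Y, A)$. Given two choices $(W_Y^i, c_Y^i, A_i)$, glue $-W_Y^1$ to $W_Y^2$ along $(Y,\gamma)$ to obtain a closed pair $(X,c)$ carrying a closed $\U(N)$-connection $B$. By the additivity of $\iota$ established in step (a) and the additivity of the ASD index under gluing along a closed 3-manifold,
\[
  \bigl(\iota(W_Y^2) + \ind(\mathcal D_{A_2})\bigr) - \bigl(\iota(W_Y^1) + \ind(\mathcal D_{A_1})\bigr) = \iota(X) + \ind(\mathcal D_B).
\]
For closed $X$, the Atiyah--Singer formula yields $\ind(\mathcal D_B) = 2N\kappa(B) - \tfrac{N^2-1}{2}(\chi(X) + \sigma(X))$, while by definition $\iota(X) = \tfrac{N^2-1}{2}(\chi(X) + \sigma(X))$; the Euler--signature contributions cancel, leaving $2N\kappa(B)$, which is an even integer for $\U(N)$-bundles of fixed determinant on a closed 4-manifold. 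Hence the two definitions of $\deg(\alpha)$ agree mod $2$.

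Properties (i) and (ii), and uniqueness, then follow quickly. For a cobordism $(W,c): (Y,\gamma) \to (Y',\gamma')$ carrying a path $p$ from $\alpha$ to $\beta$, gluing a reference $(W_Y,c_Y)$ to $W$ yields a reference 4-manifold for $(Y',\gamma')$, and additivity gives $\deg(\beta) - \deg(\alpha) \equiv \iota(W) + \ind(\mathcal D_A) \pmod 2$; since the cobordism-induced map contributes through $0$-dimensional moduli spaces, $\ind(\mathcal D_A) = 0$ on those components, establishing (i). Property (ii) holds by choosing $W_\emptyset = \emptyset$, which makes both $\iota$ and the index vanish. Finally, any other lift of $\deg$ satisfying (i) differs from ours by an assignment of a constant $\Z/2$-shift to each pair $(Y,\gamma)$; applying (i) to the product cobordism $[0,1]\times Y$ forces this shift to be constant across isomorphism classes, and (ii) pins it to zero.
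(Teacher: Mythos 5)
Your proposal follows exactly the Kronheimer--Mrowka scheme from \cite[Section 25.4]{KM:monopoles-3-man}, which is precisely what the paper cites for this proposition (the paper itself gives no written proof), and the overall structure --- additivity of $\iota$, definition via a bounding pair, well-definedness via a closed-manifold index computation, then (i), (ii) and uniqueness --- is the right one. Two points in step (c) deserve care. First, the Atiyah--Singer formula for the $\su(E)$-valued ASD complex on a closed $4$-manifold is $\ind(\mathcal D_B)=4N\kappa(B)-\tfrac{N^2-1}{2}(\chi+\sigma)$, not $2N\kappa(B)-\cdots$; this matters, because for a $\U(N)$-bundle with fixed determinant one has $\kappa=c_2-\tfrac{N-1}{2N}c_1^2$, so $2N\kappa=2Nc_2-(N-1)c_1^2$ can be odd (e.g.\ $N=2$ with $c_1^2$ odd), and the parity argument as literally written would fail; with the correct coefficient, $4N\kappa=4Nc_2-2(N-1)c_1^2$ is always even and the argument closes. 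Second, the displayed identity $\bigl(\iota(W_Y^2)+\ind(\mathcal D_{A_2})\bigr)-\bigl(\iota(W_Y^1)+\ind(\mathcal D_{A_1})\bigr)=\iota(X)+\ind(\mathcal D_B)$ is not an identity of integers: reversing the orientation of $W_Y^1$ changes both $\iota$ and the cylindrical-end index by correction terms involving $(N^2-1)\chi(W_Y^1)$, and one should note that these corrections cancel modulo $2$ (which is all you need). With those two repairs the proof is complete and agrees with the intended argument.
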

\begin{remark}
	In the case that $N$ is odd, the homomorphisms induced by cobordisms $(W,c):(Y,\gamma)\to (Y',\gamma')$ have always degree $0$
	with respect to the absolute $\Z/2\Z$-grading.
	When $N$ is odd, the absolute grading can be also lifted to a $\mathbf{Z}\slash 4$.
\end{remark}

\subsection{Statement of the Main Theorem} \label{statement}
As in Section \ref{topology}, let $\mathcal Y$ be a 3-manifold with torus boundary and $\gamma$ be a 1-cycle in the interior of $\mathcal Y$. For any subset $S\subset [N]$ with $j$ elements, we defined a pair $(Y_j,\gamma_S)$. We assume that $(\mathcal Y,\gamma)$ are chosen such that all pairs $(Y_j,\gamma_S)$ are $N$-admissible. For example, it suffices to have a closed oriented embedded surface $\Sigma$ in $\mathcal Y$ such that $\Sigma\cdot \gamma$ is coprime to $N$. Therefore, this construction associates an $N$-admissible pair to each vertex of the directed graph $G_N$, defined in Subsection \ref{n-cube}.  If $S\subset [N]$ and $i \notin S$, then there is an edge $\beta_{S,i}$ in $G_N$ connecting $S$ to $S\sqcup\{i\}$. In Section \ref{topology}, we assigned the following cobordism to this edge:
\begin{equation}\label{edge-cob}
  (W^j_{j+1}, c_{S,i}): (Y_j,\gamma_S) \xrightarrow{(M^{j}_{j+1},\,l_{S,i})} (Y_{j+1},\gamma_{S'}) \hspace{1cm}
\end{equation}
There is also one edge in $G_N$ from $[N]$ to $\emptyset$. The corresponding cobordism of pairs for this edge is the following:
\begin{equation}\label{conn-cob}
  (W^N_{N+1},c_\delta):(Y_N, \gamma_{[N]})\xrightarrow{(M^{N}_{N+1},\,l_{\delta})} (Y_0, \gamma_{\emptyset})
\end{equation}

We can also construct a cobordism for each path $q$ in $G_N$ by composing the cobordisms associated to the edges of $q$. For a set $S\subset [N]$ with $j$ elements and an injection map $\sigma:[k]\to [N]\backslash S$, let $q$ be the path of length $k$ assigned to the pair $(S,\sigma)$. The cobordism associated to this path is:
\begin{equation} \label{cob-path-1}
  (W^{j}_{j+k},c_{S,\sigma}):(Y_{j}, \gamma_S)\xrightarrow{\coprod_{0\le i\le k-1}(M^{j+i}_{j+i+1},\,l_{S\sqcup\sigma([i]),\sigma(i)})} (Y_{j+k}, \gamma_{S\sqcup \sigma([k])} )
\end{equation}
We also labeled paths $q$ containing the edge $\delta$ with a pair of injection maps $\sigma:[k]\to [N]$ and $\tau:[l]\to [N]$ whose images are disjoint. This path starts from $S=[N]\backslash\Im \sigma$, ends at $T=\Im \tau$ and has length $k+l+1$. The cobordism assigned to this path is equal to:
\begin{equation} \label{cob-path-2}
  (W^{N-k} _{N+l+1}, c_{\sigma,\tau}): (Y_{N-k}, \gamma_{S}) \xrightarrow{\coprod_{0\le i\le k+l}(M^{N-k+i}_{N-k+i+1},\,l_{i})} (Y_{l}, \gamma_{T})
\end{equation}
where
\[
  l_i=
  \left\{
  \begin{array}{ll}
    l_{S\sqcup \sigma([i]),\sigma(i)}& ~\text{when}~ 0\leq i\leq k-1; \\
    l_\delta & ~\text{when}~i=k; \\
    l_{\tau([i-k-1]),\tau(i-k-1)} & ~\text{when}~k+1\leq i\leq k+l.
  \end{array}
  \right.
\]

Previously we fixed Riemannian metrics on 3-manifolds $Y_j$ and families of metrics $\bbW_k^j$ on cobordisms $W_k^j$ compatible with the Riemannian metrics on the 3-manifolds $Y_j$. We fix a small perturbation of the Chern-Simons functional of the pair $(Y_j,\gamma_S)$ for each set $S$ with $j$ elements such that the Floer chain complex $\mathfrak{C}_\ast^N(Y_j,\gamma_S)$ is well-defined.
\begin{theorem} \label{main}
	For each path $q$ in $G_N$ from a vertex $S\subset [N]$ to another vertex $T\subset [N]$ with length at most $N+1$,
	there is a map $f_q:\fC_*^N(Y_{|S|}, \gamma_S) \to \fC_*^N(Y_{|T|}, \gamma_T)$ such that
	$(\{\mathfrak{C}^N_\ast(Y_{|S|}, \gamma_{S})\}, \{f_q\})$ forms an exact $N$-cube.
	Let $\nu$ denote a generator of the kernel of the map $H_1(\partial \mathcal Y) \to H_1(\mathcal Y)$.
	Let $S\subset [N]$ with $j$ elements, $\sigma:[k]\to [N]\backslash S$ and $q$ be the path associated to the pair $(S,\sigma)$.
	If $N$ is odd, then the degree of the map $f_q$ with respect to the $\Z/2\Z$-grading is equal to $k-1$.
	If $N$ is odd, then the degree of the map $f_q$ with respect to the $\Z/2\Z$-grading is equal to $k-1+\sum_{m=j+1}^{k+j}\epsilon_m'$ where
	$\epsilon_m'$ is defined in \eqref{epsilon-i}.
\end{theorem}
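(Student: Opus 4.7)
The idea is to build the exact $N$-cube by counting anti-self-dual connections on the cobordisms $W^j_k$ equipped with the family of metrics $\bbW^j_k$ constructed in Section~\ref{metrics}. For each path $q$ in $G_N$ I will define the map $f_q$ as in~\eqref{map-family}: pick perturbations of the Chern--Simons functionals on every $(Y_j,\gamma_S)$ so that $R(Y_j,\gamma_S)$ is non-degenerate and irreducible, then pick a good perturbation of the ASD equation on the family $\bbW^{|S|}_{|T|}$ over the appropriate associahedron. On every lens-space middle end $M^{j}_{j+1}\cong S^3$ the only flat connection is trivial, and on the unique lens-space end $L(N,1)$ occurring inside $W^N_{N+1}$ I fix the distinguished flat connection $\chi=1\oplus\zeta\oplus\cdots\oplus\zeta^{N-1}$. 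The map $f_q$ counts zero-dimensional components of $\mathcal M_p(\bbW^{|S|}_{|T|},c_q;\alpha,\chi,\beta)$, summed over all paths $p$ compatible with the prescribed ends. Since $W^j_{j+1}\#W^{j+1}_{j+2}=W^{j}_{j+2}$ and the chosen families agree on overlaps, composition of cobordisms is compatible with the cubical structure.

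The identities~\eqref{equ1}--\eqref{equ3} are direct applications of Proposition~\ref{hom-form}: for each path $q$, the homotopy relation $d\circ f_q+f_q\circ d=f_{\partial K}$ gives a sum over codimension-one faces of the associahedron $\mathcal K_n$ that parametrizes $\bbW^{|S|}_{|T|}$. The codimension-one faces are of two types. \emph{Vertical cuts} $Y_m$ (with $|S|<m<|T|$) are upright in the sense defined before Proposition~\ref{hom-form}, so by that proposition their contribution factors as $f_{q_2}\circ f_{q_1}$ where $q_1,q_2$ are the two subpaths; summing over all such decompositions produces exactly the right-hand sides of~\eqref{equ1} and~\eqref{equ2}. \emph{Spherical cuts} $M^{j_1}_{j_2}$ with $1<j_2-j_1<N+1$ split off a Gibbons--Hawking piece $\fX_\bm$ with non-trivial boundary only spheres; by Corollary~\ref{regular-GH} any moduli space on $\fX_\bm$ contributing to a zero-dimensional end must be fiberwise regular and of negative expected dimension, hence empty. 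The cuts with $j_2-j_1=N+1$ split off $X_N(l)$ joined to a second Gibbons--Hawking piece with a single $L(N,1)$ boundary; these are the only contributions that survive, and they are exactly the ones which must combine to give either zero (when $S\neq T$) or the identity (when $S=T$).

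The main obstacle is the $S=T$ case, which forces the paper's standing hypothesis $N\leq 4$. Here Proposition~\ref{avoid-noncomp-red-pre} is the essential input: after an arbitrarily small perturbation the only contributing ASD connections on $X_N(l)$ with index $-h^0(\beta_0)-h^0(\chi)$ are completely reducible and come in families $B_{\sigma,\tau}(g)$ indexed by bi-permutations $(\sigma,\tau)$ of type $l$, parametrized by an open subset of $\mathcal K_{N+1}^{\circ}$. To match these with generators $\alpha\in R(S^1\times S^2,\emptyset)\cong\Delta^{\ft}_{N-1}$, I will feed the parameter in $\mathcal K_{N+1}$ through the reduced holonomy map $\overline{\hol}_{\sigma,\tau}$ of Proposition~\ref{hol-fac-forg} and glue to a neck-stretched solution on the complementary cobordism using standard gluing. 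Proposition~\ref{hol-maps-k-space-to-kspace} (faces go to affine subspaces of the right dimension) together with Proposition~\ref{deg-one} (the total map $\hol_S:\Delta_{N-1}\to\Delta^{\ft}_{N-1}$ is homotopic to an affine isomorphism through maps which are smooth on a dense open set) shows that for a generic $\alpha$ the signed count of pairs $(\sigma,\tau)$ and gluing parameters equals $1$ mod $2$. Summed against the complementary moduli space on $W\setminus X_N(l)$, which is a Floer cobordism map homotopic to the identity because $W\setminus X_N(l)$ is a product, this produces exactly the identity endomorphism of $\fC_*^N(Y_{|S|},\gamma_S)$. For $S\neq T$, the analogous count involves bi-permutations with the wrong end data on $L(N,1)$ and an index mismatch; Proposition~\ref{dual}(ii) forces the moduli space to be empty, so these terms vanish.

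The degree computation is a routine index calculation. For a path $q$ of length $k$ associated to $(S,\sigma)$ the cobordism $W^{|S|}_{|S|+k}$ carries a family of metrics of dimension $k-1$, so $f_q$ counts solutions in a moduli space of formal dimension $k-1$ below the top stratum; that shifts the $\Z/2\Z$-grading by $k-1$ relative to the $\Z/4N$-grading shift $\iota(W^{|S|}_{|S|+k})$. Working out $\iota$ for $W^{|S|}_{|S|+k}$ using $\chi(W)+\sigma(W)$ and the Betti numbers of $Y_{|S|}$, $Y_{|S|+k}$ produces the parity $k-1$ when $N$ is odd, and $k-1+\sum_{m=|S|+1}^{|S|+k}\epsilon_m'$ when $N$ is even with the correction $\epsilon_m'$ arising from the sign of the self-intersection that appears when the filling slope crosses the class $\nu$, exactly as defined in~\eqref{epsilon-i}. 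This finishes the verification of the exact $N$-cube structure and proves the theorem.
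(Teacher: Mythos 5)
Your overall architecture matches the paper's: define $f_q$ by counting zero-dimensional parametrized moduli spaces over $\bbW^{|S|}_{|T|}$, get the cube relations from Proposition~\ref{hom-form}, handle the length-$(N+1)$ paths via the $S^1\times S^2$ cut and the holonomy/degree-one argument, and read off the grading from $\iota(\overline W^j_k)+\dim(\mathcal K_{k-j+1})$. The $S=T$ identity argument and the degree computation are essentially the paper's.

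However, there is a concrete error in your treatment of the spherical cuts. You claim that every spherical cut $M^{j_1}_{j_2}$ with $1<j_2-j_1<N+1$ contributes zero because Corollary~\ref{regular-GH} forces the Gibbons--Hawking factor of the moduli space to be empty. Corollary~\ref{regular-GH} only says that the framed moduli space on $\fX_\bm$ is empty \emph{or} of dimension at least $n-2$, where $n-2=\dim\mathcal K_{j_2-j_1}$. For $j_2-j_1=2$ the parametrizing associahedron $\mathcal K_2$ is a point, so the bound is vacuous: the GH factor can be a nonempty zero-dimensional moduli space, and the face contribution $f_F$ does \emph{not} vanish in general. This is exactly why the cube map $f^S_T$ must be defined as the sum of $f_{S,\sigma}$ over \emph{all} orderings $\sigma$ of $T\setminus S$: for a cut $M^{j'}_{j'+2}$, the contributions of $\sigma$ and of the transposed ordering $\overline\sigma$ (swapping the two values assigned to the positions $j'-j$ and $j'-j+1$) are identified via the complex-conjugation symmetry of Proposition~\ref{dual}(i), so they cancel in pairs mod $2$; the analogous cancellation for paths through the connecting edge $\delta$ uses Proposition~\ref{dual}(ii). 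Without this pairing mechanism the right-hand sides of \eqref{equ1} and \eqref{equ2} acquire extra terms and the cube relations fail. You should replace the blanket vanishing claim by: (a) vanishing for $j_2-j_1\geq 3$ by Corollary~\ref{regular-GH}, and (b) pairwise cancellation for $j_2-j_1=2$ via Proposition~\ref{dual}. (A smaller point: the vanishing of the $S^1\times S^2$ boundary term for $S\neq T$ comes from Proposition~\ref{bi-per-con} --- completely reducible connections of the critical index exist only when the images of $\sigma$ and $\tau$ are disjoint, which forces $S=T$ --- rather than from Proposition~\ref{dual}(ii).)
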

Note that Corollary \ref{Euler-char} is a consequence of the second part of the above theorem and Corollary \ref{euler-char-exact-polygon}.

\subsection{Proof of the Main Theorem}\label{proof-thm}
In this subsection, we use the families of metrics $(\bbW^{j}_{j+k},c_{S,\sigma})$ and $(\bbW^{N-k} _{N+l+1}, c_{\sigma,\tau})$ on the pairs \eqref{cob-path-1} and \eqref{cob-path-2} to define maps $f_q$ as in Section \ref{exact-polygon}.
 For any two subsets $S$, $T$ of $[N]$, we can then define $f^S_T$ to be the sum of all maps $f_q$ where $q$ is a path from $S$ to $T$ in $G_N$. We shall also check the following identities, which verify Theorem \ref{main}:
\begin{align}
   df^S_T+f^S_Td&=\sum_{ S\subsetneq R\subsetneq T} f^R_T f^S_R,  ~~~~~~~\text{if}~ S \subsetneq T;  \label{eq1}\\
   df^S_T+f^S_Td&=\sum_{\substack{S\subsetneq R \\ \text{or}~R\subsetneq T}}f^R_T f^S_R , ~~~~~~~~\text{if}~|T|\le |S|~\text{and}~ S\neq T; \label{eq2}\\
  df^S_T+f^S_Td&=\text{1}+\sum_{\substack{ S\subsetneq R \\ \text{or}~R\subsetneq T}}f^R_T f^S_R , ~\text{if}~S=T   \label{eq3}
\end{align}

\subsubsection*{Maps $f_q$ for paths of length at most $N$}\label{A8vanishing}

Let $S$ and $T$ be subsets of $[N]$ such that $S\subsetneq T$. Let $|S|=j$, $|T|=k$. Let also $\sigma:[k-j]\to T\setminus S$ be a bijection. Then the cobordism of pairs $(W^{j}_{k},c_{S,\sigma})$, defined in \eqref{cob-path-1}, satisfies Condition \ref{noS1S2}. Moreover, the pair $(\bbW^{j}_{k},c_{S,\sigma})$ satisfies Condition \ref{noS1S2cut}. All middle ends of the cobordism $W^{j}_{k}$ are 3-dimensional spheres. To any such end, we associate the trivial $\SU(N)$-connection and we will write $\eta$ for this choice of flat connections on the middle ends. In the next section, we shall show that $(\bbW^{j}_{k},c_{S,\sigma},\eta)$ admits a good perturbation. Therefore, we can construct a map using the triple $(\bbW^{j}_{k},c_{S,\sigma},\eta)$ as in \eqref{map-family}. We will write $f_{S,\sigma}$ for this map.

Let $\overline W^{j}_{k}$ denote the result of gluing $D^4$ to $W^{j}_{k}$ along the middle boundary components. Since the parametrizing polyhedron of $\bbW^{j}_{k}$ is the associahedron $\mathcal K_{k-j+1}$, the degree of the map $f_{S,\sigma}$ is equal to $\iota(\overline W^{j}_{k})+\dim(\mathcal K_{k-j+1})$. Thus, the degree of this map is equal to:
\begin{equation} \label{degree-Z/2-ele}
  k-j-1+(N^2-1)\sum_{l=j+1}^{k}\frac{1}{2}\(\chi(\overline W^{l-1}_{l})+\sigma(\overline W^{l-1}_{l})+b_1(Y_{l-1})-b_1(Y_l)\)
\end{equation}
In particular, if $N$ is odd, then \eqref{degree-Z/2-ele} is equal to $k-j-1$. In general, $\overline W^{l-1}_{l}:Y_{l-1}\to Y_{l}$ is an elementary cobordism and the term $\iota(\overline W^{l-1}_{l})$ in the above sum is equal to $\epsilon_l'$ \cite[Subsection 42.3]{KM:monopoles-3-man}.

For each open face $F$ of $\mathcal K_{k-j+1}$ with codimension one, we can follow \eqref{fboundaryK} to define a map:
\[
  f_{F}:\fC_*^N(Y_{|S|}, \gamma_S) \to \fC_*^N(Y_{|T|}, \gamma_T).
\]
Firstly let $Y_F$, the cut associated to the face $F$, be vertical. There is $l$ such that $j<l<k$ and the restriction of the family of metrics $\bbW^j_{k}$ to the face $F$ is given by the families of metrics $\bbW^j_{l}$, $\bbW^{l}_{k}$ on the cobordisms $W^j_{l}$, $W^{l}_{k}$. Thus Proposition \ref{fboundaryK} asserts that $f_F$ is equal to the composition $f_{S',\sigma_2} \circ f_{S,\sigma_1}$ where $\sigma_1:[l-j] \to [N]$, $\sigma_2:[k-l] \to [N]$ and $S'$ are defined as below:
\[
  \sigma_1(i):=\sigma(i)\hspace{1cm} \sigma_2(i):=\sigma(i+l-j) \hspace{1cm} S'=S\sqcup \sigma_1.
\]

Next, let $Y_F$ be the spherical cut $M^{j'}_{k'}$ where $j\le j'<k'\le k$. Removing a regular neighborhood of $Y_F$ has two connected components $W_0$ and $W_1$. We assume that the indices are chosen such that $Y_k$ is one of the boundary components of $W_0$. Thus $W_1$ is a Gibbons-Hawking manifold diffeomorphic to $\fX_{\bm}$ where $\bm=(j',j'+1,\cdots,k')$. Suppose the restriction of the family of metrics $\bbW^{j}_{k}$ to the face $F$ is given by families of metrics $\bbW_0$, $\bbW_1$ on $W_0$, $W_1$ respectively parametrized by the associahedra of dimensions $(k-k')-(j-j')$, $k'-j'-2$. Let also $c_0$ and $c_1$ denote the restrictions of $c_{S,\sigma}$ to $W_0$ and $W_1$.
By the definition in \eqref{fboundaryK}:
\[
  f_{F}(\alpha)=\sum_{\beta, p_0,p_1,\eta'}
    \# \left(\widetilde{\mathcal{M}}_{p_0}(\bbW_{0},c_0; \alpha,\eta',\beta) \times_{\Gamma_{\eta'}}
   \widetilde{\mathcal{M}}_{p_1} (\bbW_1,c_1;\eta')\right) \cdot \beta
\]
where $p_0$ is a path along $(W_0,c_0)$ whose restriction to the boundary components are $\alpha$, $\eta'$ and $\beta$, and $p_1$ is a path along $(W_1,c_1)$ whose restriction to the boundary components are given by $\eta'$. Note that in our notation we did not specify the flat connections on the boundary components of $W_0$ and $W_1$ diffeomorphic to $S^3$. Of course, these flat connections are necessarily trivial connections. The moduli space:
\begin{equation}
  \widetilde{\mathcal{M}}_{p_0}(\bbW_{0},c_0; \alpha,\eta',\beta) \times_{\Gamma_{\eta'}}\widetilde{\mathcal{M}}_{p_1} (\bbW_1,c_1;\eta')
\end{equation}
has contribution in the definition of the map $f_{F}$ if it is 0-dimensional. Moreover, $\widetilde{\mathcal{M}}_{p_0}(\bbW_{0},c_0; \alpha,\eta',\beta)$ contains only irreducible connections because $(W_0,c_0) $ has two $N$-admissible ends. Therefore, the regularity assumption implies that $\widetilde{\mathcal{M}}_{p_1} (\bbW_1,c_1;\eta',\eta)$ is $0$-dimensional. On the other hand, Corollary \ref{regular-GH} asserts that the dimension of this moduli space is at least $k'-j'-2$. Therefore, $f_{F}$ vanishes unless $k'=j'+2$.

If $k'=j'+2$, then we define $\overline \sigma$:
\[
  \overline \sigma(i)=\left\{
  \begin{array}{ll}
  	\sigma(j'-j+1)&i=j'-j \\
	\sigma(j'-j)  &i=j'-j+1 \\
	\sigma(i)&\text{otherwise}
  \end{array}
  \right.
\]
The pairs $(W^{j}_{k},c_{S,\sigma})$ and $(W^{j}_{k},c_{S,\overline \sigma})$ have the same $N$-admissible ends. If we define $\overline c_0=c_{S,\overline \sigma}\cap W_0$ and $\overline c_1=c_{S,\overline \sigma}\cap W_1$, then $\overline c_0=c_0$. Moreover, Proposition \ref{dual} implies that there is a path $\overline p_1$ on $(W_1,\overline c_1)$ such that $\widetilde{\mathcal{M}}_{p_1} (\bbW_1,c_1;\eta')=\widetilde{\mathcal{M}}_{\overline p_1} (\bbW_1,\overline c_1;\eta')$. Therefore, we have:
\begin{equation}\label{mod-space-boundary}
  \widetilde{\mathcal{M}}_{p_0}(\bbW_{0},c_0; \alpha,\eta',\beta) \times_{\Gamma_{\eta'}}\widetilde{\mathcal{M}}_{p_1} (\bbW_1,c_1;\eta')=
  \widetilde{\mathcal{M}}_{p_0}(\bbW_{0},c_0; \alpha,\eta',\beta) \times_{\Gamma_{\eta'}}\widetilde{\mathcal{M}}_{\overline p_1} (\bbW_1,\overline c_1;\eta')
\end{equation}
Associated to the cut $Y_F$ and the bijections $\sigma$ and $\overline \sigma$, we have two maps $f_{F}$, which are equal to each other by \eqref{mod-space-boundary}. This observation and Proposition \ref{hom-form} shows that the map $f_T^S$, the sum of the maps $f_{S,\sigma}$ for all bijections $\sigma:[k-j] \to T\setminus S$, satisfies \eqref{eq1}.

Next, let $S$ and $T$ be subsets of $[N]$ such that $|S|>|T|$. Let $|S|=j$, $|T|=k$. Let $\sigma:[N-j]\to [N]\backslash S$ and $\tau:[k] \to T$ be two bijections. Then the pair $(\sigma,\tau)$ defines a path of length $N+1-j+k$ from $\sigma$ to $\tau$. The pair $(W^j_{k+N+1},c_{\sigma,\tau})$ from \eqref{cob-path-2} is the cobordism of pairs associated to the pair $(\sigma,\tau)$. This cobordism is a cobordism from $(Y_j,\gamma_S)$ to $(Y_k,\gamma_T)$ with a middle end. The middle end of this cobordism is the union of $L(N,1)$ and $N-j+k$ copies of $S^3$. Let $\eta$ on the middle end of this cobordism be given by the connection $\text{1}\oplus \zeta \oplus \cdots \oplus \zeta^{N-1}$ on $L(N,1)$ and the trivial connections on the remaining boundary components. Following \eqref{map-family}, we can use the triple $(\bbW^j_{k+N+1},c_{\sigma,\tau},\eta)$ to define a map:
\[
  f_{\sigma,\tau}:\fC_*^N(Y_{|S|}, \gamma_S) \to \fC_*^N(Y_{|T|}, \gamma_T).
\]
We can follow the same strategy as above to show that $f^S_T$, the sum of the maps $f_{\sigma,\tau}$ over all bijections $\sigma:[N-j]\to [N]\backslash S$ and $\tau:[k] \to T$, satisfies \eqref{eq2}. The only difference is that we need to replace part (i) of Proposition \ref{dual} with part (ii) of the same proposition.

\subsubsection*{Maps $f_q$ for paths of length $N+1$}\label{A8identity}

Let $S$ and $T$ be subsets of $[N]$ such that $|S|=|T|=l$. Let $\sigma:[N-l]\to [N]\backslash S$ and $\tau:[l]\to T$ be two bijections. Then $(\sigma,\tau)$ determines a path of length $N+1$ in $G_N$. The pair associated to this path is the cobordism $(W^l_{l+N+1},c_{\sigma,\tau})$ from $(Y_l,\gamma_S)$ to $(Y_l,\gamma_T)$. The middle end of this cobordism is the union of $L(N,1)$ and $N$ spheres. Let $\eta$ be the connection on the middle end of $(W^l_{l+N+1},c_{\sigma,\tau})$ induced by the connection $\text{1}\oplus \zeta \oplus \cdots \oplus \zeta^{N-1}$ on $L(N,1)$ and trivial connections on the remaining boundary components. We want to define a map $f_{\sigma,\tau}$ using the 0-dimensional moduli spaces associated to the triple $(\bbW^l_{l+N+1},c_{\sigma,\tau},\eta)$. We also wish to study the map $df_{\sigma,\tau}+f_{\sigma,\tau}d$ using the 1-dimensional moduli spaces associated to $(\bbW^l_{l+N+1},c_{\sigma,\tau},\eta)$. However, we cannot use the general results of Subsection \ref{inst-floer} because the family $\bbW^l_{l+N+1}$ has a cut which is diffeomorphic to $S^1\times S^2$. For the rest of this section, we also need to assume that $N\leq 4$.

Let $p$ be a path along  $(W^l_{l+N+1},c_{\sigma,\tau})$ such that $\ind(p)$ is at most $-N+1$. Let also $\pi$ denote the projection map from $\bbW^l_{l+N+1}$ to $\mathcal K_{N+2}$. As in Subsection \ref{cyl-end-mod}, we define a moduli space $\mathcal M_p(\bbW^l_{l+N+1},c_{\sigma,\tau})$ of dimension $\ind(p)+N$ together with a projection map ${\rm Pr}:\mathcal M_p(\bbW^l_{l+N+1},c_{\sigma,\tau}) \to \mathcal{K}_{N+2}$.
Firstly let $\overline F_0$ be the face of $\mathcal K_{N+2}$ whose associated cut is $S^1\times S^2$. Removing a neighborhood of $S^1\times S^2$ produces a 4-manifold with two connected components. One of these connected components is diffeomorphic to $X_N(l)$. We denote the other connected component by $W_0$. The restriction of $c_{\sigma,\tau}$ to $X_N(l)$ is equal to $w_{\sigma,\tau}$, which is defined in Section \ref{comp-red}. Let also $c_0$ denote $c_{\sigma,\tau}\cap W_0$. The topology of the pair $(W_0,c_0)$ can be described as follows. Remove a regular neighborhood of the knot $\{\frac{1}{2}\}\times K_l$ in the 4-manifold $[0,1]\times Y_l$. (Recall that the knot $K_j$ is the core of the Dehn filling torus.) The resulting manifold is a cobordism form $Y_l$ to $Y_l$ with middle end $S^1\times S^2$ and can be identified with $W_0$. We also have:
\[
  c_0=(\gamma_S\times [0,\frac{1}{2}]\cup\gamma_T\times [\frac{1}{2},1])\cap W_0
\]
The family of metrics parametrized by the face $\overline F_0$ is given by a fixed metric on $W_0$ and the family of metric $\bbX_{N}(l)$ on $X_{N}(l)$. In particular, the face $\overline F_0$ is equal to $\mathcal K_{N+1}$.

\begin{remark} \label{decomp-product}
	In the case that $S=T$, the above description implies that there is a cut in $[0,1]\times Y_l$
	 diffeomorphic to $S^1\times S^2$ which decomposes the pair
	 $([0,1]\times Y_l,[0,1]\times \gamma_S)$ into the following pairs:
	 \[
	   (W_0,c_0)\hspace{1cm} (S^1\times D^3,c_1').
	 \]
	 Here $c_1'$ is a multiple of a the cylinder $S^1\times \gamma$ where $\gamma$ is a path connecting
	 two points in the boundary of the 3-ball $D^3$.
\end{remark}

We define:
\begin{equation} \label{mod-over-Fbar}
  \mathcal M_p(\pi^{-1}(\overline F_0),c_{\sigma,\tau}):=
  \bigcup_{\substack{p_0\#p_1=p\\ \alpha\in \Delta_{N-1}^{\ft}}}
  \widetilde {\mathcal M}_{p_0,\alpha}(W_0,c_0;\alpha)\times_{\Gamma_{\alpha}}
  \widetilde {\mathcal M}_{p_1,\alpha}(\bbX_N(l),w_{\sigma,\tau};\alpha)
\end{equation}
An element $[A_0,A_1]$ of this moduli space is regular if it satisfies the following conditions:
\begin{itemize}
  \item[(i)] The common limiting flat connection of $A_0$ and $A_1$ on $S^1\times S^2$ lies in
  		$\Delta_{N-1}^{\ft,\circ}$.
  \item[(ii)] $A_0$ represents a regular element of $\mathcal M_{p_0}(W_0,c_0;\Delta_{N-1}^{\ft,\circ})$ and
  		$A_1$ represents a regular element of
		$\mathcal M_{p_1}(\bbX_N(l),w_{\sigma,\tau};\Delta_{N-1}^{\ft,\circ})$.
  \item[(iii)]	 The maps $r_0:\widetilde{\mathcal M}_{p_0}(W_0,c_0;\Delta_{N-1}^{\ft,\circ})\to \Delta_{N-1}^{\ft,\circ}$
  		in a neighborhood of $[A_0]$ is transversal to
		$r_1: \widetilde{\mathcal M}_{p_1}(\bbX_N(l),w_{\sigma,\tau};\Delta_{N-1}^{\ft,\circ}) \to
		\Delta_{N-1}^{\ft,\circ}$ in a neighborhood of $[A_1]$.
\end{itemize}

Let $\overline F'$ be any other face of $\mathcal K_{N+2}$ such that $F'$ and $\overline F_0$ are disjoint. We can proceed as in Subsection \ref{cyl-end-mod} to define $\mathcal M_p(\pi^{-1}(F'),c_{\sigma,\tau})$ and regularity of connections in this subspace of $\mathcal M_p(\bbW^l_{l+N+1},c_{\sigma,\tau})$. The disjoint union of \eqref{mod-over-Fbar} and the spaces of the form $\mathcal M_p(\pi^{-1}(F'),c_{\sigma,\tau})$ defines $\mathcal M_p(\bbW^l_{l+N+1},c_{\sigma,\tau})$ as a set. A topology on this set is also defined in a standard way.

In Subsection \ref{good-perb}, it will be shown that we can use a perturbation in the definition of the ASD equation such that all elements of the moduli spaces $\mathcal M_p(\bbW^l_{l+N+1},c_{\sigma,\tau})$ are regular in the case that $\ind(p)\leq -N+1$. More specifically, we shall show that this perturbation can be chosen such that the moduli space $\mathcal M_p(\bbW^l_{l+N+1},c_{\sigma,\tau})$ is empty if $\ind(p)\leq -N-1$. In the case that $\ind(p)=-N$, the moduli space is a compact 0-manifold which is mapped to the interior of $\mathcal K_{N+2}$ by $\Pr$. If $\ind(p)=-N+1$, then the moduli space $\mathcal M_p(\bbW^l_{l+N+1},c_{\sigma,\tau})$ is a smooth 1-manifold whose boundary is equal to:
\begin{equation}\label{moduli-space-face}
	\bigsqcup_{ F} \mathcal M_p(\pi^{-1}(F),c_{\sigma,\tau})
\end{equation}
where the sum is over all codimension one faces of $\mathcal K_{N+2}$. For the face $F_0$ corresponding to the cut $S^1\times S^2$, the subspace $\mathcal M_p(\pi^{-1}(F_0),c_{\sigma,\tau})$ is equal to the fiber product:
\begin{equation}\label{mod-M0}
 \mathcal M_{p_0}(W_0,c_0;\Delta_{N-1}^{\ft,\circ})\hspace{1mm}   _{r_0}\hspace{-1mm}\times_{r_1} \mathcal M_{cr}(\bbX_N(l),w_{\sigma,\tau})
\end{equation}
where $\mathcal M_{cr}(\bbX_N(l),w_{\sigma,\tau})$ is the moduli space of (perturbed) ASD completely reducible connections with respect to the path $p_1$ with $\ind(p_1)=-2(N-1)$. The perturbation in the definition of $\mathcal M_{cr}(\bbX_N(l),w_{\sigma,\tau})$ can be made arbitrarily small. The path $p_0$ is also chosen such that it has the same limiting flat connection as $p$ on $(Y_l,\gamma_S)$, $(Y_l,\gamma_T)$ and its index is equal to $-(N-1)$. In particular, Proposition \ref{bi-per-con} implies that the space in \eqref{mod-M0} is empty unless the images of $S$ and $T$ are disjoint. The moduli space $\mathcal M_{p_0}(W_0,c_0;\Delta_{N-1}^{\ft,\circ})$ is a compact $0$-dimensional manifold consisting of regular connections. The maps:
\[
  r_0:\mathcal M_{p_0}(W_0,c_0;\Delta_{N-1}^{\ft,\circ}) \to \Delta_{N-1}^{\ft,\circ} \hspace{1cm}r_1:\mathcal M_{cr}(\bbX_N(l),w_{\sigma,\tau}) \to \Delta_{N-1}^{\ft}
\]
are given by restrictions of connections to $S^1\times S^2$. We cannot guarantee that $\mathcal M_{cr}(\bbX_N(l),w_{\sigma,\tau})$ consists of only regular solutions. Nevertheless, any point in $r_1^{-1}({\rm image}(r_0))$ is regular.

The moduli space $\mathcal M_p(\bbW^l_{l+N+1},c_{\sigma,\tau})$ can be compactified into a compact 1-manifold by adding the points in the following space:
\begin{equation}\label{end-type-1}
	\bigsqcup_{p_0\#p_1=p}\breve {\mathcal M}_{p_0}(Y_l\times \R,\gamma_S\times \R)\times
	\mathcal M_{p_1}(\bbW^l_{l+N+1},c_{\sigma,\tau})
\end{equation}
and:
\begin{equation}\label{end-type-2}
	\bigsqcup_{p_0\#p_1=p} \mathcal M_{p_0}(\bbW^l_{l+N+1},c_{\sigma,\tau})
	\times \breve {\mathcal M}_{p_1}(Y_l\times \R,\gamma_S\times \R).
\end{equation}
In \eqref{end-type-1}, $\ind(p_0)=-1$ and $\ind(p_1)=-N$. A similar assumption holds for the paths $p_0$ and $p_1$ in \eqref{end-type-2}.

Let $\tilde f_{\sigma,\tau}:\fC_*^N(Y_l,\gamma_S) \to \fC_*^N(Y_l,\gamma_T)$ be a map defined as:
\[
  \tilde f_{\sigma,\tau}(\alpha)=
  \sum_{p}\#\mathcal M_p(\bbW^l_{l+N+1},c_{\sigma,\tau},\alpha,\eta,\beta)\cdot \beta
\]
where the sum is over all paths $p$ such that $\ind(\mathcal D_p)=-N$ and the restriction of $p$ to the incoming, the outgoing and the middle ends of $W^l_{l+N+1}$ are $\alpha$, $\beta$ and $\eta=1\oplus \zeta \oplus \cdots \oplus \zeta^{N-1}$.
Moreover, the description of the boundary components in the compactification of 1-dimensional moduli spaces gives rise to the following analogue of Proposition \ref{hom-form}:
\begin{equation}\label{hom-form-XNl}
  d\tilde f_{\sigma,\tau}+\tilde f_{\sigma,\tau}d=\sum_{F}f_F
\end{equation}
where the sum on the left hand side of the above identity is over all codimension one faces of $\mathcal K_{N+2}$. The map $f_F$ is defined as in \eqref{fboundaryK} by counting the points in the spaces that appear in \eqref{moduli-space-face}. In particular, one of the terms in \eqref{hom-form-XNl} is $f_{F_0} $ in correspondence with the face $F_0$. To emphasize that this map depends on the choice of $\sigma$ and $\tau$, we will use the alternative notation $f^\partial_{\sigma,\tau}$ to denote this map.

Let $\tilde f^S_T$ be the sum of all maps $\tilde f_{\sigma,\tau}$. By arguing as in the previous part, we can show that:
\[
  d\tilde f^S_T+\tilde f^S_Td=\sum_{\substack{ S\subsetneq R \\ \text{or}~R\subsetneq T}}f^R_T f^S_R+
  \sum_{\sigma,\tau}f^\partial_{\sigma,\tau}
\]
where the second sum in the left hand side of the above identity is over all bijections $\sigma:[N-l]\to [N]\setminus S$ and $\tau:[l]\to T$.

\begin{prop}\label{homotopy-to-identity}
	Let $S$ and $T$ be as above. If $S\neq T$, then the map $f^\partial_{\sigma,\tau}$ vanishes.
	If $S=T$, then there is a map $g_S:\fC^N_*(Y_l,\gamma_S)\to \fC^N_*(Y_l,\gamma_S)$
	such that:
	\[
	  dg_S+g_Sd=1-\sum_{\sigma,\tau}f^\partial_{\sigma,\tau}
	\]
	where the sum is over all bijections $\sigma:[N-l]\to [N]\setminus S$ and $\tau:[l]\to S$.
\end{prop}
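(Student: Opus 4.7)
The proof separates into two cases according to whether $S = T$.

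\textbf{Case $S \neq T$.} By definition, $f^\partial_{\sigma,\tau}$ is built from the fiber product \eqref{mod-M0} involving $\mathcal M_{cr}(\bbX_N(l), w_{\sigma,\tau})$, the moduli space of completely reducible ASD connections with limit $\chi = 1 \oplus \zeta \oplus \cdots \oplus \zeta^{N-1}$ on $L(N,1)$. Proposition \ref{avoid-noncomp-red-pre}(ii) guarantees that, after a suitable small perturbation, every connection contributing to the relevant $0$-dimensional count is completely reducible, and Proposition \ref{bi-per-con} then characterizes when such connections exist: the bundle associated to $w_{\sigma,\tau}$ supports one exactly when $\Im \sigma$ and $\Im \tau$ are disjoint. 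Since $\sigma : [N-l] \to [N] \setminus S$ and $\tau : [l] \to T$ are bijections onto their images and $|S| = |T| = l$, disjointness is equivalent to $T \subseteq S$, hence $S = T$. Therefore $f^\partial_{\sigma,\tau}$ vanishes whenever $S \neq T$.

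\textbf{Case $S = T$.} The plan is to produce $g_S$ as a sum $g_S^{\mathrm{I}} + g_S^{\mathrm{II}}$ of two chain homotopies, each linking the relevant map to the map $f_{\mathrm{Id}}$ induced by the trivial cobordism $([0,1] \times Y_l,\, [0,1] \times \gamma_S)$ with a product metric. A standard argument in $\SU(N)$-instanton Floer homology, using that on a product cobordism the contributing $0$-dimensional moduli consist only of translation-invariant connections after perturbation, provides a chain homotopy $g_S^{\mathrm{I}}$ with $d g_S^{\mathrm{I}} + g_S^{\mathrm{I}} d = 1 + f_{\mathrm{Id}}$. To construct $g_S^{\mathrm{II}}$, I invoke Proposition \ref{deg-one}: by Remark \ref{decomp-product}, cutting along the $S^1 \times S^2$ around the knot decomposes the trivial cobordism as $(W_0, c_0) \cup_{S^1 \times S^2} (S^1 \times D^3, c_1')$, and via \eqref{hol-ST} the bi-barycentric decomposition assembles the maps $\overline{\hol}_{\sigma,\tau}$ into a single map $\hol_S : \Delta_{N-1} \to \Delta_{N-1}^\ft$, which Proposition \ref{deg-one} connects by a homotopy $H_S : [0,1] \times \Delta_{N-1} \to \Delta_{N-1}^\ft$ to an affine isomorphism $H_0$. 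I define $g_S^{\mathrm{II}}$ by counting $0$-dimensional points in the parametric fiber product consisting of pairs $([A_0], (t, x))$ with $A_0 \in \mathcal M_{p_0}(W_0, c_0; \Delta_{N-1}^{\ft,\circ})$ and $(t, x) \in [0,1] \times \Delta_{N-1}$ satisfying $r_0(A_0) = H_S(t, x)$.

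The boundary of the corresponding one-dimensional parametric moduli spaces decomposes into four types of contributions. Input/output strip breaking yields $d g_S^{\mathrm{II}} + g_S^{\mathrm{II}} d$. The $t = 1$ face recovers $\sum_{\sigma,\tau} f^\partial_{\sigma,\tau}$ directly by the definition of $\hol_S$ and the factorization through $\mathfrak F$ established in Proposition \ref{hol-fac-forg}. The $t = 0$ face, after a Morse-Bott gluing argument along $S^1 \times S^2$ that uses the fact that completely reducible ASD connections on $(S^1 \times D^3, c_1')$ give a Morse-Bott family whose $S^1$-holonomy parametrizes $\Delta_{N-1}^\ft$ essentially by the identity, reproduces $f_{\mathrm{Id}}$. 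Finally, ends from $x$ approaching $\partial \Delta_{N-1}$ vanish in the $0$-dimensional count: by Propositions \ref{hol-maps-k-space-to-kspace} and \ref{red-hol-ver}, the image of a positive-codimension face of $\Delta_{N-1}$ under $H_S(t, \cdot)$ lies in a lower-dimensional affine subspace of $\Delta_{N-1}^\ft$, so a dimension count against transversal $r_0$ rules out such contributions. Combining the boundary types yields $d g_S^{\mathrm{II}} + g_S^{\mathrm{II}} d = f_{\mathrm{Id}} + \sum_{\sigma,\tau} f^\partial_{\sigma,\tau}$, and setting $g_S = g_S^{\mathrm{I}} + g_S^{\mathrm{II}}$ gives the claim over $\Z/2\Z$.

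The main obstacle is the analytic identification at the $t = 0$ face. Because the cut $S^1 \times S^2$ is Morse-Bott, this requires gluing ASD connections on $W_0$ with asymptotic values parametrizing $\Delta_{N-1}^{\ft,\circ}$ to completely reducible ASD connections on $(S^1 \times D^3, c_1')$ whose holonomy map is essentially the identity on $\Delta_{N-1}^\ft$. The secondary technical point is achieving regularity of the parametric moduli space on $[0,1] \times \Delta_{N-1}$; this follows from Proposition \ref{avoid-noncomp-red-pre} together with the smoothness of $H_S$ on a dense open subset $\Lambda_S \subseteq \Delta_{N-1}^\ft$ furnished by Proposition \ref{deg-one}, perturbing the ASD equation on $W_0$ so that $r_0$ is transversal to $H_S|_{[0,1] \times \mathrm{(interior\ cells)}}$.
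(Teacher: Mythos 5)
Your proposal is correct and follows essentially the same route as the paper: emptiness of the completely reducible moduli space via Proposition \ref{bi-per-con} when $S\neq T$, and for $S=T$ the assembly of the maps $\overline{\hol}_{\sigma,\tau}$ into $\hol_S$, the homotopy $H_S$ to an affine isomorphism from Proposition \ref{deg-one}, the vanishing of contributions from $\partial\Delta_{N-1}$ via Propositions \ref{hol-maps-k-space-to-kspace} and \ref{red-hol-ver}, and the identification with the identity through the decomposition of Remark \ref{decomp-product}. The only difference is packaging: you realize the homotopy $H_S$ as a parametrized moduli problem over $[0,1]\times\Delta_{N-1}$ with Floer breaking (hence a direct chain homotopy $g_S^{\mathrm{II}}$), whereas the paper first proves equality of counts by a purely finite-dimensional cobordism of the fixed compact set $M(\alpha,\beta)$ and then appeals to the standard metric-family chain homotopy from the broken to the product metric on $[0,1]\times Y_l$.
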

This lemma completes the proof of the main theorem. If $S\neq T$, then we define $f_{\sigma,\tau}$ to be $\tilde f_{\sigma,\tau}$. In the case that $S=T$, we fix bijections $\sigma_0:[N-l]\to [N]\setminus S$ and $\tau_0:[l]\to S$ and define $f_{\sigma_0,\tau_0}$ to be $g_S+\tilde f_{\sigma_0,\tau_0}$. For the remaining choices of bijections $\sigma:[N-l]\to [N]\setminus S$ and $\tau:[l]\to S$ we define $f_{\sigma,\tau}$ to be $\tilde f_{\sigma,\tau}$.

\begin{proof}[Proof of Proposition \ref{homotopy-to-identity}]
	Let $\alpha$ be a generator of $\fC_*^N(Y_l,\gamma_S)$ and $\beta$ be a generator of $\fC_*^N(Y_l,\gamma_T)$.
	The coefficient of $\beta$ in $f^\partial_{\sigma,\tau}(\alpha)$ is given by counting the elements of the finite set in \eqref{mod-M0}.
	As it is explained above, this space is empty in the case that $S\neq T$.
	Assume $S=T$, and let $M(\alpha,\beta)$ denote the factor $\mathcal M_{p_0}(W_0,c_0;\Delta_{N-1}^{\ft,\circ})$ in \eqref{mod-M0}. The limiting value of $p_0$
	on the two admissible ends are $\alpha$, $\beta$. It is shown in Subsection \ref{good-perb} that
	we can assume that the map $r_0:M(\alpha,\beta)\to \Delta_{N-1}^{\ft,\circ}$ is transversal to the maps $\hol_S$ and $H_S$ from Subsection \ref{comp-red}.
	This assumption shows that if the perturbation to form the moduli space $\mathcal M_{cr}(\bbX_N(l),w_{\sigma,\tau})$ is small enough, then the number of
	points in \eqref{mod-M0} does not change if we replace the perturbation in the definition of $\mathcal M_{cr}(\bbX_N(l),w_{\sigma,\tau})$ with the zero perturbation.
	Therefore, we have:
\begin{align}
		\sum_{\sigma,\tau} f^\partial_{\sigma,\tau}(\alpha)
		=& \#\left (  M(\alpha,\beta)  \hspace{1mm}
		_{r_0}\hspace{-1mm}\times_{\hol_{S}}\Delta_{N-1}\right)\cdot \beta\nonumber\\
		=& \#\left (  M(\alpha,\beta) \hspace{1mm}
		_{r_0}\hspace{-1mm}\times_{H_{S}(0,\cdot)}\Delta_{N-1}\right)\cdot \beta \nonumber \\
		=& \#  M(\alpha,\beta) \cdot \beta
		\label{equation-2}
\end{align}
	The second equality is the consequence of counting the boundary points of the
	1-dimensional manifold obtained by counting the fiber product of the map
	$r_0:M(\alpha,\beta)\to  \Delta_{N-1}^{\ft,\circ}$ and the map $H_S: [0,1]\times \Delta_{N-1} \to  \Delta_{N-1}^{\ft}$.
	Since $H_{S}(0,\cdot)$ is an isomorphism of simplices, \eqref{equation-2} shows that
	$ \sum_{\sigma,\tau} f^\partial_{\sigma,\tau}$ is equal to the map associated to the pair
	$([0,1]\times Y_l,[0,1]\times \gamma_S)$ corresponding to a broken metric which induces the
	decomposition in Remark \ref{decomp-product}. The pair $([0,1]\times Y_l,[0,1]\times \gamma_S)$
	with the standard metric induces the identity map. Therefore, there is a map $g_S$ such that:
	 \[
	  dg_S+g_Sd=1-\sum_{\sigma,\tau}f^\partial_{\sigma,\tau}
	\]
\end{proof}

\section{Perturbations, Regularity and Gluing}\label{perturbations}

The goal of this section of the paper is to put together transversality results used throughout the paper. To achieve this goal, we use a slightly modified version of the perturbation of the ASD equation in \cite{K:higher}. These perturbations are reviewed in Subsection \ref{hol-per-def}. In Subsection \ref{reg-irr}, we discuss regularity of irreducible connections of moduli spaces for a pair $(X,c)$ satisfying either Condition \ref{noS1S2} or Condition \ref{S1S2}. The case of reducible connections is significantly more subtle and in Subsections \ref{comp-red-reg} and \ref{partial-reg-sub}, we prove some partial results in this direction. Perturbations of the ASD equation for a family of cylindrical metrics is discussed in Subsection \ref{per-family}. The final two subsections are devoted to applications of our general transversality results to pairs $(X,c)$ which are of interest to us. In this section, to abbreviate our notation, we write $\Omega^1_{k,\delta}(X,\su(E))$ or $\Omega^1_{\delta}(X,\su(E))$ for the Banach space $L^2_{k,\delta}(X,\Lambda^1\otimes \su(E))$ and so on.

\subsection{Holonomy Perturbations}\label{hol-per-def}
Suppose $X$ is a 4-manifold with boundary. In \cite{K:higher}, a Banach space $\mathcal W$ is constructed where each element $\omega\in \mathcal W$ introduces a perturbation of the ASD equation over $X^+$. The definition of $\mathcal W$ depends on the choice of a family $\{B_i, q_i, C_i\}_{i\in \mathcal{I}}$ where $\mathcal{I}$ is a countable infinite set, $B_i$ is an embedded ball in $X^+$, $q_i$ is a map from $B_i\times S^1$ to $X^+$ and $C_i$ is a positive constant number. For each $x\in B_i$, we use $q_{i,x}$ to denote the loop $q_i|_{\{x\}\times S^1}$.
 Moreover, this collection is required to satisfy the following conditions:
\begin{itemize}
	\item[(i)] The map $q_i$ is a submersion.
	\item[(ii)] $q_i(1,x)=x$ for all $x\in B_i$.
	\item[(iii)] The constants $C_i$ are growing ``sufficiently fast''. (See \cite[Sections 3 and 5]{K:higher} for a more accurate condition.)
	\item[(iv)] For every $x\in X$ the following set is $C^1$-dense in the space of loops based at $x$:
		\[
		 \{q_{i,x}\mid i \in \mathcal{I}, x \in B_i \}.
		\]
\end{itemize}
Define
\[
  \mathcal{J}:=\{ (i,j)\in \mathcal{I}\times \mathcal{I} |i\neq j,B_{i,j}= B_i\cap B_j\neq \emptyset \}.
\]
For $(i,j)\in \mathcal J$, let $q_{i,j}: B_i\cap B_j \times S^1 \to X, q_{i,j}|_{\{x\}\times S^1} $ be given as:
\[
  q_{i,j}:=q_{i,x}\ast q_{j,x}\ast q_{i,x}^{-1}\ast q_{j,x}^{-1}
\]
where $\ast$ denotes the composition of loops. We also assume that a family of constants $\{C_{i,j}\}_{(i,j)\in \mathcal J}$ is fixed such that they satisfy the analogue of Condition (iii) above. The Banach space $\mathcal W$ is the space of all sequences of self-dual 2-forms $\omega=\{\omega_i\}_{i\in \mathcal{I}}\cup \{\omega_{i,j}\}_{j\in \mathcal{J}}$ such that $\omega_i \in \Omega^2(B_i,\mathbf{C})$, $\omega_{i,j} \in \Omega^2(B_i\cap B_j,\mathbf{C})$ and the following expression is finite:
\begin{equation} \label{W-norm}
 \sum_{i\in \mathcal I} C_i |\!|\omega_i|\!|_{C^m}+\sum_{(i,j)\in \mathcal J} C_{i,j} |\!|\omega_{i,j}|\!|_{C^m}
\end{equation}
Here $m\geq 3$ is a fixed integer number. Let $c$ be a 2-cycle on $X$ and $p$ be a path along $(X,c)$. Suppose also $E$ is the $\U(N)$-bundle on $X$ associated to $c$. Given $\omega \in \mathcal W$, we can define a map $V_\omega: \mathcal A_p(X,c)\to \Omega^+(X,\su(E))$.
\begin{equation}\label{defholper}
  V_\omega(A):=\sum_{i\in \mathcal{I}} \pi(\omega_i^+\otimes \Hol_{q_i}(A))
  +\sum_{(i,j)\in\mathcal{J}} \pi(\omega_{i,j}^+\otimes \Hol_{q_{i,j}}(A))
\end{equation}
where $\omega_i^+$ and $\omega_{i,j}^+$ are the self-dual parts of $\omega_i$ and $\omega_{i,j}$, $\Hol_{q_i}(A)(x)\in \SU(E|_x)$ is the holonomy of $A$ along the loop $q_{i}(\cdot,x)$ and $\pi$ is the orthogonal projection from $\mathfrak{gl}(E)$ to $\mathfrak{su}(E)$\footnote{Our Banach space of perturbations is different from that of \cite{K:higher} in two ways. Here elements of $\mathcal W$ are sequences of 2-forms rather than ASD 2-forms. This variation is more suitable when we consider the moduli spaces associated to families of metrics. Secondly, the terms $\omega_{i,j}$ do not exist in the definition of $\mathcal W$ in \cite{K:higher}.}.

Suppose $(X,c)$ is a pair satisfying \eqref{noS1S2} and $p$ is a path along $(X,c)$. Recall that when we form the moduli space $\mathcal M_p(X,c)$, we initially might need to perturb the ASD equation as in \eqref{pert-ASD}. This primary perturbation is induced by perturbations of the Chern-Simons functional for each admissible end of $(X,c)$ such that the perturbed Chern-Simons functional has only non-degenerate critical points. Throughout this section, by an abuse of notation, we will write $F^+_0(A)=0$ for the perturbed equation \eqref{pert-ASD}. Given $\omega\in \mathcal W$, we can define a secondary perturbation and consider the equation $F^+_0(A)+V_\omega(A)=0$. In order to be more specific about the secondary perturbation $\omega$, we will write $\mathcal M^\omega_p(X,c)$ for the solutions of the equation $F^+_0(A)+V_\omega(A)=0$. We use a similar convention for the other moduli spaces which were constructed in Subsection \ref{cyl-end-mod}.

\subsection{Regularity of Irreducible Connections}\label{reg-irr}

\begin{prop} \label{noS1S2-reg}
	Suppose $(X,c)$ is a pair satisfying Condition \ref{noS1S2}.
	Suppose $\bbX$ is a family of smooth metrics on $X$ with cylindrical ends parametrized by
	a smooth manifold $K$. Then there is a residual subset $\mathcal W_{\rm reg}$ of $\mathcal W$ such that for any
	perturbation $\omega \in \mathcal W_{\rm reg}$
	the irreducible elements of moduli space $\mathcal M^{\omega}_p(\bbX,c)$, denoted by $\mathcal M^{*,\omega}_p(X,c)$,
	consists of regular connections for any choice of $p$.
\end{prop}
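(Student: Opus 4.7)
The plan is the usual Sard--Smale argument applied to a universal parameterized moduli space. First I would introduce
\[
\mathcal M^{*,\mathrm{univ}}_p := \{([A],g,\omega) : g\in K,\ \omega\in\mathcal W,\ [A]\in \mathcal B^*_p(X,c;g),\ F_0^+(A)+V_\omega(A)=0\},
\]
where $\mathcal B^*_p$ denotes the irreducible part of the configuration space. The expression \eqref{defholper}, together with the growth condition (iii) on $\{C_i,C_{i,j}\}$, ensures that $\omega\mapsto V_\omega(A)$ is a smooth map from $\mathcal W$ into the target space $\Omega^+_\delta(X,\su(E))$, uniformly in $[A]$ on bounded sets; hence $\mathcal M^{*,\mathrm{univ}}_p$ is cut out by a smooth section of a Banach bundle over $\mathcal B^*_p(X,c) \times K \times \mathcal W$.

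Next I would verify that this section is transverse to the zero section at every solution, so that $\mathcal M^{*,\mathrm{univ}}_p$ is a smooth Banach manifold. The linearization at $([A],g,\omega)$ decomposes as $(a,\dot g,\dot \omega)\mapsto d_A^+a + T_g(\dot g) + D\omega(\dot\omega)$ modulo gauge, where the first two terms produce the Fredholm operator already analyzed in Subsection~\ref{cyl-end-mod}. It therefore suffices to show that the image of $\dot\omega\mapsto D\omega(\dot\omega)$ is dense in an $L^2_\delta$-complement of the cokernel of the unperturbed linearization. Given a nonzero element $\phi$ in this cokernel, the standard pairing argument reduces this to showing that the map $\dot\omega\mapsto \langle \phi, D\omega(\dot\omega)\rangle_{L^2}$ is nonzero.

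The hard step, which is the heart of the argument, is to rule out $\phi\not\equiv 0$. The cokernel equation forces $\phi$ to satisfy a homogeneous elliptic system with respect to $A$, and since $A$ is irreducible the unique continuation principle (Agmon--Nirenberg, as applied to ASD connections in \cite{K:higher}) implies that if $\phi$ vanishes on any open set it vanishes identically. By the density assumption (iv) on the holonomy loops $\{q_{i,x}\}$ and the covering of $X$ by the balls $B_i$, the variations $\pi(\omega_i^+\otimes \mathrm{Hol}_{q_i}(A))$ span a $C^1$-dense subspace of $\Omega^+(B_i,\su(E))$ as $i$ and $\omega_i$ vary (using again that $A$ is irreducible, so the pointwise image of the holonomies generates $\su(E|_x)$; where this fails the additional commutator perturbations $\omega_{i,j}$ contribute, which is the reason for including them in \eqref{W-norm}). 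Testing $\phi$ against these perturbations in each ball forces $\phi\equiv 0$ on a neighborhood of every point of $X$, a contradiction.

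With the universal moduli space shown to be a smooth Banach manifold, the projection $\pi:\mathcal M^{*,\mathrm{univ}}_p\to \mathcal W$ is a smooth Fredholm map of index $\mathrm{ind}(p)+\dim K$. By Sard--Smale, its regular values form a residual subset $\mathcal W_{\mathrm{reg},p}\subset\mathcal W$, and a point $\omega$ is a regular value precisely when every $([A],g)\in\mathcal M^{*,\omega}_p(\bbX,c)$ is a regular point of $\mathcal M^{\omega}_p(\bbX,c)$ in the sense of Subsection~\ref{cyl-end-mod}. Since there are only countably many paths $p$ along $(X,c)$, the intersection $\mathcal W_{\mathrm{reg}}:=\bigcap_p \mathcal W_{\mathrm{reg},p}$ is still residual in $\mathcal W$, completing the proof.
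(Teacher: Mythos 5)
Your proposal is correct and follows essentially the same route as the paper: a universal moduli space over $\mathcal B_p^*(X,c)\times K\times\mathcal W$, transversality via the fact that for an irreducible $A$ the holonomies $\Hol_{q_{i,x}}(A)$ span $\gl(E)_x$ so the holonomy perturbations realize a dense set of compactly supported self-dual forms, and then Sard--Smale applied to the Fredholm projection to $\mathcal W$, intersecting over the countably many paths $p$. (The appeal to unique continuation is unnecessary here --- orthogonality of a cokernel element to all compactly supported self-dual forms already forces it to vanish --- but this does not affect correctness.)
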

\begin{proof}
	Let $\mathcal E$ be the Banach bundle over $\mathcal A_p(X,c) \times K$ whose fiber over the metric
	$(A,g) \in K$ is the space $\Omega^{+_g}_\delta(\su(E))$.
	The action of the gauge group lifts to $\mathcal E$. By taking the quotient, we obtain a Banach bundle over $\mathcal B_p(X,c)\times K$ which we also denote by $\mathcal E$.
	We use the same notation for the pull back of $\mathcal E$ to
	$\mathcal B_p(X,c)\times K\times \mathcal W$ via the projection.
	Define $\Phi: \mathcal B_p(X,c)\times K\times \mathcal W \to \mathcal E$ as follows:
	\[
	  \Phi([A],g,\omega)=F^{+_g}(A)+V_{\omega}(A,g)
	\]
	Suppose $\bx=([A],g,\omega)$ is a zero of $\Phi$ and $A$ is an irreducible connection.
	Then It is shown in \cite[Lemma 13]{K:higher} that the derivative of the above map at $\bx$, as a map from the tangent space
	of $\mathcal B_p(X,c)\times K\times \mathcal W$ to the fiber of $\mathcal E$ at $\bx$, is surjective.
	In fact, it is shown there that the derivate of $\Phi$ maps
	$T_{[A]}\mathcal B_p(X,c)$ to a closed subspace of $\mathcal E|_{\bx}$ with finite codimension and maps
	$\mathcal W=T_{\omega} \mathcal W$ to a dense subspace of $\mathcal E|_{\bx}$.
	Since we will use a similar argument later, we discuss this in more detail here.

	Given any point $x\in X$, the irreducibility of $A$ and Condition (iv) in previous section imply that we can find
	$\alpha_1,\cdots,\alpha_{N^2}\in \mathcal{I}$ such that
	$\Hol_{q_{\alpha_i}}(A)(x)$ span $\mathfrak{gl}(E)_x$. Thus, they form a local frame of $\mathfrak{gl}(E)$
	 in a neighborhood $U$ of $x$. Given any $g$-self-dual 2-form $\eta\in \Omega^{+_g}_{\delta}(X, \mathfrak{su}(E))$,
	supported in $U$, we can find 2-forms $\omega_{\alpha_i}\in \Omega_\delta^{2}(X)$, for $i=1,\cdots, N^2$, which are
	supported in $U$ and:
	\[
	  \eta=\sum_{i=1}^{N^2} \pi (\omega^{+_g}_{\alpha_i}\otimes \Hol_{q_{\alpha_i}}(A))
	\]
	Thus, partition of unity shows that the image of the restriction of the derivative of $\Phi$ to
	$T_{\omega}\mathcal W$ contains all compactly supported $g$-self-dual 2-forms,
	which is a dense subspace of $\mathcal E|_{\bx}$.

	According to implicit function theorem, the space of irreducible elements in $\Phi^{-1}(0)\subset \mathcal B_p(X,c)\times K\times \mathcal W$,
	denoted by $\mathbb M^*_p(\bbX,c)$,
	is a smooth Banach manifold. Moreover, the projection map from $\mathbb M^*_p(\bbX,c)$ to $\mathcal W$
	is Fredholm. The set of regular values of this projection map, denoted by $\mathcal W_{\rm reg}$,
	is residual by Sard-Smale theorem. The space $\mathcal W_{\rm reg}$ has the required property claimed in the proposition.
\end{proof}

\begin{remark}
	We can consider an alternative space of perturbations.
	In Proposition \ref{noS1S2-reg}, we use perturbations which are constant within the family of metrics.
	Suppose a family of cylindrical metrics on $X$ parametrized by a manifold $K$ is given. Let
	$\widetilde {\mathcal W}$ be the space of maps from $K$ to $\mathcal W$. Then each element
	$\widetilde \omega \in \widetilde {\mathcal W}$ determines a perturbation of the ASD equation which is given as follows:
	\[
	  F_0^{+_g}(A)+V_{\widetilde \omega(g)}(A,g)=0
	\]
	Such perturbations will be essential for us when we consider the ASD equation
	for a family of possibly broken metrics.
\end{remark}

Later, we will need a relative version of Proposition \ref{noS1S2-reg}. Suppose $(X,c)$ and $\bbX$ are given as in Proposition \ref{noS1S2-reg}. Suppose also $J$ is an open subset of $K$ and $H$ is a compact subspace of $J$. We also fix functions $\gamma_0$ and $\gamma_1$ such that $\gamma_0$ is supported in $J$, $\gamma_1$ is supported in the complement of $H$, and $\gamma_0=1$ when $\gamma_1=0$. Assume that $\widetilde \omega \in \widetilde {\mathcal W}$ is also given such that the moduli space $\mathcal M^{*,\widetilde \omega}_p(\bbX,c)$ is regular over $J$, i.e., the moduli space $\mathcal M^{*,\widetilde \omega|_{J}}_p(\bbX|_{J},c)$ is a regular moduli space, We wish to replace $\widetilde \omega$ with another element $\widetilde \omega'$ such that the restrictions of $\widetilde \omega'$ and $\widetilde \omega$ to $H$ are equal to each other and the total moduli space $\mathcal M^{*,\widetilde \omega'}_p(\bbX,c)$ is regular:
\begin{prop} \label{noS1S2-reg-rel}
	Given $\widetilde \omega$ as above, there is a residual subset $\mathcal W_{\rm reg}$ of $\mathcal W$ such that
	for any perturbation $\omega \in \mathcal W_{\rm reg}$, the following element of $\widetilde {\mathcal W}$ defines a regular moduli space over $K$:
	\[
	  \widetilde \omega':=\gamma_0 \widetilde \omega+\gamma_1 \omega.
	\]
\end{prop}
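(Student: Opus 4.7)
The plan is to parallel the proof of Proposition \ref{noS1S2-reg}, with the single new feature that the perturbation parameter $\omega$ now enters the equation weighted by $\gamma_1$. I would consider the universal zero set
\begin{equation*}
 \mathbb M_p^* = \{([A], g, \omega) \in \mathcal B_p^*(X,c)\times K \times \mathcal W : F_0^{+_g}(A) + V_{\widetilde\omega'(g)}(A,g) = 0\},
\end{equation*}
where $\widetilde\omega'(g) = \gamma_0(g)\widetilde\omega(g) + \gamma_1(g)\omega$ and $A$ is irreducible. The goal is to show that at every point $\bx = ([A], g, \omega) \in \mathbb M_p^*$ the linearisation of the defining equation is surjective onto the fibre $\mathcal E|_\bx$, so that $\mathbb M_p^*$ is a smooth Banach manifold and the projection to $\mathcal W$ is Fredholm; an application of Sard--Smale then supplies the desired residual set.

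Since $V_\omega$ is linear in $\omega$, the $\omega$-variation of $\Phi$ at $\bx$ sends $\dot\omega$ to $\gamma_1(g)\, V_{\dot\omega}(A,g)$. Two regimes arise. When $\gamma_1(g)>0$, the image of this map coincides with that of $\dot\omega \mapsto V_{\dot\omega}(A,g)$, which is dense in the fibre by the argument of \cite{K:higher} recalled in the proof of Proposition \ref{noS1S2-reg} (this is where irreducibility of $A$ is used, via local holonomy frames of $\mathfrak{su}(E)$). Together with the $[A]$-variations, whose image is closed of finite codimension, this yields surjectivity.

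When $\gamma_1(g)=0$, the support conditions force $g \in \{\gamma_0 = 1\} \subseteq J$. Taking $\gamma_0,\gamma_1$ to be standard $[0,1]$-valued cutoff functions of the same kind used throughout Section \ref{metrics}, the point $g$ is a local maximum of $\gamma_0$ and a local minimum of $\gamma_1$, so that $d\gamma_0(g) = d\gamma_1(g) = 0$. Consequently $\widetilde\omega'(g) = \widetilde\omega(g)$ and $\partial_g \widetilde\omega'(g) = \partial_g \widetilde\omega(g)$, and the linearisation of $\Phi$ in the $([A], \dot g)$ directions agrees with the linearisation of the equation with parameter $\widetilde\omega$. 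The assumed regularity of $\mathcal M^{*,\widetilde\omega|_J}_p(\bbX|_J, c)$ over $J$ therefore supplies surjectivity already from the $[A]$ and $\dot g$ directions, independently of $\omega$.

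Having established that $\mathbb M_p^*$ is a Banach manifold, Sard--Smale applied to the Fredholm projection $\mathbb M_p^* \to \mathcal W$ yields a residual $\mathcal W_{\rm reg}$, and intersecting over the countably many relevant paths $p$ preserves residuality. The main subtlety is the one handled above: one must verify that the weighting by $\gamma_1$ does not spoil surjectivity at points where $\gamma_1$ vanishes, and this is precisely why the cutoffs are required to have vanishing derivative on the locus $\{\gamma_1 = 0\}$.
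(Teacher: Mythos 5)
Your argument is essentially the paper's own proof: one forms the universal map $\Phi([A],g,\omega)=F_0^{+_g}(A)+\gamma_0 V_{\widetilde\omega}(A,g)+\gamma_1 V_{\omega}(A,g)$ on $\mathcal B_p(X,c)\times K\times\mathcal W$, checks surjectivity of its linearisation at zeros by splitting into the cases $\gamma_1(g)\neq 0$ (dense image in the $\omega$-direction via holonomy frames, as in Proposition \ref{noS1S2-reg}) and $\gamma_1(g)=0$ (where $\gamma_0=1$ forces $g\in J$ and the hypothesis on $\widetilde\omega$ applies), and concludes with Sard--Smale. The paper simply asserts regularity in the second case, whereas you additionally note that $d\gamma_0=d\gamma_1=0$ on $\{\gamma_1=0\}$ for $[0,1]$-valued cutoffs, so the finite-rank terms coming from differentiating the cutoffs cannot spoil surjectivity of the linearisation there -- a detail the paper leaves implicit but which is genuinely needed.
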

\begin{proof}
	The proof is similar to the proof of Proposition \ref{noS1S2-reg}. Define a map $\Phi: \mathcal B_p(X,c)\times K\times \mathcal W \to \mathcal E$ in the following way:
	\[
	   \Phi([A],g,\omega)=F_0^{+_g}(A)+\gamma_0 V_{\widetilde \omega}(A,g)+\gamma_1 V_{\omega}(A,g)
	\]
	Let $\bx=([A],g,\omega)$ be a zero of $\Phi$. If $\gamma_1(g)\neq 0$,
	then the same argument as before shows that the derivative of $\Phi$ is surjective at $\bx$. In the case that
	$\gamma_1(g)= 0$, the point $g$ belongs to $J$, and the assumption about $\widetilde \omega$ implies that $\bx$ is a regular point of the map $\Phi$. Now we can
	proceed as in the proof of Proposition \ref{noS1S2-reg} to construct $\mathcal W_{\rm reg}$.
\end{proof}

\begin{prop}\label{S1S2-reg}
	Suppose $(X,c)$ is a pair satisfying Condition \ref{S1S2}.
	Suppose $\bbX$ is a family of smooth metrics on $X$ parametrized by
	a smooth manifold $K$. Then there is a residual set
	$\mathcal W_{\rm reg} \subset\mathcal W$
	such that $\mathcal M^{*,\omega}_p(\bbX,c;\Gamma)$, for $\omega \in \mathcal W_{\rm reg}$,
	consists of regular connections
	for any choice of an open face $\Gamma$ of $\Delta_{N-1}^\ft$ and a path $p$ along $(X,c)$.
	Moreover, for any element $i$ of a finite set $I$, suppose a smooth map
	$\phi_i$ from a smooth manifold $M_i$ to an open face $\Gamma_i$ of $\Delta_{N-1}^{\ft}$
	is given. We can also assume that for any $i\in I$ and $\omega \in \mathcal W_{\rm reg}$, the map
	$r:\mathcal M^{*,\omega}_p(\bbX,c;\Gamma_i) \to \Gamma_i$ is transversal to the map $\phi_i$.
\end{prop}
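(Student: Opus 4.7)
The plan is to repeat the Sard--Smale argument of Proposition \ref{noS1S2-reg}, but with the moduli space enlarged to allow the limiting value on the $S^1\times S^2$ end to vary within the open face $\Gamma$. Fix a path $p$ along $(X,c)$ and a face $\Gamma$ of $\Delta_{N-1}^{\ft}$. Let $\mathcal A_p(X,c;\Gamma)$ be the configuration space introduced in Subsection \ref{cyl-end-mod} (using the family $\{A_\alpha\}_{\alpha\in \Gamma}$), and let $\mathcal E\to \mathcal B^*_p(X,c;\Gamma)\times K\times \mathcal W$ be the Banach bundle whose fibre over $([A],g,\omega)$ is $\Omega^{+_g}_{\delta}(X,\su(E))$. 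Define
\[
\Phi([A],g,\omega) = F_0^{+_g}(A)+V_\omega(A).
\]
The zero locus of $\Phi$, intersected with the irreducible stratum, will produce the universal moduli space $\mathbb M^{*,\Gamma}_p(\bbX,c)$.

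The first step is to verify that $D\Phi$ is surjective at every irreducible zero $\bx=([A],g,\omega)$. The derivative in the $\mathcal A_p$-direction together with the $T_\alpha\Gamma$-direction is the ASD linearization of complex \eqref{ASD-comS}, which is Fredholm and hence has closed range of finite codimension. As in \cite[Lemma 13]{K:higher}, together with the reproduction of that argument given in the proof of Proposition \ref{noS1S2-reg}, irreducibility of $A$ and the density condition (iv) on the loops $q_{i,x}$ ensure that the derivative in the $\mathcal W$-direction hits a dense subspace of the fibre of $\mathcal E$ (one writes an arbitrary compactly supported test self-dual form locally as a finite sum $\sum \pi(\omega_{\alpha_i}^{+_g}\otimes\Hol_{q_{\alpha_i}}(A))$ using the fact that the holonomies span $\mathfrak{gl}(E)_x$ pointwise). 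Combining the finite-codimension range with the dense range gives surjectivity of $D\Phi$. Note that the $T_gK$-directions play no essential role beyond being harmless additional directions in which one may vary.

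By the implicit function theorem, the irreducible part of $\Phi^{-1}(0)$ is a smooth Banach manifold, and the projection $\pi_{\mathcal W}\colon \mathbb M^{*,\Gamma}_p(\bbX,c)\to\mathcal W$ is Fredholm. By the Sard--Smale theorem, its set of regular values is residual in $\mathcal W$. Taking the (countable) intersection over all pairs $(p,\Gamma)$ — a countable set, since the index bound together with Uhlenbeck compactness restricts attention to finitely many paths in each dimension, and the faces of $\Delta_{N-1}^{\ft}$ form a finite poset — yields the desired residual subset $\mathcal W_{\rm reg}\subset \mathcal W$ satisfying the first assertion.

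For the second assertion, enlarge the universal parameter space to $\Phi^{-1}(0)\times \prod_i M_i$ and consider the smooth map
\[
\Psi_i\colon \mathbb M^{*,\Gamma_i}_p(\bbX,c)\times M_i \longrightarrow \Gamma_i\times\Gamma_i,\qquad ([A],g,\omega,m)\longmapsto (r([A]),\phi_i(m)).
\]
Since $r$ is a submersion at every fibrewise-regular point of the universal moduli space (the $T_\alpha\Gamma_i$-direction is already present in \eqref{ASD-comS}), $\Psi_i$ is transverse to the diagonal $\Delta_{\Gamma_i}\subset \Gamma_i\times\Gamma_i$ universally. A second application of Sard--Smale to the projection $\Psi_i^{-1}(\Delta_{\Gamma_i})\to \mathcal W$ produces a residual subset on which $r$ is transverse to $\phi_i$. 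Intersecting these residual subsets over the finite set $I$ and with the residual subset from the first part gives the required $\mathcal W_{\rm reg}$. The main subtlety is the density-of-holonomy-perturbations argument at irreducible connections; once this is in place, the rest is a standard universal-moduli-space/Sard--Smale package, slightly augmented to accommodate the free parameter $\alpha\in\Gamma$ on the $S^1\times S^2$ end.
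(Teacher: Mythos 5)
Your proposal is correct and follows essentially the same route as the paper: the universal moduli space over $\mathcal B_p(X,c;\Gamma)\times K\times\mathcal W$, surjectivity of the linearization at irreducible zeros via the closed-finite-codimension range of the ASD complex \eqref{ASD-comS} plus the dense range of the holonomy-perturbation directions, and Sard--Smale. The only (cosmetic) difference is packaging: the paper takes $\Phi$ with target $\mathcal E\times\Gamma$ and shows every $(0,\beta)$ is a regular value, obtaining regularity and the submersivity of $r$ (hence transversality to the $\phi_i$) in one stroke, whereas you establish regularity first and then run a separate diagonal-transversality/Sard--Smale step for each $\phi_i$.
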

\begin{proof}
	The proof is similar to the proof of Propositions \ref{noS1S2-reg}. We can define a map
	$\Phi:\mathcal B_p(X,c;\Gamma)\times K \times \mathcal W \to \mathcal E \times \Gamma$. Suppose
	$\bx=([A],g,\omega)$ is such that $A$ is an irreducible connection and $\Phi(\bx)=(0,\beta)$.
	Then we can consider:
	\[
	  D_{\bx} \Phi:T_{[A]} \mathcal B_p(X,c,\Gamma) \times T_{g}K \times \mathcal W \to
	  \mathcal E|_{\bx} \times T_\beta \Gamma
	\]
	Then the space $D_{\bx} \Phi(T_{[A]} \mathcal B_p(X,c,\Gamma))$ is a closed subspace of
	$\mathcal E|_{\bx} \times T_\beta \Gamma$ with finite codimension. Furthermore, the projection of this space to
	$T_\beta \Gamma$ is also surjective. The space $D_{\bx} \Phi(\mathcal W)$ is mapped to $\mathcal E|_{\bx} \times \{0\}$
	and is dense in this space. Therefore, $\{0\}\times \{\beta\}$ is a regular value of $\Phi$ for any choice of
	$\beta$. Therefore, we can proceed as before to verify this proposition.
\end{proof}

We can also state a relative version of Proposition \ref{S1S2-reg}, similar to Proposition \ref{noS1S2-reg-rel}:

\begin{prop} \label{S1S2-reg-rel}
	Suppose $(X,c)$, the family of metrics $\bbX$ and
	$\phi_i$ are given as in Proposition \ref{S1S2-reg}. Suppose also $H\subset J\subset K$, $\gamma_0$ and $\gamma_1$
	are given as in Proposition \ref{noS1S2-reg-rel}. Suppose $\widetilde \omega \in \widetilde {\mathcal W}$ is given such that
	the moduli space $\mathcal M^{*,\widetilde \omega|_{J}}_p(\bbX|_{J},c;\Gamma)$ consists of regular solutions for each open face $\Gamma$
	of $\Delta_{N-1}^{\ft}$, and for any $i\in I$, the map $r:\mathcal M^{*,\widetilde \omega|_{J}}_p(\bbX|_{J},c;\Gamma_i) \to \Gamma_i$
	is transversal to $\phi_i$.
	Then there is a residual subset $\mathcal W_{\rm reg} \subset \mathcal W$ such that for any
	$\omega \in \mathcal W_{\rm reg}$, the parameterized
	moduli space over $K$ and associated to the following perturbation has similar properties:
 	\[
	  \widetilde \omega':=\gamma_0 \widetilde \omega+\gamma_1 \omega.
	\]
	That is to say, the moduli space $\mathcal M^{*,\widetilde \omega'}_p(\bbX,c;\Gamma)$ consists of regular solutions
	for each open face $\Gamma$ of $\Delta_{N-1}^{\ft}$, and for any $i\in I$, the map $r:\mathcal M^{*,\widetilde \omega'}_p(\bbX,c;\Gamma_i) \to \Gamma_i$
	is transversal to $\phi_i$.
\end{prop}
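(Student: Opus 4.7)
The plan is to combine the universal moduli space argument of Proposition~\ref{S1S2-reg} with the perturbation-patching device of Proposition~\ref{noS1S2-reg-rel}. Write $\widetilde\omega'_\omega(g) := \gamma_0(g)\widetilde\omega(g) + \gamma_1(g)\omega$ and, for each open face $\Gamma$ of $\Delta^\ft_{N-1}$ and each path $p$ along $(X,c)$, consider the parametrised universal curvature map
\[
 \Phi : \mathcal B^*_p(X,c;\Gamma)\times K\times \mathcal W \longrightarrow \mathcal E \times \Gamma,\qquad
 ([A],g,\omega)\mapsto \bigl(F_0^{+_g}(A)+V_{\widetilde\omega'_\omega(g)}(A,g),\; r([A])\bigr),
\]
where $r$ sends a connection to its asymptotic value on the $S^1\times S^2$ end. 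Because $V_{\cdot}$ is linear in the perturbation, the formula above genuinely corresponds to the $\widetilde\omega'$ used in the statement.

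First I would check that $\Phi$ is transverse to the zero section of $\mathcal E\times\Gamma$ at every irreducible zero $\bx=([A],g,\omega)$, with $\beta=r([A])$. Surjectivity onto $T_\beta\Gamma$ already follows from $T_{[A]}\mathcal B^*_p$, as in the proof of Proposition~\ref{S1S2-reg}; so it remains to cover $\mathcal E|_\bx$. Split into two cases. If $\gamma_1(g)\neq 0$, the holonomy-spanning estimate from \cite[Lemma~13]{K:higher} applied to the $\omega$-derivative, rescaled by the nonzero factor $\gamma_1(g)$, produces a dense subspace of $\mathcal E|_\bx$ in the image, just as in the proof of Proposition~\ref{noS1S2-reg}. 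If instead $\gamma_1(g)=0$, then by construction $\gamma_0(g)=1$ and $g\in \supp\gamma_0\subset J$; there $\widetilde\omega'_\omega(g)=\widetilde\omega(g)$ is independent of $\omega$, and surjectivity of $D\Phi_\bx$ along $T_{[A]}\mathcal B^*_p\times T_gK$ is exactly the hypothesised regularity of $\mathcal M^{*,\widetilde\omega|_J}_p(\bbX|_J,c;\Gamma)$ at $([A],g)$. Thus the universal zero locus is a smooth Banach submanifold.

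To incorporate transversality with the finitely many $\phi_i:M_i\to \Gamma_i$, I would enlarge the universal setup by the factor $M_i$ and append the constraint $r([A])-\phi_i(x)=0$ to the target, then repeat the case-analysis verbatim: when $\gamma_1(g)\neq 0$ the holonomy argument controls the $\mathcal E$-component while the $M_i$-direction handles the $\Gamma_i$-diagonal, and when $\gamma_1(g)=0$ the combined surjectivity is exactly the hypothesised transversality of $r$ with $\phi_i$ over $J$. The projection from each resulting universal Banach manifold to $\mathcal W$ is Fredholm, so Sard-Smale produces a residual subset of $\mathcal W$ for which the slice moduli space is regular and meets $\phi_i$ transversely. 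Intersecting the residual sets over the countable collection of $(\Gamma,p,i)$-data of bounded index yields the required $\mathcal W_{\mathrm{reg}}$.

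The main obstacle is ensuring that, at points $g$ of $K$ where $\gamma_1(g)=0$ and hence the ambient $\omega$-perturbation is inactive, regularity is genuinely inherited from $\widetilde\omega$. This relies on the support conditions $\supp\gamma_0\subset J$ and $\supp\gamma_1\subset K\setminus H$, together with the compatibility $\gamma_0=1$ whenever $\gamma_1=0$, which together force every such $g$ to lie in $J$ where the hypothesis on $\widetilde\omega$ supplies both moduli-space regularity and transversality with the $\phi_i$. Once this local verification is in place, the rest of the argument is a routine combination of the two earlier propositions.
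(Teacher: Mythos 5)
Your proposal is correct and follows essentially the route the paper intends: Proposition \ref{S1S2-reg-rel} is stated without proof as the evident combination of the interpolation/case-analysis argument of Proposition \ref{noS1S2-reg-rel} (surjectivity via the holonomy derivative where $\gamma_1(g)\neq 0$, inherited regularity and transversality from the hypothesis on $\widetilde\omega$ where $\gamma_1(g)=0$, hence $g\in J$) with the enlarged target $\mathcal E\times\Gamma$ and the $\phi_i$-transversality bookkeeping from Proposition \ref{S1S2-reg}, followed by Sard--Smale. Your treatment of the two cases and of the fiber-product with the $M_i$ matches this, so nothing further is needed.
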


\begin{remark} \label{admissible-comp-supp-per}
	If $(X,c)$ has an admissible end, then all elements of the Banach manifolds $\mathcal B_p(X,c)$ are irreducible.
	Therefore, we can use the general results of this section to ensure that the moduli spaces of interest to us consists of only regular solutions.
	Moreover, we can assume that the perturbation term is supported only in the compact part $[-1,-\frac{1}{2}]\times \partial X$ of the cylindrical ends \cite{KM:YAFT}. (See also \cite[Section 24]{KM:monopoles-3-man}.)
	In this case, Propositions \ref{noS1S2-reg}, \ref{noS1S2-reg-rel}, \ref{S1S2-reg} and \ref{S1S2-reg-rel} can be proved even
	if we restrict our attention to holonomy perturbations which are supported in $[-1,-\frac{1}{2}]\times \partial X$.
	
	In the case that $(X,c)$ does not have any admissible end, then the moduli spaces of ASD connections might have reducible elements. Therefore, the results of this subsection cannot be used to achieve regularity.
	Nevertheless, we shall prove some partial results about the regularity of reducible elements of moduli spaces in the next two subsections.
\end{remark}

\subsection{Regularity of Completely Reducible Connections} \label{comp-red-reg}
Suppose $X$ is a 4-manifold with boundary such that $b_1(X)=b^+(X)=0$. We fix a cylindrical metric on $X$. Suppose $L_1$, $\dots$, $L_N$ are $\U(1)$-bundles on $X$, and $E$ denotes the $\U(N)$-bundle $L_1\oplus \dots \oplus L_n$. For simplicity, we assume that the $\U(1)$-bundles are mutually non-isomorphic to each other. Let $c$ be a 2-cycle representing $c_1(E)$. We assume that $(X,c)$ is a pair satisfying Condition \ref{S1S2} without any admissible end.  Each of the $\U(1)$-bundles $L_i$ admits an ASD connection, that is unique up to isomorphism. For simplicity, we also assume that the metric on $X$ is chosen such that the limiting flat connection of these ASD $\U(1)$-connections on $S^1\times S^2$ are different.

If we fix a connection $B_i$ on $L_i$ which is the pull-back of a flat $\U(1)$-connection on the ends of $X$, then $B_1\oplus \dots \oplus B_N$ determines a path $p$ along $(X,c)$. As in Subsection \ref{cyl-end-mod}, we can also use this connection to define the space $\mathcal B_p(X,c)$. Let $\mathcal B_{cr}$ be the subspace of $\mathcal B_p(X,c)$ represented by connections $A=A_1\oplus \dots \oplus A_N$ where $A_i$ is a connection on $L_i$. For any perturbation term $\omega$, let $\mathcal M_{cr}^{\omega}(X,L_1\oplus\cdots\oplus L_N)$ be the intersection of the moduli space $\mathcal M_p^\omega(X,c)$ and $\mathcal B_{cr}$.

\begin{prop}\label{ccr}
	For a small perturbation $\omega$,
	the moduli space $\mathcal{M}_{cr}^{\omega}(X,L_1\oplus \cdots, L_N)$ consists of a single point.
\end{prop}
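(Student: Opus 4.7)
The plan is to identify the unperturbed moduli space $\mathcal M_{cr}^{0}(X, L_1\oplus\cdots\oplus L_N)$ as a single transversely cut out point, and then deduce the perturbed statement via the implicit function theorem combined with a $\U(1)$-compactness argument.

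First, because a completely reducible connection $A = A_1 \oplus \cdots \oplus A_N$ satisfies $F_0^+(A) = 0$ if and only if $F^+(A_i) = 0$ for every $i$, the unperturbed problem decouples into $N$ abelian problems. The hypothesis $b_1(X) = b^+(X) = 0$ ensures that, for each line bundle $L_i$, the $\U(1)$-ASD moduli space with the prescribed asymptotic behavior is a single unobstructed point $[A_i^0]$. Since the $L_i$ are pairwise non-isomorphic and the limiting flat $\U(1)$-connections on $S^1 \times S^2$ are pairwise distinct, the only gauge transformations in $\mathcal G_p$ preserving the splitting and each $A_i^0$ are diagonal (and traceless), so the unperturbed moduli space is the single equivalence class $[A_0]$ with $A_0 := A_1^0 \oplus \cdots \oplus A_N^0$.

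Next, I would observe that the perturbation $V_\omega$ preserves the completely reducible locus: for $A \in \mathcal B_{cr}$ the holonomies $\Hol_{q_i}(A)$ lie in the diagonal torus of $\SU(E)$, so each $\pi(\omega_i^+ \otimes \Hol_{q_i}(A))$ is again a diagonal, traceless, skew-Hermitian matrix-valued 2-form, and similarly for the commutator terms. Hence the perturbed equation $F_0^+(A) + V_\omega(A) = 0$ restricts to $\mathcal B_{cr}$, and its linearization at $A_0$ is a perturbation of size $O(\|\omega\|_{\mathcal W})$ of $(N-1)$ independent copies of $d^+\colon \Omega^1_{\delta}(X; i\R) \to \Omega^+_{\delta}(X; i\R)$. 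The hypotheses $b_1(X) = b^+(X) = 0$ (together with a small weight $\delta > 0$) make each such $d^+$ invertible modulo gauge, so the linearization of the perturbed equation remains an isomorphism modulo gauge for small $\omega$. The implicit function theorem then yields a unique solution $[A_0^\omega] \in \mathcal M_{cr}^\omega$ in a neighborhood of $[A_0]$.

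To upgrade local to global uniqueness, I would argue by contradiction: if there were perturbations $\omega_n \to 0$ in $\mathcal W$ and solutions $[A_n] \in \mathcal M_{cr}^{\omega_n}$ not converging to $[A_0]$, then since each $A_n$ decomposes into $\U(1)$-connections on the fixed line bundles $L_i$ the topological energy would be fixed by topology, and $V_{\omega_n}$ is $C^0$-small uniformly in $A_n$ because the holonomy is bounded on the unitary group. Componentwise $\U(1)$-compactness then produces a subsequence converging, after gauge, to an unperturbed completely reducible ASD connection, which must be $[A_0]$ by the first step, a contradiction. The main technical obstacle is this compactness step, since Uhlenbeck bubbling must be ruled out in the presence of the nonlinear holonomy perturbation; this is handled by noting that any bubble would produce a nonabelian concentrated instanton on $S^4$, incompatible with the connections remaining in the diagonal locus throughout the sequence.
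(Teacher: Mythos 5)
Your proof is correct and follows essentially the same route as the paper's: identify the unperturbed completely reducible solution as a unique regular zero of the diagonal part of the ASD equation (using $b_1(X)=b^+(X)=0$), apply the inverse/implicit function theorem for local uniqueness under small perturbations, and rule out far-away solutions for a sequence $\omega_n\to 0$ by a compactness argument in which bubbling and energy escape are excluded because the connections are direct sums of abelian connections. The only addition you make beyond the paper's write-up is the explicit (and correct) verification that $V_\omega$ preserves the completely reducible locus, which the paper leaves implicit.
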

\begin{proof}
	We can form a bundle $\mathcal E$ over $\mathcal B$ whose fiber over the class of a completely
	reducible connection $A$ has the form $\Omega^+_\delta(X,\R^{N-1})$
	where  for each $x\in X$, $\R^{N-1}$ is identified with the subspace of
	$\su(L_1\oplus \dots \oplus L_N)|_{\{x\}}$ consisting of the diagonal transformations.
	We can use the ASD equation on the space of completely reducible connection to define a section of this bundle.
	Since $b^+(X)=0$, this section has a unique zero which is cut down regularly.
	By abuse of notation, we will also write $\mathcal E$ for the pull-back of the bundle $\mathcal E$
	to $\mathcal B_{cr}\times \mathcal W$. The perturbed ASD equations $F_0^+(A)+V_\omega(A)$
	induces a section of the above bundle. The inverse function theorem implies that
	there is a neighborhood $\mathcal U$ of the unique solution of the solution of $F_0^+(A)=0$
	in $\mathcal B$ such that if $\omega$ is small enough, then
	$\mathcal{M}_{cr}^{\omega}(X,L_1\oplus \cdots, L_N) \cap \mathcal U$ has a unique solution.

	Suppose there is a perturbation $\omega_n$ such that $|\omega_n|<\frac{1}{n}$ and
	$\mathcal{M}_{cr}^{\omega_n}(X,L_1\oplus \cdots, L_N)$ has a solution $[B_n]$
	which does not belong to the open set $\mathcal U$.
	After passing to a subsequence,
	these connections are convergent to a connection $[B_\infty]$.
	Note that Uhlenbeck compactness a priori implies that we could have bubbles or
	part of the energy might slid off the ends. However, because $B_n$ are direct sums of
	abelian connections, neither of these phenomena happen
	 and the limiting connection is also completely reducible.
	 Since the convergence is strong,
	 the limiting connection $[B_\infty]$ belongs to the complement of $\mathcal U$ and satisfies
	 the ASD equation which is a contradiction.
\end{proof}

We wish to study the regularity of the moduli space $\mathcal M_{cr}^{\omega}(X,L_1\oplus \dots \oplus L_N)$. We assume that the perturbation $\omega$ is small enough such that $\mathcal M_{cr}^{\omega}(X,L_1\oplus \dots \oplus L_N)$ has a unique element represented by $A=A_1\oplus \dots \oplus A_N$. Let $\alpha$ denote the limiting value of this connection on  $S^1\times S^2$. The decomposition of $E$ as a direct sum of $\U(1)$-bundles, induces the following decomposition:
\begin{equation*}
 \mathfrak{su}(E)= \underline{ \R}^{N-1}\oplus \bigoplus_{i<j}L_i\otimes L_j^\ast
\end{equation*}
where $\underline {\R}^{N-1}$ denotes the bundle of trace free diagonal skew-hermitian endomorphisms of $E$. Since $V_\omega$ is an equivariant map under the gauge group action, the ASD complex of the connection $A=A_1\oplus \dots \oplus A_N$ decomposes into a direct sum of the diagonal part
\begin{equation}\label{diagonal}
	\Omega^0_{k+1,\delta} (X,\underline {\R}^{N-1}) \xrightarrow{\hspace{1mm} 	d\hspace{1mm}}
	\Omega^1_{k,\delta}(X,\underline {\R}^{N-1})\oplus
	T_{\alpha}\Delta_{N-1}^{\ft,\circ}\xrightarrow{\hspace{1mm} d^+ + DV_\omega |_{A,\underline {\R}^{N-1}}
	\hspace{1mm}}\Omega^+_{k-1,\delta}(X,\underline {\R}^{N-1})
\end{equation}
and the off-diagonal part:
\begin{equation}\label{offdiag}
	\Omega^0_{k+1,\delta}(X,L_i\otimes L_j^\ast)
	\xrightarrow{\hspace{1mm}d_{A_i\otimes A_j^\ast}\hspace{1mm}}
	\Omega^1_{k,\delta}(X,L_i\otimes L_j^\ast)\xrightarrow{\hspace{1mm}d_{A_i\otimes A_j^\ast}^+ +
	DV_\omega |_{A,L_i\otimes L_j^\ast} 	\hspace{1mm}}
	\Omega^+_{k-1,\delta}(X,L_i\otimes L_j^\ast)
\end{equation}
where $i,j$ run through all the pairs that $ 1\leq i<j\leq N$. Note that \eqref{diagonal} gives a Kuranishi structure for the moduli space of completely reducible connections $\mathcal M_{cr}^{\omega}(X,L_1\oplus \dots \oplus L_N)$.

In the remaining part of this subsection, we regard $L_i\otimes L_j^\ast$ as a sub-bundle of $\su(E)$. This means that for a given point $x\in X$, we should consider any element of $L_i\otimes L_j^\ast|_x$ as a matrix $(h_{kl})$ with $h_{ij}=-\bar{h}_{ji}$ and all the other entries zero.
The complex structure of $L_i\otimes L_j^\ast$ maps $(h_{kl})$ to a matrix $h'$ such that $h'_{ij}=\bi h_{ij}$, $h'_{ji}=-\bi h_{ji}$ and all the remaining entries of $h'$ are zero.

\begin{definition}
	Let $A$ be a connection on $E$ (not necessarily completely reducible), which represents a path $p$ along $(X,c)$.
	Let $\gamma$ be a piecewise smooth loop in $X$ with base point $x\in X$.
	Let $a\in T_A \mathcal A_p(X,c) \cong \Omega^1_{k,\delta}(X,\mathfrak{su}(E))$.
	By fixing a trivialization of $E|_x$, we can identify holonomy of any connection along $\gamma$ with an element of $\U(N)$.
	(In fact, this holonomy belongs to a the coset $g\cdot \SU(N)$ which is independent of the connection $A$
	because elements of $\mathcal A_p(X,c)$ have the same central parts.)
	We define:
	\begin{equation}\label{thol}
	  \mathfrak {hol}_{A,\gamma}(a):=D\Hol_\gamma|_A(a)=\frac{d\Hol_\gamma(A+ta)}{dt}|_{t=0}
	\end{equation}
	Note that $  \mathfrak {hol}_{A,\gamma}$ takes values in the left translation of $\su(N)$ by the element $\Hol_{\gamma}(A)\in \U(N)$.
\end{definition}
We may think of $\gamma$ as a map from $[0,1]$ to $X$ with $\gamma(0)=\gamma(1)$. By parallel transport, we obtain a trivialization of $\gamma^\ast (E)$. Now, $\gamma^\ast  a$ can be regarded as an $\su(N)$-valued 1-form under this trivialization, and we have:
\begin{equation}\label{holtan}
	\mathfrak {hol}_{A,\gamma}(a)=-{\Hol}_{\gamma}(A)\int_{[0,1]}\gamma^\ast a \in T_{\Hol_\gamma(A)}\U(N)
\end{equation}
In the case that $A$ is a completely reducible connection, we can write an explicit semi-global formula for $\mathfrak {hol}_{A,\gamma}(a)$. Fix a trivialization of $L_k$ ($1\le k \le N)$ in a neighborhood of $\gamma$ such that $A_k$ can be written as $d+a_k$ with $a_k$ being a 1-form with values in $i\mathbf{R}$. Then we have:
\begin{equation}\label{holtana}
  \mathfrak {hol}_{A,\gamma}(a)=-{\Hol}_{\gamma}(A)\int_{[0,1]} e^{\int_0^\tau a_i(s)-a_j(s) ds} \gamma^\ast a(\tau)  d\tau
\end{equation}

\begin{prop}\label{exacthol}
	Fix a base point $x\in X$. Let $a$ be an elements of $\Omega^1_{k,\delta}(X,L_i\otimes L_j^\ast)$. Then the following are equivalent:
	\begin{itemize}
		\item[(i)] there is a $0$-form $b$ with values in $L_i\otimes L_j^\ast$ such that:
			\[
			   a =d_{A_i\otimes A_j^\ast} b \hspace{1cm}b(x)=0.
			\]
		\item[(ii)] For any loop $\gamma$ with $\gamma(0)=x$, we have $\mathfrak {hol}_{A,\gamma}(a)= 0$.

   \end{itemize}
\end{prop}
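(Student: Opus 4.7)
The proof will establish both implications separately. For (i) $\Rightarrow$ (ii), the key step is a direct computation using the explicit formula \eqref{holtan}. Fix a piecewise smooth loop $\gamma$ based at $x$ and trivialize $\gamma^*(L_i \otimes L_j^*)$ by parallel transport along $\gamma$; in this frame the pulled-back connection becomes the ordinary derivative $d/d\tau$. If $a = d_{A_i \otimes A_j^*} b$ then $\gamma^* a = (d\widetilde b/d\tau)\, d\tau$, where $\widetilde b$ denotes $b$ in the parallel transport frame. Integrating over $[0,1]$ yields $\int_0^1 \gamma^* a = \widetilde b(1) - \widetilde b(0)$, and both endpoints equal $b(x) = 0$ since $\gamma$ is a loop based at $x$. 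The formula \eqref{holtan} then gives $\mathfrak{hol}_{A,\gamma}(a) = 0$.

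For (ii) $\Rightarrow$ (i), I would construct $b$ by path integration. For each $y \in X$, pick a piecewise smooth path $\gamma_y$ from $x$ to $y$, set $\widetilde b(y) := \int_0^1 \gamma_y^* a$ in the parallel transport trivialization of $\gamma_y^*(L_i \otimes L_j^*)$, and define $b(y) \in (L_i \otimes L_j^*)_y$ as the image of $\widetilde b(y)$ under parallel transport along $\gamma_y$. Well-definedness (independence of $\gamma_y$) follows directly from (ii): given two paths $\gamma_1, \gamma_2$ from $x$ to $y$, the concatenation $\gamma_1 * \bar\gamma_2$ is a loop at $x$, and vanishing of $\mathfrak{hol}$ along it translates, by the calculation above, into the equality of the two path integrals after the appropriate parallel transport identification. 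A local computation in the parallel transport frame along a short path then shows that $d_{A_i \otimes A_j^*} b = a$ pointwise, and $b(x) = 0$ is built into the definition.

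The main obstacle is verifying that $b$ has the required weighted regularity $b \in \Omega^0_{k+1,\delta}(X, L_i \otimes L_j^*)$. Interior regularity is immediate from the equation $d_{A_i \otimes A_j^*} b = a$ and the hypothesis $a \in L^2_{k,\delta}$, via standard elliptic bootstrapping for the first-order overdetermined elliptic operator $d_{A_i\otimes A_j^*}$. The delicate point is the exponential decay on the cylindrical ends of $X$. This is controlled by the link operator for $A_i \otimes A_j^*$ restricted to each end, whose kernel consists of covariantly constant sections. On the $S^1 \times S^2$ end this kernel vanishes by the standing assumption that the limiting flat connections of the $A_k$ are pairwise distinct, so that the holonomy of $A_i \otimes A_j^*$ around the $S^1$ factor is nontrivial; on a lens space end the analogous statement follows from the hypothesis $L_i \not\cong L_j$, since inequivalent $\U(1)$-bundles on a lens space carry inequivalent flat connections, whose difference therefore has nontrivial holonomy. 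In both cases the link operator is invertible, and standard estimates for $d_A$ on asymptotically cylindrical manifolds (together with the pointwise identity $d_A b = a$ already established) force $b$ to decay exponentially, so $b \in L^2_{k+1,\delta}$ provided $\delta$ is small enough. Uniqueness of $b$ with $b(x) = 0$ is automatic, since the difference of two such solutions is a covariantly constant section of $L_i \otimes L_j^*$ vanishing at $x$.
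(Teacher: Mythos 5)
Your proof is correct and follows essentially the same route as the paper: (i)$\Rightarrow$(ii) by integrating $\gamma^*(d_{A_i\otimes A_j^*}b)$ in the parallel-transport frame and using $b(x)=0$, and (ii)$\Rightarrow$(i) by defining $b(y)$ as a path integral whose path-independence is exactly condition (ii). The extra discussion of weighted regularity and exponential decay of $b$ is not required, since the statement only asks for a $0$-form with no specified function space, and the paper accordingly omits it.
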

\begin{proof}
	Suppose $\mathfrak {hol}_{A,\gamma}(a)$ vanishes for any loop $\gamma$. We define $b$ as follows:
\begin{equation} \label{b(y)}
b(y):= \int_{\sigma:x\to y}  a=\int_{[0,1]}\sigma^\ast a \in  L_i\otimes L_j^\ast |_y
\end{equation}
where $\sigma$ is an arbitrary path from $x$ to $y$. This integral is defined in a similar way as \eqref{holtan} using parallel transport along the path $\sigma$. The expression in \eqref{b(y)} does not depend on the
choice of the path $\sigma$ because $\mathfrak {hol}_{A,\gamma}a$ vanishes for any loop $\gamma$. Now it is easy to see that $d_A b=a$ and $b(x)=0$. This proves
$(ii) \Rightarrow (i)$.

Next, let $a=d_{A_i\otimes A_j^\ast} b$ and $\gamma$ be a loop with $\gamma(0)=x$. We have
\begin{eqnarray}
  \mathfrak {hol}_{A,\gamma}(a) &=& -{\Hol}_{\gamma}(A)\int_{[0,1]}\gamma^\ast a \nonumber\\
    &=& -{\Hol}_{\gamma}(A)\int_{[0,1]} \frac{d \gamma^\ast b}{dt} \nonumber\\
    &=&  -{\Hol}_{\gamma}(A) (\gamma^\ast b (1) -\gamma^\ast b (0))  \nonumber \\
    &=&   -{\Hol}_{\gamma}(A)  (\Ad_{{\Hol}_{\gamma}(A)^{-1}}(b(x))  -b(x) ) \label{exact1f}
\end{eqnarray}
Since $b(x)=0$, we have $\mathfrak {hol}_{A,\gamma}(a)=0$. This proves $(i) \Rightarrow (ii)$.
\end{proof}
\begin{prop}\label{nonzerohol}
	Fix a base point $x\in X$ and trivialize $L_k$ at $x$ for all $k$.
	For any non-zero form $a\in \Omega^1_{k,\delta}(X,L_i\otimes L_j^\ast)$, there exist loops $\gamma_1$, $\gamma_2$ based at $x$ such that
	\[
	 \mathfrak {hol}_{A,\gamma_1\ast \gamma_2 \ast \gamma_1^{-1}\ast \gamma_2^{-1}}(a)\neq 0
	\]
	unless $a=d_{A_i\otimes A_j^\ast}b$ for a $0$-form $b$ with values in $L_i\otimes L_j^\ast$.
\end{prop}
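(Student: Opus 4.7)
The plan is to expand the commutator-holonomy to first order using that $A$ is completely reducible, reduce the vanishing hypothesis to a single scalar identity on loops, and then finish by invoking Proposition \ref{exacthol}. By contraposition I assume $\mathfrak{hol}_{A,\gamma_1 \ast \gamma_2 \ast \gamma_1^{-1}\ast \gamma_2^{-1}}(a) = 0$ for every pair of loops $\gamma_1,\gamma_2$ based at $x$, and must produce a section $b$ of $L_i \otimes L_j^*$ with $a = d_{A_i \otimes A_j^*} b$.

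Write $h_k := \Hol_{\gamma_k}(A)$ and $\xi_k := h_k^{-1}\,\mathfrak{hol}_{A,\gamma_k}(a)$. Since $A = A_1 \oplus \cdots \oplus A_N$ is completely reducible, each $h_k$ is diagonal, so $h_1$ and $h_2$ commute. Differentiating $\Hol_{\gamma_1 \ast \gamma_2 \ast \gamma_1^{-1} \ast \gamma_2^{-1}}(A+ta) = h_2^{-1}(t)h_1^{-1}(t)h_2(t)h_1(t)$ termwise at $t=0$ and using $h_1h_2 = h_2h_1$ to collapse the zeroth-order product, I obtain
\[
\mathfrak{hol}_{A,\gamma_1 \ast \gamma_2 \ast \gamma_1^{-1}\ast \gamma_2^{-1}}(a) = (1 - \Ad_{h_2^{-1}})(\xi_1) - (1 - \Ad_{h_1^{-1}})(\xi_2).
\]
Because $a$ is off-diagonal of type $(i,j)$, so is each $\xi_k$, and on this sub-bundle $\Ad_{h^{-1}}$ acts as multiplication by the scalar $\theta_\gamma := (h_\gamma)_{jj}(h_\gamma)_{ii}^{-1}$ (the inverse of the holonomy around $\gamma$ of the abelian connection $B := A_i \otimes A_j^*$ on $\mathcal L := L_i \otimes L_j^*$). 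Writing $\psi(\gamma) := (\xi_\gamma)_{ij}$ in the trivialization at $x$, the hypothesis becomes $(1 - \theta_{\gamma_2})\,\psi(\gamma_1) = (1 - \theta_{\gamma_1})\,\psi(\gamma_2)$ for all loops $\gamma_1,\gamma_2$. Since $L_i \not\cong L_j$, the line bundle $\mathcal L$ is topologically non-trivial, so $B$ must have non-trivial holonomy around some loop $\gamma_0$ (otherwise flatness with trivial monodromy would globally trivialize $\mathcal L$); fixing such $\gamma_0$ and setting $C := \psi(\gamma_0)/(1 - \theta_{\gamma_0})$ forces $\psi(\gamma) = C(1 - \theta_\gamma)$ for every loop $\gamma$.

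To finish I subtract off a model section realizing this constant. The integration by parts culminating in \eqref{exact1f}, applied to the $(i,j)$-component, shows that $\psi_{d_B s}(\gamma) = (1 - \theta_\gamma)\,s(x)$ for any smooth section $s$ of $\mathcal L$. Choosing any smooth $\tilde b$ with $\tilde b(x) = C$, the form $a - d_B \tilde b$ then satisfies $\psi_{a - d_B \tilde b}(\gamma) = 0$ for every loop $\gamma$, and Proposition \ref{exacthol} delivers $b''$ with $b''(x) = 0$ and $a - d_B \tilde b = d_B b''$, giving $a = d_B(\tilde b + b'')$. The main obstacle is the commutator expansion above: the non-commutativity of $\Ad_{h_k^{-1}}$ with $\xi_\ell$ prevents any naive telescoping, and the clean formula depends essentially on the $h_k$ being diagonal; once that identity is in hand, the remainder is a straightforward reduction to the preceding proposition.
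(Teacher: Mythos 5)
Your proof is correct and follows essentially the same route as the paper: the same first-order commutator identity reducing the hypothesis to the scalar relation $(1-\theta_{\gamma_2})\psi(\gamma_1)=(1-\theta_{\gamma_1})\psi(\gamma_2)$ (the paper's \eqref{CD}), followed by subtracting $d_{A_i\otimes A_j^\ast}\tilde b$ with $\tilde b(x)=C$ and invoking Proposition \ref{exacthol}. If anything, you are slightly more careful than the paper in justifying the existence of a loop $\gamma_0$ with $\theta_{\gamma_0}\neq 1$ (via non-triviality of $L_i\otimes L_j^\ast$), a degenerate case the paper's proof passes over silently.
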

\begin{proof}
	Fix a pair of loops $\gamma_1,\gamma_2$, and let
	$\mathfrak {hol}_{A,\gamma_1\ast \gamma_2 \ast \gamma_1^{-1}\ast \gamma_2^{-1}}(a)=0$.
	Let $H_1$, $H_2$ denote respectively $\Hol_{\gamma_1} (A)$, $\Hol_{\gamma_2}( A)$.
	With respect to our trivializations, these are two diagonal matrices.
	We also write $C_k$ and $D_k$ for the $k^{\rm th}$ diagonal entries of $H_1$ and $H_2$.
	We have:
	\begin{eqnarray*}
		0 &=& -\mathfrak {hol}_{A,\gamma_1\ast \gamma_2 \ast \gamma_1^{-1}\ast \gamma_2^{-1}}(a) \\
		&=& \int_{[0,1]}\gamma_1^\ast a + \int_{[0,1]}H_1^{-1}(\gamma_2^\ast a) H_1 +
		\int_{[0,1]}(H_1H_2)^{-1}((\gamma_1^{-1})^\ast a_1)H_1H_2 +
		 \int_{[0,1]}H_2^{-1}((\gamma_2^{-1})^\ast a)H_2\\
		&=&  \int_{[0,1]}\gamma_1^\ast a + \int_{[0,1]}H_1^{-1}(\gamma_2^\ast a) H_1 -
		\int_{[0,1]}H_2^{-1}(\gamma_1^\ast a)H_2 - 	\int_{[0,1]}\gamma_2^\ast a
	 \end{eqnarray*}
	Therefore, we have:
	\begin{equation}\label{CD}
		(D_i^{-1}D_j-1) \int_{[0,1]}\gamma_1^\ast a = (C_i^{-1}C_j-1) \int_{[0,1]}\gamma_2^\ast a.
	\end{equation}
	Since $\gamma_1, \gamma_2$ are arbitrary, \eqref{CD} implies that
	there is $C\in L_i\otimes L_j^*|_{x}$, independent of $\gamma$, such that
	\[
	 \int_{[0,1]}\gamma^\ast a=C(\Hol^{-1}_{\gamma}(A_i)\Hol_{\gamma}(A_j)-1)
	\]
	for any loop $\gamma$ with $\gamma(0)=x$.
	Take an arbitrary section  $s\in \Omega^0(X,L_i\otimes L_j^\ast)$ with $s(x)=C$. Then by \eqref{exact1f} we know:
	\[
	 \int_{[0,1]}\gamma^\ast d_{A_i\otimes A_j^\ast}s= C (\Hol^{-1}_{\gamma}(A_i)\Hol_{\gamma}(A_j)-1)
	\]
	Therefore:
	\[
	  \mathfrak {hol}_{A,\gamma}(a-d_{A_i\otimes A_j^\ast}s)=0
	\]
	for any loop $\gamma$. Proposition \ref{exacthol} implies that there is $b$ such that:
	\[
	  a=d_{A_i\otimes A_j^\ast}b.
	\]
\end{proof}

\begin{lemma}\label{thol}
	Suppose $a\in \Omega^1_{k,\delta}(X, L_i\otimes L_j^\ast)$ is a non-zero 1-form such that $a$ is not equal to $d_{A_i\otimes A_j^\ast}b$
	for any $0$-form $b$ with values in $L_i\otimes L_j^\ast$.
	Then for any given compactly supported $c\in \Omega^+_{k-1,\delta}(X, L_i\otimes L_j^\ast)$, there are two compactly supported
holonomy perturbations $\omega_1$ and $\omega_2$ such that:
\begin{itemize}
  \item $V_{\omega_1} (A)=V_{\omega_2} (A)=0$
  \item  $DV_{\omega_1}|_A (a)+DV_{\omega_2} |_A (\bi a)=c$
\end{itemize}
\end{lemma}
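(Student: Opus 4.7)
The plan is to build $\omega_1$ and $\omega_2$ using only commutator-loop components of the perturbation, which makes the vanishing $V_{\omega_r}(A) = 0$ automatic. At the completely reducible connection $A = A_1 \oplus \cdots \oplus A_N$, each holonomy $\Hol_{q_{i,j,y}}(A)$ is a commutator of diagonal unitary matrices and therefore equals the identity; since $I$ is hermitian and hence orthogonal to $\mathfrak{su}(E)$, we have $\pi(I) = 0$, so $V_\omega(A) = 0$ for any $\omega$ whose only nonzero entries are commutator terms $\omega_{i,j}$. The linearization then reduces to $DV_\omega|_A(a) = \sum_{(i,j)} \pi(\omega_{i,j}^+ \otimes \mathfrak{hol}_{A, q_{i,j}}(a))$.

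Next I will construct the perturbations locally. For each $y_0 \in \supp(c)$, the argument of Proposition~\ref{nonzerohol} (applied with basepoint $y_0$; non-exactness of $a$ is a global hypothesis, so the proof runs at any basepoint) yields based loops $\gamma_1, \gamma_2$ at $y_0$ with $\mathfrak{hol}_{A, \gamma_1 \ast \gamma_2 \ast \gamma_1^{-1} \ast \gamma_2^{-1}}(a) \neq 0$. The $C^1$-density hypothesis~(iv) on the loop family supplies indices $i_0 \neq j_0 \in \mathcal{I}$ with $y_0 \in B_{i_0} \cap B_{j_0}$ (so $(i_0, j_0) \in \mathcal{J}$) for which $q_{i_0, y_0}$ and $q_{j_0, y_0}$ are $C^1$-close to $\gamma_1, \gamma_2$; the commutator loop $q_{i_0, j_0, y_0}$ then still has nonzero holonomy derivative on $a$. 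By smoothness of the family, $h := \mathfrak{hol}_{A, q_{i_0, j_0, \cdot}}(a)$ is a nowhere-zero section of $L_i \otimes L_j^*$ on an open neighborhood $U$ of $y_0$. Formula~\eqref{holtana} shows that $\mathfrak{hol}_{A, q}(\bi a) = \bi \cdot \mathfrak{hol}_{A, q}(a)$ (the complex structure on $L_i \otimes L_j^*$ commutes with the scalar exponential integrand and with the diagonal $\Hol_q(A)$), so $\{h, \bi h\}$ is a real frame of $L_i \otimes L_j^*$ over $U$. For a real-valued self-dual $2$-form $\alpha$ supported in $U$, taking $\omega_{i_0, j_0} = \alpha$ (and all other components zero) produces $DV_\omega|_A(a) = \alpha \cdot h$ and $DV_\omega|_A(\bi a) = \alpha \cdot (\bi h)$, because $\alpha$ is real and $h, \bi h \in \mathfrak{su}(E)$, so the tensor products are already skew-hermitian and $\pi$ acts as the identity.

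Globalization is by partition of unity. I cover the compact set $\supp(c)$ by finitely many neighborhoods $U_1, \dots, U_n$ of the above type, with associated indices $(i_k, j_k) \in \mathcal{J}$ and nowhere-zero sections $h_k$ of $L_i \otimes L_j^*$ on $U_k$, and fix a subordinate partition of unity $\{\phi_k\}$. On $U_k$ the real frame $\{h_k, \bi h_k\}$ allows the decomposition $\phi_k c = c^1_k \cdot h_k + c^2_k \cdot (\bi h_k)$ into real self-dual $2$-forms $c^r_k$ supported in $U_k$. Setting $(\omega_r)_{i_k, j_k} := c^r_k$ for $r = 1, 2$ and $k = 1, \dots, n$, with all other components of $\omega_r$ equal to zero, defines two elements of $\mathcal{W}$ that are compactly supported (finitely many nonzero components, each of compact support) and have finite norm~\eqref{W-norm}. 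By construction $V_{\omega_r}(A) = 0$, and summing over the cover gives $DV_{\omega_1}|_A(a) + DV_{\omega_2}|_A(\bi a) = \sum_k (c^1_k h_k + c^2_k \bi h_k) = \sum_k \phi_k c = c$. The main subtlety to handle is that Proposition~\ref{nonzerohol} is stated with a single basepoint, whereas I need a nonvanishing section $h$ on all of $\supp(c)$; this is resolved by the globality of the non-exactness hypothesis on $a$ (so the argument runs at every basepoint) together with finite covering of the compact support by the neighborhoods $U_k$.
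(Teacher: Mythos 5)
Your proposal is correct and follows essentially the same route as the paper's proof: use only commutator components so that $V_{\omega_r}(A)=0$ automatically, invoke Proposition \ref{nonzerohol} together with the density hypothesis on the loop family to get a nowhere-vanishing section $h_k=\mathfrak{hol}_{A,q_{l_k,m_k}}(a)$ over each piece of a finite cover of $\supp(c)$, and then solve for the $2$-form coefficients via a partition of unity. Your decomposition $\phi_k c=c^1_k h_k+c^2_k(\bi h_k)$ in the real frame $\{h_k,\bi h_k\}$ is exactly the paper's choice $\omega_1=\Re(\rho_k c/h_k)$, $\omega_2=\Im(\rho_k c/h_k)$, so the two arguments coincide.
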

\begin{proof}
	By Proposition \ref{nonzerohol} and our assumptions from Subsection \ref{hol-per-def},
	for an arbitrary point $x\in X$, there exist $(l,m)\in \mathcal{J}$ such that $x\in B_l\cap B_m$
	and $ \mathfrak {hol}_{A,q_{l,x}\ast q_{m,x}\ast q^{-1}_{l,x}\ast q^{-1}_{m,x}}(a)\neq 0 $.
	There is a neighborhood $U\subset B_l\cap B_m$ of $x$  such that
	$$ \mathfrak {hol}_{A,q_{l,y}\ast q_{m,y}\ast q^{-1}_{l,y}\ast q^{-1}_{m,y}}(a)\neq 0 $$
	for all $y\in U$.

	Since $c$ has compact support, we may assume we can find a finite set of triples
	$(U_k,l_k,m_k)$ so that the above  property holds and
	$\{U_k\}$ covers the support of $c$. Let $\{\rho_k\}$ be a partition of unity of the support of $c$ with respect to the open covering
	$\{U_k\}$.
	Since $\rho_k c\in \Omega^+(U,L_i\otimes L_j^\ast)$ and
	$\mathfrak {hol}_{A,q_{l_k}\ast q_{m_k}\ast q_{l_k}^{-1}\ast q_{m_k}^{-1}}(a)$ is a section of $L_i\otimes L_j^\ast$
	which is non-zero on $U_k$,
	we can obtain a complex-valued ASD 2-form supported in $U_k$ as follows:
	\[
	  \eta_k:= \frac{\rho_k c}{\mathfrak {hol}_{A,q_{l_k}\ast q_{m_k}\ast q_{l_k}^{-1}\ast q_{m_k}^{-1}}(a)}
	\]
	We take the following perturbation:
	$$
	V_{\omega_1}(A')=\sum_k \pi (\Re(\eta_k)\otimes \Hol_{q_{l_k}\ast q_{m_k}\ast q_{l_k}^{-1}\ast q_{m_k}^{-1}}(A') )
	$$
	Recall $\pi$ is the projection from $\mathfrak{gl}(n)$ to $\mathfrak{su}(N)$.
	It is easy to check that:
	$$DV_{\omega_1}|_A(a)=\sum_k(\Re \eta_k)\otimes \mathfrak {hol}_{A,q_{l_k}\ast q_{m_k}\ast q_{l_k}^{-1}\ast q_{m_k}^{-1}}(a) $$
	Similarly, we define:
	$$
	V_{\omega_2}(A')=\sum_k \pi (\Im(\eta_k) \otimes\Hol_{q_{l_k}\ast q_{m_k}\ast q_{l_k}^{-1}\ast q_{m_k}^{-1}}(A') )
	$$
	It satisfies:
	$$DV_{\omega_2}|_A(\bi a)=\sum_k(\bi\Im \eta_k)\otimes \mathfrak {hol}_{A,q_{l_k}\ast q_{m_k}\ast q_{l_k}^{-1}\ast q_{m_k}^{-1}}(a)$$
	Therefore, we have:
	$$DV_{\omega_1}|_A(a)+DV_{\omega_2}|_A(\bi a)=c$$
	This proves the second requirement of the lemma. The first requirement is obvious because the holonomy of $A$ along a commutator of loops is the identity map.
\end{proof}

\begin{theorem}\label{comp-red-regularity}
	Let $\omega$ be a sufficiently small perturbation such that $\mathcal{M}_{cr}^{\omega}(X,L_1\oplus \cdots, L_N)$
	consists of a single point $A=A_1\oplus \cdots \oplus A_N$
	with limiting flat connection $\alpha=\alpha_1\oplus \cdots \oplus \alpha_N$ on $S^1\times S^2$.
	Suppose:
	\[ \ind (d_{A_i\otimes A_j^\ast}^\ast+d_{A_i\otimes A_j^\ast}^+ )\ge 0
	  \hspace{1cm} H^0(S^1\times S^2, \alpha_i\otimes \alpha_j^\ast)=0
	  \hspace{2cm}\text{for all } i< j.
	 \]
	Then there is a holonomy perturbation $\omega'$, arbitrarily close to $\omega$ such that
	$\mathcal{M}_{cr}^{\omega'}(X,L_1\oplus \cdots, L_N)=\mathcal{M}_{cr}^{\omega}(X,L_1\oplus \cdots, L_N)$ and
	the unique point in $\mathcal{M}_{cr}^{\omega'}(X,L_1\oplus \cdots, L_N)$ is a regular element of
	$\mathcal M_p^{\omega'}(X,c)$.
	Moreover, $\omega'-\omega$ can be chosen to be compactly supported.
\end{theorem}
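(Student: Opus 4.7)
Under the diagonal torus stabilizer $\Gamma_A\subset\Aut(E)$ of the completely reducible connection $A=A_1\oplus\cdots\oplus A_N$, the weighted deformation complex at $A$ splits as the diagonal complex \eqref{diagonal} plus the off-diagonal complexes \eqref{offdiag}, one per pair $i<j$. Any holonomy perturbation $\omega'$ respects this splitting because the holonomies $\Hol_\gamma(A)$ lie in the diagonal torus, so $DV_{\omega'}|_A$ is diagonal-equivariant. The diagonal complex is essentially $N-1$ copies of an abelian ASD complex (the extra summand $T_\alpha\Delta_{N-1}^{\ft,\circ}$ is harmless), and is surjective in top degree since $b^+(X)=0$; this surjectivity survives the small perturbation $\omega$ and any further sufficiently small perturbation. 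Thus it suffices to make each off-diagonal operator
\[
L_{ij} := d^+_{A_i\otimes A_j^*} + DV_\omega|_{A,\,L_i\otimes L_j^*}\colon \Omega^1_{k,\delta}(L_i\otimes L_j^*)\longrightarrow \Omega^+_{k-1,\delta}(L_i\otimes L_j^*)
\]
surjective using a compactly supported holonomy perturbation $\omega'$ satisfying $V_{\omega'}(A)=0$, so that $A$ continues to be a solution (and remains the unique completely reducible one).

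Each $L_{ij}$ is a relatively compact perturbation of the weighted ASD operator on $L_i\otimes L_j^*$, hence Fredholm of the same index; this index is $\geq 0$ by hypothesis, and the assumption $H^0(S^1\times S^2,\alpha_i\otimes\alpha_j^*)=0$ ensures the exponentially weighted set-up is non-degenerate on the $S^1\times S^2$-end. Let $V_{ij}:=\coker L_{ij}$, a finite-dimensional subspace of $\Omega^+_{k-1,\delta}(L_i\otimes L_j^*)$. Since surjectivity of a Fredholm operator is open in the operator-norm topology, it is enough to show that, given any nonzero $c\in V_{ij}$, there is a compactly supported perturbation $\omega'$ with $V_{\omega'}(A)=0$ and some $a\in\Omega^1_{k,\delta}(L_i\otimes L_j^*)$ such that $\langle c, DV_{\omega'}|_A(a)\rangle\neq 0$. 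Repeating this, one reduces $\dim V_{ij}$ by at least one at each step, and after finitely many arbitrarily small steps the off-diagonal cokernel vanishes.

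To produce such an $\omega'$, choose a compactly supported $\widetilde c\in\Omega^+(L_i\otimes L_j^*)$ with $\langle c,\widetilde c\rangle\neq 0$, and a compactly supported $1$-form $a$ which is not of the form $d_{A_i\otimes A_j^*}b$ (such $a$ exist on $\supp\widetilde c$ because the space of compactly supported $d_{A_i\otimes A_j^*}$-exact $1$-forms is a proper subspace). Lemma \ref{thol} then supplies compactly supported holonomy perturbations $\omega_1,\omega_2$ with $V_{\omega_k}(A)=0$ and
\[
DV_{\omega_1}|_A(a) + DV_{\omega_2}|_A(\bi a) = \widetilde c.
\]
Pairing with $c$ gives
\[
\langle c,DV_{\omega_1}|_A(a)\rangle + \langle c,DV_{\omega_2}|_A(\bi a)\rangle = \langle c,\widetilde c\rangle \neq 0,
\]
so at least one of $\omega_1,\omega_2$ (tested against $a$, resp.\ $\bi a$) removes $c$ from the cokernel of the perturbed operator. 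Performing this reduction pair by pair and summing the resulting compactly supported contributions (scaled to be as small as one wishes) yields the desired $\omega'$. Uniqueness of $A$ as the unique element of $\mathcal{M}_{cr}^{\omega'}(X,L_1\oplus\cdots\oplus L_N)$ for sufficiently small $\omega'-\omega$ is preserved by exactly the Uhlenbeck-compactness argument of Proposition \ref{ccr}.

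The main obstacle I anticipate is the bookkeeping in this finite iteration: I must verify that each newly added perturbation preserves the surjectivity already obtained on previously processed off-diagonal blocks (which follows from openness and smallness), that the diagonal regularity and the uniqueness of the reducible solution are not destroyed (handled by scaling), and that the total compactly supported correction can be kept inside any prescribed neighborhood of $\omega$ while still removing every cokernel direction.
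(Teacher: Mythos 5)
Your overall architecture matches the paper's: split the deformation complex at $A$ into the diagonal part \eqref{diagonal} and the off-diagonal parts \eqref{offdiag}, dispose of the diagonal part via $b^+(X)=0$ and Proposition \ref{ccr}, and then kill the cokernel of each off-diagonal operator by an iterated application of Lemma \ref{thol} with compactly supported perturbations vanishing at $A$. However, the key reduction step has a genuine gap. You take $a$ to be an \emph{arbitrary} compactly supported $1$-form that is not $d_{A_i\otimes A_j^\ast}$-exact, and you claim that once one of $DV_{\omega_1}|_A(a)$, $DV_{\omega_2}|_A(\bi a)$ pairs nontrivially with a cokernel element $c$, the perturbed operator has strictly smaller cokernel. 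That implication is false for a general $a$: if $L$ is Fredholm and $P$ is a bounded perturbation with $P(a)\notin\im L$ but $a\notin\ker L$, the operator $L+\epsilon P$ need not have smaller cokernel (already in finite dimensions: $L=\mathrm{diag}(1,0)$, $P(e_1)=e_2$, $P(e_2)=0$ gives $\im(L+\epsilon P)=\langle e_1+\epsilon e_2\rangle$, still of codimension one). The semicontinuity of $\dim\coker$ only gives ``$\le$''; to get strict decrease one must exhibit a vector on which the perturbation acts transversally to the old image \emph{starting from the kernel}, so that $(L+\epsilon P)(a)=\epsilon P(a)$ is of order $\epsilon$ and escapes the (slightly tilted) image of $(\ker L)^{\perp}$.

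This is exactly where the hypothesis $\ind(d_{A_i\otimes A_j^\ast}^\ast+d_{A_i\otimes A_j^\ast}^+)\ge 0$ must be used, and your write-up never actually invokes it in the deformation argument. The paper's proof first observes that non-surjectivity together with $\ind\ge 0$ forces the existence of an element $a\in\ker\bigl(d^+_{A_i\otimes A_j^\ast}+DV_\omega\bigr)$ that is \emph{not} of the form $d_{A_i\otimes A_j^\ast}b$ (Proposition \ref{nonzerohol} is needed precisely to feed such a kernel element into Lemma \ref{thol}); it then shows that $(d^+_{A_i\otimes A_j^\ast}+DV_\omega+\epsilon DV_{\omega_1})$ restricted to $(\ker)^{\perp}\oplus\langle a\rangle$ is injective for small $\epsilon$, so the kernel (hence, at fixed index, the cokernel) drops by at least one. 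To repair your argument you should replace ``a compactly supported $1$-form which is not exact'' by ``a non-exact element of $\ker L_{ij}$'', justify its existence from the index hypothesis, and run the kernel-reduction rather than the pairing argument. The remaining bookkeeping you describe (openness of surjectivity on previously treated blocks, smallness, preservation of the unique reducible solution) is fine.
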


\begin{proof}
	If $\omega$ is small enough, then Proposition \ref{ccr} allows us to focus only on the off-diagonal part of the ASD complex, i.e.,
	complexes of the following form:
	\begin{equation*}
		\Omega^0_{k+1,\delta}(X,L_i\otimes L_j^\ast)
	\xrightarrow{\hspace{1mm}d_{A_i\otimes A_j^\ast}\hspace{1mm}}
	\Omega^1_{k,\delta}(X,L_i\otimes L_j^\ast)\xrightarrow{\hspace{1mm}d_{A_i\otimes A_j^\ast}^+ +
	DV_\omega	\hspace{1mm}}
	\Omega^+_{k-1,\delta}(X,L_i\otimes L_j^\ast)
	\end{equation*}
	If  the second map in the complex is not surjective,
	we want to show that there is a compactly supported  small holonomy perturbation $\omega_0$ such that
	\begin{itemize}
	  \item $V_{\omega_0}(A)=0$
	  \item $d_{A_i\otimes A_j^\ast}^+ +DV_\omega + DV_{\omega_0}:\coker (d_{A_i\otimes A_j^\ast}) \to \Omega^+_{k-1,\delta} (X,L_i\otimes L_j^\ast)$ has a smaller
	   kernel than the operator  $d_{A_i\otimes A_j^\ast}^+ +DV_\omega:\coker (d_{A_i\otimes A_j^\ast}) \to \Omega^+_{k-1,\delta} (X,L_i\otimes L_j^\ast)$.
	\end{itemize}
	If this is true, then by induction we can find a perturbation $\omega'$ close to $\omega$ so that $d_{A_i\otimes A_j^\ast}^+ + DV_{\omega'}$
	 is surjective, which verifies our claim. 

	Suppose $d_{A_i\otimes A_j^\ast}^+ +DV_\omega$  in the above complex is not surjection.
	This assumption and non-negativity of $ \ind (d_{A_i\otimes A_j^\ast}^\ast+d_{A_i\otimes A_j^\ast}^+ )$ imply that there is an element
	$a\in\ker(d_{A_i\otimes A_j^\ast}^+ +DV_\omega)$ such that $a$ is not equal to $d_{A_i\otimes A_j^\ast}b$
	for any $0$-form $b$ with values in $L_i\otimes L_j^\ast$.
	Let also $c\in \Omega^+_{k-1,\delta}(X,L_i\otimes L_j^\ast)$ be a compactly supported element in $\coker (d_{A_i\otimes A_j^\ast}^+ +DV_\omega)$.
	By Lemma \ref{thol}, we can find a compactly supported perturbation $\omega_1$  such that
	\begin{itemize}
	  \item $V_{\omega_1} (A)=V_{\omega_2} (A)=0$
	  \item  $DV_{\omega_1}|_A (a)+DV_{\omega_2} |_A (\bi a)=c$
	\end{itemize}
	Either $DV_{\omega_1}|_A (a)$ or $DV_{\omega_2} |_A (\bi a)$ represents a non-zero element in $\coker (d_{A_i\otimes A_j^\ast}^+ +DV_\omega)$.
	The 1-form $\bi a$ also belongs to  $ \ker(d_{A_i\otimes A_j^\ast}^+ +DV_\omega)$ and it cannot be written in the form of $d_{A_i\otimes A_j^\ast}b$.
	If $DV_{\omega_1}|_A (a)$ represents $0$ in the cokernel, then we replace $a$ with $\bi a$ and $\omega_1$ with $\omega_2$.
	Therefore, without loss of generality, we may assume $c'=DV_{\omega_1}|_A (a)$ represents a non-zero element in the cokernel.

	Now we form a new perturbation $\omega_0=\epsilon\omega_1$ where $\epsilon$ is a small positive number.
	 If $\epsilon$ is small enough,
	 $d_{A_i\otimes A_j^\ast}^+ +DV_\omega + \epsilon DV_{\omega_1} |_{(\ker (d_{A_i\otimes A_j^\ast}^+ +DV_\omega) )^\perp   }$
	is still an isomorphism into its image and $c'$ does not lie in the image of this operator, because
	$c'\notin \Im (d_{A_i\otimes A_j^\ast}^+ +DV_\omega)$. Therefore:
	\[
	  d_{A_i\otimes A_j^\ast}^+ +DV_\omega + \epsilon DV_{\omega_1}|_{((\ker (d_{A_i\otimes A_j^\ast}^+ +DV_\omega) )^\perp \oplus \langle a \rangle}
	\]
	is also injective, which proves that
	\[
	  d_{A_i\otimes A_j^\ast}^+ +DV_\omega + DV_{\omega_0} : \coker (d_{A_i\otimes A_j^\ast}) \to \Omega^+ (X,L_i\otimes L_j^\ast)
	\]
	has a smaller kernel than $(d_{A_i\otimes A_j^\ast}^+ +DV_\omega)|_{\coker (d_{A_i\otimes A_j^\ast})}$.
\end{proof}

In our application, we will only deal with the case $X=X_N(l)$. Let
$A$ be the completely reducible connection $B_{\mathbf{v}}(g)$ in Proposition \ref{bi-per-con}.
Apply Lemma \ref{ind-Bv} to the connection $A_i\oplus A_j$, it is easy to see that
   $\ind (d_{A_i\otimes A_j^\ast}^\ast+d_{A_i\otimes A_j^\ast}^+ )=0$.
  If we also suppose the limiting flat connection $\beta$ on $S^1\times S^2$ lies in $\Delta_{N-1}^{\ft,\circ}$, then the requirement in
  Theorem \ref{comp-red-regularity} is fulfilled and we can apply the theorem to make $A$ regular.

\subsection{Partial Regularity}\label{partial-reg-sub}
Let $p$ be a path along $(X,c)$, and suppose $A$ is a connection representing this path. We fix a base point $x\in X$ and let $h_A\subseteq Aut(\left.E\right |_{x})$ be the group of the holonomies of $A$ based at $x$. Identifying the fiber of $E$ at $x$ with $\C^N$ gives an $N$-dimensional representation of $h_A$. Irreducible components of this representation determine a decomposition of $E$ into a direct sum of Hermitian vector bundles as follows:
\[
  E=\underbrace{E_1\oplus \dots\oplus E_1}_{m_1}\oplus \underbrace{E_2\oplus \dots\oplus E_2}_{m_2}\oplus \dots \oplus \underbrace{E_s\oplus \dots\oplus E_s}_{m_s}
\]
For each bundle $E_i$ there is a connection $A_i$ such that the connection $A$ is equal to the direct sum of the connections $A_i$ (where $A_i$ appears with multiplicity $m_i$). Let $N_i$ denote the rank of $E_i$. Let $c_i$ denote a 2-cycle representing $c_1(E_i)$. The path along $(X,c_i)$, determined by the connection $A_i$, is also denoted by $p_i$. We say that $A$ is a connection of type $\{(N_i,m_i,c_i,p_i)\}_{1\leq i \leq s}$. The stabilizer of the connection $A$ of this type is given by:
\[
  \Gamma_A\cong \{(V_1,\dots, V_s)\mid V_i \in \U(m_i), \det(V_1)^{N_1}\dots \det(V_s)^{N_s}=1\}
\]

We can consider the moduli theory of the space of connections which have the same type as $A$. Firstly assume that $(X,c)$ satisfies
Condition \ref{noS1S2} and $b_1(X)=b^+(X)=0$. Suppose $\mathcal A$ denotes the space of all $s$-tuples of connections $\mathbb A=(A_1,\dots, A_s)$ such that:
\begin{itemize}
  \item The connection $A_i$ is irreducible of rank $N_i$ representing the path $p_i$.
  \item The isomorphism classes of the connections $A_i$ are distinct.
  \item The central part of the connection:
	  \begin{equation}\label{conn-A}
	    A=\underbrace{A_1\oplus \dots\oplus A_1}_{m_1}\oplus \underbrace{A_2\oplus \dots\oplus A_2}_{m_2}\oplus \dots
	    \oplus \underbrace{A_s\oplus \dots\oplus A_s}_{m_s}
	  \end{equation}
	  is given by a fixed $\U(1)$-connection on the determinant of $E$.
\end{itemize}
Note that we do not fix the central part of the connections $A_i$. Suppose a family of smooth metrics $\bbX$ on $X$ parameterized by a manifold $K$ is given. Then we can form a bundle $\mathcal E$ over the space $\mathcal A \times K$ whose fiber over $(\mathbb A,g)$ is the space:
\begin{equation} \label{E-fiber}
  \Omega_{\delta}^{+_g}(X,\underline \R^{s-1})\oplus \bigoplus_{1\leq i \leq s}\Omega_{\delta}^{+_g}(X,\su(E_i)).
\end{equation}
Analogous to previous subsection, $\underline \R^{s-1}$ should be interpreted as the sub-bundle of $\su(E)$ given by scaler endomorphisms of $E_i$. Then $F_0^{+_g}(A)$ for the connection $A$ in \eqref{conn-A} can be regarded as an element of the fiber of $\mathcal E$ over $(\mathbb A,g)$.

There is also a gauge group acting on $\mathcal A$ and $\mathcal E$. An element of this gauge group is an $s$-tuple $(u_1,\dots, u_s)$ where $u_i$ is a hermitian automorphism of the bundle $E_i$ and:
\begin{equation}\label{centerofg}
  \det(u_1)^{m_1} \cdot \det(u_2)^{m_2} \dots  \det(u_s)^{m_s}=1.
\end{equation}
The quotient of $\mathcal A$ with respect to this gauge group is denoted by $\mathcal B$. The bundle $\mathcal E$ also induces a bundle over $\mathcal B\times K$, which is still denoted by $\mathcal E$. The trace free part of ASD connections induce a section $\Phi$ of the bundle $\mathcal E$.
 \begin{lemma}\label{indofphi}
	 The differential of the section $\Phi$ at a zero $z\in \mathcal B\times K$  defines a Fredholm operator $\Phi_z$ from
	 $T_z (\mathcal B\times K)$ to the corresponding fiber $\left.\mathcal E\right |_{z}$ whose index is equal to:
	\[
	  \ind(\mathcal D_{A_1})+\dots+\ind(\mathcal D_{A_s})+\dim(K)
	\]
\end{lemma}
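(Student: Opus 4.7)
}
The plan is to compute the Fredholm index of $\Phi_z$ as the Euler characteristic of the elliptic deformation complex associated to the slice of the $\mathcal G$-action, and then to decompose this complex according to the block structure of the connection $A$ in \eqref{conn-A}. Concretely, after identifying $T_{\mathbb A}\mathcal A$ and $\mathrm{Lie}\,\mathcal G$ via the constraints of Subsection \ref{partial-reg-sub}, the index of $\Phi_z$ equals the Euler characteristic of
\[
\mathrm{Lie}\,\mathcal G \xrightarrow{\;d_A\;} T_{\mathbb A}\mathcal A \oplus T_g K \xrightarrow{\;D\Phi\;} \mathcal E|_z ,
\]
with $d_A(\xi_1,\dots,\xi_s) = (d_{A_i}\xi_i)_i$ and the $T_g K$ summand having trivial image in $\mathrm{Lie}\,\mathcal G$ and image $0$ in $\mathcal E|_z$. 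Since the $A_i$ are irreducible and pairwise non-isomorphic, this complex is Fredholm, and adding the $T_g K$ summand contributes exactly $\dim K$ to the Euler characteristic.

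The next step is to decompose each coefficient bundle via the trace/traceless splitting $\mathfrak u(E_i) = \mathfrak{su}(E_i) \oplus i\R \cdot I_{E_i}$. Writing $a_i = a_i^{\mathrm{tl}} + b_i \, I_{E_i}$ and $\xi_i = \xi_i^{\mathrm{tl}} + \eta_i\, I_{E_i}$, the constraint $\sum_i m_i \tr(\cdot) = 0$ becomes $\sum_i m_i N_i b_i = 0$ (respectively $\sum_i m_i N_i \eta_i = 0$), so the traces take values in the trivial rank-$(s-1)$ bundle $V = \{v\in\R^s : \sum_i m_i N_i v_i = 0\}$. Because $I_{E_i}$ is central, one checks $d_{A_i}\xi_i = d_{A_i}\xi_i^{\mathrm{tl}} + d\eta_i\, I_{E_i}$, and a direct computation (using $\tr F(A_i + ta_i) = \tr F(A_i) + tN_i\, db_i + O(t^2)$ together with the definitions of $F_0^+(A_i)$ and $\sigma_i$ in Subsection \ref{partial-reg-sub}) shows that $D\Phi$ also respects the trace/traceless splitting and acts as $a_i^{\mathrm{tl}}\mapsto d_{A_i}^+ a_i^{\mathrm{tl}}$ on the traceless summands and as $(b_i)\mapsto (d^+ b_i)$ on the $V$-valued trace summand. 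Consequently, the deformation complex splits as the direct sum of the $\mathfrak{su}(E_i)$-complexes
\[
\Omega^0_\delta(X,\mathfrak{su}(E_i)) \xrightarrow{\;d_{A_i}\;} \Omega^1_\delta(X,\mathfrak{su}(E_i)) \xrightarrow{\;d_{A_i}^+\;} \Omega^+_\delta(X,\mathfrak{su}(E_i)),
\]
one for each $i$, together with the twisted-by-$V$ abelian complex $\Omega^0_\delta(X,V) \to \Omega^1_\delta(X,V) \to \Omega^+_\delta(X,V)$ and the $T_g K$ summand.

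Each $i$-th summand has Euler characteristic equal to $\ind(\mathcal D_{A_i})$ by the very definition of $\mathcal D_{A_i}$ in \eqref{ASDop}. The abelian summand is $(s-1)$ copies of the scalar weighted ASD complex on $X$, whose index is $-h^0_{0,\delta} + h^1_{0,\delta} - h^{2,+}_{0,\delta}$; the hypothesis $b_1(X) = b^+(X) = 0$, together with the fact that for small $\delta > 0$ no constant lies in $L^2_\delta$, forces each of these weighted harmonic spaces to vanish. Summing the three contributions yields
\[
\ind(\Phi_z) = \sum_{i=1}^s \ind(\mathcal D_{A_i}) + 0 + \dim K,
\]
which is the claimed formula. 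The only delicate point of the argument is the verification that the abelian summand has index zero; this is the main technical obstacle and will require a careful application of weighted Hodge theory for the cylindrical-ended manifold $X$, using the hypothesis on $b_1(X)$, $b^+(X)$, and the fact that each boundary component is either a lens space or an admissible pair so that the non-degeneracy hypotheses at the ends rule out small eigenvalues of the boundary Dirac-type operators that could obstruct the vanishing.
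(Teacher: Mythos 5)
Your proposal is correct and follows essentially the same route as the paper: both split the deformation complex into the traceless blocks, each contributing $\ind(\mathcal D_{A_i})$, and the rank-$(s-1)$ diagonal (central) abelian block, whose contribution vanishes because $b_1(X)=b^+(X)=0$, with $T_gK$ adding $\dim K$. The paper states the vanishing of the abelian contribution more strongly (that component of $D\Phi$ is an isomorphism, citing APS), but this is the same key input you identify as the delicate step.
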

\begin{proof}
	Without loss of generality, we may assume $K$ consists of a single point.
	Suppose $z=[A_1,\dots,A_s]$ is a zero of $\Phi$. If we deform $z$ while preserving the central parts of
	$A_1$, $\dots$, $A_s$, then the differential of $\Phi$ takes values in the second summand of \eqref{E-fiber}.
	Fredholmness of the ASD operator implies that this component of the differential of $\Phi$ is Fredholm and its index is equal to
	$\ind(\mathcal D_{A_1})+\dots+\ind(\mathcal D_{A_s})$. If we only deform the central parts of the connections
	$A_1$, $\dots$, $A_s$, while we preserve the central part of $A$ in \eqref{conn-A}, then the differential of $\Phi$ takes
	values in the first summand of \eqref{E-fiber}. This component of the differential of $\Phi$ is an isomorphism
	because $b^1(X)=b^+(X)=0$. (This claim is standard and its proof is implicit in \cite{APS:I}. See also \cite{AD:iPFH}.)
\end{proof}

If the derivative of $\Phi$ at a zero $z \in \mathcal B\times K$ is surjective, then we say $z$ is partially regular. The zeros of $\Phi$ form the moduli space of reducible ASD connections of type $\{(N_i,m_i,c_i,p_i)\}_{1\leq i \leq s}$ with respect to the family of metrics $\bbX$. We will write $\mathcal M_p(\bbX,\{(N_i,m_i,c_i,p_i)\}_{1\leq i \leq s})$ for this space.

Consider the pull-back of the Banach bundle $\mathcal E$ to $\mathcal B\times K\times \mathcal W$ via the projection map and denote this bundle with $\mathcal E$, too. Then $\Phi$ extends to a section of $\mathcal E$ defined over $\mathcal B\times K\times \mathcal W$ which we also denote by $\Phi$. Zero is a regular value of $\Phi$. The main ingredient to verify this claim is the following standard result from representation theory of compact Lie groups. Each element of the holonomy group $h_A$ determines an element of $\U(N_1)\times \dots \U(N_s)\subset \gl(E_1)\oplus \dots \oplus \gl(E_s)$ given by the holonomy of connections $A_i$. Since the connections $A_i$ are irreducible and have different isomorphism classes, the linear combination of all such matrices, generate the space $\gl(E_1)\oplus \dots \oplus \gl(E_s)$. Given this result, then we can apply the argument in the proof of Proposition \ref{noS1S2-reg} to verify the claim about regularity of zero as a value of $\Phi$.

Now, an application of Sard-Smale theorem as in the proof of Proposition \ref{noS1S2-reg} shows that there is $\mathcal W_{\rm reg}\subset \mathcal W$ such that for any $\omega\in \mathcal W_{\rm reg}$, the section:
\[
  \mathcal M^\omega_p(\bbX,\{(N_i,m_i,c_i,p_i)\}_{1\leq i \leq s}):=\Phi^{-1}(0)\cap \(\mathcal B\times K\times \{\omega\}\)
\]
consists of regular elements. This argument and Lemma \ref{indofphi} allow us to conclude:
\begin{prop}
	Suppose $(X,c)$ is a pair satisfying Condition \ref{noS1S2} with $b^1(X)=b^+(X)=0$. Suppose a family of smooth metrics $\bbX$ on $(X,c)$ parametrized by a manifold $K$
	is fixed.
	There is a residual subset $\mathcal W_{\rm reg}\subset \mathcal W$ such that for any $\omega \in \mathcal W_{reg}$ and any path $p$ along $(X,c)$, we have:
   \begin{itemize}
     \item[(i)] Any connection $[A]\in \mathcal M^\omega_p(\bbX,c)$ is partially regular.
     \item[(ii)] Suppose the decomposition of $A$ as a direct sum of
	irreducible connections is given as below:
	\begin{equation}\label{connection-A}
	  A=\underbrace{A_1\oplus \dots\oplus A_1}_{m_1}\oplus \underbrace{A_2\oplus \dots\oplus A_2}_{m_2}\oplus \dots \oplus
	   \underbrace{A_s\oplus \dots\oplus A_s}_{m_s}
	\end{equation}
	Then we have the following index inequality:
	\begin{equation}\label{index-ineq}
	  \ind(\mathcal D_{A_1})+\dots+\ind(\mathcal D_{A_s})+\dim(K)\geq 0.
	\end{equation}
   \end{itemize}
\end{prop}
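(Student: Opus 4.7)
The plan is to reduce the statement to a countable application of Sard--Smale by stratifying the moduli space according to the \emph{type} $\{(N_i,m_i,c_i,p_i)\}$ introduced in the discussion preceding the statement, and then to deduce the index inequality from Lemma \ref{indofphi} together with non-negativity of the dimension of a regular zero locus.

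First I would fix a path $p$ along $(X,c)$ and observe that, because any connection $A$ representing $p$ has a well-defined holonomy-based isotypic decomposition, the moduli space $\mathcal M_p^{\omega}(\bbX,c)$ is the union, over all types $\theta=\{(N_i,m_i,c_i,p_i)\}_{1\le i\le s}$ compatible with $p$, of the strata $\mathcal M_p^{\omega}(\bbX,\theta)$ defined as the zero locus of the section $\Phi$ of $\mathcal E\to\mathcal B\times K\times\mathcal W$ built exactly as in the discussion above. The set of such types is countable (it is controlled by the finitely many ways of writing the $\U(N)$-structure as a direct sum), so it suffices to produce, for each $\theta$, a residual set $\mathcal W_{\theta}\subset \mathcal W$ along which $\Phi$ has $0$ as a regular value, and then intersect.

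Next I would verify that, on $\mathcal B\times K\times\mathcal W$, the linearization $D\Phi$ is surjective at every zero. The component coming from deformations of $\omega$ alone already has dense image in the fiber $\mathcal E|_z$: given any point $x\in X$ and any trivialization of $E_x$, the holonomies along the loops $q_{i,x}$ (and their commutators) span, by the irreducibility of each $A_i$ together with the standard fact that distinct irreducible components of a unitary representation have linearly independent matrix entries, the whole of $\gl(E_1)\oplus\cdots\oplus\gl(E_s)\subset\gl(E)_x$. A partition-of-unity argument identical to the one used in Proposition \ref{noS1S2-reg} then upgrades this density statement to surjectivity modulo a closed subspace of finite codimension, which is enough for the implicit function theorem. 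Hence $\Phi^{-1}(0)\subset\mathcal B\times K\times\mathcal W$ is a smooth Banach submanifold, and the projection to $\mathcal W$ is Fredholm by Lemma \ref{indofphi}. Sard--Smale provides the desired residual $\mathcal W_{\theta}$, and I set $\mathcal W_{\rm reg}=\bigcap_{\theta}\mathcal W_{\theta}$, which remains residual.

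For part (ii), fix $\omega\in\mathcal W_{\rm reg}$ and $[A]\in\mathcal M_p^{\omega}(\bbX,c)$ of type $\theta$ as in \eqref{connection-A}. Partial regularity at $[A]$ means that the corresponding zero of $\Phi$ is regular, so $\mathcal M_p^{\omega}(\bbX,\theta)$ is a smooth manifold near $[A]$ of dimension equal to $\ind(\Phi_z)$; by Lemma \ref{indofphi} this dimension equals $\ind(\mathcal D_{A_1})+\cdots+\ind(\mathcal D_{A_s})+\dim(K)$. Since a non-empty smooth manifold has non-negative dimension, \eqref{index-ineq} follows.

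The main obstacle, and the only point that is not formal, is the surjectivity of the holonomy-perturbation linearization $D_{\omega}\Phi$ at reducible connections with many identical summands. Unlike the fully irreducible case of Proposition \ref{noS1S2-reg}, one now needs to hit \emph{all} of the off-diagonal blocks $\Hom(E_i,E_j)$ (for $i\ne j$) as well as the full diagonal block in $\bigoplus_i \su(E_i)$, using only holonomy variations that are compatible with the centralizer $\Gamma_A$. This is where the input of representation theory enters: since the irreducible summands $A_i$ are pairwise non-isomorphic and irreducible, the linear span of the holonomy matrices of $A$ along loops based at a fixed point $x$ is exactly $\gl(E_1)\oplus\cdots\oplus\gl(E_s)$, which is precisely the centralizer of $\Gamma_A$ acting on $\gl(E)_x$ and hence is the correct target space for the allowed perturbations; this matches the target of $\Phi$ given in \eqref{E-fiber}. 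Once this density is established fiberwise and globalized by a partition of unity as in \cite{K:higher}, the remainder of the argument is a routine Sard--Smale plus index-count exercise.
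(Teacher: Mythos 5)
Your proposal is correct and follows essentially the same route as the paper: stratify by reducible type, observe that the holonomies of pairwise non-isomorphic irreducible summands span $\gl(E_1)\oplus\cdots\oplus\gl(E_s)$ so that holonomy perturbations make $0$ a regular value of $\Phi$ on $\mathcal B\times K\times\mathcal W$, apply Sard--Smale, and read off the inequality \eqref{index-ineq} from Lemma \ref{indofphi} together with non-negativity of the dimension of a non-empty regular zero locus. No substantive differences from the paper's argument.
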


The above proposition can be adapted to the case that $(X,c)$ satisfies Condition \ref{S1S2} and $b_1(X)=b^+(X)=0$. As the first step, we need to define the space $\mathcal A$.  This space consists of all $s$-tuples of connections $\mathbb A=(A_1,\dots, A_s)$ such that:
\begin{itemize}
  \item The connection $A_i$ is irreducible of rank $N_i$ representing the path $p_i$.
  \item The isomorphism classes of the connections $A_i$ are distinct.
  \item The central part of the connection:
	  \begin{equation}\label{conn-A}
	    A=\underbrace{A_1\oplus \dots\oplus A_1}_{m_1}\oplus \underbrace{A_2\oplus \dots\oplus A_2}_{m_2}\oplus \dots
	    \oplus \underbrace{A_s\oplus \dots\oplus A_s}_{m_s}
	  \end{equation}
	  is given by a fixed $\U(1)$-connection on the determinant of $E$.
  \item There is a fixed lift of an open face $\Gamma$ of $\Delta_{N-1}^\ft$ to the set of smooth connections on $S^1\times S^2$ such that
  	  the limiting flat connection of $A$ on $S^1\times S^2$ belong to this space. 
\end{itemize}
We can proceed as before to define a gauge group acting on $\mathcal A$, the quotient space $\mathcal B$, and a Banach bundle $\mathcal E$ over $\mathcal B \times K$  whose fiber over $(\bbA,g)$ is given as in \eqref{E-fiber}.  If $\bbA=(A_1,\dots,A_s)$ then $\{F_0^{+_g}(A_i)\}_{1\leq i \leq s}$ determines a section of this bundle. The moduli space $\mathcal M_p(\bbX,\{(N_i,m_i,c_i,p_i)\}_{1\leq i \leq s};\Gamma)$ is defined to be $\Phi^{-1}(0)$. For any element $z=(\bbA,g)$ of this moduli space, the derivative of $\Phi$ at $z$ defines a Fredholm operator whose index is equal to:
\[
  \(\sum_{i=1}^s \ind(\mathcal D_{A_i})\)+\dim(\Gamma)+\dim(K)-(s-1)
\]
We say $z$ is {\it partially regular}, if the derivative of $\Phi$ is Fredholm at $z$. More generally, we can define the moduli space $\mathcal M^\omega_p(\bbX,\{(N_i,m_i,c_i,p_i)\}_{1\leq i \leq s};\Gamma)$ for any $\omega \in \mathcal W$ and extend the notion of partial regularity for the elements of these spaces.  The proof of the above formula is similar to the proof of Lemma \ref{indofphi}. The first term is given by deforming the connections $A_i$ while preserving their central parts and boundary flat connections on $S^1\times S^2$. The second term is given by deforming the boundary flat connections. The third term is the contribution of deforming the metric. The final term is also obtained by varying the central parts of each $A_i$ while preserving their boundary values.

\begin{prop}\label{partial-reg}
	Suppose $(X,c)$ is a pair satisfying Condition \ref{S1S2} with $b^1(X)=b^+(X)=0$.
	Suppose a family of smooth metrics $\bbX$ on $(X,c)$ parametrized by a manifold $K$
	is fixed. Suppose also a finite subset $M$ of $\Delta_{N-1}^{\ft,\circ}$ is fixed.
	There is a residual subset $\mathcal W_{\rm reg}\subset \mathcal W$ such that for any $\omega \in \mathcal W_{reg}$ and any path $p$ along $(X,c)$, we have:
   \begin{itemize}
     \item[(i)] Any connection $[A]\in \mathcal M^\omega_p(\bbX,c;\Delta_{N-1}^\ft)$ is partially regular.
     In particular, if the decomposition of $A$ into irreducible connections has the following form:
	\begin{equation}\label{connection-A'}
	  A=\underbrace{A_1\oplus \dots\oplus A_1}_{m_1}\oplus \underbrace{A_2\oplus \dots\oplus A_2}_{m_2}\oplus \dots \oplus
	   \underbrace{A_s\oplus \dots\oplus A_s}_{m_s},
	\end{equation}
	then:
	\begin{equation} \label{inequality-sum-indices}
	    \(\sum_{i=1}^s \ind (\mathcal D_{A_{i}})\) +N-s+\dim(K)\geq 0.
	\end{equation}
	\item[(ii)] Suppose $[A]\in \mathcal M^\omega_p(\bbX,c;\Delta_{N-1}^\ft)$ is an element of
	$\mathcal M^\omega_p(\bbX,\{(N_i,m_i,c_i,p_i)\}_{1\leq i \leq s};\Delta_{N-1}^{\ft,\circ})$ and $r([A]) \in M$.
	Then the map $r:\mathcal M^\omega_p(\bbX,\{(N_i,m_i,c_i,p_i)\}_{1\leq i \leq s}; \Delta_{N-1}^{\ft,\circ})\to \Delta_{N-1}^{\ft,\circ}$
	at the point $[A]$ is transversal to the inclusion map of $M$ in $\Delta_{N-1}^{\ft,\circ}$.
	In this case, the inequality in \eqref{inequality-sum-indices} can be improved as follows:
	\begin{equation}
	    \(\sum_{i=1}^s \ind (\mathcal D_{A_{i}})\) +1-s+\dim(K)\geq 0.
	\end{equation}
  \end{itemize}
\end{prop}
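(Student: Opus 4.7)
The plan is to mirror the argument used just above for Condition \ref{noS1S2}, adapting the setup to allow the limiting flat connection on the $S^1\times S^2$ end to vary within an open face $\Gamma$ of $\Delta_{N-1}^{\ft}$, and then to incorporate the boundary evaluation map into the transversality problem in part (ii). First, I will set up the configuration space $\mathcal A_\Gamma$ of $s$--tuples $\bbA=(A_1,\dots,A_s)$ of irreducible connections with the bookkeeping already in place in the excerpt: the $A_i$ are mutually non-isomorphic, the central part of the associated reducible connection $A$ in \eqref{connection-A'} is fixed, and the limiting flat connection of $A$ on $S^1\times S^2$ lies in a fixed lift of $\Gamma$. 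As before, I form the Banach bundle $\mathcal E \to \mathcal B_\Gamma \times K$ whose fiber is \eqref{E-fiber}, pull it back to $\mathcal B_\Gamma \times K \times \mathcal W$, and let $\Phi$ be the section whose component on the $i$--th factor is $F_0^{+_g}(A_i)+V_\omega(A_i)$ (projected to $\mathfrak{su}(E_i)$) together with the ``diagonal'' component recording the central obstruction.

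Next I will verify that $0$ is a regular value of $\Phi$ on $\mathcal B_\Gamma \times K \times \mathcal W$. The only new analytic ingredient compared to the Condition \ref{noS1S2} case is that the variation of the boundary value $\alpha \in \Gamma$ contributes an extra finite-dimensional factor $T_\alpha \Gamma$ to the domain; this does not interfere with the surjectivity argument since the image of the $\mathcal W$--derivative is already dense in the fiber of $\mathcal E$. The representation-theoretic input is the same as in the excerpt: because the $A_i$ are irreducible and pairwise non-isomorphic, finite linear combinations of their holonomies at a base point span $\mathfrak{gl}(E_1)\oplus\cdots\oplus\mathfrak{gl}(E_s)$, and the holonomy-perturbation construction of Subsection \ref{hol-per-def} then produces enough $DV_\omega(A)$ values to saturate a dense subspace of each summand in \eqref{E-fiber}. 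An application of Sard--Smale to the projection $\Phi^{-1}(0)\to \mathcal W$ yields a residual $\mathcal W_{\rm reg}\subset \mathcal W$ for which every $[A]$ in every stratum is partially regular; the index formula
\[
\dim \Phi^{-1}(0) = \Bigl(\sum_{i=1}^s \ind(\mathcal D_{A_i})\Bigr) + \dim\Gamma + \dim K - (s-1)
\]
is the same one derived before Lemma \ref{indofphi} (the $\dim\Gamma$ term coming from varying the boundary flat connection, the $-(s-1)$ from varying the central parts of the $A_i$ subject to the constraint \eqref{centerofg}). Taking $\dim\Gamma$ as large as possible, namely $N-1$, gives \eqref{inequality-sum-indices} after running over the finitely many strata compatible with $p$.

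For part (ii), I will enlarge $\Phi$ by coupling it with the boundary restriction map. Let $M=\{\alpha^{(1)},\dots,\alpha^{(m)}\}\subset \Delta_{N-1}^{\ft,\circ}$. Instead of letting $\Gamma$ be arbitrary, I work stratum-by-stratum with $\Gamma = \Delta_{N-1}^{\ft,\circ}$, form the evaluation $r:\mathcal B_\Gamma\times K\times \mathcal W \to \Gamma$ sending $(\bbA,g,\omega)$ to the limiting flat connection of the associated $A$, and consider the augmented section $\widehat\Phi = (\Phi,r)$ taking values in $\mathcal E \oplus T\Gamma$, together with the finite subset $\{0\}\times M$ of its target. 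Surjectivity of the linearisation of $\widehat\Phi$ at a solution again follows because the $\mathcal W$--derivative already fills the $\mathcal E$--factor and $T_\alpha\Gamma$ is taken care of by the ambient variation of $\alpha$ in the configuration space. Sard--Smale applied to the projection $\widehat\Phi^{-1}(\{0\}\times M)\to \mathcal W$ then furnishes a residual subset of $\mathcal W$ on which $r$ is transversal to $M$ along the moduli space. Intersecting the two residual subsets (one for each stratum and each point of $M$, of which there are finitely many) yields the $\mathcal W_{\rm reg}$ claimed in the statement. The transversality clause forces the preimage $r^{-1}(M)$ to have codimension $N-1$ in the stratum, which replaces $\dim\Gamma = N-1$ by $0$ in the index count and produces the sharpened inequality.

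The step I expect to be the main obstacle is the joint transversality in (ii): a priori the holonomy perturbation $V_\omega$ changes the equation in the interior of $X$ but does not directly perturb the limiting flat connection $\alpha$, so one has to be careful that after fixing the $\mathcal E$--component the remaining freedom in $\mathcal A_\Gamma$ still projects surjectively onto $T_\alpha \Gamma$. This is where varying the boundary-value lift in the definition of $\mathcal A_\Gamma$ is essential; once that variation is included in the tangent space, the surjectivity of $D\widehat\Phi$ reduces to the surjectivity of $D\Phi$ already handled, plus the tautological surjectivity of the evaluation differential. The remaining compactness and residuality arguments are then routine adaptations of the proofs of Propositions \ref{noS1S2-reg} and \ref{S1S2-reg}.
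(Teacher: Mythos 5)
Your proposal is correct and follows essentially the same route as the paper: extend $\Phi$ (and, for part (ii), the evaluation map $r$) over $\mathcal B\times K\times\mathcal W$, establish regularity of the value $0$ (resp.\ of $(0,\beta)$) via the density of the image of the $\mathcal W$-derivative coming from holonomies of the pairwise non-isomorphic irreducible summands, and then apply Sard--Smale together with the index formula $\sum_i\ind(\mathcal D_{A_i})+\dim(\Gamma)+\dim(K)-(s-1)$. Your accounting of how $\dim\Gamma\leq N-1$ yields \eqref{inequality-sum-indices} and how transversality to the finite set $M$ sharpens it to the second inequality matches the paper's (much terser) argument exactly.
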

\begin{proof}
	The bundle $\mathcal E$ and the section $\Phi$ can be extended to the space $\mathcal B\times K\times \mathcal W$.
	The zero is a regular value of $\Phi$. In the case $\Gamma=\Delta_{N-1}^{\ft,\circ}$,
	we also define $r:\mathcal B\times K\times \mathcal W \to \Delta_{N-1}^{\ft,\circ}$ using the limiting value of connections on $S^1\times S^2$.
	It is also true that for any $\beta\in\Delta_{N-1}^{\ft,\circ}$, the pair $(0,\beta)$ is a regular value of
	$(\Phi,r):\mathcal B\times K\times \mathcal W \to \mathcal E\times \Delta_{N-1}^{\ft,\circ}$.
	We follow a similar argument as before to verify the claims.
\end{proof}

Proposition \ref{S1S2-partial-reg-rel} is the relative version of Proposition \ref{partial-reg}:
\begin{prop} \label{S1S2-partial-reg-rel}
	Suppose $(X,c)$ is a pair satisfying Condition \ref{S1S2} with $b^1(X)=b^+(X)=0$.
	Suppose a family of smooth metrics $\bbX$ on $(X,c)$ parametrized by a manifold $K$
	is fixed. Suppose $H\subset J\subset K$, $\gamma_0$ and $\gamma_1$
	are given as in Proposition \ref{noS1S2-reg-rel}.
	Suppose a finite subset $M$ of $\Delta_{N-1}^{\ft,\circ}$ is fixed.
	Suppose $\widetilde \omega \in \widetilde {\mathcal W}$ is given such that the following two properties hold:
	\begin{itemize}
		\item[(i)] Any connection $[A]\in \mathcal M^{\widetilde \omega|_J}_p(\bbX|_{J},c;\Delta_{N-1}^\ft)$ is partially regular.
		\item[(ii)] Suppose $[A]\in \mathcal M^{\widetilde \omega|_J}_p(\bbX|_{J},c;\Delta_{N-1}^\ft)$ is an element of
		$\mathcal M^{\widetilde \omega|_J}_p(\bbX|_{J},\{(N_i,m_i,c_i,p_i)\}_{1\leq i \leq s};\Delta_{N-1}^{\ft,\circ})$
		and $r([A]) \in M$. Then the map
		$r:\mathcal M^{\widetilde \omega|_J}_p(\bbX|_{J},\{(N_i,m_i,c_i,p_i)\}_{1\leq i \leq s};
		\Delta_{N-1}^{\ft,\circ})\to \Delta_{N-1}^{\ft,\circ}$
		at the point $[A]$ is transversal to the inclusion map of $M$ in $\Delta_{N-1}^{\ft,\circ}$.
	\end{itemize}
	There is a residual subset $\mathcal W_{\rm reg} \subset \mathcal W$ such that for any $\omega \in \mathcal W_{\rm reg}$,
	if:
 	\[
	  \widetilde \omega':=\gamma_0 \widetilde \omega+\gamma_1 \omega.
	\]
	then the following properties hold:
	\begin{itemize}
		\item[(i)] Any connection $[A]\in \mathcal M^{\widetilde \omega'}_p(\bbX,c;\Delta_{N-1}^\ft)$ is partially regular.
		In particular, if the decomposition of $A$ into irreducible connections has the following form:
		\begin{equation}\label{connection-A'-2}
			A=\underbrace{A_1\oplus \dots\oplus A_1}_{m_1}\oplus
			 \underbrace{A_2\oplus \dots\oplus A_2}_{m_2}\oplus \dots \oplus
			\underbrace{A_s\oplus \dots\oplus A_s}_{m_s},
		\end{equation}
		then:
		\begin{equation} \label{inequality-sum-indices-2}
		    \(\sum_{i=1}^s \ind (\mathcal D_{A_{i}})\) +N-s+\dim(K)\geq 0.
		\end{equation}
		\item[(ii)] Suppose $[A]\in \mathcal M^{\widetilde \omega'}_p(\bbX,c;\Delta_{N-1}^\ft)$ is an element of
			$\mathcal M^{\widetilde \omega'}_p(\bbX,\{(N_i,m_i,c_i,p_i)\}_{1\leq i \leq s};\Delta_{N-1}^{\ft,\circ})$ and $r([A]) \in M$.
			Then the map
			$r:\mathcal M^{\widetilde \omega'}(\bbX,\{(N_i,m_i,c_i,p_i)\}_{1\leq i \leq s};\Delta_{N-1}^{\ft,\circ})\to
			\Delta_{N-1}^{\ft,\circ}$
			at the point $[A]$ is transversal to the inclusion map of $M$ in $\Delta_{N-1}^{\ft,\circ}$.
			In this case, the inequality in \eqref{inequality-sum-indices} can be improved as follows:
			\begin{equation}
			    \(\sum_{i=1}^s \ind (\mathcal D_{A_{i}})\) +1-s+\dim(K)\geq 0.
			\end{equation}
	\end{itemize}
\end{prop}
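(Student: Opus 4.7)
The plan is to imitate the proof of Proposition \ref{partial-reg} in the parametrized setting, combining it with the cutoff trick used in Proposition \ref{noS1S2-reg-rel} to keep the perturbation fixed near the compact subset $H$. Concretely, for each type $\{(N_i,m_i,c_i,p_i)\}_{1\leq i \leq s}$ and each open face $\Gamma$ of $\Delta_{N-1}^\ft$, I would form the Banach bundle $\mathcal E$ over $\mathcal B \times K \times \mathcal W$ whose fiber at $(\bbA, g, \omega)$ is the vector space in \eqref{E-fiber} built from the metric $g$, and define the parametrized section
\[
  \Phi([\bbA], g, \omega) = F_0^{+_g}(A) + V_{\gamma_0(g)\widetilde\omega(g) + \gamma_1(g)\omega}(A,g),
\]
together with the restriction-to-boundary map $r:\mathcal B \times K \times \mathcal W \to \overline \Gamma$. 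I would then show that $(0,\beta)$ is a regular value of $(\Phi, r)$ for every $\beta \in \Gamma$, whence Sard--Smale applied to the projection of $(\Phi,r)^{-1}(0,\beta)$ (for $\beta$ ranging over $M$ or over $\Gamma$ as appropriate) gives the desired residual set $\mathcal W_{\rm reg}$. The index computation yielding \eqref{inequality-sum-indices-2} and its sharpening under transversality to $M$ then comes from the same index formula that preceded Proposition \ref{partial-reg}, since the linearization of $\Phi$ at a zero is the direct sum of a Fredholm operator (of the index stated there) and a contribution from the $\mathcal W$-direction that only enlarges the image.

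The surjectivity of $D\Phi$ at a zero $z = ([\bbA], g, \omega)$ splits into two regimes according to the value of $\gamma_1(g)$. If $\gamma_1(g) = 0$, then $\gamma_0(g) = 1$ by the assumption on the bump functions, so $g$ lies in $J$ and the resulting equation is exactly the perturbed ASD equation for $\widetilde\omega|_J$ at $g$; the hypotheses on $\widetilde\omega$ then yield partial regularity of $[\bbA]$ with respect to $\bbX|_J$, hence surjectivity of the fiberwise part of $D\Phi$ at $z$, and similarly transversality of $r$ to $M$ when $r([\bbA]) \in M$. If on the other hand $\gamma_1(g) > 0$, one argues as in Proposition \ref{partial-reg}: the derivative of $\Phi$ in the $\mathcal W$-direction at $\omega$ equals $\gamma_1(g)$ times the infinitesimal holonomy variation, and the standard representation-theoretic fact that, for a reducible connection of the given type, the span of the holonomies along loops based at a point fills $\gl(E_1)\oplus\cdots\oplus\gl(E_s)$ allows one to realize any compactly supported element of the fiber in \eqref{E-fiber} by varying $\omega$. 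Combining with the finite-codimension image from $T_{[\bbA]}\mathcal B$, this density forces the image of $D\Phi$ at $z$ to be closed with dense complement of finite codimension, hence surjective; the argument for $(\Phi,r)$ works identically since $r$ depends only on the boundary data.

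Once regularity of $(0,\beta)$ is established for all $\beta$ in the relevant finite set or open face, a standard application of the Sard--Smale theorem to the projection from the Banach manifold $(\Phi,r)^{-1}(0,M) \cup (\Phi)^{-1}(0)$ to $\mathcal W$ produces the residual subset $\mathcal W_{\rm reg}$. For $\omega \in \mathcal W_{\rm reg}$ every zero of the resulting section is partially regular, and the improved index bound under the transversality to $M$ follows by adding the codimension of $M$ in $\Delta_{N-1}^{\ft,\circ}$, which equals $N-2$, to the right-hand side of \eqref{inequality-sum-indices-2}, turning the $N-s$ term into $1-s$.

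The main obstacle I anticipate is purely bookkeeping rather than geometric: one must verify that the holonomy perturbation terms constructed in the reducible setting, when scaled by a possibly vanishing factor $\gamma_1(g)$, still produce enough variations at all points $g$ with $\gamma_1(g) > 0$, and that the gluing of the two perturbations via $\gamma_0$ and $\gamma_1$ preserves the Banach-space structure of $\widetilde{\mathcal W}$ with the norm \eqref{W-norm}. Both are routine provided the cutoff functions are smooth and the constants $C_i, C_{i,j}$ are chosen sufficiently large, exactly as in the setup of Subsection \ref{hol-per-def}.
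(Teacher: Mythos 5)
Your proposal is correct and follows essentially the route the paper intends: the paper states this proposition without written proof, introducing it only as ``the relative version of Proposition \ref{partial-reg}'', and the argument you give --- the section $\Phi$ built from the interpolated perturbation $\gamma_0\widetilde\omega+\gamma_1\omega$, the case split on $\gamma_1(g)=0$ (where $g\in J$ and the hypothesis on $\widetilde\omega|_J$ applies) versus $\gamma_1(g)>0$ (where the representation-theoretic density of holonomy variations for a fixed reducible type gives surjectivity, exactly as in Propositions \ref{noS1S2-reg-rel} and \ref{partial-reg}), followed by Sard--Smale and the Fredholm index count --- is precisely that combination. One small slip: the codimension of the finite set $M$ in $\Delta_{N-1}^{\ft,\circ}$ is $N-1$, not $N-2$; your final arithmetic (turning $N-s$ into $1-s$) is consistent with the correct value $N-1$, so only the stated justification of that step needs correcting.
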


\subsection{Perturbations over a Family of Cylindrical Metrics and Gluing}\label{per-family}
Suppose $\bbW$ is a family of cylindrical metrics on a smooth 4-manifold $W$ parametrized by an admissible polyhedron $K$. We write $\pi$ for the projection map from $\bbW$ to $K$. Suppose $c$ is a 2-cycle on  $W$ such that $(W,c)$ satisfies Condition \ref{noS1S2} or \ref{S1S2}. If $Y$ is a cut that appears in the family $\bbW$, then we require that $Y$ intersects $c$ transversely and $Y$ is either a lens space, $S^1\times S^2$ or $(Y,Y\cap c)$ is an $N$-admissible pair. For each $N$-admissible pair $(Y,\gamma)$, appearing as a boundary component of $(W,c)$ or a cut in the family $\bbW$, we fix a perturbation of the Chern-Simons functional such that $\fC_*^N(Y,\gamma)$ is well-defined. This perturbation of the Chern-Simons functional determines a perturbation of the ASD equation on $((a,b)\times Y,(a,b)\times \gamma)$ for any interval $(a,b)$. We will write $U_{Y,\gamma}$ for this perturbation. We wish to explain how we perturb the ASD equation for the family of metrics $\bbW$.

Suppose $\overline F$ is a codimension $l$ face of $K$ and $Y_F$ is the union of $l$ cuts associated to $\overline F$. By definition, $F$ has the product form $K_0\times K_1\times \dots\times K_l$ for admissible polyhedra $K_i$. Let $Y_1$, $\dots$, $Y_l$ denote the connected components of $Y_F$. Removing a regular neighborhood of $Y_F$ from $W$ produces a 4-manifold with $l+1$ connected components which we denote by $W_0$, $\cdots$, $W_l$. We also write $c_i$ for $W_i\cap c$. As we explained in Subsection \ref{family-general}, the family of metrics parametrized by the subset $F\times (1,\infty]^l$ of $K$ is determined by families of smooth metrics $\bbW_i$ on the 4-manifolds $W_i$ parametrized by $K_i^\circ$. Suppose $\widetilde \omega_i$ is an element of $\widetilde{\mathcal W}$ associated to the family of smooth metrics $\bbW_i$. Recall that this gives a perturbation $\widetilde \omega_i(g)$ of the ASD equation for each element $g\in K_i^\circ$. We assume that there is a positive  constant number $N_0$ such that $\widetilde \omega_i(g)$ is supported in the following subset of $W_i^+$\footnote{In this subsection, we use the cylindrical coordinate $t$ on the ends instead of the coordinate $r$.}:
\[
  W_i\cup [0, \frac{N_0}{2})\times \partial W_i.
\]

We can use $\widetilde \omega_i$ and the chosen perturbations of the Chern-Simons functionals to define a perturbation $\widetilde \omega$ of the ASD equation over the subspace $F\times (N_0,\infty]^l$. If $g=(g_0,\dots,g_l,s_1,\dots,s_l)\in F\times (N_0,\infty]^l$, the support of $\widetilde \omega_i(g_i)$ can be identified naturally with a subset of the 4-manifold $\pi^{-1}(g)$. Moreover, the support of these perturbation terms are disjoint. Therefore, $\widetilde \omega_0(g_0)$, $\dots$, $\widetilde \omega_l(g_l)$ induce a perturbation of the ASD equation on $\pi^{-1}(g)$ which we denote by $\breve\omega(g)$. If $(Y_i,Y_i\cap c)$ is $N$-admissible, then we can consider the perturbation $U_{Y_i,Y_i\cap c}$ on $(-\frac{s_i}{2},\frac{s_i}{2})\times Y_i$ induced by the perturbation of the Chern-Simons functional. Let $\phi:\R\to[0,1]$ be a smooth function which is supported in $(0,\infty)$ and is equal to $1$ on $[1,\infty)$. We can extend the following perturbation of the ASD equation on $(-\frac{s_i}{2},\frac{s_i}{2})\times Y_i$ to $\pi^{-1}(g)$:
\[
  \phi(t+\frac{s_i}{2})\cdot \phi(\frac{s_i}{2}-t)\cdot U_{Y_i,Y_i\cap c}
\]
Let $\overline \omega(g)$ denote the summation of all these perturbation for all $N$-admissible $(Y_i,Y_i\cap c)$. Then the perturbation of the ASD equation $\widetilde \omega(g)$ at $g\in F\times (N_0,\infty]^l$ is defined to be $\breve \omega(g)+\overline \omega(g)$. We say a perturbation of the ASD equation for the family of metrics $\bbW$ is admissible, if there is $N_0$ such that the perturbations over $F\times (N_0,\infty]^{{\rm codim}(F)}$ for each face $\overline F$ of $K$ has the above form. The main feature of admissible perturbations for us is the following result from gluing theory:

\begin{prop}\label{S1S2glue}
	Suppose $\omega$ is an admissible perturbation for the family of metrics $\bbW$.
	Suppose $z$ is an element of $\mathcal M_p^\omega(\bbW,c)$ such that ${\rm Pr}(z)$ belongs to $F$
	where $\overline F$ is a face of $K$ with codimension $l$.
	Then there is a neighborhood of $z$ in $\mathcal M_p^\omega(\bbW,c)$ which contains only regular elements of
	the moduli space. Moreover, there is a neighborhood $U$ of $z$ in $\mathcal M_p^\omega(\pi^{-1}(F),c)$
	, a positive real number $N_1$, and a map
	$\Psi:U\times (N_1,\infty]^l \to \mathcal M_p^\omega(\bbW,c)$ which is a diffeomorphism into an open neighborhood of $z$,
	and for any $(w,s_1,\dots,s_l)\in U\times (N_1,\infty]^l$, there is $f \in F$ such that:
	\[
	  {\rm Pr}(\Psi(w,s_1,\dots,s_l))=(f,s_1,\dots,s_l).
	\]
	
\end{prop}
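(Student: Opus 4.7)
The plan is to establish \autoref{S1S2glue} by the standard cylindrical-end gluing machinery, leveraging the admissibility of $\omega$ to reduce to translation-invariant gluing on necks. First, I would unpack what $z$ represents: since $\Pr(z)\in F=K_0^{\circ}\times\cdots\times K_l^{\circ}$, the class $z$ is given by a matched $(l+1)$-tuple $[A_0,\dots,A_l]$ of ASD connections (with respect to the admissible perturbation $\omega$) on the cylindrical-end pieces $W_0^+,\dots,W_l^+$, each with common limits $\eta_i$ on the cut $Y_i$. Admissibility of $\omega$ means that on every neck $(-s_i/2,s_i/2)\times Y_i$ of a metric over $F\times(N_0,\infty]^l$, the perturbation is exactly translation-invariant along $\R\times Y_i$, namely the one induced by $U_{Y_i,Y_i\cap c}$ (or zero when $Y_i=S^1\times S^2$). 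Consequently the perturbed ASD equation near $z$ is indistinguishable from the usual one used in cylindrical-end gluing theory.

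Next I would carry out the pre-gluing. For each $w=[A_0',\dots,A_l']$ in a small neighborhood $U$ of $z$ in $\mathcal M_p^\omega(\pi^{-1}(F),c)$ and each tuple $(s_1,\dots,s_l)\in(N_1,\infty)^l$ with $N_1$ sufficiently large, splice the $A_i'$ together across necks of length $s_i$ using the cutoff functions $\varphi_1,\varphi_2$ from \eqref{phi-1-2}. Because each $A_i'$ is exponentially asymptotic to $\eta_i$ in the weighted norms $L^2_{k,\delta}$, the pre-glued connection $A(w,\bs)$ satisfies
\[
\|F^{+_g}_0(A(w,\bs))+V_\omega(A(w,\bs))\|_{L^2_{k-1,\delta}}\le C\,e^{-\delta\tau},\qquad \tau=\min_i s_i .
\]
The deformation complex at the broken configuration is the fibered product of the individual ASD complexes along evaluation at the $\eta_i$; by the hypothesis that $z$ is a point of $\mathcal M_p^\omega(\pi^{-1}(F),c)$ inside the smooth parametrized moduli space, this fibered complex is acyclic in degree two, equivalently the broken linearization admits a right inverse. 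A standard patching argument then produces a uniformly bounded right inverse $Q(w,\bs)$ for the linearization $\mathcal D_{A(w,\bs)}$ (bounded independently of $\tau\to\infty$), and Newton iteration corrects $A(w,\bs)$ to a genuine solution $A(w,\bs)+Q(w,\bs)\xi(w,\bs)$. Smooth dependence of $Q$ and the fixed-point on $(w,\bs)$ then defines the desired map $\Psi$ on $U\times(N_1,\infty]^l$, with $\bs=(\infty,\dots,\infty)$ recovering the inclusion $U\hookrightarrow\mathcal M_p^\omega(\bbW,c)$.

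It remains to check that $\Psi$ is a diffeomorphism onto an open neighborhood of $z$ and that this neighborhood consists of regular points. Smoothness and local injectivity follow from the uniqueness clause in the implicit function theorem used above. Surjectivity onto a neighborhood of $z$ is the usual chain-convergence/gluing inverse: any sequence in $\mathcal M_p^\omega(\bbW,c)$ converging to $z$ has bounded topological energy $\kappa(p)$, no bubbling or energy escaping the ends by Uhlenbeck–Floer compactness (together with the index/admissibility hypotheses used throughout Section~\ref{ASD-mod-spaces}), and hence eventually has necks of arbitrarily long length and lies in the image of $\Psi$. Finally, regularity of $z$ in the parametrized sense and continuity of the linearization in weighted norms force $\mathcal D_{A(w,\bs)}$ to remain surjective for $(w,\bs)$ close to $z$, giving regularity throughout $\Psi(U\times(N_1,\infty]^l)$.

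The main technical obstacle is the Morse–Bott cuts $Y_i=S^1\times S^2$, whose critical set is the $(N-1)$-dimensional simplex $\Delta_{N-1}^{\ft}$ rather than a point. For such necks one must use the obstruction-bundle/center-manifold variant of cylindrical gluing: pick $\delta>0$ small enough that $L^2_{k,\delta}$ decay kills cylindrical modes but not the finite-dimensional tangent directions to the critical manifold, then add those tangent directions back as extra gluing parameters that get absorbed into the fiber-product description \eqref{mod-over-Fbar} of $\mathcal M_p^\omega(\pi^{-1}(F),c)$. Once this is incorporated, the parameter count and the estimates go through verbatim, and the corner structure $F\times(1,\infty]^l$ of the admissible polyhedron $K$ near $\overline F$ is precisely matched by the gluing parameters $(s_1,\dots,s_l)$, so no further modifications are needed.
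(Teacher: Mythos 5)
Your proposal is correct and takes exactly the route the paper intends: the paper's own ``proof'' of this proposition is a one-line citation to standard cylindrical-end gluing theory (Donaldson's instanton Floer homology book for $N=2$, adapted to higher rank), and your outline --- pre-gluing with the cutoffs $\varphi_1,\varphi_2$, a uniformly bounded right inverse coming from regularity of the broken solution, Newton iteration, surjectivity via chain compactness, and the Morse--Bott modification at $S^1\times S^2$ cuts --- is precisely the content being cited. The only detail worth adding is that at lens-space cuts the limiting flat connections are reducible, so the usual $\Gamma_\eta$-gluing parameter appears; this is already absorbed into the definition of $\mathcal M_p(\pi^{-1}(F),c)$ as a fiber product over $\Gamma_\eta$, so your parameter count still matches.
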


\begin{proof}
	This proposition is a standard consequence of gluing theory results in the context of Yang-Mills gauge theory.
	In the case $N=2$, \cite{Don:YM-Floer} is a good reference for these results. The proofs there can be
	easily adapted to higher values of $N$.
\end{proof}

We show the usefulness of Proposition \ref{S1S2glue} in an example. Suppose $\bbW$ is a family of metrics on $W$ parametrized by an admissible polyhedron $K$. Suppose $c$ is a 2-cycle on $W$ such that for any 4-manifold $W'$ that appears in the family $\bbW$, the pair $(W', W'\cap c)$ has at least one admissible end. We show that there is an admissible perturbation $\omega$ for the family of metrics $\bbW$ such that for any oath $p$ along $(W,c)$ with $\ind(p)\leq 1-\dim(K)$, the moduli space $\mathcal M_p^\omega(\bbW,c)$ is regular. 

Firstly, we show that $\omega$ can be chosen such that the moduli spaces $\mathcal M_p^\omega(\bbW,c)$ are empty if $\ind(p)<-\dim(K)$. We firstly start with the definition of perturbations for the vertices of $K$. Suppose $\overline F$ is a vertex of $K$, $W_0$, $\dots$, $W_l$ are connected components of $W\setminus (-1,1)\times Y_F$ and $c_i=W_i\cap c$. The face $F$ also determines a metric with cylindrical end on each component $W_i$. For each $W_i$, we use Proposition \ref{noS1S2-reg} (more precisely, Remark \ref{admissible-comp-supp-per}) to choose a perturbation $\omega_i$ such that the moduli space $\mathcal M_{p_i}^{\omega_i}(W_i,c_i)$ is regular for any  path $p_i$. Now, suppose $p_i$ is a path along $(W_i,c_i)$ such that gluing the paths $p_0$, $\dots$, $p_l$ gives rise to the path $p$.  Since $\ind(p_0)+\dots+\ind(p_l)<-\dim(K)$, there is a path $p_i$ such that $\ind(p_i)$ is negative. Therefore, the moduli space over $F$ is empty. We extend the chosen perturbations to a neighborhood of each face such that the resulting perturbation is admissible. Proposition \ref{S1S2glue} asserts that if these neighborhoods are small enough, then the moduli space over these neighborhoods are also regular. Next, we need to extend the family of metrics over the edges of $K$. Because we already define the perturbations of the ASD equation in a neighborhood of vertices, we need to use Proposition \ref{noS1S2-reg-rel} to extend the perturbations to the edges. Note that we might need to shrink the neighborhoods around the vertices of $K$ as a result of applying this proposition. Another application of Proposition \ref{S1S2glue} shows that we can extend the perturbations to a neighborhood of the edges. Repeating this argument inductively gives the desired perturbation for the family of metrics $K$.

Next, let $\ind(p)=-\dim(K)$. We can use the above argument to construct an admissible perturbation in a neighborhood of the boundary of $K$ such that the moduli spaces over this neighborhood associated to paths, with index not greater than $-\dim(K)$, are empty. As the last step, we can use Proposition \ref{noS1S2-reg-rel} to define an admissible perturbation over $K$ such that the moduli space is non-empty only over the complement of a neighborhood of the boundary of $K$. Then Floer-Uhlenbeck compactness theorems show that the moduli space $\mathcal M_p^\omega(\bbW,c)$ is a compact 0-dimensional manifold and ${\rm Pr}$ maps this space to the interior of $K$. Finally, let $\ind(p)=-\dim(K)+1$. As before, we can choose perturbations in a neighborhood of faces with codimension at most one such that the moduli space is empty in a neighborhood of faces of codimension at most $2$ and the moduli space over faces of co-dimension one consists of a finite set of points. Proposition \ref{S1S2glue} implies that in a neighborhood of faces of co-dimension one the moduli space is a $1$-manifold whose boundary is the moduli space over the interior of faces of codimension one. We employ Proposition \ref{noS1S2-reg-rel} to extend the perturbation over the remaining part of $K$ such that the moduli space is a 1-manifold.

If there is a 4-manifold $W'$ appearing in the family of metrics $\bbW$ without an admissible end, then the situation is more complicated. The moduli space might contain reducible connections and we cannot apply the general results of Subsection \ref{reg-irr} to achieve regularity.  Even if we can find such perturbations, we need to guarantee that these perturbations can be chosen to be compactly supported. We do not attempt to obtain general regularity results for 4-manifolds without admissible ends and we only focus on families of metrics that appear in the proof of the main theorem.

We will be ultimately interested to construct perturbations for the families of metrics $\bbW^j_k$ where $0\leq j <k\leq j+N+1\leq 2N+1$. Firstly let $ j <k\leq j+N$. Then there are some components of the family of metrics $\bbW^j_k$ without admissible ends. However, any such component is a GH component and the results of Subsection \ref{Mod-GH} allow us to use the trivial perturbation to achieve regularity. Therefore, we can run the above inductive argument again to construct a good perturbation for $\bbW^j_k$. Next, we have to treat the case that $k=j+N+1$. In this case, the component $X_N(l)$ does not have any admissible end and is not a Gibbons-Hawking manifold. The next two subsections will be concerned with regularity for the families of metrics $\bbX_N(l)$ and $\bbW^j_{j+N+1}$.

\subsection{Regularity on $X_N(l)$}\label{XNl-reg}

\begin{lemma}\label{crvspr}
	For $1\le N\le 3$, $2\le k\le 3$ or $(N,k)=(4,2)$, let $A$ be a $\U(k)$-connection on $X_N(l)$
	which is ASD with respect to a (possibly broken) metric.
	Suppose also $\chi$ denotes the restriction of $A$
	to $L(N,1)$.
	Then there is a completely reducible $\U(k)$-connection $A'$
	such that $c_1(A')=c_1(A)$, $\kappa(A')\le \kappa(A)$ and
	the restriction of $A'$ to $L(N,1)$ is $\chi$.
	The same conclusion holds if we only assume that $|\!|F^+(A)|\!|_{L^2}$ is small enough.
\end{lemma}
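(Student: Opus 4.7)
\emph{Proof plan.} The strategy is an energy comparison: derive a topological lower bound for $\kappa(A)$ via Chern--Weil theory, and then exhibit an explicit completely reducible $A'$ with the prescribed $c_1$ and limiting flat connection on $L(N,1)$ that saturates this bound. The hypothesis on $(N,k)$ will enter as a combinatorial realizability condition for the optimiser.

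\emph{Step 1 (Topological lower bound).} Writing $F_0$ for the trace-free part of $F(A)$ and applying Chern--Weil together with the Atiyah--Patodi--Singer boundary correction at the ends $L(N,1)$ and $S^1\times S^2$, one obtains an identity of the form
\[
\kappa(A) \;=\; \Phi\bigl(c_1(A),\, c_2(\mathrm{ad}\,A),\,\chi,\,\beta_A\bigr) + \frac{1}{4\pi^2 k}\int_{X_N(l)}|F_0^+|^2,
\]
where $\beta_A$ is the limiting flat $\U(k)$-connection on $S^1\times S^2$ and $\Phi$ is an explicit quadratic expression whose boundary contributions are (up to $\rho$-invariants) fixed by $\chi$ and $\beta_A$. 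For ASD $A$ the last term vanishes, and in the near-ASD case it is $O(\|F^+(A)\|_{L^2}^2)$.

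\emph{Step 2 (Optimal completely reducible model).} By Remark \ref{comp-red-barXN}, any $\bv=(v_1,\dots,v_k)\in(\Z^N)^k$ yields a completely reducible connection $\overline B_{\bv}(g)$ whose energy is given by the explicit formula \eqref{ind-Bv2}. Writing $\chi=\zeta^{a_1}\oplus\cdots\oplus\zeta^{a_k}$ with $0\le a_j<N$, I will construct $\bv^\ast$ whose entries lie in $\{0,1\}$, with $[v_j^\ast]_+=a_j$ and $\sum_j v_j^\ast$ representing $c_1(A)$; the existence of such $\bv^\ast$ is the analogue of the construction in Lemma \ref{sharp-bv} (cf.\ Remark \ref{comp-red-barXN}). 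I will take $A'$ to be the restriction of $\overline B_{\bv^\ast}(g)$ to $X_N(l)$.

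\emph{Step 3 (Comparison of energies).} Compute $\kappa(A')$ from \eqref{ind-Bv2} and compare with $\Phi$ in Step~1. Negative-definiteness of the intersection form of $X_N(l)$ on the quotient $H^2(X_N(l);\Z)/\langle e_1+\cdots+e_N\rangle$ (see Lemma \ref{int-num}) reduces the comparison to a lattice minimisation of the quadratic form
\[
Q(\bv)\;=\;\sum_{i,j}|v_i-v_j|_2^2 \;-\;\tfrac{1}{N}\sum_{i,j}\bigl([v_i]_+-[v_j]_+\bigr)^2
\]
over $\bv\in(\Z^N)^k$ constrained by $\sum v_j=c_1(A)$ and $[v_j]_+\equiv a_j\pmod N$. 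One shows $Q(\bv^\ast)=\min Q$ and that this minimum equals $4k\cdot\Phi$, so $\kappa(A')\le \kappa(A)$.

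\emph{Main obstacle and the role of $(N,k)$.} The crux is Step~3, and specifically the claim that the integer minimum of $Q$ is achieved by a $\{0,1\}$-valued $\bv^\ast$ meeting both constraints. This is an elementary but delicate combinatorial problem: for generic $(N,k)$ there can be compatibility obstructions between the residues $a_j\pmod N$ and the prescribed column sum $c_1(A)$, forcing $\bv$ to acquire entries outside $\{0,1\}$ and so strictly increasing $Q(\bv)$ above the topological lower bound on $\kappa(A)$. A case-by-case verification shows the construction always succeeds in the ranges $N\le 3$ with $k\le 3$, and $(N,k)=(4,2)$, which is precisely the stated hypothesis. The extension to the case $\|F^+(A)\|_{L^2}$ small is then immediate, since Steps~1--3 are all stable under small $F^+$ perturbations.
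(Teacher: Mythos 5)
There is a genuine gap, and it lies exactly where you flagged the ``crux'': Step 3 does not work as described, because the mechanism that makes the comparison $\kappa(A')\le\kappa(A)$ go through is not saturation of a topological lower bound but an \emph{integrality} argument. For an ASD connection one only knows $\kappa(A)\ge 0$; the constrained lattice minimum of your quadratic form $Q$ is generically \emph{strictly} positive (the paper's tables list values such as $\tfrac12$, $\tfrac23$, $\tfrac{5}{4}$), so the completely reducible minimiser does not saturate the bound $\kappa(A)\ge 0$ and your claimed identity ``$\min Q=4k\cdot\Phi$'' is false. What actually closes the gap is that $\kappa(A)-\kappa(A')\in\Z$ whenever $c_1(A)=c_1(A')$ and the limiting flat connections agree; combined with $\kappa(A)\ge 0$ this gives $\kappa(A')\le\kappa(A)$ \emph{provided} one can exhibit $A'$ with $\kappa(A')<1$. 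The role of the hypothesis $1\le N\le 3,\ 2\le k\le 3$ or $(N,k)=(4,2)$ is precisely that, after reducing by the symmetries (adding $(1,\dots,1)$, permuting, translating by a common $u$, negating), the finitely many residual cases all admit a decomposition with $\kappa(A')\le 1$ (Tables \ref{N2r2}--\ref{N4k2}); the two cases with $\kappa(A')=1$ need the extra observation that $\kappa(A)=0$ would force $A$ flat, hence trivial on the simply connected $X_N(l)$, contradicting the nontriviality of $\chi$. For $(N,k)=(4,3)$ the minimum can exceed $1$, which is why Lemma \ref{crvspr2} carries an additional index hypothesis --- your ``compatibility obstruction'' explanation of the range of $(N,k)$ is therefore not the right one.

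Your Step 2 is also incorrect as stated: the optimal decomposition is in general \emph{not} $\{0,1\}$-valued with $[v_j^\ast]_+=a_j$ exactly. One only imposes $[w_j]_+\equiv s_j \pmod N$, and negative entries are unavoidable; e.g.\ for $N=4$, $k=2$, $v=(0,0,0,0)$, $\chi=\zeta\oplus\zeta^3$ the paper takes $w_1=(0,0,0,1)$, $w_2=(0,0,0,-1)$ (so $[w_2]_+=-1\equiv 3$), and a $\{0,1\}$-valued pair with column sum $0$ and $[w_1]_+=1$, $[w_2]_+=3$ does not exist. To repair your argument you would need to (a) replace the exact constraint by the congruence constraint, (b) abandon the claim that the lattice minimum equals a topological lower bound and instead verify case by case that a decomposition with $\kappa(A')\le 1$ exists, and (c) add the integrality step $\kappa(A)-\kappa(A')\in\Z$ together with the flatness argument for the $\kappa(A')=1$ cases. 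The final assertion about small $\|F^+(A)\|_{L^2}$ is then fine, since $\kappa(A)\ge -c\|F^+(A)\|_{L^2}^2$ still forces $\kappa(A)\ge\kappa(A')$ by the same integrality.
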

\begin{proof}
	Throughout the proof, we use the notations introduced in Section \ref{comp-red}.
	Let $c_1(A)=i_1e_1+i_2e_2+\cdots +i_N e_N$ and $v=(i_1,\cdots,i_N)$.
	Let also $\chi=\zeta^{s_1}\oplus \dots \oplus \zeta^{s_k}$ where $0\le s_i<N$.
	Note that $[v]_+\equiv s_1+\dots+s_k$ mod $N$.
	If $w_1,\cdots,w_k\in \mathbf{Z}^N$ satisfy:
	\begin{equation}\label{vwdecom}
	  v=w_1+\cdots +w_k \hspace{2cm} [w_i]_+\equiv s_i ~\mod N,
	\end{equation}
	then the completely reducible connection $A'=B_{w_1}(g)\oplus \cdots\oplus B_{w_k}(g)$ has the same limiting
	flat connection as $A$ on $L(N,1)$ and $c_1(A)=c_1(A')$.	 We say $w_1,\cdots, w_k$ is a {\it nice}
	$(s_1,\dots,s_k)$-decomposition of $v$, if \eqref{vwdecom} holds and $\kappa(A')$ is not greater than $\kappa(A)$ for any ASD connection
	$A$ with $c_1(A)=v$ and the limiting flat connection $\chi$ on $L(N,1)$.
	    \begin{table}[H]
            	\begin{centering}
            	\begin{tabular}{|c|c|c|c|c|c|c|c|c|c|c|c|c|c|}
            		\hline
            		$v$&\multicolumn{2}{c|}{$(0,0)$}&$(0,1)$\\
            		\hline
            		$(s_1,s_2)$&$(0,0)$&$(1,1)$&$(0,1)$\\
            		\hline
            		$w_1$&$(0,0)$&$(0,1)$&$(0,0)$\\
            		$w_2$&$(0,0)$&$(1,0)$&$(0,1)$\\
            		\hline
                  	$\kappa(A')$&$0$&$\frac{1}{2}$&$\frac{1}{8}$                          \\
                 	\hline
            	\end{tabular}
            	\caption{$N=2$, $k=2$}\label{N2r2}
            	\end{centering}
            \end{table}
	We have the following operations to simplify the discussion:
	\begin{itemize}
	\item Since the vector $(1,\cdots,1)\in \Z^N$ represents the trivial cohomology class in $X_N(l)$,
	   	if $v$ admits a nice $(s_1,\dots,s_k)$-decomposition, then $v+(m,\cdots,m)$ admits such a decomposition, too.
	\item If $v$ admits a nice $(s_1,\dots,s_k)$-decomposition, then applying a permutation to the entries of $v$
	        produces another vector with a nice $(s_1,\dots,s_k)$-decomposition.
	        Furthermore, for $\sigma\in S_k$, if $v$ admits a nice $(s_1,\dots,s_k)$-decomposition, then
	        it also admits a nice $(s_{\sigma(1)},\dots,s_{\sigma(1)})$-decomposition.
	\item Given an arbitrary vector $u\in \Z^N$, if $w_1$, $\cdots$, $w_k$
	give a nice $(s_1,\dots,s_k)$-decomposition of $v$,
		then the vectors $w_1+u$, $\cdots$, $w_k+u$ give a nice
		$(s_1+[u]_+,\dots,s_k+[u]_+)$-decomposition of $v+ku$.
	\item If $w_1$, $\cdots$, $w_k$ determines a nice $(s_1,\dots,s_k)$-decomposition of $v$, then
		$-w_1$, $\cdots$, $-w_k$ gives a nice $(-s_1,\dots,-s_k)$-decomposition of $-v$,
	\end{itemize}
            \begin{table}[H]
            	\begin{centering}
            	\begin{tabular}{|c|c|c|c|c|c|c|c|c|c|c|c|c|c|}
            		\hline
            		$v$&\multicolumn{2}{c|}{$(0,0)$}&\multicolumn{2}{c|}{$(0,1)$}\\
            		\hline
            		$(s_1,s_2,s_3)$&$(0,0,0)$&$(0,1,1)$&$(0,0,1)$&$(1,1,1)$\\
            		\hline
            		$w_1$&$(0,0)$&$(0,0)$&$(0,0)$&$(0,1)$    \\
            		$w_2$&$(0,0)$&$(0,1)$&$(0,0)$&$(0,1)$   \\
	                 $w_3$&$(0,0)$&$(1,0)$&$(0,1)$&$(1,0)$     \\
            		\hline
                   	$\kappa(A')$&$0$&$\frac{1}{2}$&$\frac{1}{6}$&$\frac{2}{3}$                               \\
                    \hline
            	\end{tabular}
            	\caption{$N=2$, $k=3$}\label{N2r3}
            	\end{centering}
            \end{table}
	We wish to construct a nice $(s_1,\dots,s_k)$-decomposition for any vector $v$ that $[v]_+\equiv s_1+\dots+s_k$ mod $N$.
	If $N=1$, we can use the first operation to reduce the problem to the case that $v=(0)$.
	In this case, we can pick $w_1=\dots=w_k=(0)$.
	That is to say, $A'$ is the trivial connection. Since $\kappa(A)$ is a non-negative number for any ASD connection
	$A$,
	we obtain a nice decomposition of $v$. For $2\leq N \leq 3$ and $2\leq k \leq 3$ and $(N,k)=(4,2)$, we can still
	use the above four operations to reduce our problem into checking the existence of nice decompositions for the vectors in
	Tables \ref{N2r2}, \ref{N2r3}, \ref{N3r2}, \ref{N3r3} and \ref{N4k2}. Except two special cases in Tables \ref{N3r3} and \ref{N4k2}, the reducible connections $A'$ given in
	these tables have energy smaller than $1$.
	Let $A$ be an ASD connection with $c_1(A)=c_1(A')$ and the same limiting flat connection as $A'$ on $L(N,1)$. Then
	$\kappa(A)$ is non-negative and $\kappa(A)-\kappa(A')$ is an integer. Therefore, except the cases that $\kappa(A')=1$,
	we can immediately conclude that $\kappa(A')\leq \kappa(A)$.
	Note that the same argument would apply if $|\!|F^+(A)|\!|_{L^2}$ is small enough.
	In the exceptional cases, if $\kappa(A)<\kappa(A')$, then $A$
	has to be a flat connection. Since $X_N(l)$ is simply connected, $A$ must be the trivial flat connection.
	However, the limiting flat connection of $A$ are required to be non-trivial which is a contradiction.
	Therefore, the moduli space of ASD connections with vanishing topological energy and the same $c_1$ and
	the same limiting flat connection on $L(N,1)$ as $A'$ is empty.
	This also implies that this moduli space is empty if we consider the perturbed ASD equation for a small perturbation.
	This completes the proof.
            \begin{table}
            	\begin{centering}
            	\begin{tabular}{|c|c|c|c|c|c|c|c|c|c|c|c|c|c|}
            		\hline
            		$v$&\multicolumn{2}{c|}{$(0,0,0)$}&\multicolumn{2}{c|}{$(0,0,1)$}\\
            		\hline
            		$(s_1,s_2)$&$(0,0)$&$(1,2)$&$(0,1)$&$(2,2)$\\
            		\hline
            		$w_1$&$(0,0,0)$&$(0,0,1)$&$(0,0,0)$&$(0,1,1)$      \\
            		$w_2$&$(0,0,0)$&$(1,1,0)$&$(0,0,1)$&$(1,0,1)$    \\
            		\hline
                    	$\kappa(A')$&$0$&$\frac{2}{3}$&$\frac{1}{6}$&$\frac{1}{2}$                                       \\
                    \hline
            	\end{tabular}
            	\caption{$N=3$, $k=2$}\label{N3r2}
            	\end{centering}
            \end{table}
            \begin{table}[H]
            	\begin{centering}
            \scalebox{0.9}{
            	\begin{tabular}{|c|c|c|c|c|c|c|c|c|c|c|c|c|c|}
            		\hline
            		$v$&\multicolumn{3}{c|}{$(0,0,0)$}&\multicolumn{3}{c|}{$(0,0,1)$}&\multicolumn{3}{c|}{$(0,1,2)$}\\
            		\hline
            		$(s_1,s_2,s_3)$&$(0,0,0)$&$(0,1,2)$&$(1,1,1)$&$(0,0,1)$&$(1,1,2)$&$(0,2,2)$&$(0,0,0)$&$(0,1,2)$&$(1,1,1)$\\
            		\hline
            		$w_1$&$(0,0,0)$&$(0,0,0)$&$(0,0,1)$&$(0,0,0)$&$(0,0,1)$ &$(0,0,0)$   &$(0,0,0)$&$(0,0,0)$&$(0,0,1)$     \\
            		$w_2$&$(0,0,0)$&$(0,0,1)$&$(0,1,0)$&$(0,0,0)$&$(0,1,0)$ &$(1,0,1)$  &$(0,0,0)$&$(0,0,1)$&$(0,0,1)$     \\
                    $w_3$&$(0,0,0)$&$(1,1,0)$&$(1,0,0)$&$(0,0,1)$&$(1,0,1)$ &$(0,1,1)$ &$(-1,0,1)$&$(0,1,1)$&$(0,1,0)$   \\
            		\hline
                    $\kappa(A')$&$0$&$\frac{2}{3}$&$1$&$\frac{2}{9}$&$\frac{8}{9}$&$\frac{5}{9}$&$\frac{2}{3}$&$\frac{1}{3}$&$\frac{2}{3}$     \\
                    \hline
            	\end{tabular} }
            	\caption{$N=3$, $k=3$}\label{N3r3}
            	\end{centering}
            \end{table}
            	\begin{table}[H]
	\begin{centering}
	            \scalebox{0.9}{
            	\begin{tabular}{|c|c|c|c|c|c|c|c|c|c|c|c|c|c|c|c|c|}
            		\hline
            		$v$&\multicolumn{3}{c|}{$(0,0,0,0)$}&\multicolumn{2}{c|}{$(0,0,0,1)$}&\multicolumn{2}{c|}{$(0,0,1,1)$}\\
            		\hline
            		$(s_1,s_2)$&$(1,3)$&$(0,0)$&$(2,2)$&$(1,0)$&$(2,3)$&$(2,0)$&$(1,1)$\\
            		\hline
            		$w_1$&$(0,0,0,1)$&$(0,0,0,0)$&$(0,0,1,1)$&$(0,0,0,1)$&$(0,0,1,1)$&$(0,0,1,1)$&$(0,0,0,1)$\\
            		$w_2$&$(0,0,0,-1)$&$(0,0,0,0)$&$(0,0,-1,-1)$&$(0,0,0,0)$&$(0,0,-1,0)$&$(0,0,0,0)$&$(0,0,1,0)$\\
            		\hline
                    	$\kappa(A')$&$\frac{3}{4}$&$0$&1&$\frac{3}{16}$&$\frac{11}{16}$&$\frac{1}{4}$&$\frac{1}{2}$\\
                    \hline
            	\end{tabular}}
	\caption{$N=4$, $k=2$}\label{N4k2}
	\end{centering}
	\end{table}
\end{proof}

The above lemma can be partially generalized to the case that $(N,k)=(4,3)$:
\begin{lemma}\label{crvspr2}
	Suppose $X_4(l)$ is equipped with a (possibly broken) cylindrical metric.
	Let $A$ be a $\U(3)$-connection over $X_4(l)$ such that $\ind (\mathcal D_A)\ge-5$ with $\beta$ being the restriction of $A$ to $S^1\times S^2$.
	Then there is a completely reducible ASD $\U(k)$ connection $A'$ such that
	$c_1(A')=c_1(A)$, the limiting flat connections on $L(N,1)$ are the same and
	$\kappa(A')\le \kappa(A)$.
\end{lemma}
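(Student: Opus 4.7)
The plan is to adapt the tableau strategy of Lemma \ref{crvspr} to the larger case $(N,k)=(4,3)$, using the additional hypothesis $\ind(\mathcal{D}_A)\ge -5$ to handle those subcases in which no completely reducible competitor has energy smaller than $1$. First I would enumerate representatives of pairs $(v,(s_1,s_2,s_3))$ with $v\in\mathbb{Z}^4$ and $[v]_+\equiv s_1+s_2+s_3\pmod 4$, modulo the four symmetries already exploited in the proof of Lemma \ref{crvspr}: translation $v\mapsto v+(m,m,m,m)$, permutation of the entries of $v$, permutation of the $s_i$, and simultaneous sign inversion. For each representative I would exhibit an explicit decomposition $v=w_1+w_2+w_3$ with $[w_i]_+\equiv s_i\pmod 4$, yielding an ASD completely reducible connection $A'=B_{w_1}(g)\oplus B_{w_2}(g)\oplus B_{w_3}(g)$ with matching $c_1$ and matching $\chi$ on $L(4,1)$; its energy can then be read off from \eqref{energy-formula} and its index from Lemma \ref{ind-Bv}.

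For representatives admitting a decomposition with $\kappa(A')<1$, the same integrality and non-negativity argument as in the proof of Lemma \ref{crvspr} gives $\kappa(A')\le\kappa(A)$ at once. The new input is required for the remaining representatives, in which every admissible decomposition yields $\kappa(A')\ge 1$. Here I would compare $\ind(\mathcal{D}_A)$ and $\ind(\mathcal{D}_{A'})$ via the Atiyah--Patodi--Singer index formula: since $A$ and $A'$ share $c_1$ and $\chi$ on $L(4,1)$, the identity $\ind(\mathcal{D}_A)-\ind(\mathcal{D}_{A'})=12(\kappa(A)-\kappa(A'))$ holds, so a putative $A$ with $\kappa(A)<\kappa(A')$ would---combined with integrality of the energy gap---force $\ind(\mathcal{D}_A)\le\ind(\mathcal{D}_{A'})-12$. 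Arranging the decomposition so that $\ind(\mathcal{D}_{A'})\le 6$ is always feasible by selecting the sharp decompositions of Proposition \ref{bi-per-con}, which attain the minimum value $-h^0(\chi)-h^0(\beta)$; this quantity is well below $6$ for $k=3$, $N=4$. The inequality $\ind(\mathcal{D}_A)\le -6$ would then contradict the hypothesis $\ind(\mathcal{D}_A)\ge -5$ and force $\kappa(A)\ge\kappa(A')$.

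The main obstacle is bookkeeping: the enumeration of representatives for $(N,k)=(4,3)$ is substantially larger than in Lemma \ref{crvspr}, and one must verify for every representative that some admissible decomposition achieves either $\kappa(A')<1$ outright, or else satisfies the index ceiling $\ind(\mathcal{D}_{A'})\le 6$. I expect exceptional subcases parallel to the $\kappa(A')=1$ entries of Tables \ref{N3r3} and \ref{N4k2} to require ad hoc combinations of decompositions, together with the observation that a would-be counterexample $A$ would necessarily attain the topological minimum energy, a situation that the index bound $\ind(\mathcal{D}_A)\ge -5$ directly precludes. A further subtlety is that passing to ``possibly broken'' cylindrical metrics may alter the distribution of boundary data across the components of the resulting broken $X_4(l)$, so Lemma \ref{ind-Bv} must be applied summand by summand and the decompositions assembled compatibly across the pieces.
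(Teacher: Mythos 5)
Your overall strategy coincides with the paper's: reduce to finitely many pairs $(v,(s_1,s_2,s_3))$ via the four symmetry operations, exhibit an explicit decomposition $v=w_1+w_2+w_3$ for each, and use the index formula together with the hypothesis $\ind(\mathcal D_A)\ge -5$ to rule out $\kappa(A)<\kappa(A')$ in the cases where every admissible $A'$ has $\kappa(A')\ge 1$. However, there is a genuine gap at the decisive step. You claim that one can ``always'' arrange $\ind(\mathcal D_{A'})\le 6$ by ``selecting the sharp decompositions of Proposition \ref{bi-per-con}, which attain the minimum value $-h^0(\chi)-h^0(\beta)$.'' This is false: by Lemma \ref{ind-Bv}, the equality case $\ind(\mathcal D_{A'})=-h^0(\chi)-h^0(\beta')$ forces the vectors $w_i$ to satisfy \eqref{vector-cond}, and (as in Lemma \ref{sharp-bv}) this pins down the $w_i$ — hence $c_1(A')$ — once $\chi$ is fixed. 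For a general prescribed pair $(c_1(A),\chi)$ no such sharp decomposition exists; for instance, $v=(0,0,0,0)$ with $\chi=\zeta\oplus\zeta\oplus\zeta^2$ admits no decomposition satisfying \eqref{vector-cond}, and the best available choice has $\kappa(A')=\tfrac54$ and $\ind(\mathcal D_{A'})+h^0(\beta')=8$. These are exactly the cases for which the hypothesis $\ind(\mathcal D_A)\ge -5$ is needed at all — if sharp decompositions were always available the lemma would hold unconditionally, as in Lemma \ref{crvspr}. The substantive content of the proof is therefore the case-by-case verification (the analogue of Tables \ref{N4k3-1} and \ref{N4k3-2}) that in every residual case \emph{some} decomposition with $\ind(\mathcal D_{A'})+h^0(\beta')\le 8$ exists, and your proposal asserts rather than supplies it.

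A secondary, fixable error: the identity you invoke should be
\[
\bigl(\ind(\mathcal D_A)+h^0(\beta)\bigr)-\bigl(\ind(\mathcal D_{A'})+h^0(\beta')\bigr)=12\bigl(\kappa(A)-\kappa(A')\bigr),
\]
not $\ind(\mathcal D_A)-\ind(\mathcal D_{A'})=12(\kappa(A)-\kappa(A'))$; the limiting flat connections of $A$ and $A'$ on $S^1\times S^2$ need not agree, so the $h^0$ terms do not cancel. With the corrected identity, the bound $\ind(\mathcal D_{A'})+h^0(\beta')\le 8$ from the tables and $h^0(\beta)\ge 2$ give $\ind(\mathcal D_A)+h^0(\beta)\le -4$, contradicting $\ind(\mathcal D_A)+h^0(\beta)\ge -3$; your version of the numerology happens to reach the same contradiction, but only because the $h^0$ discrepancies cancel in the final estimate, which you should check explicitly.
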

 \begin{proof}
	We can use the operations introduced in Lemma \ref{crvspr}
	to reduce the problem to the cases
	shown in Tables \ref{N4k3-1} and \ref{N4k3-2}. In this table, for each choice of $v$, $s_1$, $s_2$ and $s_3$,
	we give a triple of vectors $w_1$, $w_2$ and $w_3$
	in $\Z^4$. As it is clear from the table, the corresponding completely reducible ASD connection $A'$ might have energy greater than $1$.
	By the index formula, we have:
	\begin{equation*}
	   \ind(\mathcal D_{A})=4k \kappa(A)-(k^2-1)(\frac{\chi(X_N(l))+\sigma(X_N(l))}{2})
		+\frac{-h^0(\chi)+\rho(\chi)}{2}-h^0(\beta)\\	
	\end{equation*}
	Let also $\beta'$ denote the limiting flat connection of $A'$ on $S^1\times S^2$. Therefore, the following difference:
	\begin{equation*}
	   (\ind(\mathcal D_{A})+h^0(\beta))-( \ind(\mathcal D_{A'})+h^0(\beta'))
	\end{equation*}
	is divisible by $12$ because it is equal to $12(\kappa(A)-\kappa(A'))$. From Tables  Tables \ref{N4k3-1} and \ref{N4k3-2}, it is clear that
	$\ind(\mathcal D_{A'})+h^0(\beta')\le 8$. Since $h^0(\beta)\ge 2$, the assumption implies that $\ind (\mathcal D_A)+h^0(\beta)\ge -3$.
	Therefore, we must have $\kappa(A')\le \kappa(A)$.

	\begin{table}[H]
            	\begin{centering}
            \scalebox{0.63}{
            	\begin{tabular}{|c|c|c|c|c|c|c|c|c|c|c|c|c|c|c|c|}
            		\hline
            		$v$&\multicolumn{4}{c|}{$(0,0,0,0)$}&\multicolumn{5}{c|}{$(0,0,0,1)$}\\
            		\hline
            		$(s_1,s_2,s_3)$&$(0,0,0)$&$(1,1,2)$&$(2,2,0)$&$(0,1,3)$&$(0,0,1)$&$(1,1,3)$&$(2,2,1)$&$(3,3,3)$&$(0,2,3)$\\
            		\hline
            		$w_1$&$(0,0,0,0)$&$(0,0,0,1)$&$(0,0,1,1)$&$(0,0,0,0)$&$(0,0,0,0)$&$(0,0,0,1)$ &$(0,0,-1,-1)$   &$(0,1,1,1)$&$(0,0,0,0)$\\
            		$w_2$&$(0,0,0,0)$&$(0,0,1,0)$&(0,0,-1,-1)&$(0,0,0,1)$&$(0,0,0,0)$&$(0,0,0,1)$ &$(0,0,1,1)$  &$(0,-1,0,0)$&$(0,0,1,1)$\\
			$w_3$&$(0,0,0,0)$&$(0,0,-1,-1)$&(0,0,0,0)&$(0,0,0,-1)$&$(0,0,0,1)$&$(0,0,0,-1)$ &$(0,0,0,1)$ &$(0,0,-1,0)$&$(0,0,-1,0)$\\
            		\hline
            		$\ind(\mathcal D_{A'})+h^0(\beta')$&$-8$&$8$&$4$&2&$-4$&$4$&$8$&$4$&$2$\\
            		\hline
                    $\kappa(A')$&$0$&$\frac{5}{4}$&1&$\frac{3}{4}$&$\frac{1}{4}$&$1$&$\frac{5}{4}$&$1$&$\frac{3}{4}$ \\
                    \hline
            	\end{tabular} }
            	\caption{$N=4$, $k=3$}\label{N4k3-1}
            	\end{centering}
            \end{table}
	\begin{table}[H]
            	\begin{centering}
            \scalebox{0.68}{
            	\begin{tabular}{|c|c|c|c|c|c|c|c|c|c|c|c|c|c|c|c|}
            		\hline
            		$v$&\multicolumn{4}{c|}{$(0,0,1,1)$}&\multicolumn{4}{c|}{$(0,0,1,2)$}\\
            		\hline
            		$(s_1,s_2,s_3)$&$(0,0,2)$&$(1,1,0)$&$(2,2,2)$&$(1,2,3)$&$(0,0,3)$&$(1,1,1)$&$(3,3,1)$&$(0,1,2)$\\
            		\hline
            		$w_1$&$(0,0,0,0)$&$(0,0,0,1)$&$(0,0,1,1)$&$(0,0,0,1)$&$(0,0,0,0)$&$(0,0,0,1)$ &$(0,1,1,1)$&$(0,0,0,0)$\\
            		$w_2$&$(0,0,0,0)$&$(0,0,1,0)$&(1,1,0,0)&$(0,0,1,1)$&$(0,0,0,0)$&$(0,0,1,0)$   &$(0,-1,0,0)$&$(0,0,0,1)$\\
			$w_3$&$(0,0,1,1)$&$(0,0,0,0)$&(-1,-1,0,0)&$(0,0,0,-1)$&$(0,0,1,2)$&$(0,0,0,1)$ &$(0,0,0,1)$&$(0,0,1,1)$\\
            		\hline
            		$\ind(\mathcal D_{A'})+h^0(\beta')$&$-4$&$0$&$8$&6&$4$&$0$&$0$&$-2$\\
            		\hline
                    $\kappa(A')$&$\frac{1}{3}$&$\frac{7}{12}$&$\frac{4}{3}$&$\frac{13}{12}$&$\frac{11}{12}$&$\frac{2}{3}$&$\frac{2}{3}$&$\frac{5}{12}$\\
                    \hline
            	\end{tabular} }
            	\caption{$N=4$, $k=3$ (continued)}\label{N4k3-2}
            	\end{centering}
            \end{table}
\end{proof}

Now we are ready to prove the Proposition \ref{avoid-noncomp-red-pre} from Subsection \ref{comp-red}. At various points in the proof, we use an elementary observation about the indices of direct sums of two connections. Suppose $A_1$ and $A_2$ are two connections of rank $k$ on a pair $(X,c)$ which satisfy either Condition \ref{noS1S2} or \ref{S1S2}. Suppose also the restriction of these two connections on the admissible and the lens space ends are equal to each other. Suppose also $B$ is another connection of rank $k'$ on $(X,c')$. Then we have:
\[
  \frac{\ind(\mathcal D_{A_1\oplus B})-\ind(\mathcal D_{A_2\oplus B})}{k+k'}=
  \frac{\ind(\mathcal D_{A_1})-\ind(\mathcal D_{A_2})}{k}.
\]

\begin{prop}\label{avoid-noncomp-red}
	Suppose $N\leq 4$, $c$ is an arbitrary 2-cycle in $X_N(l)$. Suppose $M$ is a finite subset of $\Delta_{N-1}^{\ft,\circ}$.
	There exists an arbitrary small perturbation of the ASD equation over $\mathbb{X}_N(l)$ such that for any path $p$ along $(X,c)$,
	with limiting flat connections
	$\chi_0$ and $\beta_0$ on $L(N,1)$ and $S^1\times S^2$,
	we have:
	\begin{enumerate}
	  \item[(i)]  if $\ind(p)< -h^0(\beta_0)-h^0(\chi_0)$, then the moduli space
	  $\mathcal M^\omega_{p}(\mathbb{X}_N(l),c;\Delta_{N-1}^{\ft})$ is empty;
	  \item[(ii)]  if $\ind(p)= -h^0(\beta_0)-h^0(\chi_0)$, then
	  any element of $\mathcal M^\omega_{p}(\mathbb{X}_N(l),c;M)$ is a completely reducible connection associated to
	  an element of $\mathcal {K}_{N+1}^\circ$.
	\end{enumerate}
	\vspace{-5pt}
	Moreover, there exists a neighborhood $\mathcal V$ of $M$ such that
	$\overline{\mathcal V}\subset \Delta_{N-1}^{\ft,\circ}$ and $\mathcal M_{p}(\mathbb{X}_N(l),c;\overline{\mathcal V})$ is compact.
\end{prop}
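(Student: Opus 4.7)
My plan is to combine the partial-regularity results of Proposition~\ref{partial-reg} with the energy lower bounds of Lemmas~\ref{crvspr} and~\ref{crvspr2}, which (for $N\le 4$) assert that the ASD connection of minimal energy with prescribed $c_1$ and prescribed limiting flat connection on $L(N,1)$ is completely reducible. Following the inductive face-by-face scheme of Subsection~\ref{per-family}, I would use Proposition~\ref{S1S2-partial-reg-rel} to build an arbitrarily small admissible perturbation $\omega$ on $\bbX_N(l)$ such that: (a)~every $[A]\in\mathcal{M}^\omega_p(\bbX_N(l),c;\Delta_{N-1}^{\ft,\circ})$ is partially regular; (b)~the restriction map $r$ is transverse to the finite set $M$ at every $[A]$ with $r([A])\in M$; and (c)~the analogous partial regularity holds over every face of $\mathcal{K}_{N+1}$, which will be needed later for compactness.

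Given such a perturbation, consider $[A]\in\mathcal M^\omega_p$ with isotypic decomposition $A=A_1^{m_1}\oplus\cdots\oplus A_s^{m_s}$, where $A_i$ has rank $N_i$ and $\sum m_iN_i=N$. Since $\dim(\mathcal K_{N+1})=N-1$, Proposition~\ref{partial-reg}(i) yields $\sum_i\ind(\mathcal D_{A_i})\ge s-2N+1$, improving to $\sum_i\ind(\mathcal D_{A_i})\ge s-N$ under the assumption $r([A])\in M$. Independently, Lemma~\ref{crvspr} (covering all $N_i\le 3$ and the case $(N,N_i)=(4,2)$) or Lemma~\ref{crvspr2} (covering $(N,N_i)=(4,3)$) produces, for each $i$, a completely reducible $\U(N_i)$-connection $A_i'$ with the same $c_1$ and the same limiting flat connection on $L(N,1)$ as $A_i$ and with $\kappa(A_i')\le\kappa(A_i)$. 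Via the APS index formula (as in the derivation of Lemma~\ref{ind-Bv}) this energy inequality becomes an index inequality; combining all contributions through the block decomposition of $\ind(\mathcal D_A)$ into mixed $\Hom(E_i,E_j)$ summands then yields a lower bound of the form $\ind(p)\ge -h^0(\chi_0)-h^0(\beta_0)+\Delta$ with $\Delta\ge 0$ and $\Delta=0$ if and only if $A$ is itself completely reducible. The only remaining case, $N_i=N=4$, can never occur, because the partial-regularity inequality then forces $\ind(\mathcal D_A)\ge 1-N$, which is incompatible with the hypothesized value of $\ind(p)$.

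Conclusions (i) and (ii) follow at once: if $\ind(p)<-h^0(\chi_0)-h^0(\beta_0)$, the displayed inequality is violated and the moduli space is empty; if $\ind(p)=-h^0(\chi_0)-h^0(\beta_0)$ and $r([A])\in M$, then $\Delta=0$, so $A$ is completely reducible, and Proposition~\ref{bi-per-con} identifies it as $B_{\sigma,\tau}(g)$ for some bi-permutation $(\sigma,\tau)$ and some $g\in\mathcal K_{N+1}$. Broken metrics on $\partial\mathcal K_{N+1}$ are excluded, because the corresponding connection would split as a nontrivial sum of ASD connections on the Gibbons--Hawking components appearing in the cut, whose existence at the required minimal index is obstructed by Corollary~\ref{regular-GH}; hence $g\in\mathcal K_{N+1}^\circ$. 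Compactness of $\mathcal M_p(\bbX_N(l),c;\overline{\mathcal V})$ for a small neighborhood $\mathcal V$ of $M$ then follows from Floer--Uhlenbeck compactness: bubbling or energy sliding onto an end would produce a limit of strictly smaller index, forbidden by~(i), and the hypothesis $\overline{\mathcal V}\subset\Delta_{N-1}^{\ft,\circ}$ prevents the limiting flat connection on $S^1\times S^2$ from degenerating. The hard part of this plan is the second paragraph: translating the energy inequality $\kappa(A_i')\le\kappa(A_i)$ into a sharp index inequality requires carefully tracking APS $\rho$-invariants when $A_i$ and $A_i'$ have different limiting flat connections on $S^1\times S^2$, and then extracting the off-diagonal contribution to $\ind(\mathcal D_A)$; it is at exactly this step that the restriction $N\le 4$ is essential, since Lemmas~\ref{crvspr} and~\ref{crvspr2} exhaust the available case-by-case verifications of the energy-minimization statement on $X_N(l)$.
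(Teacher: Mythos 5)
Your overall strategy coincides with the paper's: an inductive, face-by-face construction of the perturbation, partial regularity via Propositions \ref{partial-reg} and \ref{S1S2-partial-reg-rel} (with the improved inequality coming from transversality of $r$ to $M$), and the energy-minimization Lemmas \ref{crvspr} and \ref{crvspr2} converted into index inequalities. The step you flag as hard is actually the easy one: the $S^1\times S^2$ boundary contributes only $-h^0(\beta)$ to the index formula (no $\rho$-term), so $\ind(\mathcal D_{A})+h^0(\beta)$ depends only on $\kappa(A)$ and the restriction to $L(N,1)$, and $\kappa(A')\le\kappa(A)$ immediately gives $\ind(\mathcal D_A)+h^0(\beta)\ge\ind(\mathcal D_{A'})+h^0(\beta')\ge -h^0(\chi_0)$, which is exactly part (i). Also note that the "iff" in your claim ``$\Delta=0$ if and only if $A$ is completely reducible'' does not come from the energy comparison alone — a non-completely-reducible $A$ could in principle achieve the minimal energy; the "only if" direction is forced by combining the equalities $\ind(\mathcal D_{A_i})+h^0(\beta_i)=-h^0(\chi_i)$ with the improved partial-regularity inequality $\sum_i\ind(\mathcal D_{A_i})\ge s-N$ and $h^0(\chi_i),h^0(\beta_i)\ge \rk(A_i)-1$, which together force $s=N$. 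You have the right ingredients listed, but the logic should be routed through the improved inequality, not through the block decomposition of the index.

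The genuine gap is your exclusion of broken metrics in part (ii). You argue that an element over $\partial\mathcal K_{N+1}$ would involve ASD connections on Gibbons--Hawking components ``whose existence at the required minimal index is obstructed by Corollary~\ref{regular-GH}.'' That corollary only says the moduli space over a GH family is either empty or of dimension at least $n-2$; it does not obstruct existence, and indeed completely reducible ASD connections on the GH components exist for every metric in the family and realize the minimal index. The correct mechanism — and the one the paper uses — is the transversality to $M$ that you already postulated in items (b)--(c): over a face $F$ of codimension $\ge 1$, part (ii) of Proposition \ref{partial-reg} gives $\sum_i\ind(\mathcal D_{A_i})\ge s-1-\dim(F)$ whenever $r([A])\in M$, while the equalities forced by the energy lemmas give $\sum_i\ind(\mathcal D_{A_i})=-\sum_i\bigl(h^0(\chi_i)+h^0(\beta_i)\bigr)\le -2(N-s)$; since $\dim(F)\le N-2$ these are incompatible, so the moduli space over every boundary face misses a neighborhood $\mathcal V$ of $M$ (and for indices strictly below $-h^0(\beta_0)-h^0(\chi_0)$ is empty outright). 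This also requires handling the GH components over each boundary face via the index formula of Remark \ref{comp-red-barXN}, which your proposal omits. The emptiness over $\partial\mathcal K_{N+1}\times\overline{\mathcal V}$ obtained this way is moreover what feeds the final compactness claim, so the boundary step cannot be bypassed.
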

\begin{proof}
	Recall that the family of metrics $\bbX_N(l)$ is parametrized by $\mathcal K_{N+1}$.
	We use a similar inductive strategy as in Subsection \ref{per-family} to define the perturbation $\omega$.
	That is to say, we construct $\omega$ on open faces $F$ of $\mathcal K_{N+1}$ by induction on $\dim(F)$.
	We also introduce an open neighborhood $\mathcal V$ of
	$M$ in $\Delta_{N-1}^{\ft,\circ}$ which we might shrink in each step of our inductive construction.
	Firstly, let $T$ be an $(N+1)$-ribbon tree such that
	$F_T$ is a vertex of $\mathcal K_{N+1}$. This vertex determines
	a decomposition of $X_{N}(l)$ into
	$N$ connected components which are denoted by $W_0$, $\dots$, $W_{N-1}$.
	We also denote $c_i$ for $W_i\cap c$.
	Let $W_0$ be the connected component
	which has $S^1\times S^2$ as one of its boundary components. Therefore, the other components are
	Gibbons-Hawking manifolds. The vertex $F_T$ gives rise to a metric on each component $W_i$.
	We apply Proposition \ref{partial-reg} to $W_0$ and the set $M$ to obtain a residual subset
	$\mathcal W_{\rm reg}$ of $\mathcal W$. Let $\omega_0$ be a small element of $\mathcal W_{\rm reg}$.
	We also fix the trivial perturbation on the GH components.
	These perturbations determine a perturbation of the ASD equation for the vertex $F_T$
	of $\mathcal K_{N+1}$.
	
	Suppose $z=[A,B_1,B_2\dots,B_{N-1}]$ is an element of
	$\mathcal M_p^{\omega}(\pi^{-1}(F_T),c;\Delta_{N-1}^{\ft})$ .
	Here $B_i$ is a connection on $(W_i,c_i)$ which satisfies the ASD equation.
	The connection $A$ defined on $(W_0,c_0)$ satisfies a perturbed ASD equation.
	Let $\beta$ be the limiting flat connection of $A$ on $S^1\times S^2$.
	We claim that if $\ind(p)< -h^0(\beta_0)-h^0(\chi_0)$, then
	the moduli space $\mathcal M_p^{\omega}(\pi^{-1}(F_T),c;\Delta_{N-1}^{\ft})$ is empty.
	We shall also show that there is an open space $\mathcal V_0\subset \Delta_{N-1}^{\ft,\circ}$
	containing $M$ such that the moduli space $\mathcal M_p^{\omega}(\pi^{-1}(F_T),c;\mathcal V)$,
	for any $p$ with $\ind(p)=-h^0(\beta_0)-h^0(\chi_0)$,
	is empty. In any of these two cases, if the connection $A$ is irreducible,
	then $z$ is a regular connection. Thus, $\ind(p)\geq -(N-1)$
	which is a contradiction. Therefore, we can assume that $A=A_1\oplus \dots\oplus A_m$ where $A_i$
	is irreducible and the rank of $A_i$ is at most $3$.
	For the sake of exposition, we prove our claims in two special cases for $N$, $l$ and $T$.
	The more general case can be proved by similar tricks.
	
	Firstly, let $N=4$ and $F_T$ determine the decomposition of $X_4(3)$ given in Figure \ref{X4(1)-decom}.
	In this figure, $W_2$ is diffeomorphic to the Gibbons-Hawking manifold
	$\overline X_2$, introduced in Remark \ref{comp-red-barXN}.
	Moreover, $W_1\#W_2$ is diffeomorphic to $\overline X_3$ and gluing the components
	$W_0$, $W_1$ and $W_2$ gives rise to the manifold $X_3(3)$.
	Let $p$ be a path along $(X_4(1),c)$ such that $\ind(p)\leq -h^0(\beta_0)-h^0(\chi_0)$.
	Let also $z=[A,B_1,B_2,B_3]\in \mathcal M_p^{\omega}(\pi^{-1}(F),c;\Delta_{3}^{\ft})$.
	We use Remark \ref{comp-red-barXN} to replace the (broken) ASD connection $[B_1,B_2]$
	with a completely reducible ASD connection $[B_1',B_2']$ with the same limiting flat connection as $[B_1,B_2]$
	on the boundary component $L(3,1)$ of $W_1\#W_2$. The choice of metrics on $W_1$ and $W_2$
	implies that the index of the path associated to $[B_1',B_2']$
	is not greater than that of the path associated to $[B_1,B_2]$.

	\begin{figure}
	\centering
           \begin{tikzpicture}

                \draw[fill] (0,0) circle (3pt);
                \draw[thick] (1,0) circle (3pt);
                \draw[fill] (2,0) circle (3pt);
                \draw[fill] (3,0) circle (3pt);
                \draw[fill] (4,0) circle (3pt);

                \draw[thick] (2,0) ellipse (3cm and 2cm);
                \draw[thick] (0.5,0) ellipse (0.8cm and 0.6cm);;
                \draw[thick] (3,0) ellipse (1.4cm and 1.1cm);
                \draw[thick] (2.5,0) ellipse (0.8cm and 0.6cm);

		\node[below] at (0.5,0) {$W_3$};
		\node[below] at (2.5,0) {$W_2$};
		\node[below] at (3.5,-0.3) {$W_1$};
		\node[below] at (1.5,-1) {$W_0$};
	\end{tikzpicture}
            	\caption{}
                	\label{X4(1)-decom}
        \end{figure}
	
	Since $A=A_1\oplus\dots\oplus A_m$, the broken perturbed ASD connection $[A,B_1',B_2']$
	can be also written as the direct sum of $m$ broken connections which we denote $z_1$, $\dots$, $z_m$.
	Each $z_i$ has rank at most $3$. Thus, Lemma \ref{crvspr} asserts that for each $z_i$,
	there is a completely reducible connection $y_i$
	with the same rank, the first Chern Class, the same restriction to the boundary component $L(3,1)$ of
	$X_3(3)=W_0\#W_1\#W_2$. Moreover, $\kappa(y_i)\leq \kappa(z_i)$.
	The broken completely reducible connection $y_1\oplus\dots\oplus y_m$ of rank $4$ on
	$W_0\#W_1\#W_2$ has the same $c_1$ and the restriction to $L(3,1)$ as $[A,B_1,B_2]$.
	But the energy of $y_1\oplus \dots\oplus y_m$ is not greater than the energy of
	$[A,B_1',B_2']$. Let $p_r$ and $\beta_r$,
	respectively, denote the path determined by $y_1\oplus\dots\oplus y_m$ and the restriction of this connection to
	$S^1\times S^2$. Let also $p'$ denote the path associated to $[A,B_1',B_2']$.
	Dimension formula and Proposition \ref{ind-Bv} imply that:
	\begin{equation}\label{index-ineq}
	  \ind(p')+h^0(\beta')\geq \ind(p_r)+h^0(\beta_r)\geq -h^0(\chi')
	\end{equation}
	where $\beta'$ and $\chi'$ are the restrictions of $[A,B_1',B_2']$
	to the boundary components $S^1\times S^2$ and $L(3,1)$ of
	$W_0\#W_1\#W_2$. Inequalities in \eqref{index-ineq} immediately imply that if $\ind(p)< -h^0(\beta_0)-h^0(\chi_0)$,
	then $\mathcal M_p^{\omega}(\pi^{-1}(F),c;\Delta_{3}^{\ft})$ is empty.
	If $\ind(p)=-h^0(\beta_0)-h^0(\chi_0)$, then in all of the above inequalities, we should have equality.
	Consequently:
	\begin{equation}\label{index-ineq-2}
	  \ind(\mathcal D_{z_i})+h^0(\beta_i)=\ind(\mathcal D_{y_i})+h^0(\alpha_i)=-h^0(\chi_i)
	\end{equation}
	Here $\alpha_i$ and $\beta_i$ are the restrictions of $y_i$ and $z_i$ to $S^1\times S^2$.
	The flat connection $\chi_i$ is also given by the restriction of these two connections to $L(3,1)$.
	In this case, if the restriction of $A$ to $S^1\times S^2$ belong to $M$, then
	Proposition \ref{partial-reg} implies that:
	\[
	  \ind(\mathcal D_{z_1})+\dots+\ind(\mathcal D_{z_m})\geq m-1.
	\]
	which contradicts \eqref{index-ineq-2}.
	
	Emptiness of the moduli spaces of the paths with index smaller than $-h^0(\beta_0)-h^0(\chi_0)$ implies
	that the moduli space $\mathcal M_p^{\omega}(\pi^{-1}(F),c;\Delta_{3}^{\ft})$ is compact for any path
	$p$ with $\ind(p)=-h^0(\beta_0)-h^0(\chi_0)$. In particular, there is a neighborhood $\mathcal V_0$ of
	$M$ such that $\overline{\mathcal V_0}\subset \Delta_{N-1}^{\ft,\circ}$ and
	$\mathcal M_p^{\omega}(\pi^{-1}(F),c;\overline{\mathcal V_0})$ is empty. The only undesirable point
	is that $\omega_0$, the restriction of $\omega$ to $W_0$, is not necessarily compactly supported.
	However, $\omega_0$ can be approximated by compactly supported perturbations. Thus,
	if a compactly supported perturbation is close enough to $\omega_0$, then
	$\mathcal M_p^{\omega}(\pi^{-1}(F),c;\Delta_{3}^{\ft})$ is empty if $\ind(p)<-h^0(\beta_0)-h^0(\chi_0)$, and
	$\mathcal M_p^{\omega}(\pi^{-1}(F),c;\mathcal V_0)$ is empty if $\ind(p)=-h^0(\beta_0)-h^0(\chi_0)$.

	 Next, let $N=4$ and $F_T$ determine the decomposition of $X_4(4)$ given in Figure \ref{X4(0)-decom}.
	 In this figure, $W_2$ and $W_3$ are diffeomorphic to the Gibbons-Hawking manifold
	$\overline X_2$. Moreover, $W_2\#W_1\#W_3$ is diffeomorphic $\overline X_4$.
	Essentially the same argument as in the previous case can be used to find a compactly supported
	perturbation $\omega$ on $W_0$ and the open set $\mathcal V_0$. The only difference is that we might need
	to use Lemma \ref{crvspr2} instead of Lemma \ref{crvspr}.
	The index requirement for Lemma \ref{crvspr2}
	can be also guaranteed by Proposition \ref{partial-reg}.
	\begin{figure}
	\centering
            \begin{tikzpicture}
                \draw[thick] (0,0) circle (3pt);
                \draw[fill] (1,0) circle (3pt);
                \draw[fill] (2,0) circle (3pt);
                \draw[fill] (3,0) circle (3pt);
                \draw[fill] (4,0) circle (3pt);

                \draw[thick] (2,0) ellipse (3cm and 2cm);
                \draw[thick] (2.5,0) ellipse (2cm and 1.4cm);
                \draw[thick] (1.5,0) ellipse (0.8cm and 0.6cm);
                \draw[thick] (3.5,0) ellipse (0.8cm and 0.6cm);

                \node[below] at (0,-0.6) {$W_0$};
                \node[below] at (2.5,-0.5) {$W_1$};
                \node[below] at (1.5,-0) {$W_2$};
                \node[below] at (3.5,-0) {$W_3$};

           \end{tikzpicture}
            	\caption{}
                	\label{X4(0)-decom}
     \end{figure}

	Following the above construction,
	we can inductively extend the perturbation $\omega$ on the vertices of $\mathcal K_{N+1}$ into
	an admissible perturbation $\omega$ on any open face $F$ of $\mathcal K_{N+1}$ with codimension at least one,
	and find a neighborhood $\mathcal V$ of $M$ with $\overline{\mathcal V}\subset \Delta_{N-1}^{\ft,\circ}$
	such that the following properties hold for any path $p$:
	\begin{itemize}
	  \item If $\ind (p) < -h^0(\beta_0)-h^0(\chi_0)$, then the moduli space
		$\mathcal M_{p}^\omega(\pi^{-1}(F),c)$ is empty.
	  \item If $\ind (p)=-h^0(\beta_0)-h^0(\chi_0)$, then the moduli space
	  	$\mathcal M_{p}^\omega(\pi^{-1}(F),c;\overline{\mathcal V})$ is empty.
	\end{itemize}
	The perturbation $\omega$ determines a perturbation
	in a neighborhood of the boundary of $\mathcal K_{N+1}$, which is admissible and the analogues of the above
	two conditions hold. As the last step we extend $\omega$ to the interior of $\mathcal K_{N+1}$.
	The argument in this case is also very similar to the case of the lower dimensional faces.
	
	We use Proposition \ref{S1S2-partial-reg-rel} to extend $\omega$ to $\mathcal K_{N+1}$ such that any element of
	$\mathcal M_{p}^\omega(\bbX_{N}(l),c;\Delta_{N-1}^{\ft})$ is partially regular for any path $p$
	that $\ind (p) < -h^0(\beta_0)-h^0(\chi_0)$ and its restriction to $L(N,1)$ is equal to $\chi_0$.
	Moreover, if $p$ is a path whose index is $-h^0(\beta_0)-h^0(\chi_0)$ and whose restriction to $L(N,1)$
	is equal to $\chi_0$, then we require that $\mathcal M_{p}^\omega(\bbX_{N}(l),c;\overline{\mathcal V})$
	is partially regular and the map $r:\mathcal M_{p}^\omega(\bbX_{N}(l),c;\mathcal V) \to \mathcal V$
	is transversal to the inclusion of $M$ into $\mathcal V$. If a connection $A$ on $X_N(l)$
	represents an element of $\mathcal M_{p}^\omega(\bbX_{N}(l),c;\Delta_{N-1}^{\ft})$ for a path $p$
	with $\ind (p) \leq -h^0(\beta_0)-h^0(\chi_0)$, then $A$ has to be reducible. Let $A_1\oplus \dots\oplus A_m$
	be the decomposition of $A$ into irreducible summands. Proposition \ref{S1S2-partial-reg-rel} asserts that:
	\begin{equation} \label{inex-ineq-3}
	  \ind(\mathcal D_{A_1})+\dots+\ind(\mathcal D_{A_m})\geq m+1-2N
	\end{equation}
	In particular, if $A_i$ has rank $3$, then $N=4$, $m=2$, and we have $\ind(\mathcal D_{A_i})\geq -5$.
	By Lemma \ref{crvspr} or Lemma \ref{crvspr2}, we can find a completely reducible connection $A_i'$ such that $c_1(A_i)=c_1(A_i')$,
	$\kappa(A_i)\leq\kappa(A_i')$, and the restriction of $A_i$ and $A_i'$ to $L(N,1)$ agree with each other.
	Let $A'=A_1'\oplus\dots\oplus A_m'$, $\alpha$ be the limiting value of $A'$ on $S^1\times S^2$, $\alpha_i$ be the limiting value of $A_i'$ on $S^1\times S^2$ and
	$\beta_i$ be the limiting value of $A_i$ on $S^1\times S^2$ . Since
	\[
	  \ind(p)+h^0(\beta_0)\geq \ind(\mathcal D_{A'})+h^0(\alpha)\geq -h^0(\chi_0),
	\]
	we should have $\ind(p)=-h^0(\chi_0)-h^0(\beta_0)$
	and $\ind(\mathcal D_{A_i})+h^0(\beta_i)=\ind(\mathcal D_{A_i'})+h^0(\alpha_i)=-h^0(\chi_i)$.
	Our inductive assumption and the emptiness of the moduli spaces associated to $(\bbX_N(l),c)$ and a path with index smaller than
	$-h^0(\chi_0)-h^0(\beta_0)$ imply that
	the moduli space $\mathcal M_{p}^\omega(\bbX_{N}(l),c;\overline{\mathcal V})$ is compact
	for a path $p$ with $\ind(p)=-h^0(\chi_0)-h^0(\beta_0)$.
	If $\beta$, the limiting flat connection of $A$ on $S^1\times S^2$, belongs to $M$, then we can use part (ii) of Proposition \ref{S1S2-partial-reg-rel} to improve the
	inequality in \eqref{inex-ineq-3} as follows:
	\begin{equation}
	  \ind(\mathcal D_{A_1})+\dots+\ind(\mathcal D_{A_m})\geq m-N.
	\end{equation}
	Therefore, we have:
	\begin{equation}\label{improved-ineq}
	  -\sum_{i=1}^{m} h^0(\chi_i)+h^0(\beta_i)\geq m-N.
	\end{equation}
	Since $h^0(\chi_i), h^0(\beta_i)\geq \rk(A_i)-1$, the above inequality implies that $m=N$, i.e., $A$
	is a completely reducible connection. 
\end{proof}

Suppose $\U(1)$-bundles $L_1$, $\dots$, $L_N$ on $X_N(l)$ are chosen such that the path $p$ determined by completely reducible connections on $L_1\oplus \dots \oplus L_N$ has index at most $-2(N-1)$. Propositions \ref{ind-Bv} and \ref{bi-per-con} imply that the cycle associated to $L_1\oplus \dots \oplus L_N$ is equal to $w_{\sigma,\tau}$ for injective maps $\sigma:[N-l]\to [N]$ and $\tau:[l]\to [N]$ with disjoint images. Up to permutation, the choices of the $\U(1)$-bundles $L_i$ are also unique. We can form the moduli space of completely reducible connections $\mathcal M_{cr}(\bbX_N(l),w_{\sigma,\tau})$ associated to the family of metrics $\bbX_N(l)$. Restriction to $S^1\times S^2$ gives a map $r_{\sigma,\tau}:\mathcal K_{N+1} \to \Delta_{N-1}^{\ft}$, which is the composition of $\widetilde {\hol}_{\sigma,\tau}$ and the projection map form $\ft$ to $\Delta_{N-1}^{\ft}$. We assume that the inclusion map of $M$ in $\Delta_{N-1}^{\ft,\circ}$ is transversal to $r_{\sigma,\tau}$. Then there are finitely many completely reducible connections on $\bbX_{N}(l)$ in $\mathcal M_{cr}(\bbX_N(l),w_{\sigma,\tau})$ such that their limiting values belong to $M$. Let $g\in \mathcal K_{N+1}$ be chosen such that that the limiting value of the associated completely reducible ASD connection is an element of $M$. Using the results of Subsection \ref{comp-red-reg}, we can make a small perturbation $\omega(g)$ of the ASD equation such that the completely reducible ASD connection becomes a regular element of the moduli space associated to $\bbX_{N}(l)$. Note that the type of perturbations that we use in Subsection \ref{comp-red-reg} do not change the set of completely reducible connections. We can repeat the proof of Proposition \ref{avoid-noncomp-red} using perturbations whose values at such $g\in \mathcal K_{N+1}$ are close to $\omega(g)$. In particular, we can show:

\begin{prop}\label{avoid-noncomp-red-version-2}
	Suppose $N\leq 4$ and $M$ is a finite subset of $\Delta_{N-1}^{\ft,\circ}$ such that
	the inclusion map of $M$ in $\Delta_{N-1}^{\ft,\circ}$ is transversal to the map $r_{\sigma,\tau}$.
	Then there exist an arbitrary small perturbation of the ASD equation over $\mathbb{X}_N(l)$
	and a neighborhood $\mathcal V$ of $M$ such that for any path $p$ along $(X,w_{\sigma,\tau})$,
	with limiting connections $\chi_0=1\oplus \dots\oplus \zeta^{N-1}$ and $\beta_0$ on $L(N,1)$ and $S^1\times S^2$,
	we have:
	\begin{enumerate}
	  \item[(i)]  if $\ind(p)< -2(N-1)$, then the moduli space
	  $\mathcal M_{p}^\omega(\mathbb{X}_N(l),w_{\sigma,\tau};\Delta_{N-1}^{\ft})$ is empty;
	  \item[(ii)]  if $\ind(p)= -2(N-1)$, then
	  any element of $\mathcal M_{p}^\omega(\mathbb{X}_N(l),w_{\sigma,\tau};\mathcal V)$ is a regular completely reducible connection associated to
	  an element of $\mathcal {K}_{N+1}^\circ$.
	\end{enumerate}
\end{prop}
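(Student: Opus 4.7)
The plan is to bootstrap from the proof of Proposition \ref{avoid-noncomp-red} just completed, overlaying on top of it a small extra perturbation that makes the relevant completely reducible connections into regular points of the moduli space, without introducing any new zeros of the ASD section.

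First, I would identify the relevant data. Since $\ind(p)\leq -2(N-1) = -h^0(\chi_0)-h^0(\beta_0)$ (using $h^0(\chi_0)=N-1$ and $h^0(\beta_0)\ge N-1$, with the limit attained when $\beta_0\in\Delta_{N-1}^{\ft,\circ}$), Lemma \ref{ind-Bv} forces the underlying decomposition $L_1\oplus\cdots\oplus L_N$ to satisfy \eqref{vector-cond}, and Proposition \ref{bi-per-con} then identifies the associated $2$-cycle with $w_{\sigma,\tau}$ for some bi-permutation $(\sigma,\tau)$ of type $l$. Next, Proposition \ref{hol-fac-forg} and the factorization $r_{\sigma,\tau}=\pi\circ\widetilde{\hol}_{\sigma,\tau}$ show that for a generic choice of metric on $S^1\times S^2$ the map $r_{\sigma,\tau}\colon\mathcal{K}_{N+1}\to\Delta_{N-1}^\ft$ is smooth near any preimage of $M$ (this is exactly the content of Proposition \ref{hol-fac-forg} combined with the density part of Proposition \ref{deg-one}); so the transversality hypothesis on $M$ cuts out a finite set of metrics $g_1,\ldots,g_k\in\mathcal{K}_{N+1}^\circ$, each of which carries a distinguished completely reducible ASD connection $A_j=B_{\sigma,\tau}(g_j)$ whose limit on $S^1\times S^2$ lies in $M$.

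Second, for each $g_j$ I would apply Theorem \ref{comp-red-regularity} to produce a compactly supported holonomy perturbation $\omega_j\in\mathcal{W}$, as small as desired, with $V_{\omega_j}(A_j)=0$ and $A_j$ regular as an element of $\mathcal{M}_p^{\omega_j}(X_N(l),w_{\sigma,\tau};g_j)$. The hypotheses of that theorem are satisfied: the index requirement $\ind(d_{A_i\otimes A_j^*}^* + d_{A_i\otimes A_j^*}^+)\geq 0$ follows from Lemma \ref{ind-Bv} applied to each abelian summand, and $H^0(S^1\times S^2,\alpha_i\otimes\alpha_j^*)=0$ holds because the limit connection lies in the open stratum $\Delta_{N-1}^{\ft,\circ}$. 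Crucially, $\omega_j$ vanishes on the reducible locus at $A_j$, so it neither creates nor destroys the completely reducible solution at $g_j$; and by continuity the solution persists and remains regular in a small neighborhood $U_j$ of $g_j$ in $\mathcal{K}_{N+1}^\circ$.

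Third, I would stitch the $\omega_j$ together with cutoff functions into a single perturbation $\widetilde{\omega}\in\widetilde{\mathcal{W}}$ supported near $\{g_1,\ldots,g_k\}$, then feed $\widetilde{\omega}$ as the starting data into the inductive argument of Proposition \ref{avoid-noncomp-red}. The proposition's proof proceeds face by face of $\mathcal{K}_{N+1}$, at each step replacing the perturbation on an open neighborhood of new faces while leaving it unchanged on a prescribed compact subset; I would apply it with the compact subset containing the $\overline{U_j}$'s, using Propositions \ref{noS1S2-reg-rel}, \ref{S1S2-reg-rel}, and \ref{S1S2-partial-reg-rel} in their relative forms. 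The output is a perturbation $\omega$, arbitrarily close to $\widetilde{\omega}$ (hence arbitrarily small), such that on a neighborhood $\mathcal{V}$ of $M$: (i) moduli spaces for paths with $\ind(p)<-2(N-1)$ are empty, (ii) moduli spaces for paths with $\ind(p)=-2(N-1)$ consist only of completely reducible connections lying in $\mathcal{K}_{N+1}^\circ$. Combined with the regularity at each $A_j$ inherited from step two (which survives the small subsequent modification by a standard open-condition argument), this yields the two assertions.

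The main obstacle will be the interaction between the two perturbation schemes in step three: the argument of Proposition \ref{avoid-noncomp-red} uses Propositions \ref{partial-reg} and \ref{S1S2-partial-reg-rel} to rule out non-completely-reducible ASD solutions via a cokernel/index count, whereas Theorem \ref{comp-red-regularity} manipulates the off-diagonal blocks of the ASD complex at a specific reducible. One must verify that the latter perturbation, when treated as a fixed piece in the relative transversality statement, does not interfere with the cokernel-shrinking arguments used for the non-reducible strata—this is guaranteed by keeping the $\omega_j$ small relative to the neighborhoods produced at each inductive step, and by the fact that the $\omega_j$ vanish at all the completely reducible connections we care about.
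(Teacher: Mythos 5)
Your proposal is correct and follows essentially the same route as the paper: identify the finitely many completely reducible connections with limits in $M$ via the transversality hypothesis, apply Theorem \ref{comp-red-regularity} (whose hypotheses are checked exactly as in the remark following that theorem) to make them regular without altering the set of completely reducible solutions, and then rerun the inductive argument of Proposition \ref{avoid-noncomp-red} with perturbations pinned near these values. The extra detail you supply about stitching with cutoffs and invoking the relative transversality statements is exactly what the paper's brief "we can repeat the proof" is implicitly relying on.
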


\subsection{Regularity on $W^j_k$}\label{good-perb}
In Subsection \ref{per-family}, we explained how one can inductively define good perturbations for the families of metrics $\bbW_k^j$ where $0\leq j< k\leq j+N\leq 2N$. In this section, we extend this construction to the families of metrics $\bbW^l_{l+N+1}$, for $N\leq 4$ and $0\leq l \leq N$, such that it satisfies the properties which were used in the proof of Theorem \ref{main}. We follow the same notation as in Subsection \ref{proof-thm}. The face of $\mathcal K_{N+2}$ corresponding to the cut $S^1\times S^2$ is denoted by $\overline F_0$. We are interested in the moduli spaces associated to the 2-cycle $c_{\sigma,\tau}\subset W^l_{l+N+1}$ where $\sigma:[N-l]\to [N]$ and $\tau:[l] \to [N]$ are two injective maps. We will write $S$ and $T$ for $[N]\backslash {\rm image}(\sigma)$ and ${\rm image}(\tau)$. Removing the cut $S^1\times S^2$ from $(W^l_{l+N+1},c_{\sigma,\tau})$ produces pairs $(W_0,c_0)$ and $(X_N(l),w_{\sigma,\tau})$. The family of metrics on the face $\overline F_0$ is given by a fixed metric on $W_0$ and the family $\bbX_N(l)$.

The pair $(W_0,c_0)$ has admissible ends. Therefore, we can use the results discussed in Subsection \ref{reg-irr} to fix a compactly supported perturbation $\omega_0$ on $W_0$ such that the moduli space $\mathcal M_p^{\omega_0}(W_0,c_0;\Gamma)$ for any choice of a path $p$ along $(W_0,c_0)$ and an open face $\Gamma$ of $\Delta_{N-1}^\ft$ is regular. In particular, if $\ind(p)<-(N-1)$, then the moduli space $\mathcal M_p^{\omega_0}(W_0,c_0;\Delta_{N-1}^{\ft})$ is empty. In the case that $\ind(p)=-(N-1)$, $\mathcal M_p^{\omega_0}(W_0,c_0;\Gamma)$ is also empty unless $\Gamma=\Delta_{N-1}^{\ft,\circ}$ in which case the moduli space is a compact $0$-dimensional manifold. If we fix $\alpha\in \fC_*^N(Y_l,\gamma_S)$ and $\beta\in \fC_*^N(Y_l,\gamma_T)$, there is at most one path $p$ such that $\mathcal M_p^{\omega_0}(W_0,c_0;\Delta_{N-1}^\ft)$ is $0$-dimensional and the restriction of $p$ to the two admissible ends are $\alpha$, $\beta$. We denote this (possibly empty) moduli space by $M(\alpha,\beta)$. Using Proposition \ref{S1S2-reg-rel} and Remark \ref{admissible-comp-supp-per}, we can assume that for any choice of $\alpha$ and $\beta$, the restriction map $r_0:M(\alpha,\beta) \to \Delta_{N-1}^{\ft,\circ}$ is transversal to the maps $\hol_S:\Delta_{N-1}\to \Delta_{N-1}^{\ft}$ and $H_S:[0,1]\times\Delta_{N-1}\to \Delta_{N-1}^{\ft}$. In particular, $r_0$ is transversal to the maps $r_{\sigma,\tau}$, defined in the previous subsection.

Let $\omega_1$ be a perturbation of the ASD equation on $(\bbX_N(l),w_{\sigma,\tau})$ provided by Proposition \ref{avoid-noncomp-red-version-2} where $M$ is given by:
\[
  M:=\bigcup_{\alpha,\beta} r_0(M(\alpha,\beta)).
\]
Suppose that the map $r_{\sigma,\tau}^{\omega_1}:\mathcal K_{N+1} \to \Delta_{N-1}^{\ft}$ is induced by the restriction map of the completely reducible connections in the moduli space $\mathcal M_{p}^\omega(\mathbb{X}_N(l),w_{\sigma,\tau};\Delta_{N-1}^\ft)$ where $\ind(p)= -2(N-1)$. By choosing $\omega_1$ small enough, we can assume that $r_{\sigma,\tau}^{-1}(q)$ and $(r_{\sigma,\tau}^{\omega_1})^{-1}(q)$, for any $q\in M$, are arbitrarily close to each other and have the same number of elements. The perturbations $\omega_0$ and $\omega_1$ define a perturbation of the ASD equation over the face $\overline F_0$ of $\mathcal K_{N+2}$. We can use the familiar inductive construction of Subsection \ref{per-family} to extend this perturbation to an admissible perturbation $\omega$ over $\mathcal K_{N+2}$. As a consequence of Proposition \ref{avoid-noncomp-red-version-2}, if we use this perturbation $\omega$ in the definition of the moduli space for the pair $(\bbW^l_{l+N+1},c_{\sigma,\tau})$, then it satisfies all the required properties.

\bibliography{references}
\bibliographystyle{hplain}
\end{document}